\renewcommand{\eprint}[1]{#1}
\numberwithin{equation}{section}
\theoremstyle{definition}
\newtheorem{defi}{Definition}[section]
\theoremstyle{plain}
\newtheorem{thm}[defi]{Theorem}
\newtheorem{lem}[defi]{Lemma}
\newtheorem{prop}[defi]{Proposition}
\theoremstyle{remark}
\newtheorem{rem}[defi]{Remark}
\newcommand{\bk}{\Bbbk}
\newcommand{\cx}{\mathcal{X}}
\newcommand{\cz}{\mathcal{Z}}
\newcommand{\rd}{\mathrm{d}}
\newcommand{\rI}{\mathrm{I}}
\newcommand{\rJ}{\mathrm{J}}
\newcommand{\Arr}{\blacktriangleright}
\newcommand{\IndexSet}{\mathrm{I}}
\newcommand{\EQ}{\mathrm{E}}
\newcommand{\id}{\mathrm{id}}
\newcommand{\Id}{\mathtt{e}}
\newcommand{\tr}{\mathrm{tr}}
\newcommand{\ptr}{\mathrm{ptr}}
\newcommand{\vSp}{\mathrm{H}}
\newcommand{\twist}{\mathtt{\tau}}
\newcommand{\un}{\mathbf{1}}
\newcommand{\Boxt}{{\,\Box\,}}
\newcommand{\Hom}{\operatorname{Hom}}
\newcommand{\End}{\operatorname{End}}
\newcommand{\Irr}{\operatorname{Irr}}
\newcommand{\Ind}{\operatorname{Ind}}
\newcommand{\Rep}{\operatorname{Rep}}
\newcommand{\Iso}{\operatorname{Iso}}
\newcommand{\Hsp}[1]{\mathrm{H}\mathopen{}\left[#1\right]\mathclose{}}
\newcommand{\auxprod}{\Pi_{a,b{;}c}}
\newcommand{\Auxprod}[3]{{\Pi_{#1{,}#2{;}#3}}}
\newcommand{\coev}{\mathrm{coev}}
\newcommand{\ev}{\mathrm{ev}}
\newcommand{\tcoev}{\overline{{\coev}}}
\newcommand{\tev}{\overline{{\,\ev}}}
\newcommand{\Tube}{\mathrm{Tube}(\cx)}
\newcommand{\Tubecx}{\mathrm{Tube}(\cx,\IndexSet)}
\newcommand{\kVec}{\mathrm{Vec}_\bk}
\newcommand{\Colon}{\colon}
\newcommand{\tenEl}[7]{
\def\radL{1.7cm}
\def\radS{0.5cm}
\begin{tikzpicture}[xscale=0.9,yscale=0.9,baseline={([yshift=-.5ex]current bounding box.center)}]
\draw[line width=1] (0,0) circle (\radL)
(-\radS,0) circle (\radS)
(\radS,0) circle (\radS);
\draw[line width=1] 
    (0,0) to (0,\radL);

\node[rotate=90] at (0,0.5*\radL) {\scriptsize $\Arr$};
\node[rotate=180] at (-\radS,-\radS) {\scriptsize $\Arr$};
\node[rotate=180] at (\radS,-\radS) {\scriptsize $\Arr$};
\node[rotate=180] at (0,-\radL) {\scriptsize $\Arr$};

\filldraw [black] (0,0) circle (1.5pt);
\filldraw [black] (0,\radL) circle (1.5pt);

\node at (\radS,0) {$#6$};
\node at (-\radS,0) {$#7$};

\node at (0,0.5*\radL) [anchor=south] {$=$};
\node at (0,0.55*\radL) [anchor=west] {$#3$};
\node at (-40:\radL) [anchor=north] {$#2$};
\node at (2*\radS,0) [anchor=west] {$#4$};
\node at (-2*\radS,0) [anchor=east] {$#5$};
\node at (0,-0.4*\radL) [anchor=north] {{$#1$}};
\end{tikzpicture}
}
\newcommand{\pielem}[5]{
\begin{tikzpicture}[xscale=0.8,baseline={([yshift=-.5ex]current bounding box.center)}]
    \draw[line width=1] 
    (-0.75*\len,0) to (0.75*\len,0)
    (-0.75*\len,0) to (-0.75*\len,0.5*\len)
    (0.75*\len,0) to (0.75*\len,0.5*\len)
    (-0.75*\len,0.5*\len) to (0.75*\len,0.5*\len);

\node[rotate=-90] at (-0.75*\len,0.25*\len) {\scriptsize $\Arr$};
\node[rotate=-90] at (0.75*\len,0.25*\len) {\scriptsize $\Arr$};
\node[rotate=180] at (0,0) {\scriptsize $\Arr$};
\node[rotate=180] at (-0.4*\len,0.5*\len) {\scriptsize $\Arr$};
\node[rotate=180] at (0.4*\len,0.5*\len) {\scriptsize $\Arr$};

\filldraw [black] (-0.75*\len,0) circle (1pt);
\filldraw [black] (0.75*\len,0) circle (1pt);
\filldraw [black] (0,0.5*\len) circle (1pt);
\filldraw [black] (-0.75*\len,0.5*\len) circle (1pt);
\filldraw [black] (0.75*\len,0.5*\len) circle (1pt);

\node at (-0.75*\len,0.25*\len) [anchor=east] {$#4$};
\node at (0.75*\len,0.25*\len) [anchor=west] {$#4$};
\node at (0,0) [anchor=north] {$#3$};
\node at (0.4*\len,0.5*\len) [anchor=south] {$#1$};
\node at (-0.4*\len,0.5*\len) [anchor=south] {$#2$};

\draw (0,0.25*\len) node {${#5}$};
\end{tikzpicture}
}
\newcommand{\pielemUnit}[4]{
\begin{tikzpicture}[xscale=0.8,baseline={([yshift=-.5ex]current bounding box.center)}]
    \draw[line width=1] 
    (-0.75*\len,0) to (0.75*\len,0)
    (-0.75*\len,0.5*\len) to (0.75*\len,0.5*\len);
    \draw[line width=1,dotted] 
    (-0.75*\len,0) to (-0.75*\len,0.5*\len)
    (0.75*\len,0) to (0.75*\len,0.5*\len);

\node[rotate=180] at (0,0) {\scriptsize $\Arr$};
\node[rotate=180] at (-0.4*\len,0.5*\len) {\scriptsize $\Arr$};
\node[rotate=180] at (0.4*\len,0.5*\len) {\scriptsize $\Arr$};

\filldraw [black] (-0.75*\len,0) circle (1pt);
\filldraw [black] (0.75*\len,0) circle (1pt);
\filldraw [black] (0,0.5*\len) circle (1pt);
\filldraw [black] (-0.75*\len,0.5*\len) circle (1pt);
\filldraw [black] (0.75*\len,0.5*\len) circle (1pt);

\node at (0,0) [anchor=north] {$#3$};
\node at (0.4*\len,0.5*\len) [anchor=south] {$#1$};
\node at (-0.4*\len,0.5*\len) [anchor=south] {$#2$};

\draw (0,0.25*\len) node {${#4}$};
\end{tikzpicture}
}
\newcommand{\centrelem}[3]{
\def\len{2cm}
\begin{tikzpicture}[baseline={([yshift=-.5ex]current bounding box.center)}]
\draw[line width=1]
    (-0.25*\len,0) .. controls (-0.25*\len,0.4*\len) and (0.25*\len,0.4*\len)..  node[midway,rotate=180] {\scriptsize $\Arr$} (0.25*\len,0) node[midway,above] {\footnotesize $#2$};
\draw[line width=1] 
    (-0.25*\len,0) to (0.25*\len,0);

\node[rotate=180] at (0,0) {\scriptsize $\Arr$};

\filldraw [black] (-0.25*\len,0) circle (1pt);
\filldraw [black] (0.25*\len,0) circle (1pt);

\node at (0,0) [anchor=north] { $#1$};
\draw (0,0.15*\len) node {${#3}$};
\end{tikzpicture}
}
\newcommand{\unelem}[2]{
\def\len{2cm}
\begin{tikzpicture}[baseline={([yshift=-.5ex]current bounding box.center)}]
\draw[line width=1,dotted]
    (-0.25*\len,0) .. controls (-0.25*\len,0.4*\len) and (0.25*\len,0.4*\len).. (0.25*\len,0);
\draw[line width=1] 
    (-0.25*\len,0) to (0.25*\len,0);

\node[rotate=180] at (0,0) {\scriptsize $\Arr$};

\filldraw [black] (-0.25*\len,0) circle (1pt);
\filldraw [black] (0.25*\len,0) circle (1pt);

\node at (0,0) [anchor=north]{$#1$};
\draw (0,0.15*\len) node {${#2}$};
\end{tikzpicture}
}
\newcommand{\tubelem}[4]{
\begin{tikzpicture}[xscale=0.8,baseline={([yshift=-.5ex]current bounding box.center)}]
\draw[line width=1] 
    (-0.5*\len,0) to (0.5*\len,0)
    (-0.5*\len,0) to (-0.5*\len,0.5*\len)
    (0.5*\len,0) to (0.5*\len,0.5*\len)
    (-0.5*\len,0.5*\len) to (0.5*\len,0.5*\len);

\node[rotate=-90] at (-0.5*\len,0.25*\len) {\scriptsize $\Arr$};
\node[rotate=-90] at (0.5*\len,0.25*\len) {\scriptsize $\Arr$};
\node[rotate=180] at (0,0) {\scriptsize $\Arr$};
\node[rotate=180] at (0,0.5*\len) {\scriptsize $\Arr$};

\filldraw [black] (-0.5*\len,0) circle (1pt);
\filldraw [black] (0.5*\len,0) circle (1pt);
\filldraw [black] (-0.5*\len,0.5*\len) circle (1pt);
\filldraw [black] (0.5*\len,0.5*\len) circle (1pt);

\node at (-0.5*\len,0.25*\len) [anchor=east] {$#3$};
\node at (0.5*\len,0.25*\len) [anchor=west] {$#3$};
\node at (0,0) [anchor=north] {$#2$};
\node at (0,0.5*\len) [anchor=south] {$#1$};

\draw (0,0.25*\len) node  {${#4}$};
\end{tikzpicture}
}
\title{The braided monoidal structure of tube algebra representations}
\author{David Jaklitsch\footnote{dajak@math.uio.no} }
\author{Makoto Yamashita\footnote{makotoy@math.uio.no}}
\affil{Department of Mathematics, University of Oslo}
\date{November 11, 2025}
\begin{document}

\maketitle

\begin{abstract}
We consider the tube algebra of a spherical semisimple multitensor category $\cx$, and construct a braided monoidal structure with twist for its representations.
We further show that this category is braided tensor equivalent with the Drinfeld center of the ind-category of $\cx$, extending the well-known linear equivalence.
\end{abstract}

\tableofcontents

\newpage
\section{Introduction}

\emph{Tube algebras} first appeared in the work of Ocneanu~\cite{MR1317353} on the structure of subfactors and associated topological quantum field theory.
This construction was analogous to the quantum double of Hopf algebras, and a precise connection between the Drinfeld center of the associated tensor category was clarified through the work of Izumi~\cite{MR1782145} and Müger~\cite{MR1966525}.
To be precise, these works show that, given a unitary fusion category $\cx$, the linear category of modules over Ocneanu's tube algebra $\Tube$ is equivalent to the underlying linear category of the Drinfeld center category $\cz(\cx)$.
This correspondence has been generalized to the setting without finiteness assumption on the number of irreducible classes, and expanded to annular algebras when $\cx$ is a rigid C$^*$-tensor category as studied in~\cites{MR3447719,NY15tubealgebra}, which gave a linear equivalence of between $\Tube$-modules and the Drinfeld center.

While it is possible to transport the braided monoidal structure of $\cz(\cx)$ to $\Tube$-modules via this equivalence, in view of the connection to the Levin--Wen type lattice models~\cites{MR4642306,DOI:10.22331/q-2024-03-28-1301}, it is desirable to have a direct description of fusion and braiding on the category of modules over the tube algebra, as conjectured in~\cite{MR2503392}.
In this direction, Das, Ghosh, and Gupta~\cite{MR3254423} showed that, starting from a shaded planar algebra (equivalent to a unitary rigid $2$-category $\cx$ generated by a single $1$-cell), the unitary representations of the associated annular algebra has a structure of unitary braided category, and that a certain subcategory of `finite modules' is braided equivalent to the Drinfeld center $\cz(\cx)$.
A similar claim can be found in~\cite{hoek-master-thesis}, although precise details seems to be missing there.

In this paper we relax the unitarity, single generation, and finiteness assumptions, and establish a corresponding comparison result.
That is, we define a braided monoidal structure on the category of modules over the tube algebra $\Tubecx$ of a pivotal $2$-category $\cx$ and a generating set $\IndexSet$ of objects  ($1$-cells).
Our main result (Theorem~\ref{thm:main}) shows that this indeed recovers the (ribbon) braided monoidal category $\cz(\cx)$.

Our model of monoidal product and braiding is loosely motivated by the topological presentation of corresponding structures on the fusion ring in~\cite{MR1317353}.
We hope that this gives a more intuitive description of topological operations on the Levin--Wen type lattice models than the conventional ones, as in~\cite{MR3254423}, based on string diagrams.

The outline of paper is as follows.
In Section~\ref{sec:prelim}, we review some basic materials on rigid $2$-categories.
In Section~\ref{sec:graph-calc}, we set up a graphical calculus scheme adapted from the standard topological presentation of $2$-categories and planar algebra theory.
In Section~\ref{sec:tub-alg-rep}, we define the monoidal structure and braiding on the category of modules over the tube algebra, using our scheme from Section~\ref{sec:graph-calc}.
Finally in Section~\ref{sec:drinfeld_center}, we carry out the comparison with the Drinfeld center.

\paragraph{Acknowledgments}
This research was funded by The Research Council of Norway [project 324944].
We would like to thank Corey Jones for fruitful discussions and encouragements during the work on this project, and David Penneys for comments on an early draft of this paper.

\section{Preliminaries}\label{sec:prelim}

\subsection{Spherical multitensor categories}

Let us begin with a quick review of the basic concepts on multitensor categories, mostly following the convention of~\cite{EGNO}.

Let $\bk$ be an algebraically closed field.
A \textit{multitensor category over $\bk$} is a locally finite $\bk$-linear abelian category $\cx$ together with a rigid monoidal structure, such that the tensor product $\otimes$ is bilinear on morphisms.
Without loss of generality we may and do assume $\cx$ to be strict, in view of MacLane's coherence theorem.

We have the canonical irreducible decomposition of monoidal the unit,
\[
\un = \bigoplus_{i \in \Gamma} \un_i
\]
for the finite set $\Gamma$ parametrizing the primitive idempotents of the semisimple commutative algebra $\End_\cx(\un)$.
This induces the corresponding decomposition of the category
\[
\cx = \bigoplus_{i, j \in \Gamma} \cx_{i j},
\]
where the subcategory $\cx_{i j} \subset \cx$ consists of the objects $x$ satisfying $x \cong \un_i \otimes x \otimes \un_j$.
We use the notation $\cx_i\coloneqq \cx_{ii}$ for the diagonal tensor categories. We freely identify $\cx$ with the associated $2$-category, whose $0$-cells are elements of $\Gamma$, $1$-cells are objects of $\cx_{i j}$, and $2$-cells are morphisms of $\cx_{i j}$. The horizontal composition is given by the tensor product~\cite[Remark 4.3.7]{EGNO}.

A \textit{pivotal structure} on $\cx$ is given by a monoidal natural isomorphism from the identity functor of $\cx$ to the double dual functor, i.e., a natural family of isomorphisms
\[
p_x \colon x \xrightarrow{\sim} x^{**}
\]
compatible with the natural isomorphism $(x \otimes y)^{**} \cong x^{**} \otimes y^{**}$.
By~\cite[Theorem 2.2]{NS07pivotal}, we may assume that $p$ is strict, meaning that $(x\otimes y)^{*}=y^{*}\otimes x^{*} $ and $p_{x}=\id_x$.
We also identify left and right duals, i.e., for every $x\in\cx$ we may assume
\begin{equation*}
  x^*={}^*x,
\end{equation*}
and write $\overline{x}$ for this object. 
We can thus denote the corresponding evaluation and coevaluation morphisms by
\begin{align*}
  {\ev}_x&\colon \overline{x} \otimes x\to \un, &
  {\coev}_x&\colon \un \to x \otimes \overline{x}, \\
  \tev_x&\colon x \otimes \overline{x}\to \un, &
  \tcoev_x&\colon \un\to \overline{x}\otimes x.   
\end{align*}  

These allow us to define left and right traces of an endomorphism $f\in\Hom_\cx(x,x)$ as
\begin{equation*}
    \tr_x^\mathrm{r}(f)\coloneqq  \tev_x\circ (f\otimes\id) \circ\coev_x,\quad\text{ and }\quad \tr_x^\mathrm{l}(f)\coloneqq  \ev_x\circ (\id\otimes f) \circ\tcoev_x\,.
\end{equation*}
In a pivotal multitensor category, $\overline{x}\in\cx_{j i}$ holds for a homogeneous object $x\in\cx_{i j}$. In that case, we have that $\tr_x^\mathrm{r}(f)\in \Hom_\cx\!\left(\un_j,\un_j\right)$ and $\tr_x^\mathrm{l}(f)\in \Hom_\cx\!\left(\un_i,\un_i\right)$. We identify these traces as scalars in the base field via the unique unital isomorphism of $\bk$-algebras
\begin{equation}\label{scalar_id}
\Hom_\cx\!\left(\un_i,\un_i\right) \xrightarrow{~\sim~} \bk\,,\quad\id_{\un_i}\mapsto 1 \,.
\end{equation}

Another consequence of the pivotal structure is the equality
\begin{equation}\label{eq:triv-monodromy}
((\tev_x \circ (\id \otimes \ev_y \otimes \id)) \otimes \id) \circ (\id \otimes f \otimes \id) \circ (\id \otimes ((\id \otimes \coev_x \otimes \id) \circ \tcoev_y)) = f
\end{equation}
for $f \in \Hom_\cx(x, y)$.

\begin{defi}[cf.~\cites{MR1966524,MR3342166}]\label{def:spherical_multitensor_category}
A pivotal multitensor category $\cx$ is called \textit{spherical} when left and right traces coincide under~\eqref{scalar_id} for every endomorphism of $f\in\Hom_{\cx_{i j}}(x,x)$, for all $i, j \in \Gamma$ and all $x \in \cx_{i j}$. In that case, we denote this scalar by $\tr_x(f)$.
\end{defi}
The \textit{dimension} of an object $x\in\cx_{i j}$ is the scalar 
$\rd_x\coloneq\tr_x(\id_x)$. Sphericality ensures that the dimension of dual objects obey $\rd_{\overline{x}}=\rd_x$.

The sphericality condition in Definition~\ref{def:spherical_multitensor_category} is equivalent to the following~\cite[Proposition 5.8]{MR1966524}:
for any fixed $i_0 \in \Gamma$, the rigid tensor category $\cx_{i_0}$ is spherical, and for each $j \in \Gamma$ and any fixed irreducible object $x \in \cx_{i_0 j}$, the Frobenius algebra $Q = x \otimes \overline{x} \in \cx_{i_0}$ is normalized in the sense of~\cite[Definition 3.13]{MR1966524}.

\begin{rem}
In the finite setting, given a spherical fusion category $\cx$ there is a pivotal $2$-category $\mathbf{Mod}^\mathrm{sph}(\cx)$ of spherical module categories over $\cx$ as defined in~\cite[Section 3.6]{manifestlymorita} and~\cite[Definition 5.22]{sphericalmorita}. By considering a choice of simple objects in this semisimple $2$-category as indexing set $\Gamma$, we obtain a spherical multifusion category as in Definition~\ref{def:spherical_multitensor_category}.
Reciprocally, given a spherical multifusion category $\cx$, we have that $\cx_i$ is a spherical fusion category for any $i\in \Gamma$. Moreover, the (idempotent completed~\cite{douglas2018fusion2categories}) $2$-category associated to $\cx$ can be realized as as a subcategory of the $2$-category $\mathbf{Mod}^\mathrm{sph}(\cx_i)$.
\end{rem}

Throughout the paper we will tacitly assume that $\cx$ is semisimple and spherical.
In particular, any object of $\cx$ is a direct sum of simple objects, but we allow for $\cx$ to have infinitely many isomorphism classes of simple objects.

We fix for each $i,j\in\Gamma$ a set $\Iso \cx_{i j}$ (resp.~$\Irr\cx_{i j}$) of representatives of isomorphism classes of objects (resp.~simple objects) in $\cx_{i j}$.

We will use the following notation
\[
x_1\otimes \cdots \otimes x_n\equiv x_1\cdots x_n
\]
in order to shorten lengthy expressions involving the tensor product of several objects in $\cx$.
In order to simplify the presentation, we assume these objects to be from homogeneous components $\cx_{i j}$.
In this case, the above product is nontrivial if and only if each object $x_k$ belongs to $\cx_{i_k i_{k + 1}}$, and is nonzero, for some choice of $0$-cells $i_1, \ldots, i_{n + 1} \in \Gamma$.

\subsection{Ind-completion}

The \textit{ind-completion} of a category $\cx$ is the category $\Ind\cx$ whose objects are colimits of filtered diagrams in $\cx$.
Concretely, one can represent its objects by functors $F\colon I\to \cx$ where $I$ is a filtered category, and with morphism spaces between two such functors given by
\begin{equation*}
    \Hom_{\Ind\cx}(F,G)\coloneqq {\lim}_i\mathrm{colim}_j\Hom_\cx(F(i),G(j)).
\end{equation*}

When $\cx$ is a semisimple $\bk$-linear category, every object in $\Ind\cx$ is a formal direct product of objects in $\cx$, that is,
\begin{equation}\label{eq:formal-dir-sum}
\bigoplus_{a \in \Irr\cx} V_a \otimes a
\end{equation}
for a collection of vector spaces $V_a$ labeled by the irreducible classes, and the morphism space between two such objects is given by
\[
\Hom_{\Ind\cx}\biggl(\bigoplus_{a \in \Irr\cx} V_a \otimes a, \bigoplus_{a \in \Irr\cx} W_a \otimes a \biggr) = \prod_{a \in \Irr\cx} \Hom(V_a, W_a).
\]
Indeed,~\eqref{eq:formal-dir-sum} can be considered as the colimit of diagram $F \colon I \to \cx$ where $I$ is the category whose objects are $V' = \bigoplus_{a \in \Irr\cx} V'_a$, a direct sum of subspaces $V'_a \subset V_a$ for $a \in \Irr\cx$ such that $\dim V' < \infty$, ordered by inclusion.
Conversely, suppose we are given a filtered diagram $F \colon I \to \cx$.
Then, for each fixed $a \in \Irr\cx$, the finite-dimensional vector spaces $\Hom(a, F(i))$ form a directed system.
Writing $V_a$ for its colimit, we see that~\eqref{eq:formal-dir-sum} is a presentation of the overall colimit of $F$.

Furthermore, when $\cx$ is a multitensor category, the $\Ind\cx$ inherits from $\cx$ a monoidal structure with the same set of $0$-cells.
However, $\Ind\cx$ is not rigid since we are allowing infinite direct sums of $\un$, among others, as its objects.

\subsection{Drinfeld center}

Next, let us recall the \emph{Drinfeld center} $\cz(\Ind\cx)$ of the multitensor category $\Ind\cx$ for a semisimple multitensor category, cf.~\cites{MR3398725,NY15tubealgebra}.
Its objects are pairs $(Z, \sigma^Z)$, where $Z \in \Ind\cx$ and $\sigma^Z$ is a natural isomorphism
\[
\sigma^Z_X \colon X \otimes Z \to Z \otimes X
\]
obeying the corresponding hexagon axiom.
The centrality condition against $X = \un$ shows that $Z$ must be of the form $\bigoplus_{i \in \Gamma} Z_i$ for $Z_i \in \Ind\cx_{i }$.
Moreover, $\sigma^Z$ is completely determined by its action on objects of $\cx_{i j}$ for $i, j \in \Gamma$, so that our data can be represented by natural isomorphisms
\begin{equation*}
\sigma^Z_a \Colon  a \otimes Z_j \to Z_i \otimes a \quad (i, j \in \Gamma, a \in \cx_{i j}).
\end{equation*}
Morphisms in $\cz(\Ind\cx)$ are morphisms in $\Ind\cx$ between the underlying objects that commute with the respective half-braidings.

Of course, we can restrict to the case $Z_i \in \cx_{i }$, which gives the category $\cz(\cx)$.
If we further assume that $\cx$ is a tensor category, we recover the usual definition of the Drinfeld center category.

The monoidal product of $\cx$ induces a monoidal structure on $\cz(\Ind\cx)$ where the underlying object of $Z \otimes Z'$ is given by $(Z_i \otimes Z'_i)_{i \in \Gamma}$, endowed with the composition of the respective half-braidings.
Moreover, $\cz(\Ind\cx)$ is a braided category, whose braiding $(X, \sigma^X) \otimes (Y, \sigma^Y) \to (Y, \sigma^Y) \otimes (X, \sigma^X)$ is given by
\begin{equation*}
\sigma_{X,Y}=\sigma^{Y}_X\Colon X \otimes Y \to Y \otimes X,
\end{equation*}
i.e., the action of the half-braiding on $Y$.

Under the rigidity assumption on $\cx$, for each $i \in \Gamma$, the restriction functor $\cz(\Ind\cx) \to \cz(\Ind\cx_{i })$ given by $Z \mapsto Z_i$ are braided monoidal equivalences, see~\cites{MR1822847,MR1966524}.

\subsection{Algebras with local units}
To work with tube algebras in the infinite setting, we will need to forgo assuming the existence of a unit.
\begin{defi}[\cites{MR695890,MR899719}]
An associative \textit{$\bk$-algebra with local units} consists of a $\bk$-vector space $A$ together with a product, i.e., a linear map $\cdot\colon A\otimes A\to A$ obeying the following properties.
\begin{itemize}
\item Associativity: for every $f,g,h\in A$
\begin{equation*}
f\cdot(g\cdot h)= (f\cdot g)\cdot h    
\end{equation*}
\item Local units: for every finite dimensional vector subspace $V\subset A$ there exists an idempotent $e\in A$ such that $V\subset eAe$.
\end{itemize}
\end{defi}
Similar to the case of unital algebras, there is an associated notion of representation for algebras with local units.
\begin{defi}
Let $A$ be an associative $\bk$-algebra with local units.
\begin{enumerate}[\rm (i)]
    \item 
A \textit{locally presented 
$($right$)$ representation} of $A$ is a $\bk$-vector space $M$ together with a module action, i.e., a linear map $M\otimes A\to M, m \otimes f \mapsto m.f$ fulfilling the following properties.
\begin{itemize}
    \item Module associativity: $m.(f\cdot g)=(m.f).g$ for $m\in M$ and $f,g\in A$.    
    \item Locality: for every finite dimensional vector subspace $W\subset M$ there exists an idempotent $e\in A$ such that $W.e=W$.
\end{itemize}
\item A \textit{morphism of locally presented representations} is simply a module homomorphism, i.e., a linear map $F\colon M\to N$ obeying $F(m.f)=F(m).f$ for $m\in M$ and $f\in A$.
\item We denote by $\Rep A$ the $\bk$-linear category of locally presented representations over $A$.
\end{enumerate}
\end{defi}

\section{Graphical calculus and spaces of tubes}\label{sec:graph-calc}

\subsection{Cyclically invariant vector spaces of morphisms}
Let $\cx$ be a pivotal multitensor category. In the spirit of~\cite{BK1}, we define in this section a cyclically invariant vector space of morphisms.
Let $x_1,\ldots, x_n$ be objects of $\cx$.
Then the vector space $\Hom_\cx(\un,x_1\cdots x_n)$ only depends on the cyclic order of $(x_1,\ldots,x_n)$ up to canonical isomorphism.
We will mostly consider the case where $x_1,\ldots, x_n$ are homogeneous and \emph{cyclically composable}, in the sense that $x_k \in \cx_{i_k i_{k+1}}$ with $i_{n+1} = i_1$.

To make this more precise, consider an indexing category $\mathfrak{n}$ whose objects are $\{1,2,\ldots, n\}$ and with a unique isomorphism between every pair of objects. We define a $\mathfrak{n}$-shaped diagram of finite dimensional vector spaces 
\begin{equation*}
    \mathrm{X}\Colon \mathfrak{n}\to\kVec,\qquad i\mapsto \Hom_\cx(\un,x_i \cdots  x_n x_1\cdots x_{i-1})
\end{equation*}
assigning to the unique isomorphism between $i$ and $i+1$ the isomorphism
\begin{equation}\label{cyclic_perm}
\begin{aligned}
\mathrm{X}_{i,i+1}\Colon \Hom_\cx(\un,x_i \cdots  x_n x_1\cdots x_{i-1})&\to
\Hom_\cx(\un,x_{i+1} \cdots  x_n x_1\cdots x_{i-1}x_i)\\
    f&\mapsto (\ev_{x_i}\otimes \id) \circ (\id\otimes f \otimes \id) \circ \tcoev_{x_i}
\end{aligned}
\end{equation}
Equalities of the form~\eqref{eq:triv-monodromy} ensure that the composition $\mathrm{X}_{n,1}\circ\cdots\circ \mathrm{X}_{1,2}$ equals the identity map of the vector space $\Hom_\cx(\un,x_1\cdots x_n)$. The identification of the double-dual of a tensor product as the product of the double-duals implies the functoriality of $\mathrm{X}$.

\begin{defi}Let $\cx$ be a pivotal multitensor category and $x_1, \ldots ,x_n$ cyclically composable objects in $\cx$.
Define the \textit{vector space of morphisms bounded by $x_1, \ldots ,x_n$} as the limit
\begin{equation}\label{vsp_limit}
    \Hsp{x_1\cdots x_n}\coloneqq \lim \mathrm{X}
\end{equation}    
in the category $\kVec$ of finite-dimensional vector spaces.
\end{defi}
\begin{rem}By construction the vector space~\eqref{vsp_limit} is invariant under cyclic permutations of $(x_1, \ldots, x_n)$, thus we canonically identify
\begin{equation*}
     \Hsp{x_1\cdots x_n} = \Hsp{x_2\cdots x_n x_1} = \cdots = \Hsp{x_n x_1\cdots x_{n-1}}.
\end{equation*}
As remarked before, we will be mostly interested in the case when each $x_k$ is homogeneous, that is, $x_k \in \cx_{i_k i_{k+1}}$ for $i_1, \ldots, i_{n} \in \Gamma$, with the convention $i_{n + 1} = i_1$.
We thus identify $\Hom_{\cx_{i_1}}(\un_{i_1}, x_1 \cdots x_n)$ with $\Hom_{\cx_{i_2}}(\un_{i_2}, x_2 \cdots x_n x_1)$, etc., by the maps of the form~\eqref{cyclic_perm}, and call this $\Hsp{x_1 \cdots x_n}$.
\end{rem}

Rigidity allows us to define a composition relative to an object $b\in\cx$ via the following map
\begin{equation}\label{comp_t}
\begin{aligned}
\circ_b\Colon \Hom_\cx\left(\un,x\overline{b}\,\right)\otimes \Hom_\cx(\un,by)&\to \Hom_\cx(\un,xy),\\
(f,g)&\mapsto (\id_x\otimes\ev_{b}\otimes\id_y)\circ (f\otimes g).
\end{aligned}
\end{equation}

The composition map~\eqref{comp_t} relative to an object $b\in\cx$, via the isomorphisms~\eqref{cyclic_perm}, descends to a map between cyclically invariant vector spaces
\begin{equation}\label{comp_b}
\circ_b\colon  \Hsp{x \,\overline{b}}\otimes \Hsp{b\,y}\to \Hsp{x\,y}
\end{equation}
In the case that $\cx$ is semisimple, these induce an isomorphism
\begin{equation*}
    \sum_{t\in \Irr \cx}\,\circ_t\Colon\bigoplus_{t\in \Irr \cx} \Hsp{x\,\overline{t}}\otimes \Hsp{t\,y}\xrightarrow{\;\cong\;} \Hsp{x\,y}
\end{equation*}

\begin{rem}
For each object $x\in\cx$  consider the universal limit cone defining $\Hsp{\overline{x}x}$:
\begin{equation*}
\begin{tikzcd}
{\Hsp{\overline{x}x}} \ar[d,equal] \ar[r] & \Hom_\cx(\un,\overline{x}x)\ar[d,"\eqref{cyclic_perm}"]\\
{\Hsp{x\overline{x}}} \ar[r]& \Hom_\cx(\un,x\overline{x})
\end{tikzcd}
\end{equation*}
the image of $\tcoev_x$ under the vertical isomorphism is the coevaluation morphism $\coev_x$.
We denote their common inverse image in $\Hsp{\overline{x}x}=\Hsp{x\overline{x}}$ by $\Id_x$, and refer to it as \textit{the unit morphism of $x$}. This morphism serves as identity of relative composition~\eqref{comp_b} as shown in Lemma~\ref{comp_prop}.

The unit morphism of $x\in\cx_{i j}$ can also be used to define
\textit{partial traces}: given $f\in\Hsp{yx\overline{x}}$ 
\begin{equation*}
    \ptr_x(f)\coloneqq f\circ_{x\overline{x}} \,\Id_x\in \Hsp{y}
\end{equation*}
\textit{Full traces}, or simply \textit{traces} are similarly defined for \textit{endomorphisms} $f\in\Hsp{x\overline{x}}$ with $x \in \cx_{i j}$ by
\begin{equation*}
    \tr_x(f)\coloneqq f\circ_{x\overline{x}} \,\Id_x\in \Hsp{\un_i}\overset{\eqref{scalar_id}}{\cong}\bk\,.
\end{equation*}
\end{rem}
(A word of caution: for arbitrary $x\in\cx$ we have natural isomorphisms $\Hsp{x \overline{x}} \cong \Hsp{x \overline{x}}$, but $\tr_x(f)$ and $\tr_{\overline{x}}(f)$ are different elements of $\bk^\Gamma$.
When $x \in \cx_{i j}$, these elements have only one nonzero components, respectively at $i$-th and $j$-th components, which are the same scalars.)

\begin{lem}\label{comp_prop}
Let $\cx$ be a pivotal multitensor category.
\begin{enumerate}[$($i$)$]
    \item The unit morphism of an object $x\in\Iso\cx_{ij}$ obeys
    \begin{equation*}
    \Id_x=\Id_{\overline{x}}, \quad \Id_{xy}=\Id_x\circ_{\un_j}\Id_y, \qquad\text{ and }\qquad f\circ_x \Id_x= f \,\in\Hsp{yx\overline{x}}\,,
    \end{equation*}
    for any $f\in\Hsp{yx\overline{x}}$, which justifies its name.
    \item Given $f\in\Hsp{x\overline{y}} $ and $g\in\Hsp{yz}$, it holds that 
    \begin{equation*}
        f\circ_y g=g\circ_{\overline{y}} f
    \end{equation*}
    as elements of $\Hsp{xz}$.
    \item For $f\in\Hsp{w\overline{x}\,\overline{y}}$ and $g\in\Hsp{yxz}$, it holds that 
    \begin{equation*}
        f\circ_{xy} g=\ptr_x(f\circ_{y} g)=\ptr_y(f\circ_{x} g)
    \end{equation*}
    as elements of $\Hsp{xz}$.    
    \item For $f\in\Hsp{xy_1y_2}$, $g\in\Hsp{\overline{y}_1w}$ and $h\in\Hsp{\overline{y}_2z}$, it holds that 
    \begin{equation*}
        (g\circ_{y_1} f)\circ_{\overline{y}_2}h=g\circ_{y_1} (f\circ_{\overline{y}_2}h)
    \end{equation*}
    as elements of $\Hsp{xwz}$.
\end{enumerate}
\end{lem}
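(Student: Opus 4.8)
The plan is to prove all four assertions by unwinding the relative composition to the level of the representing vector spaces $\Hom_\cx(\un,x_1\cdots x_n)$, on which $\circ_b$ is given by the explicit formula~\eqref{comp_t}, and then reducing everything to: the snake (zigzag) identities for $\ev,\coev,\tev,\tcoev$; the interchange law for $\otimes$; the strictness of the pivotal structure; and the monodromy identity~\eqref{eq:triv-monodromy}. Since $\Hsp{x_1\cdots x_n}=\lim\mathrm{X}$ over the groupoid $\mathfrak{n}$, two of its elements coincide as soon as their images agree at one vertex $\Hom_\cx(\un,x_i\cdots x_{i-1})$; for each identity I will work at the vertex where~\eqref{comp_t} applies verbatim, and only have to check once that the chosen representatives of the two sides are related by the rotation maps~\eqref{cyclic_perm}.

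For~(i), write $\Id_x\in\Hsp{x\overline x}=\Hsp{\overline x x}$ through its two representatives $\coev_x\in\Hom_\cx(\un,x\overline x)$ and $\tcoev_x\in\Hom_\cx(\un,\overline x x)$. Then $\Id_x=\Id_{\overline x}$ is precisely the strictness relation $\coev_{\overline x}=\tcoev_x$ (equivalently $\tcoev_{\overline x}=\coev_x$) coming from $p_x=\id$ and $\overline{\overline x}=x$. The multiplicativity $\Id_{xy}=\Id_x\circ_{\un_j}\Id_y$, after inserting representatives and using $\ev_{\un_j}=\id_{\un_j}$ so that $\circ_{\un_j}$ is mere concatenation of the underlying morphisms, is the standard identity $\coev_{xy}=(\id_x\otimes\coev_y\otimes\id_{\overline x})\circ\coev_x$, i.e.\ the compatibility of coevaluations with the strict identification $\overline{xy}=\overline y\,\overline x$. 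Finally $f\circ_x\Id_x=f$, with $f$ represented in $\Hom_\cx(\un,yx\overline x)$ and $\Id_x$ by $\coev_x$, reads $(\id_{yx}\otimes\ev_x\otimes\id_{\overline x})\circ(f\otimes\coev_x)=f$, which is the snake identity $(\ev_x\otimes\id_{\overline x})\circ(\id_{\overline x}\otimes\coev_x)=\id_{\overline x}$ applied to the trailing $\overline x$-leg of $f$.

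For~(ii)--(iv), in each case both sides, pushed to a common vertex $\Hom_\cx(\un,\cdot)$, are built from the given morphisms by tensoring and then contracting one or two \emph{disjoint} dual pairs with evaluations. In~(ii), $f\circ_y g$ is by definition $(\id_x\otimes\ev_y\otimes\id_z)\circ(f\otimes g)$ in $\Hom_\cx(\un,xz)$, whereas $g\circ_{\overline y}f$, computed with representatives of $g$ in $\Hom_\cx(\un,zy)$ and of $f$ in $\Hom_\cx(\un,\overline y x)$, equals $(\id_z\otimes\ev_{\overline y}\otimes\id_x)\circ(g\otimes f)$ in $\Hom_\cx(\un,zx)$; rotating this back to $\Hom_\cx(\un,xz)$ via~\eqref{cyclic_perm} and simplifying using the snake identities together with $\ev_{\overline y}=\tev_y$ yields the same morphism. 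This is the one place where the coherence of the rotation maps, i.e.~\eqref{eq:triv-monodromy}, is genuinely used. For~(iii), part~(i) and the definition of the partial trace show that $\ptr_x(f\circ_y g)$ and $\ptr_y(f\circ_x g)$ both equal the single morphism obtained from $f\otimes g$ by contracting the $x$-pair and the $y$-pair; identifying this with $f\circ_{xy}g$ then amounts to decomposing the evaluation attached to the composite of $x$ and $y$ into the two successive evaluations $\ev_x$ and $\ev_y$, again a strictness identity for duals of tensor products. For~(iv), the $y_1$-contraction (joining $f$ and $g$) and the $y_2$-contraction (joining $f$ and $h$) act on disjoint tensor legs, so functoriality of $\otimes$ shows that performing them in either order gives the same element of $\Hom_\cx(\un,xwz)$, hence the same element of $\Hsp{xwz}$.

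The work here is organizational rather than conceptual: one must fix, consistently across all four parts, which vertex of each limit $\Hsp{\cdots}$ represents each element, verify that the representatives of the two sides of an equation are related by a chain of maps~\eqref{cyclic_perm}, and then run the snake identities without orientation errors. The only input beyond routine rigidity is the coherence relation~\eqref{eq:triv-monodromy}, needed in~(ii) to see that the full-loop rotation implicit in comparing $f\circ_y g$ with $g\circ_{\overline y}f$ is trivial; I expect this comparison --- getting the rotations to cancel cleanly --- to be the main technical point.
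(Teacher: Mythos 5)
Your proposal is correct and follows essentially the same route as the paper's (very terse) proof: (i) from $\coev_{\overline{x}}=\tcoev_x$ and the snake identities, (ii) from the identity $(\id\otimes\ev_y\otimes\id)\circ(f\otimes g)=(\id\otimes\tev_y\otimes\id)\circ(g\otimes f)$ read through the cyclic identifications~\eqref{cyclic_perm} (whose coherence is exactly~\eqref{eq:triv-monodromy}), (iii) from the snake relations, and (iv) from the interchange law together with the strict identification of double duals of tensor products. Your write-up merely makes explicit the bookkeeping of representatives at the vertices of the limit, which the paper leaves implicit.
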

\begin{proof}
Statement (i) follows from $\coev_{\overline{x}} = \tcoev_x$.
Next, for (ii), take $f\in\Hom_\cx(\un, x\overline{y})$ and $g\in\Hom_\cx(\un, y\overline{z})$.
We then have the equality
\begin{equation*}
    f\circ_y g=(\id \otimes \ev_y \otimes \id) \circ (f\otimes g)= (\id \otimes \tev_y \otimes \id) \circ (g\otimes f)
\end{equation*}
in $\Hsp{x z}$.
Item (iii) follows from the standard snake relations for the structure morphisms of duality.
Finally, (iv) follows from associativity of the composition in $\cx$ and uses the fact that the double dual of a tensor product is the product of the double duals.     
\end{proof}

\subsection{Diagrammatic calculus}

We set up the conventions for a graphical calculus using the $2$-dimensional graphical representation of $2$-categories (which in a sense is the dual graph to the usual string diagram for tensor categories).

First, a $0$-cell $i \in \Gamma$ is depicted by a vertex.
Properly speaking, vertices should be decorated by the labels $i \in \Gamma$, but we supress this from our diagrams.

Next, a $1$-cell, that is, a homogeneous object $x\in \Iso\cx_{i j}$ will be depicted by an oriented line segment.
Changing the orientation of the edge results in changing the label by the dual object.
\[\def\len{2.5cm}
\begin{tikzpicture}[baseline=(A1.base)]
\draw[line width=1]
(-0.5*\len,0) to node[midway,rotate=0](A1) { $\Arr$}
    node[midway,below] { $x$}(0.5*\len,0);
\filldraw [black] (0.5*\len,0)  circle (2pt);    
\filldraw [black] (-0.5*\len,0)  circle (2pt);      
\end{tikzpicture}
\quad=\quad
\begin{tikzpicture}[baseline=(A2.base)]
\draw[line width=1]
(-0.5*\len,0) to node[midway](A2) { $\blacktriangleleft$}
    node[midway,below] { $\overline{x}$}(0.5*\len,0);
\filldraw [black] (0.5*\len,0)  circle (2pt);    
\filldraw [black] (-0.5*\len,0)  circle (2pt);      
\end{tikzpicture}
\]
The $i$-th component of the monoidal unit is depicted by a dotted edge:
\begin{equation*}
\def\len{3cm}
\begin{tikzpicture}
\draw[line width=1.5,dotted]
(-0.5*\len,0) to  node[midway,below] { $\un_i$}(0.5*\len,0);
\filldraw [black] (0.5*\len,0)  circle (2pt);    
\filldraw [black] (-0.5*\len,0)  circle (2pt);      
\node at (0.5*\len,0) [anchor=north]{\small $i$};
\node at (-0.5*\len,0) [anchor=north]{\small $i$};
\end{tikzpicture}
\end{equation*}
and as mentioned we will suppress the vertex decoration in our diagrams. Often we will also omit the subscript $i$ on $\un_i$ since this is determined by the 
labels adjacent to the vertices.
Finally, a $2$-cell, which we take to be an element of $\Hsp{x_1\cdots x_n}$, will be depicted by an oriented disk (we will always use by convention a counterclockwise orientation) whose boundary comes with an oriented graph whose edges are labeled by composable homogeneous objects $x_1,\dots, x_n$. For instance, an element $f\in \Hsp{x_1x_2x_3x_4 x_5}$ is depicted by
\[
\def\len{2.5cm}
\begin{tikzpicture}[baseline={([yshift=-.5ex]current bounding box.center)}]
    \node[regular polygon, draw=black, line width=1, regular polygon sides = 5, minimum size=\len] (p) at (0,0) {$f$};
    \foreach \x in {1,2,...,5}  \fill (p.corner \x) circle (1.5pt);
    \node at (p.270) {\scriptsize $\Arr$};
    \node at (p.54) [rotate=145] {\scriptsize $\Arr$};
    \node at (p.126) [rotate=-145] {\scriptsize $\Arr$};
    \node at (p.342) [rotate=70]  {\scriptsize $\Arr$};
    \node at (p.198) [rotate=-70] {\scriptsize $\Arr$};
    \node at (p.198) [anchor=east] {$x_1$};
    \node at (p.270) [anchor=north] {$x_2$};  
    \node at (p.342) [anchor=west] {$x_3$};
    \node at (p.54) [anchor=west] {$\;x_4$};    
    \node at (p.126) [anchor=east] {$x_5\;$};        
\end{tikzpicture}
\]
Different graphs correspond to the same element of $\Hsp{x_1\cdots x_n}$ in the following situations:
\begin{enumerate}[(i)]
    \item Edge fusion (vertex removal): two consecutive edges meeting at a vertex can be seen as an edge with label being the tensor product of the starting edges.
\[\def\len{1.6cm}
\begin{tikzpicture}[baseline={([yshift=-.5ex]current bounding box.center)}]
\draw[line width=1]
(-0.5*\len,0) to node[midway,rotate=0] {\scriptsize $\Arr$}
    node[midway,below] { $a$}(0.5*\len,0)
(-0.5*\len,0) to node[midway,rotate=-90] {\scriptsize $\Arr$}
    node[midway,left] { $d$}(-0.5*\len,1*\len)
(-0.5*\len,1*\len) to node[midway,rotate=180] {\scriptsize $\Arr$}
    node[midway,above] { $c$}(0.5*\len,1*\len)
(0.5*\len,0) to node[midway,rotate=90] {\scriptsize $\Arr$}
    node[midway,right] { $b$}(0.5*\len,1*\len) ;
    
\filldraw [black] (0.5*\len,1*\len)  circle (1pt);    
\filldraw [black] (-0.5*\len,1*\len)  circle (1pt);    
\filldraw [black] (0.5*\len,0)  circle (1pt);    
\filldraw [black] (-0.5*\len,0)  circle (1pt);    
\node at (0,0.5*\len) {$f$};
\end{tikzpicture}
\quad=\quad
\begin{tikzpicture}[baseline={([yshift=-.5ex]current bounding box.center)}]
\draw[line width=1]
(-0.5*\len,0) to node[midway] {\scriptsize $\Arr$}
    node[midway,below] { $a$}(0.5*\len,0)
(-0.5*\len,0) to node[midway,rotate=-115] {\scriptsize $\Arr$}
    node[midway,left] { $d\;$}(0,1*\len)
(0.5*\len,0) to node[midway,rotate=115] {\scriptsize $\Arr$}
    node[midway,right] { $\;bc$}(0,1*\len) ;
    
\filldraw [black] (0,1*\len)  circle (1pt);    
\filldraw [black] (0.5*\len,0)  circle (1pt);    
\filldraw [black] (-0.5*\len,0)  circle (1pt);    
\node at (0,0.35*\len) {$f$};
\end{tikzpicture}
\quad\in\Hsp{abcd}
\]
\item Unit insertion: a dotted edge can be inserted at a vertex.
\begin{equation*}
\def\len{1.6cm}
\begin{tikzpicture}[baseline={([yshift=-.5ex]current bounding box.center)}]
\draw[line width=1]
(-0.5*\len,0) to node[midway] {\scriptsize $\Arr$}
    node[midway,below] {$a$}(0.5*\len,0)
(-0.5*\len,0) to node[midway,rotate=-115] {\scriptsize $\Arr$}
    node[midway,left] { $c\;$}(0,1*\len)
(0.5*\len,0) to node[midway,rotate=115] {\scriptsize $\Arr$}
    node[midway,right] { $\;b$}(0,1*\len) ;
    
\filldraw [black] (0,1*\len)  circle (1pt);    
\filldraw [black] (0.5*\len,0)  circle (1pt);    
\filldraw [black] (-0.5*\len,0)  circle (1pt);    
\node at (0,0.35*\len) {$f$};
\end{tikzpicture}
\quad=\quad
\begin{tikzpicture}[baseline={([yshift=-.5ex]current bounding box.center)}]
\draw[line width=1]
(-0.5*\len,0) to node[midway] {\scriptsize $\Arr$}
    node[midway,below] { $a$}(0.5*\len,0)
(0.5*\len,0) to node[midway,rotate=90] {\scriptsize $\Arr$}
    node[midway,right] { $b$}(0.5*\len,1*\len)
(-0.5*\len,1*\len) to node[midway,rotate=180] {\scriptsize $\Arr$}
    node[midway,above] { $c$}(0.5*\len,1*\len);
\draw[line width=1,dotted]    
(-0.5*\len,0) to  node[midway,left] { $\un_i$}(-0.5*\len,1*\len) ;
    
\filldraw [black] (0.5*\len,1*\len)  circle (1pt);    
\filldraw [black] (-0.5*\len,1*\len)  circle (1pt);    
\filldraw [black] (0.5*\len,0)  circle (1pt);    
\filldraw [black] (-0.5*\len,0)  circle (1pt);    
\node at (0,0.5*\len) {$f$};
\end{tikzpicture}
\quad\in\Hsp{abc}    
\end{equation*}
where $a\in\cx_{ij}$, $b\in\cx_{jk}$ and $c\in\cx_{ki}$.
\item Gluing: the composition map~\eqref{comp_b} is depicted by the gluing of the corresponding disks along the common boundary label. We have that
\[\def\len{1.6cm}
\begin{tikzpicture}[baseline={([yshift=-.5ex]current bounding box.center)}]
\draw[line width=1]
(-0.5*\len,0) to node[midway] {\scriptsize $\Arr$}
    node[midway,below] { $x_3$}(0.5*\len,0)
(-0.5*\len,0) to node[midway,rotate=-90] {\scriptsize $\Arr$}
    node[midway,left] { $x_2$}(-0.5*\len,1*\len)
(-0.5*\len,1*\len) to node[midway,rotate=180] {\scriptsize $\Arr$}
    node[midway,above] { $x_1$}(0.5*\len,1*\len)
(0.5*\len,0) to node[midway,rotate=-90] {\scriptsize $\Arr$}
    node[midway,right] { $b$}(0.5*\len,1*\len) ;
    
\filldraw [black] (0.5*\len,1*\len)  circle (1pt);    
\filldraw [black] (-0.5*\len,1*\len)  circle (1pt);    
\filldraw [black] (0.5*\len,0)  circle (1pt);    
\filldraw [black] (-0.5*\len,0)  circle (1pt);    
\node at (0,0.5*\len) {$g$};
\end{tikzpicture}
\;\circ_b\;
\begin{tikzpicture}[baseline={([yshift=0.8ex]current bounding box.center)}]
\draw[line width=1]
(-0.5*\len,0) to node[midway] {\scriptsize $\Arr$}
    node[midway,below] { $y_1$}(0.5*\len,0)
(0.5*\len,0) to node[midway,rotate=125] {\scriptsize $\Arr$}
    node[midway,right] { $\;y_2$}(-0.5*\len,1*\len)
(-0.5*\len,0) to node[midway,rotate=-90] {\scriptsize $\Arr$}
    node[midway,left] { $b\;$}(-0.5*\len,1*\len) ;
    
\filldraw [black] (-0.5*\len,1*\len)  circle (1pt);    
\filldraw [black] (0.5*\len,0)  circle (1pt);    
\filldraw [black] (-0.5*\len,0)  circle (1pt);    
\node at (-0.2*\len,0.3*\len) {$f$};
\end{tikzpicture}
~=
\begin{tikzpicture}[baseline={([yshift=-.5ex]current bounding box.center)}]
\draw[line width=1]
(-0.5*\len,0) to node[midway] {\scriptsize $\Arr$}
    node[midway,below] { $x_3$}(0.5*\len,0)
(-0.5*\len,0) to node[midway,rotate=-90] {\scriptsize $\Arr$}
    node[midway,left] { $x_2$}(-0.5*\len,1*\len)
(-0.5*\len,1*\len) to node[midway,rotate=180] {\scriptsize $\Arr$}
    node[midway,above] { $x_1$}(0.5*\len,1*\len)
(0.5*\len,0) to node[midway,rotate=-90] {\scriptsize $\Arr$}
    node[midway,left] { $b$}(0.5*\len,1*\len) ;
    
\filldraw [black] (0.5*\len,1*\len)  circle (1pt);    
\filldraw [black] (-0.5*\len,1*\len)  circle (1pt);    
\filldraw [black] (0.5*\len,0)  circle (1pt);    
\filldraw [black] (-0.5*\len,0)  circle (1pt);    
\node at (0,0.5*\len) {$g$};
\begin{scope}[xshift=1*\len]
\draw[line width=1]
(-0.5*\len,0) to node[midway] {\scriptsize $\Arr$}
    node[midway,below] { $y_1$}(0.5*\len,0)
(0.5*\len,0) to node[midway,rotate=125] {\scriptsize $\Arr$}
    node[midway,right] { $\;y_2$}(-0.5*\len,1*\len) ;

\filldraw [black] (0.5*\len,0)  circle (1pt);    

\node at (-0.2*\len,0.3*\len) {$f$};
\end{scope}
\end{tikzpicture}
~=
\begin{tikzpicture}[baseline={([yshift=-.5ex]current bounding box.center)}]
\draw[line width=1]
(-0.5*\len,0) to node[midway] {\scriptsize $\Arr$}
    node[midway,below] { $x_3$}(0.5*\len,0)
(-0.5*\len,0) to node[midway,rotate=-90] {\scriptsize $\Arr$}
    node[midway,left] { $x_2$}(-0.5*\len,1*\len)
(-0.5*\len,1*\len) to node[midway,rotate=180] {\scriptsize $\Arr$}
    node[midway,above] { $x_1$}(0.5*\len,1*\len);
    
\filldraw [black] (0.5*\len,1*\len)  circle (1pt);    
\filldraw [black] (-0.5*\len,1*\len)  circle (1pt);    
\filldraw [black] (0.5*\len,0)  circle (1pt);    
\filldraw [black] (-0.5*\len,0)  circle (1pt);    
\node at (0.25*\len,0.5*\len) {$g\circ_b f$};
\begin{scope}[xshift=1*\len]
\draw[line width=1]
(-0.5*\len,0) to node[midway] {\scriptsize $\Arr$}
    node[midway,below] { $y_1$}(0.5*\len,0)
(0.5*\len,0) to node[midway,rotate=125] {\scriptsize $\Arr$}
    node[midway,right] { $\;y_2$}(-0.5*\len,1*\len) ;

\filldraw [black] (0.5*\len,0)  circle (1pt);    
\end{scope}
\end{tikzpicture}
\]
as an element in the vector space $\Hsp{x_1x_2x_3y_1y_2}$.
\end{enumerate}

\subsection{Spherical pairing}

Throughout this section we assume that $\cx$ is a spherical semisimple multitensor category.
The pivotal structure of $\cx$ induces an important structure on the Hom-spaces of $\cx$, namely a non-degenerate pairing: for any $x,y\in\Iso\cx_{i j}$
\begin{equation}\label{pairing}
\begin{aligned}
\langle-,-\rangle_{x,y}\Colon \Hom_\cx(\overline{x},\overline{y})&\otimes \Hom_\cx(x,y)\to \Hom_\cx(\un_j,\un_j)\cong\bk,\\
(f,g)&\mapsto 
\ev_{y}\circ (f\otimes g)\circ \tcoev_x=\tev_{y}\circ (g\otimes f)\circ \coev_x\,.
\end{aligned}
\end{equation}
We extend this to arbitrary $x, y \in \Iso \cx$ by bilinearity, noting that $\Hom_\cx(x, y)$ is the direct sum of spaces $\Hom_{\cx_{i j}}(x_{i j}, y_{i j})$ with $x = \bigoplus_{i, j} x_{i j}$ for $x_{i j} \in \cx_{i j}$, and a similar decomposition for $y$.
In particular, we have for the vector space $\Hom_\cx(\un,x)$ that the pairing
$\langle-,-\rangle_{\un,x}=\circ_x$ is the composition relative to $x$. 

A direct computation involving the sphericality of the pivotal structure of $\cx$ proves that the pairing~\eqref{pairing} commutes with the isomorphisms $\mathrm{X}_{i,i+1}$ from equation~\eqref{cyclic_perm}.
Consequently, when $x_1, \ldots, x_n$ is a cyclically composable sequence of $1$-cells,~\eqref{pairing} descends to a pairing on $\vSp$ spaces
    \begin{equation}\label{Hpairing}
    \langle-,-\rangle = \tr_{x_n}(-\circ_{x_1\cdots x_{n-1}}-)\Colon
    \Hsp{\overline{x}_n\cdots \overline{x}_1}
    \otimes\Hsp{x_1\cdots x_n}
    \to \bk
    \end{equation}
where we interpret the value of $\tr_{x_n}$ as a scalar by the isomorphism $\End_\cx(\un_i) \to \bk$ as in~\eqref{scalar_id}.
The non-degeneracy of the pairing means that $\Hsp{x_1\cdots x_n}^*\cong \Hsp{\overline{x}_n\cdots \overline{x}_1}$.

\begin{defi}
Let $\alpha=\{\alpha^i\}$ and $\overline{\alpha}=\{\overline{\alpha}_i\}$ be bases of $\Hsp{ax}$ and $\Hsp{\overline{x}\,\overline{a}}$ respectively. The bases $(\alpha,\overline{\alpha})$ are called
a \textit{pair of dual bases} of $\Hsp{ax}$  with respect to~\eqref{Hpairing} when $\langle\overline{\alpha}_i,\alpha^j\rangle=\delta_{ij}$ holds.
\end{defi}

Given a pair of dual bases $(\alpha,\overline{\alpha})$ of $\Hsp{ax}$, the linear map defined by
\begin{equation}\label{eq:dual_bases}
\begin{aligned}
\bk&\to \Hsp{\overline{x}\,\overline{a}}\otimes \Hsp{a\, x}\\
1&\mapsto
    \sum_i\;\overline{\alpha}_i\otimes \alpha^i
\end{aligned}
\end{equation}
is independent of the choice of dual bases. The element $\sum_i\overline{\alpha}_i\otimes \alpha^i$ will be depicted by
\begin{equation*}
\begin{tikzpicture}[x=0.75pt,y=0.75pt,yscale=-1.5,xscale=1.5,baseline={([yshift=-.5ex]current bounding box.center)}]

\draw  [line width=1]  (0,-30) to (0,30);
\draw  [line width=1]  (0,-30) to [in=225, out=135,distance=25] (0,30) ;

\draw  [line width=1]  (30,-30) to (30,30);
\draw  [line width=1]  (30,-30) to [in=-45, out=45,distance=25] (30,30) ;

\node[rotate=-90] at (0,0) {\scriptsize $\Arr$} ;
\node[rotate=90] at (-18,0) {\scriptsize $\Arr$} ;
\node[rotate=-90] at (30,0) {\scriptsize $\Arr$} ;
\node[rotate=90] at (47.5,0) {\scriptsize $\Arr$} ;
\node[black] at (-18,0) [anchor=east] {\small $x$} ;
\node[black] at (0,0) [anchor=west] {\small $a$} ;
\node[black] at (30,0) [anchor=east] {\small $a$} ;
\node[black] at (47.5,0) [anchor=west] {\small $x$} ;

\node at (-8,0) {\small $\overline{\alpha}_i$} ;

\node[black] at (15,0) {\small $\otimes$};

\node at (38,0){\small $\alpha^i$};

\filldraw [black] (0,30) circle (1pt);
\filldraw [black] (0,-30) circle (1pt);
\filldraw [black] (30,30) circle (1pt);
\filldraw [black] (30,-30) circle (1pt);
\end{tikzpicture}
\end{equation*}
where the sum over the basis elements is implicit and indicated by Einstein's summation convention.

\begin{lem}[{\cite[Lemma 3.6]{Kir}}]\label{lem:base_change}
Given $f\in\Hsp{ab}$, we have that
\begin{equation}\label{eq:base_change}
\begin{tikzpicture}[x=0.75pt,y=0.75pt,yscale=-1.5,xscale=1.5,baseline={([yshift=-.5ex]current bounding box.center)}]

\draw  [line width=1]  
(0,-30) to (0,30)
(0,-30) to [in=225, out=135,distance=25] (0,30) 
(0,-30) to [in=-45, out=45,distance=25] (0,30) 
(60,-30) to (60,30)
(60,-30) to [in=-45, out=45,distance=25] (60,30) ;

\node[rotate=-90] at (0,0) {\scriptsize $\Arr$} ;
\node[rotate=90] at (-18,0) {\scriptsize $\Arr$} ;
\node[rotate=90] at (18,0) {\scriptsize $\Arr$} ;
\node[rotate=-90] at (60,0) {\scriptsize $\Arr$} ;
\node[rotate=90] at (77.5,0) {\scriptsize $\Arr$} ;

\node[black] at (-20,0) [anchor=east] {\small $x$} ;
\node[black] at (-2,13) [anchor=west] {\small $a$} ;
\node[black] at (20,0) [anchor=west] {\small $b$} ;
\node[black] at (60,0) [anchor=east] {\small $a$} ;
\node[black] at (77.5,0) [anchor=west] {\small $x$} ;

\node at (-8,0)  {\small $\overline{\alpha}_i$} ;
\node at (8,0)  {\small $f$};

\node[black] at (40,0) {\small $\otimes$};

\node at (68,0) {\small $\alpha^i$};

\filldraw [black] (0,30) circle (1pt);
\filldraw [black] (0,-30) circle (1pt);
\filldraw [black] (60,30) circle (1pt);
\filldraw [black] (60,-30) circle (1pt);

\end{tikzpicture}
\;=\;
\begin{tikzpicture}[x=0.75pt,y=0.75pt,yscale=-1.5,xscale=1.5,baseline={([yshift=-.5ex]current bounding box.center)}]
\draw  [line width=1]  (0,-30) to (0,30)
(0,-30) to [in=225, out=135,distance=25] (0,30) 
(60,-30) to [in=225, out=135,distance=25] (60,30) 
(60,-30) to (60,30)
(60,-30) to [in=-45, out=45,distance=25] (60,30) ;

\node[rotate=90] at (0,0) {\scriptsize $\Arr$} ;
\node[rotate=90] at (-18,0) {\scriptsize $\Arr$} ;
\node[rotate=-90] at (42,0) {\scriptsize $\Arr$} ;
\node[rotate=90] at (60,0) {\scriptsize $\Arr$} ;
\node[rotate=90] at (77.5,0) {\scriptsize $\Arr$} ;

\node[black] at (-20,0) [anchor=east] {\small $x$} ;
\node[black] at (0,0) [anchor=west] {\small $b$} ;
\node[black] at (42,0) [anchor=east] {\small $a$} ;
\node[black] at (58,13) [anchor=west] {\small $b$} ;
\node[black] at (77.5,0) [anchor=west] {\small $x$} ;

\node at (-8,0)  {\small $\overline{\beta}_j$} ;
\node at (52,0)  {\small $f$};
\node[black] at (20,0) {\small $\otimes$};
\node at (68,0) {\small $\beta^j$};

\filldraw [black] (0,30) circle (1pt);
\filldraw [black] (0,-30) circle (1pt);
\filldraw [black] (60,30) circle (1pt);
\filldraw [black] (60,-30) circle (1pt);
\end{tikzpicture}
\end{equation}
where $(\alpha,\overline{\alpha})$ is a pair of dual bases of $\Hsp{ax}$ and $(\beta,\overline{\beta})$ dual bases of $\Hsp{\overline{b}x}$.
\end{lem}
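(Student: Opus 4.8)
The plan is to deduce~\eqref{eq:base_change} from the non-degeneracy of the pairing~\eqref{Hpairing}. Both sides are elements of $\Hsp{\overline x b}\otimes\Hsp{a x}$ — explicitly $\sum_i(\overline\alpha_i\circ_a f)\otimes\alpha^i$ on the left and $\sum_j\overline\beta_j\otimes(\beta^j\circ_b f)$ on the right, neither depending on the choice of dual bases by the discussion around~\eqref{eq:dual_bases} — so, since the $\vSp$-spaces are finite dimensional, it suffices to show that the two sides agree after contracting the second tensor leg $\Hsp{a x}$ with an arbitrary $g\in\Hsp{\overline x\,\overline a}\cong\Hsp{a x}^{*}$ by means of~\eqref{Hpairing}.

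First I would treat the two sides separately. Since $(\alpha,\overline\alpha)$ are dual bases, every $g\in\Hsp{\overline x\,\overline a}$ expands as $g=\sum_i\langle g,\alpha^i\rangle\,\overline\alpha_i$; pushing the linear map $-\circ_a f$ through this sum shows that the left-hand side of~\eqref{eq:base_change}, contracted with $g$, equals $\sum_i\langle g,\alpha^i\rangle\,(\overline\alpha_i\circ_a f)=g\circ_a f$. Contracting the right-hand side with $g$ gives $\sum_j\langle g,\ \beta^j\circ_b f\rangle\,\overline\beta_j$, which — using that $(\beta,\overline\beta)$ are dual bases of $\Hsp{\overline b x}$ and again the non-degeneracy of~\eqref{Hpairing} — is the unique element $h\in\Hsp{\overline x b}$ satisfying $\langle h,w\rangle=\langle g,\ w\circ_b f\rangle$ for all $w\in\Hsp{\overline b x}$.

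Thus the whole statement reduces to the adjunction identity
\[
\langle\,g\circ_a f,\ w\,\rangle=\langle\,g,\ w\circ_b f\,\rangle\qquad(g\in\Hsp{\overline x\,\overline a},\ w\in\Hsp{\overline b x}),
\]
i.e.\ to the fact that $-\circ_a f$ and $-\circ_b f$ are mutually transpose with respect to~\eqref{Hpairing}. To check it, I would unwind~\eqref{Hpairing} into a trace: the left side is $\tr_x\!\bigl((g\circ_a f)\circ_{\overline b}w\bigr)$ and the right side is $\tr_x\!\bigl(g\circ_a(w\circ_b f)\bigr)$, which by Lemma~\ref{comp_prop}(ii) equals $\tr_x\!\bigl(g\circ_a(f\circ_{\overline b}w)\bigr)$. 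Inserting a dummy unit edge into $f$, so that $f\in\Hsp{\un\, a\, b}$ and Lemma~\ref{comp_prop}(iv) applies, gives $(g\circ_a f)\circ_{\overline b}w=g\circ_a(f\circ_{\overline b}w)$, and applying $\tr_x$ to both sides yields the identity. Conceptually this is just the naturality of the copairing $\sum_i\overline\alpha_i\otimes\alpha^i$; diagrammatically, all the scalars above are visibly the trace of one and the same closed diagram, obtained by gluing $g$, $f$ and $w$ along the edges labelled $a$ and $b$ and closing the $x$-loop.

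I do not expect a conceptual obstacle. The only point needing care is the bookkeeping of dualities and orientations when translating graphical gluings into the relative compositions~\eqref{comp_b} and matching them against the precise clauses of Lemma~\ref{comp_prop} — exactly what the calculus of Section~\ref{sec:graph-calc} is designed to streamline.
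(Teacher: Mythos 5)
The paper states this lemma as a citation of \cite[Lemma~3.6]{Kir} and gives no proof of its own, so there is nothing internal to compare against; your argument is correct and is essentially the standard one. Reducing, via non-degeneracy of the spherical pairing, to the adjunction identity $\langle g\circ_a f,w\rangle=\langle g,w\circ_b f\rangle$ and then verifying that identity by Lemma~\ref{comp_prop}(ii),(iv) (after a unit insertion so that (iv) applies) is exactly the "naturality of the copairing" proof, and all the bookkeeping you flag — the cyclic identifications placing both sides in $\Hsp{\overline{x}b}\otimes\Hsp{ax}$ and both scalars as $\tr_x$ of the same element of $\Hsp{\overline{x}x}$ — checks out against the conventions of Section~\ref{sec:graph-calc}.
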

Composition relative to an object is compatible with the pairing only up to a dimension factor. For a simple object $x\in\Irr\,\cx_{i j}$, define 
\begin{equation*}
f\bullet_x g\coloneqq \sqrt{\rd_x}\;f\circ_x g\,.
\end{equation*}
For composable simple objects $x_1,\ldots,x_n$
\begin{equation*}
f\bullet_{x_1\cdots x_n} g\coloneqq \sqrt{\rd_{x_1}\cdots\rd_{x_n}}\;f\circ_{x_1\cdots x_n} g\,.
\end{equation*}

\begin{lem}[{\cite[Lemma 1.1]{BK1}}]\label{lem:pairing_properties}
Let $y\in\Irr\cx$ be a simple object, and $a,b \in\Iso\cx$ homogeneous objects.
\begin{enumerate}[\rm (i)]
    \item For $f\in{\Hsp{ay}}$ and $g\in{\Hsp{\overline{y}\,\overline{a}}}$, then 
    \begin{equation*}
        \rd_{y}\,f \circ_a g=\langle f,g\rangle \Id_y\;.
    \end{equation*}
    \item Dominance relation: unit morphisms can be decomposed in terms of sums of dual bases over simple objects.
\def\len{1.25cm}
\begin{equation}\label{dominance}
\begin{tikzpicture}[baseline={([yshift=-.5ex]current bounding box.center)}]
    \draw[line width=1] (-0.75*\len,0) -- (0,\len) node [midway, rotate=45] {\scriptsize $\Arr$}
    (-0.75*\len,0) -- (0,-\len) node [midway, rotate=135] {\scriptsize $\Arr$}
    (0.75*\len,0) -- (0,-\len) node [midway, rotate=45] {\scriptsize $\Arr$}
    (0.75*\len,0) -- (0,\len) node [midway, rotate=135] {\scriptsize $\Arr$};

\filldraw [black] (0,-\len) circle (1pt);
\filldraw [black] (0,\len) circle (1pt);
\filldraw [black] (0.75*\len,0) circle (1pt);
\filldraw [black] (-0.75*\len,0) circle (1pt);
 
\node at ($(-0.75*\len,0)!0.5!(0,\len)$) [anchor=south east]{$b$};
\node at ($(-0.75*\len,0)!0.5!(0,-\len)$) [anchor=north east]{$a$};
\node at ($(0.75*\len,0)!0.5!(0,\len)$) [anchor=south west]{$b$};
\node at ($(0.75*\len,0)!0.5!(0,-\len)$) [anchor=north west]{$a$};
\node  at (0,0) 
{$\Id_{ab}$} ;
\end{tikzpicture}
\;=\;\sum_{t\in \Irr \cx}\;\rd_t\,
\begin{tikzpicture}[baseline={([yshift=-.5ex]current bounding box.center)}]
    \draw[line width=1] (-0.75*\len,0) -- (0,\len) node [midway, rotate=45] {\scriptsize $\Arr$}
    (-0.75*\len,0) -- (0,-\len) node [midway, rotate=135] {\scriptsize $\Arr$}
    (0.75*\len,0) -- (0,-\len) node [midway, rotate=45] {\scriptsize $\Arr$}
    (0.75*\len,0) -- (0,\len) node [midway, rotate=135] {\scriptsize $\Arr$}
    (0,\len) -- (0,-\len) node [midway, rotate=90] {\scriptsize $\Arr$};

\filldraw [black] (0,-\len) circle (1pt);
\filldraw [black] (0,\len) circle (1pt);
\filldraw [black] (0.75*\len,0) circle (1pt);
\filldraw [black] (-0.75*\len,0) circle (1pt);
 
\node at ($(-0.75*\len,0)!0.5!(0,\len)$) [anchor=south east]{$b$};
\node at ($(-0.75*\len,0)!0.5!(0,-\len)$) [anchor=north east]{$a$};
\node at ($(0.75*\len,0)!0.5!(0,\len)$) [anchor=south west]{$b$};
\node at ($(0.75*\len,0)!0.5!(0,-\len)$) [anchor=north west]{$a$};
\node  at (-0.35*\len,0) 
{\small $\overline{\alpha}_{t,i}$} ;
\node  at (0.35*\len,0) 
{\small $\alpha_t^i$} ;
\node at (0.1*\len,0) [anchor=south] {\small $t$};
\end{tikzpicture}
\end{equation}
    \item For $f\in{\Hsp{ay}},\;g\in{\Hsp{\overline{y}\,b}},\;h\in{\Hsp{\overline{b}\,y}},~\text{and}~ k\in{\Hsp{\overline{y}\,\overline{a}}}$
    \begin{equation*}
    \langle f \bullet_y g, h\bullet_y k\rangle=\langle f ,k\rangle\cdot \langle g,h\rangle\;.
    \end{equation*}
\end{enumerate}
\end{lem}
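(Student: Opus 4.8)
The plan is to handle the three statements in turn, with part (i) carrying the conceptual weight and parts (ii)--(iii) reducing to it by standard graphical manipulations.

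For part (i): since $y\in\Irr\cx$ is simple and $\bk$ is algebraically closed, the space $\Hsp{y\overline{y}}\cong\End_\cx(y)$ is one–dimensional, spanned by the nonzero unit morphism $\Id_y$ (the image of $\coev_y$). Hence $f\circ_a g=\lambda\,\Id_y$ for a unique scalar $\lambda$. To pin down $\lambda$, apply the full trace $\tr_y$: on one hand $\tr_y(\lambda\,\Id_y)=\lambda\,\rd_y$, and on the other hand $\tr_y(f\circ_a g)=\langle f,g\rangle$ is exactly the spherical pairing~\eqref{Hpairing} read off on $\Hsp{a y}\otimes\Hsp{\overline{y}\,\overline{a}}$ via cyclic invariance. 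Comparing gives $\lambda=\langle f,g\rangle/\rd_y$, which is the assertion.

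For part (ii), which is the usual completeness (dominance) relation, I would combine part (i) with the semisimple decomposition isomorphism $\sum_t\circ_t\colon\bigoplus_{t\in\Irr\cx}\Hsp{(ab)\overline{t}}\otimes\Hsp{t(\overline{b}\,\overline{a})}\xrightarrow{\;\cong\;}\Hsp{ab\overline{b}\,\overline{a}}$ recorded just before the unit–morphism remark. Since $\Id_{ab}\in\Hsp{ab\overline{b}\,\overline{a}}$, it has a unique preimage, which we expand in a pair of dual bases $(\alpha_t,\overline{\alpha}_t)$ of $\Hsp{(ab)\overline{t}}$; the point is to identify the coefficient. I would do this by pairing both sides of~\eqref{dominance} against an arbitrary test morphism $h$ and invoking non-degeneracy: the unit property of $\Id_{ab}$ (Lemma~\ref{comp_prop}(i)) identifies $\langle h,\Id_{ab}\rangle$ with a full trace of $h$, while $\langle h,\overline{\alpha}_{t,i}\circ_t\alpha_t^i\rangle$ rewrites, using Lemma~\ref{comp_prop} and the base-change Lemma~\ref{lem:base_change}, as the $t$-isotypic contribution to that same trace — part (i) being precisely what forces the normalizing factor $\rd_t$. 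Equivalently, this is just the decomposition $\id_{ab}=\sum_{t,i}(\text{inclusion})_i\circ(\text{projection})_i$ of $\id_{a\otimes b}$ over its simple summands, transported through the non-degenerate pairing, with $\rd_t$ coming from the normalization in~\eqref{Hpairing}.

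For part (iii): begin from $\langle f\bullet_y g,\,h\bullet_y k\rangle=\rd_y\,\langle f\circ_y g,\,h\circ_y k\rangle$. Diagrammatically the right-hand side is a closed (sphere) evaluation in which $f,g,h,k$ form a cyclic chain glued successively along $y$, $b$, $y$, $a$; using the reassociation identities of Lemma~\ref{comp_prop} I would reorganize it so that $f$ is composed with $k$ along $a$ first (and $g$ with $h$ along $b$). Part (i) applied to the pair $(f,k)$ then replaces $f\circ_a k$ by $\rd_y^{-1}\langle f,k\rangle\,\Id_y$, the remaining copy of $\Id_y$ is absorbed by the unit property (Lemma~\ref{comp_prop}(i)), leaving exactly $\langle g,h\rangle$, and the constants combine as $\rd_y\cdot\rd_y^{-1}=1$.

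The mathematical content throughout is the standard bubble/completeness calculus of a spherical fusion category, so the real difficulty is purely bookkeeping: keeping the cyclic-invariance identifications of the $\Hsp$-spaces, the orientations of the boundary labels, and the $\sqrt{\rd_\bullet}$ normalizations mutually consistent when passing between $\circ_\bullet$, $\bullet_\bullet$ and $\langle-,-\rangle$. I expect part (ii) to be the fiddliest step, precisely because one must match the coefficient $\rd_t$ in~\eqref{dominance} against the normalization built into~\eqref{Hpairing}; once (i) is available, parts (i) and (iii) are short.
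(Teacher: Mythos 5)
Your proposal is correct and follows essentially the same route as the paper: part (i) via one-dimensionality of $\Hsp{y\overline{y}}$ and taking the trace to fix the scalar, part (ii) by reducing to part (i) together with semisimplicity, and part (iii) by collapsing one of the two relative compositions to a multiple of $\Id_y$ using (i) (the paper applies (i) to the pair $(g,h)$ rather than $(f,k)$, which is the same computation by symmetry). The only cosmetic difference is in (ii), where the paper first verifies the identity for $a$ simple and $b=\un$ and then extends to direct sums, whereas you pair against test morphisms and invoke non-degeneracy; both arguments hinge on (i) supplying the coefficient $\rd_t$.
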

\begin{proof}
Since $y$ is simple, $\Hsp{y\overline{y}}$ is one dimensional and thus $f\circ_a g=\lambda\,\Id_y$ for a scalar $\lambda\in\bk$. It follows that $\langle f,g\rangle=\tr_y(f\circ_a g)=\lambda\, \rd_y$ which implies (i). To prove (ii) first assume that $b=\un$ and $a=y$ is simple. Notice that, from Schur's lemma, $\Hsp{t\overline{y}}$ is one-dimensional only if $t\cong y$ and zero otherwise. It follows that the sum on the left hand side of~\eqref{dominance} reduces to
    \begin{equation*}
        \sum_{t\in \Irr \cx}\,\rd_t\,\overline{\alpha}_{t,i}\circ_t\alpha_t^i=\rd_y\,\overline{\alpha}_{y}\circ_t\alpha_y=\langle\overline{\alpha}_{y},\alpha_y\rangle\,\Id_y=\Id_y
    \end{equation*}
    where the second to last equality follows from (i) and the last one holds since $\overline{\alpha}_y$ and $\alpha_y$ are dual bases. Due to semisimplicty of $\cx$, extending the argument to direct sums ensures that the whole statement holds. Lastly, notice that
    \begin{equation*}
        \langle f\bullet_y g,h\bullet_y k\rangle=\rd_y\,\tr_a(f\circ_y g\circ_b h\circ_y k)\overset{\rm (i)}{=}\rd_y\,\frac{\langle g,h\rangle}{\rd_y}\tr_a(f\circ_y\Id_y\circ_y k)=\langle f ,k\rangle\cdot \langle g,h\rangle
    \end{equation*}
    which proves (iii).
\end{proof}

\begin{lem}\label{bases_comp}
Let $a,b,c$ be homogeneous objects in $\Iso\cx$. 
\begin{enumerate}[$($i$)$]
    \item For each $t\in\Irr\cx$ consider a pair of dual bases  $(\psi_t,\overline{\psi}_t)$ of $\Hsp{atc}$, and $(\beta_t,\overline{\beta}_t)$ a pair of bases of $\Hsp{b\overline{t}}$. Then 
    $\{\beta_t^i\bullet_t\psi_t^j\}_{t\in\Irr\cx}$ and $\{\overline{\beta}_{t,i}\bullet_t\overline{\psi}_{t,j}\}_{t\in\Irr\cx}$
    is a pair of dual bases of $\Hsp{abc}$. 
    \item Given a pair of dual bases  $(\varphi_t,\overline{\varphi}_t)$ of $\Hsp{btc}$, of each $t\in\Irr\cx$, and $(\alpha_t,\overline{\alpha}_t)$ of $\Hsp{a\overline{t}}$. Then $\{\alpha_t^i\bullet_t\varphi_t^j\}_{t\in\Irr\cx}$ and $\{\overline{\alpha}_{t,i}\bullet_t\overline{\varphi}_{t,j}\}_{t\in\Irr\cx}$ is a pair of dual bases of $\Hsp{abc}$. 
\end{enumerate}
Thus, graphically we have in the vector space $\Hsp{abc}\otimes \Hsp{\overline{abc}}$ that
\begin{equation*}
\sum_{t\in\Irr\cx}\quad
\begin{tikzpicture}[x=0.75pt,y=0.75pt,yscale=-1.5,xscale=1.5,baseline={([yshift=-.5ex]current bounding box.center)}]

\draw  [line width=1]  (400.3,270.5) -- (350.1,219.7) -- (350.1,270.5) -- cycle ;
\draw [line width=1]    (350.1,270.5) .. controls (360.1,250.1) and (381.2,249.8) .. (400.3,270.5) ;
\draw  [line width=1]  (420.6,250.2) -- (369.8,200) -- (420.6,200) -- cycle ;
\draw [line width=1]    (420.6,200) .. controls (400.2,210) and (399.9,231.1) .. (420.6,250.2) ;
\node[rotate=180] at (372.59,270.66) {\scriptsize $\Arr$} ;
\node[rotate=90] at (349.77,246.85) {\scriptsize $\Arr$} ;
\node[rotate=-45] at (375.01,245.37) {\scriptsize $\Arr$} ;
\node[rotate=-45] at (395.01,225.37) {\scriptsize $\Arr$} ;
\node[rotate=180] at (396.59,200.5) {\scriptsize $\Arr$} ;
\node[rotate=90] at (420.27,221.85) {\scriptsize $\Arr$} ;

\draw (335,239) node [anchor=north west]   {$b$};
\draw (389,185) node [anchor=north west]    {$b$};
\draw (422,220) node [anchor=north west]    {$a$};
\draw (370,275) node [anchor=north west]    {$a$};
\draw (370,230) node [anchor=north west]    {$c$};
\draw (383,233) node [anchor=north west]    {$\otimes$};
\draw (380,220) node [anchor=north west]     {$c$};
\draw (355,252) node [anchor=north west]   {$\bullet_{t}$};
\draw (402,206) node [anchor=north west]    {$\bullet_{t}$};
\draw (350,235) node [anchor=north west]   [font=\small ]  {$\overline{\varphi}_{t,j}$};
\draw (385,202) node [anchor=north west]   [font=\small ]  {$\varphi_t^j$};
\draw (370,255) node [anchor=north west]   [font=\small]  {$\overline{\alpha}_{t,i}$};
\draw (405,219) node [anchor=north west]   [font=\small]  {$\alpha_t^i$};

\filldraw [black] (400.3,270.5) circle (1pt);
\filldraw [black] (350.1,270.5) circle (1pt);
\filldraw [black] (350.1,219.7) circle (1pt);
\filldraw [black]  (369.8,200) circle (1pt);
\filldraw [black] (420.6,250.2)  circle (1pt);
\filldraw [black] (420.6,200) circle (1pt);
\end{tikzpicture}
\;=\sum_{t\in\Irr\cx}\quad
\begin{tikzpicture}[x=0.75pt,y=0.75pt,yscale=-1.5,xscale=1.5,baseline={([yshift=-.5ex]current bounding box.center)}]

\draw  [line width=1]  (371.25,80.57) -- (421.45,131.37) -- (421.45,80.57) -- cycle ;
\draw [line width=1]    (421.45,80.57) .. controls (411.45,100.97) and (390.35,101.27) .. (371.25,80.57) ;
\draw  [line width=1]  (350.95,100.87) -- (401.75,151.07) -- (350.95,151.07) -- cycle ;
\draw [line width=1]    (350.95,151.07) .. controls (371.35,141.07) and (371.65,119.97) .. (350.95,100.87) ;
\node[rotate=180] at (398.59,80.91) {\scriptsize $\Arr$} ;
\node[rotate=90] at (421.77,105.85) {\scriptsize $\Arr$} ;
\node[rotate=-45] at (400.01,109.37) {\scriptsize $\Arr$} ;
\node[rotate=135] at (380.01,129.37) {\scriptsize $\Arr$} ;
\node[rotate=180] at (374.59,151.04) {\scriptsize $\Arr$} ;
\node[rotate=90] at (350.77,127.85) {\scriptsize $\Arr$} ;

\draw (337,122) node [anchor=north west]    {$b$};
\draw (390,67) node [anchor=north west]    {$b$};
\draw (425,100) node [anchor=north west]    {$a$};
\draw (370,155) node [anchor=north west]   {$a$};
\draw (370,110) node [anchor=north west]    {$c$};
\draw (385,115) node [anchor=north west]    {$\otimes$};
\draw (380,100) node [anchor=north west]    {$c$};
\draw (357,135) node [anchor=north west]   {$\bullet_{t}$};
\draw (404,88) node [anchor=north west]    {$\bullet_{t}$};
\draw (403,100) node [anchor=north west]  [font=\small ]  {$\psi_t^j$};
\draw (374,133) node [anchor=north west]  [font=\small ]  {$\overline{\psi}_{t,j}$};
\draw (350,117) node [anchor=north west]  [font=\small]  {$\overline{\beta}_{\!t\!,i}$};
\draw (388,80) node [anchor=north west]  [font=\small]  {$\beta_t^i$};

\filldraw [black]  (421.45,80.57) circle (1pt);
\filldraw [black]  (421.45,131.37) circle (1pt);
\filldraw [black]  (371.25,80.57) circle (1pt);
\filldraw [black]  (401.75,151.07) circle (1pt);
\filldraw [black]  (350.95,100.87)  circle (1pt);
\filldraw [black]  (350.95,151.07) circle (1pt);
\end{tikzpicture}
\,=\,
\begin{tikzpicture}[x=0.75pt,y=0.75pt,yscale=-1.5,xscale=1.5,baseline={([yshift=-.5ex]current bounding box.center)}]

\draw  [line width=1]  (571.3,231.37) -- (521.1,180.57) -- (521.1,231.37) -- cycle ;
\draw  [line width=1]  (590.6,211.07) -- (539.8,160.87) -- (590.6,160.87) -- cycle ;
\node[rotate=180] at (542,232) {\scriptsize $\Arr$} ;
\node[rotate=90] at (521,208) {\scriptsize $\Arr$} ;
\node[rotate=-45] at (546,206) {\scriptsize $\Arr$} ;
\node[rotate=-45] at (565,186) {\scriptsize $\Arr$} ;
\node[rotate=180] at (566,161) {\scriptsize $\Arr$} ;
\node[rotate=90] at (591,182) {\scriptsize $\Arr$} ;

\draw (505,205) node [anchor=north west]  {$b$};
\draw (562,145) node [anchor=north west]  {$b$};
\draw (594,178) node [anchor=north west]  {$a$};
\draw (540,235) node [anchor=north west]  {$a$};
\draw (540,190) node [anchor=north west]  {$c$};
\draw (553,195) node [anchor=north west]  {$\otimes$};
\draw (555,185) node [anchor=north west]  {$c$};
\draw (525,205) node [anchor=north west]  [font=\small]  {$\overline{\gamma}_k$};
\draw (570,170) node [anchor=north west]  [font=\small]  {$\gamma^k$};

\filldraw [black]  (521,231) circle (1pt);
\filldraw [black]  (521,181) circle (1pt);
\filldraw [black]  (571,231) circle (1pt);
\filldraw [black]   (590,211) circle (1pt);
\filldraw [black]   (540,161)  circle (1pt);
\filldraw [black]   (590,161) circle (1pt);
\end{tikzpicture}
\end{equation*}
for an arbitrary pair of dual bases $(\gamma,\overline{\gamma})$ of $\Hsp{abc}$.
\end{lem}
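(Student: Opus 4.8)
The plan is to prove the duality assertion in (i) directly against the pairing~\eqref{Hpairing}, deduce (ii) by the symmetric argument, and then read off the displayed identity formally. Throughout we use that the relevant Hom-spaces pair nondegenerately: $\Hsp{abc}$ pairs with $\Hsp{\overline{abc}} = \Hsp{\overline c\,\overline b\,\overline a}$ via~\eqref{Hpairing}, so to identify a pair of dual bases it suffices to exhibit one of the two families as a basis and check that the pairing matrix is the identity.

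First I would record that $\{\beta_t^i\bullet_t\psi_t^j\}_{t,i,j}$ is a basis of $\Hsp{abc}$. Rewriting $\Hsp{atc} = \Hsp{tca}$ by cyclic invariance, the family $\{\beta_t^i\otimes\psi_t^j\}_{t,i,j}$ is a basis of $\bigoplus_{t\in\Irr\cx}\Hsp{b\,\overline t}\otimes\Hsp{t\,(ca)}$, and the semisimple decomposition isomorphism $\sum_t \circ_t$ associated to the relative composition~\eqref{comp_b} carries it onto a basis of $\Hsp{b\,(ca)} = \Hsp{abc}$; rescaling the $t$-block by the nonzero scalar $\sqrt{\rd_t}$ (i.e.\ passing from $\circ_t$ to $\bullet_t$) does not disturb this. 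Next I would compute the pairing matrix $\langle\,\overline\beta_{s,k}\bullet_s\overline\psi_{s,l}\,,\,\beta_t^i\bullet_t\psi_t^j\,\rangle$ by gluing the four disks into a single closed diagram along the boundary edges $a,b,c$, as in the gluing rule of the diagrammatic calculus. If $s\neq t$, contracting the outer edges $a$ and $c$ first fuses $\psi_t^j$ and $\overline\psi_{s,l}$ into an element of $\Hsp{t\,\overline s}\cong\Hom_\cx(\un, t\,\overline s)$, which vanishes because $t$ and $s$ are non-isomorphic simple objects; hence the pairing is $0$. If $s=t$, one is exactly in the situation of Lemma~\ref{lem:pairing_properties}(iii) once the orientations of the relative compositions are matched using Lemma~\ref{comp_prop}(ii), and the pairing factors as $\langle\psi_t^j,\overline\psi_{t,l}\rangle\cdot\langle\beta_t^i,\overline\beta_{t,k}\rangle = \delta_{jl}\,\delta_{ik}$, the role of the $\sqrt{\rd_t}$-normalisation being precisely to absorb the dimension factors so that no residual scalar survives. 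By nondegeneracy this shows $\{\overline\beta_{t,i}\bullet_t\overline\psi_{t,j}\}$ is the dual basis, proving (i). Part (ii) is the same statement viewed through the cyclic identification $\Hsp{abc} = \Hsp{bca}$, now peeling a bigon off the first slot and inserting $t$ between the other two, so it follows from (i) by relabelling; equivalently one repeats the argument verbatim with $\varphi_t$ in place of $\psi_t$ and $\alpha_t$ in place of $\beta_t$.

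Finally, the displayed diagrammatic identity is immediate from (i) and (ii): by the remark following~\eqref{eq:dual_bases}, the element $\sum_k \overline\gamma_k\otimes\gamma^k \in \Hsp{\overline{abc}}\otimes\Hsp{abc}$ is independent of the chosen pair of dual bases of $\Hsp{abc}$, so substituting the two pairs of dual bases constructed in (i) and (ii) yields the two left-hand sums, each equal to the right-hand side.

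\textbf{Main obstacle.}
The step I expect to require the most care is the $s=t$ case of the pairing computation: lining up $\beta_t^i\bullet_t\psi_t^j$ and $\overline\beta_{t,i}\bullet_t\overline\psi_{t,j}$ with the precise input shapes $\Hsp{ay},\Hsp{\overline y b},\Hsp{\overline b y},\Hsp{\overline y\,\overline a}$ demanded by Lemma~\ref{lem:pairing_properties}(iii) — in particular tracking which relative composition is $\circ_t$ and which is $\circ_{\overline t}$ via Lemma~\ref{comp_prop}(ii) — and verifying that the $\sqrt{\rd_t}$ factors cancel exactly, so that the pairing matrix is the identity rather than a scalar multiple of it.
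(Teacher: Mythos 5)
Your proposal is correct and follows essentially the same route as the paper: the pairing matrix is computed via Lemma~\ref{lem:pairing_properties}(iii), with the off-diagonal vanishing $\delta_{t,s}$ coming from $\Hsp{t\,\overline{s}}=0$ for non-isomorphic simples, and the $\sqrt{\rd_t}$-normalisation already absorbed into that lemma. The only addition is your explicit check that the glued families span, via the decomposition isomorphism $\sum_t\circ_t$, which the paper leaves implicit.
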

\begin{proof}
From Lemma~\ref{lem:pairing_properties} (iii) follows that
\begin{equation*}
    \langle\, 
    \overline{\beta}_{t,i}\bullet_t\overline{\psi}_{t,j},
    \beta_s^k\bullet_s\psi_s^l
    \,\rangle
    =\langle\overline{\beta}_{t,i}, \beta_s^k\rangle\cdot \langle\overline{\psi}_{t,j}, \psi_s^l\rangle
    =\delta_{t,s}\delta_{i,k}\delta_{j,l}
\end{equation*}
where the $\delta_{t,s}$ appears after taking into account that $\overline{\beta}_{t,i}\circ_b\beta_s^k$ belongs to $\Hsp{t\overline{s}}\cong \delta_{t,s}\bk$. The additional delta factors come since we are pairing pairs of dual bases. This proves (i). Statement (ii) follows analogously.
\end{proof}

In the case that a graph has two edges with labels dual to each other, we can resolve them over simple objects, via the composition:
\begin{equation*}
\star_x \Colon   \Hsp{axb\overline{x}c}
\xrightarrow{\eqref{eq:dual_bases}}
\bigoplus_{t\in \Irr\cx} \Hsp{axb\overline{x}c}\otimes\Hsp{x\overline{t}}\otimes \Hsp{\overline{x}t}\xrightarrow{\bullet_x\;\otimes\;\bullet_{\,\overline{x}}}
\bigoplus_{t\in \Irr \cx}\Hsp{atb\overline{t}c}
\end{equation*}
This operation can be graphically represented as follows. For $f\in \Hsp{axb\overline{x}c}$, the homogeneous component of $\star_x(f)$ associated to $t\in \Irr \cx$ is given by
\begin{equation*}
\begin{tikzpicture}[x=1pt,y=1pt,yscale=-0.6,xscale=0.6,baseline={([yshift=-.5ex]current bounding box.center)}]
\draw [line width=1]  (177.5,159.5) -- (125.56,230.99) -- (41.52,203.68) -- (41.52,115.32) -- (125.56,88.01) -- cycle ;
\filldraw[black] (46.15,160.48) -- (41.71,168.73) -- (37.27,160.48) -- (41.71,160.48) -- cycle ;
\filldraw[black] (80.21,212.58) -- (87.03,219.01) -- (77.91,221.16) -- (79.06,216.87) -- cycle ;
\filldraw[black] (83.51,106.49) -- (74.39,104.34) -- (81.21,97.92) -- (82.36,102.2) -- cycle ;
\filldraw[black] (154.32,117.99) -- (153.78,127.34) -- (146.28,121.74) -- (150.3,119.87) -- cycle ;
\filldraw[black] (145.47,196.62) -- (153.44,191.7) -- (153.16,201.06) -- (149.31,198.84) -- cycle ;
\filldraw[black] (225.71,227.58) -- (232.53,234.01) -- (223.41,236.16) -- (224.56,231.87) -- cycle ;

\draw [line width=1] (187,218) -- (271,246) ;
\draw [line width=1] (220,130) -- (272,202) ;
\filldraw[black] (251,165) -- (252,175) -- (244,170) -- (247,167) -- cycle;
\draw [line width=1]    (187,218) .. controls (194,262) and (235,274) .. (271.06,245.99) ;
\filldraw[black] (215,254) -- (222,261) -- (212,263) -- (214,258) -- cycle;
\draw [line width=1] (272,202) .. controls (290,162) and (267,126) .. (220,130);
\filldraw[black] (270,142) -- (271,151) -- (263,147) -- (266,144) -- cycle;

\draw (92,145) node [anchor=north west] {\large${f}$};
\draw (8,150) node [anchor=north west]   {$a$};
\draw (61,221) node [anchor=north west] {$x$};
\draw (60,80) node [anchor=north west]   {$c$};
\draw (152,102) node [anchor=north west]     {$x$};
\draw (156,190) node [anchor=north west]     {$b$};
\draw (225,208) node [anchor=north west]     {$x$};
\draw (221,155) node [anchor=north west]     {$x$};
\draw (282,135) node [anchor=north west]     {$t$};
\draw (205,265) node [anchor=north west]     {$t$};
\draw (250,155) node [anchor=north west] {$ {\alpha_t}$};
\draw (220,235) node [anchor=north west] {$ {\overline{\alpha}_t}$};

\filldraw [black] (177,159)  circle (2pt);
\filldraw [black] (125,230)  circle (2pt);
\filldraw [black] (41,203)  circle (2pt);
\filldraw [black] (41,115) circle (2pt);
\filldraw [black] (125,88) circle (2pt);
\filldraw [black] (187,218) circle (2pt);
\filldraw [black] (271,245)  circle (2pt);
\filldraw [black] (220,130)  circle (2pt);
\filldraw [black] (272,202) circle (2pt);
\end{tikzpicture}
 \quad\mapsto\quad
\begin{tikzpicture}[x=1pt,y=1pt,yscale=-0.6,xscale=0.6,baseline={([yshift=-.5ex]current bounding box.center)}]
\draw  [line width=1]  (526,103) -- (474.06,174.49) -- (390.02,147.18) -- (390.02,58.82) -- (474.06,31.51) -- cycle ;
\filldraw[black] (394.65,103.98) -- (390.21,112.23) -- (385.77,103.98) -- (390.21,103.98) -- cycle ;
\filldraw[black] (432.01,49.99) -- (422.89,47.84) -- (429.71,41.42) -- (430.86,45.7) -- cycle ;
\filldraw[black] (493.97,140.12) -- (501.94,135.2) -- (501.66,144.56) -- (497.81,142.34) -- cycle;
\draw [line width=1]   (390.02,147.18) -- (473.06,174.49);
\draw [line width=1]    (390.02,147.18) .. controls (397,191) and (438.5,203) .. (474.06,174.49);
\filldraw[black] (418.21,183.08) -- (425.03,189.51) -- (415.91,191.66) -- (417.06,187.37) -- cycle;
\draw [line width=1]    (526.5,103) .. controls (545,63.5) and (521.56,27.51) .. (474.56,31.51);
\filldraw[black] (525.04,43.23) -- (525.78,52.57) -- (517.58,48.04) -- (521.31,45.64) -- cycle;

\draw (441,83) node [anchor=north west] [font=\large ]  {$ {f}$};
\draw (360,100) node [anchor=north west]   {$a$};
\draw (422,35) node [anchor=south west]   {$c$};
\draw (505,134) node [anchor=north west]   {$b$};
\draw (536,36) node [anchor=north west]  {$t$};
\draw (408,193) node [anchor=north west] {$t$};
\node[rotate=70] at (435,155) [anchor=north] {\large $\bullet_x$};
\node[rotate=210] at (495,66) [anchor=south] {\large $\bullet_x$};
\draw (500,50) node [anchor=north west] {$ {\alpha_t}$};
\draw (410,160) node [anchor=north west]   {$ {\overline{\alpha}_t}$};

\filldraw [black] (526,103)  circle (2pt);
\filldraw [black] (474,174) circle (2pt);
\filldraw [black] (390,147) circle (2pt);
\filldraw [black] (390,59) circle (2pt);
\filldraw [black] (474,31) circle (2pt);
\end{tikzpicture}
\;=:\;
\begin{tikzpicture}[x=1pt,y=1pt,yscale=-0.6,xscale=0.6,baseline={([yshift=-.5ex]current bounding box.center)}]

\draw  [line width=1]  (224,113.5) -- (172.06,184.99) -- (88.02,157.68) -- (88.02,69.32) -- (172.06,42.01) -- cycle ;
\filldraw[black] (92.65,114.48) -- (88.21,122.73) -- (83.77,114.48) -- (88.21,114.48) -- cycle ;
\filldraw[black] (126.71,166.58) -- (133.53,173.01) -- (124.41,175.16) -- (125.56,170.87) -- cycle ;
\filldraw[black] (130.01,60.49) -- (120.89,58.34) -- (127.71,51.92) -- (128.86,56.2) -- cycle ;
\filldraw[black] (200.82,71.99) -- (200.28,81.34) -- (192.78,75.74) -- (196.8,73.87) -- cycle ;
\filldraw[black] (191.97,150.62) -- (199.94,145.7) -- (199.66,155.06) -- (195.81,152.84) -- cycle ;

\filldraw [black] (224,113.5) circle (2pt);
\filldraw [black] (172.06,184.99) circle (2pt);
\filldraw [black] (88.02,157.68) circle (2pt);
\filldraw [black] (88.02,69.32) circle (2pt);
\filldraw [black] (172.06,42.01) circle (2pt);

\draw (120,99) node [anchor=north west]  [font=\large]  {$ {\star_x(f)_t}$};
\draw (60,107) node [anchor=north west]    {$a$};
\draw (108,175) node [anchor=north west]   {$t$};
\draw (110,32) node [anchor=north west]    {$c$};
\draw (202,50) node [anchor=north west]    {$t$};
\draw (200,144) node [anchor=north west]   {$b$};
\end{tikzpicture}
\end{equation*}
where $(\alpha_t,\overline{\alpha}_t)$ is a pair of dual bases of $\Hsp{x\overline{t}}$.

\begin{lem}\label{star_prop}
The maps $\star_x$ satisfy the following consistency conditions.
\begin{enumerate}[$($i$)$]
    \item Given $f\in\Hsp{\overline{y}xyz}$ and $g\in\Hsp{\overline{z}\,w}$, it holds in $\bigoplus_{t\in \Irr \cx}\Hsp{xtw\overline{t}}$ that 
    \begin{equation}\label{distr_star}
        \star_y (f\circ_z g)=\star_y(f)\circ_z g,\quad\text{ and }\quad 
        \star_y(f\bullet_z g)=\star_y(f)\bullet_z g\,.
    \end{equation}
    \item Given $f\in\Hsp{axyzb\overline{z}\,\overline{y}\,\overline{x}}$, it holds that
    \begin{equation}\label{star_triprod}
    \sum_{t\in \Irr \cx}\,\star_{xt}(\star_{yz}(f)_t)\,=\,\star_{xyz}(f)
    \,=\,\sum_{t\in \Irr \cx}\,\star_{tz}(\star_{xy}(f)_t)
    \end{equation}    
    \item Given $f\in\Hsp{xyzw\overline{z}\,\overline{y}}$, it holds  that 
    \begin{equation*}
        \star_z(\star_y (f))=\star_y(\star_z(f))\,,
    \end{equation*}
    \begin{equation*}
    \star_{yz}(f)=\sum_{s,t\in \Irr \cx}\, \star_{st}(\star_z(\star_y (f)_s)_t)
    =\sum_{s,t\in \Irr \cx}\, \star_{st}(\star_y(\star_z(f)_t)_s)\,.
    \end{equation*}
    \item For $f\in\Hsp{ax\,\overline{b}\,\overline{x}}$ and $x\in\Irr\cx$, we have that $\star_x(f)=f$.
    \item Given $f\in\Hsp{\overline{z}\,\overline{y}\,\overline{x}}$, with $z\in\Irr\cx$, and families of pairs of dual bases $\alpha$, $\beta$, $\gamma$ and $\eta$ it holds
\def\len{1.4cm}
\begin{equation*}
\sum_{t\in\Irr\cx} \rd_t\, \star_{xy}\left[\!\!
\begin{tikzpicture}[xscale=0.8,baseline={([yshift=-.5ex]current bounding box.center)}]
\draw[line width=1] 
    (-0.5*\len,0) to (0.5*\len,0)
    (-0.5*\len,0) to (-0.5*\len,0.5*\len)
    (0.5*\len,0) to (0.5*\len,0.5*\len)
    (-0.5*\len,\len) to (-0.5*\len,0.5*\len)
    (0.5*\len,\len) to (0.5*\len,0.5*\len)
    (-0.5*\len,\len) to (0.5*\len,\len)
    (-0.5*\len,0.5*\len) to (0.5*\len,0.5*\len);

\node[rotate=-90] at (-0.5*\len,0.25*\len) {\scriptsize $\Arr$};
\node[rotate=-90] at (0.5*\len,0.25*\len) {\scriptsize $\Arr$};
\node[rotate=-90] at (-0.5*\len,0.75*\len) {\scriptsize $\Arr$};
\node[rotate=-90] at (0.5*\len,0.75*\len) {\scriptsize $\Arr$};
\node[rotate=180] at (0,0) {\scriptsize $\Arr$};
\node[rotate=180] at (0,\len) {\scriptsize $\Arr$};
\node[rotate=180] at (0,0.5*\len) {\scriptsize $\Arr$};

\filldraw [black] (-0.5*\len,0) circle (1pt);
\filldraw [black] (0.5*\len,0) circle (1pt);
\filldraw [black] (-0.5*\len,\len) circle (1pt);
\filldraw [black] (0.5*\len,\len) circle (1pt);
\filldraw [black] (-0.5*\len,0.5*\len) circle (1pt);
\filldraw [black] (0.5*\len,0.5*\len) circle (1pt);

\node at (-0.5*\len,0.25*\len) [anchor=east] {$y$};
\node at (0.5*\len,0.25*\len) [anchor=west] {$y$};
\node at (-0.5*\len,0.75*\len) [anchor=east] {$x$};
\node at (0.5*\len,0.75*\len) [anchor=west] {$x$};
\node at (0,0) [anchor=north] {$b$};
\node at (-0.2*\len,0.45*\len) [anchor=south,font=\scriptsize] {$t$};
\node at (0,\len) [anchor=south] {$a$};

\draw (0.1*\len,0.75*\len) node    {${\overline{\alpha}_i}$};
\draw (0.1*\len,0.25*\len) node   [font=\normalsize]  {${\overline{\beta}_j}$};
\end{tikzpicture}
\right]\otimes\!\!\!
\begin{tikzpicture}[xscale=0.9,baseline={([yshift=-.5ex]current bounding box.center)}]
    \draw[line width=1]
(0.5*\len,0) .. controls (1.2*\len,0.25*\len) and (1.2*\len,0.75*\len).. (0.5*\len,\len);
\draw[line width=1] 
    (-0.5*\len,0) to (0.5*\len,0)
    (-0.5*\len,0) to (-0.5*\len,0.5*\len)
    (0.5*\len,0) to (0.5*\len,0.5*\len)
    (-0.5*\len,\len) to (-0.5*\len,0.5*\len)
    (0.5*\len,\len) to (0.5*\len,0.5*\len)
    (-0.5*\len,\len) to (0.5*\len,\len)
    (-0.5*\len,0.5*\len) to (0.5*\len,0.5*\len);

\node[rotate=90] at (-0.5*\len,0.25*\len) {\scriptsize $\Arr$};
\node[rotate=90] at (0.5*\len,0.25*\len) {\scriptsize $\Arr$};
\node[rotate=90] at (-0.5*\len,0.75*\len) {\scriptsize $\Arr$};
\node[rotate=90] at (0.5*\len,0.75*\len) {\scriptsize $\Arr$};
\node[rotate=-90] at (1.025*\len,0.5*\len) {\scriptsize $\Arr$};
\node[rotate=180] at (0,0) {\scriptsize $\Arr$};
\node[rotate=180] at (0,\len) {\scriptsize $\Arr$};
\node[rotate=180] at (0,0.5*\len) {\scriptsize $\Arr$};

\filldraw [black] (-0.5*\len,0) circle (1pt);
\filldraw [black] (0.5*\len,0) circle (1pt);
\filldraw [black] (-0.5*\len,\len) circle (1pt);
\filldraw [black] (0.5*\len,\len) circle (1pt);
\filldraw [black] (-0.5*\len,0.5*\len) circle (1pt);
\filldraw [black] (0.5*\len,0.5*\len) circle (1pt);

\node at (1.0*\len,0.5*\len) [anchor=west] {$z$};
\node at (-0.5*\len,0.25*\len) [anchor=east] {$x$};
\node at (0.5*\len,0.25*\len) [anchor=west,font=\footnotesize] {$x$};
\node at (-0.5*\len,0.75*\len) [anchor=east] {$y$};
\node at (0.5*\len,0.75*\len) [anchor=west,font=\footnotesize] {$y$};
\node at (0,0) [anchor=north] {$a$};
\node at (-0.2*\len,0.45*\len) [anchor=south,font=\scriptsize] {$t$};
\node at (0,\len) [anchor=south] {$b$};

\draw (0.8*\len,0.5*\len) node  {$f$};
\draw (0.1*\len,0.25*\len) node  {${\alpha^i}$};
\draw (0.1*\len,0.75*\len) node  {${\beta^j}$};
\end{tikzpicture}
=\frac{\rd_x\rd_y}{\rd_z}
\langle \gamma^k,f\rangle
\begin{tikzpicture}[xscale=0.7,baseline={([yshift=-.5ex]current bounding box.center)}]
\draw[line width=1] 
    (-0.5*\len,0) to (0.5*\len,0)
    (-0.5*\len,0) to (-0.5*\len,0.5*\len)
    (0.5*\len,0) to (0.5*\len,0.5*\len)
    (-0.5*\len,0.5*\len) to (0.5*\len,0.5*\len);

\node[rotate=-90] at (-0.5*\len,0.25*\len) {\scriptsize $\Arr$};
\node[rotate=-90] at (0.5*\len,0.25*\len) {\scriptsize $\Arr$};
\node[rotate=180] at (0,0) {\scriptsize $\Arr$};
\node[rotate=180] at (0,0.5*\len) {\scriptsize $\Arr$};

\filldraw [black] (-0.5*\len,0) circle (1pt);
\filldraw [black] (0.5*\len,0) circle (1pt);
\filldraw [black] (-0.5*\len,0.5*\len) circle (1pt);
\filldraw [black] (0.5*\len,0.5*\len) circle (1pt);

\node at (-0.5*\len,0.25*\len) [anchor=east] {$z$};
\node at (0.5*\len,0.25*\len) [anchor=west] {$z$};
\node at (0,0) [anchor=north] {$b$};
\node at (0,0.5*\len) [anchor=south] {$a$};

\draw (0,0.25*\len) node    {${\overline{\eta}_i}$};
\end{tikzpicture}
\otimes 
\begin{tikzpicture}[yscale=1.5,xscale=0.7,baseline={([yshift=-.5ex]current bounding box.center)}]
\draw[line width=1]
(-0.5*\len,0) .. controls (-1.35*\len,0.15*\len) and (-1.35*\len,0.35*\len).. (-0.5*\len,0.5*\len);
\draw[line width=1] 
    (-0.5*\len,0) to (0.5*\len,0)
    (-0.5*\len,0) to (-0.5*\len,0.5*\len)
    (0.5*\len,0) to (0.5*\len,0.5*\len)
    (-0.5*\len,0.5*\len) to (0.5*\len,0.5*\len);
\node[rotate=90] at (-0.5*\len,0.25*\len) {\scriptsize $\Arr$};
\node[rotate=90] at (0.5*\len,0.25*\len) {\scriptsize $\Arr$};
\node[rotate=90] at (-1.15*\len,0.25*\len) {\scriptsize $\Arr$};
\node[rotate=180] at (0,0) {\scriptsize $\Arr$};
\node[rotate=180] at (0,0.5*\len) {\scriptsize $\Arr$};
\filldraw [black] (-0.5*\len,0) circle (1pt);
\filldraw [black] (0.5*\len,0) circle (1pt);
\filldraw [black] (-0.5*\len,0.5*\len) circle (1pt);
\filldraw [black] (0.5*\len,0.5*\len) circle (1pt);

\node at (-1.15*\len,0.25*\len) [anchor=east] {$xy$};
\node at (-0.52*\len,0.25*\len) [anchor=west,font=\footnotesize] {$z$};
\node at (0.5*\len,0.25*\len) [anchor=west] {$z$.};
\node at (0,0) [anchor=north] {$a$};
\node at (0,0.5*\len) [anchor=south] {$b$};
\draw (-0.8*\len,0.25*\len) node    {${\overline{\gamma}_k}$};
\draw (0.1*\len,0.25*\len) node    {${\eta^i}$};
\end{tikzpicture}
\end{equation*}
\end{enumerate}
\end{lem}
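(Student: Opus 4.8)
The plan is to dispatch the five items in roughly increasing order of difficulty, in each case reducing the claim to the dual‑basis machinery already established in Lemmas~\ref{comp_prop}, \ref{lem:base_change}, \ref{lem:pairing_properties} and \ref{bases_comp}, together with the elementary fact that $\star_x$ only modifies the two edges labelled $x$ and $\overline x$ that it resolves, leaving every other boundary edge untouched.

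For (i), I would simply unwind the definition of $\star_y$: it inserts a pair of dual bases of $\Hsp{y\overline t}$ and then performs the two $\bullet$‑compositions along the $y$‑ and $\overline y$‑edges. Since $\circ_z$ (resp.\ $\bullet_z$) acts on the disjoint $z$‑edge, the two operations commute by the associativity statements of Lemma~\ref{comp_prop}; the $\sqrt{\rd}$ weights are inert and carried along. The first identity of (iii), $\star_z(\star_y(f))=\star_y(\star_z(f))$, is the same disjointness argument: resolving the $(y,\overline y)$‑pair and the $(z,\overline z)$‑pair touch different boundary edges, so this is a Fubini interchange of the two independent sums over $\Irr\cx$. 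For (iv), when $x\in\Irr\cx$ Schur's lemma forces $\Hsp{x\overline t}=0$ unless $t\cong x$, and $\Hsp{x\overline x}$ is one‑dimensional with normalised dual basis $\Id_x/\sqrt{\rd_x}$ on both sides, since $\langle\Id_x,\Id_x\rangle=\tr_x(\Id_x)=\rd_x$; feeding this into $\star_x$, the two $\bullet_x$'s contribute $\rd_x$, which cancels the $\rd_x^{-1}$ coming from the two normalised basis vectors, and $f\circ_x\Id_x=f$ from Lemma~\ref{comp_prop}(i) finishes the computation.

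Items (ii) and the remaining identities of (iii) express the associativity of resolution: resolving a composite $1$‑cell in one step equals resolving its factors successively with an intermediate sum over $\Irr\cx$. These follow from Lemma~\ref{bases_comp}. Indeed $\star_{xyz}$ is built from a pair of dual bases of $\Hsp{xyz\overline t}$; applying Lemma~\ref{bases_comp}(i) with $(a,b,c)$ replaced by $(x,\,yz,\,\overline t)$ rewrites such a pair as the $\bullet_s$‑product of dual bases of $\Hsp{xs\overline t}$ and of $\Hsp{(yz)\overline s}$, summed over $s\in\Irr\cx$ — and those are precisely the families defining $\star_{xs}$ and $\star_{yz}$. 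Matching the $\sqrt{\rd}$‑weights (the $\bullet$'s of $\star_{xyz}$, of $\star_{xs}$, of $\star_{yz}$, and the $\sqrt{\rd_s}$ built into Lemma~\ref{bases_comp}) yields the first equality of (ii); the second is its mirror obtained from the $(xy)$‑then‑$z$ grouping of Lemma~\ref{bases_comp}(ii). The last two identities of (iii) then follow by combining (ii) with the $\star_y\leftrightarrow\star_z$ interchange established above, after fusing the $yz$‑strand.

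The main obstacle is item (v), the only genuinely computational claim. My plan is: first use (i) to push the outer $\star_{xy}$ through the relative composition that assembles the two stacked cells, then expand $\star_{xy}$ via its definition, introducing a pair of dual bases of $\Hsp{(xy)\overline s}$ and the attendant $\bullet_{xy},\bullet_{\overline{xy}}$. The two stacked cells carrying $\overline\alpha_i,\overline\beta_j$ and $\alpha^i,\beta^j$, together with the weight $\sum_{t\in\Irr\cx}\rd_t$, should be recognised — through the dominance relation~\eqref{dominance} of Lemma~\ref{lem:pairing_properties}(ii) and the base‑change Lemma~\ref{lem:base_change} — as reconstituting an unresolved $xy$‑strand threaded through $f$; at that point the inner $z$‑loop contracts against $f$ by Lemma~\ref{lem:pairing_properties}(i) (the relation $\rd_z\,(\cdot)\circ(\cdot)=\langle-,-\rangle\,\Id_z$), which is exactly what produces the scalar $\langle\gamma^k,f\rangle$ and the factor $\rd_z^{-1}$, while the two $\bullet_{xy}$‑normalisations supply the remaining $\rd_x\rd_y$ and all other dimension factors cancel. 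The delicate point is keeping the four implicitly summed basis labels $i,j,k$ (and the index $s$ of $\star_{xy}$) straight and confirming that no stray dimension survives; I would make this tractable by pairing both sides of (v) against an arbitrary test vector and invoking non‑degeneracy of the pairing~\eqref{Hpairing}, turning the diagrammatic identity into a scalar identity to which Lemma~\ref{lem:pairing_properties}(iii) applies directly.
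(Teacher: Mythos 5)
Your proposal follows essentially the same route as the paper: (i) and the commutation part of (iii) via the associativity statements of Lemma~\ref{comp_prop}, (ii) and the rest of (iii) via the compatibility of iterated dual-basis decompositions in Lemma~\ref{bases_comp}, (iv) directly from the definition using Schur's lemma and the normalisation $\langle\Id_x,\Id_x\rangle=\rd_x$, and (v) by combining Lemmas~\ref{bases_comp}, \ref{lem:base_change} and \ref{lem:pairing_properties}. The paper's own proof of (v) is equally terse (it only cites those three lemmas), so your extra bookkeeping of the dimension factors and the suggestion to test against the non-degenerate pairing are harmless elaborations rather than a different method.
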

\begin{proof}
The first statement follows from Lemma~\ref{comp_prop}, indeed
\begin{equation*}
    \star_y (f\circ_z g)=\sum_{t\in \Irr \cx}\alpha_t\circ_b (f\circ_z g)\circ_{\overline{b}}\overline{\alpha}_t=\sum_{t\in \Irr \cx}\alpha_t\circ_b (f\circ_{\overline{b}}\overline{\alpha}_t)\circ_z g=\star_y(f)\circ_z g\,.
\end{equation*}
The second equality in (i) follows by considering the additional factor of $\sqrt{\rd_Z}$.
To prove the second assertion, first notice that by (i) resolving a label over simple objects and gluing along a different label are procedures that commute. Now, by Lemma~\ref{bases_comp}, the two possible configurations of gluing pairs of dual bases agree. More explicitly, we have graphically for each simple object $s\in\Irr\cx$ that
\def\len{1.75cm}
\begin{equation*}
\sum_{t\in\Irr\cx}\!\!\!
\begin{tikzpicture}[baseline={([yshift=-.5ex]current bounding box.center)}]
\draw[line width=1]
(-0.5*\len,0) .. controls (-1*\len,0.25*\len) and (-1*\len,0.75*\len).. (-0.5*\len,\len);
\draw[line width=1]
(-0.5*\len,0) .. controls (-1.25*\len,0.25*\len) and (-1.25*\len,1.25*\len).. (-0.5*\len,1.5*\len)
node[midway,rotate=-90] {\scriptsize $\Arr$}
    node[midway,left] {$s$};
\draw[line width=1]
(0.5*\len,0) .. controls (1*\len,0.25*\len) and (1*\len,0.75*\len).. (0.5*\len,\len);
\draw[line width=1]
(0.5*\len,0) .. controls (1.25*\len,0.25*\len) and (1.25*\len,1.25*\len).. (0.5*\len,1.5*\len)
node[midway,rotate=-90] {\scriptsize $\Arr$}
node[midway,right] {$s$};

\draw (0.75*\len,1.1*\len) node  {\normalsize ${\psi^k}$};
\draw (-0.75*\len,1.1*\len) node  {\normalsize ${\overline{\psi}_k}$};
\draw (0.67*\len,0.45*\len) node  {\normalsize $
{\alpha^i}$};
\draw (-0.67*\len,0.45*\len) node  {\normalsize ${\overline{\alpha}_i}$};

\node at (-0.8*\len,0.65*\len) {$\bullet_t$};
\node at (0.87*\len,0.65*\len) {$\bullet_t$};
\draw[line width=1] 
    (-0.5*\len,0) to (0.5*\len,0)
    (-0.5*\len,0) to (-0.5*\len,0.5*\len)
    (0.5*\len,0) to (0.5*\len,0.5*\len);

\node at (-0.45*\len,0.25*\len) {\normalsize $\bullet_z$};
\node[rotate=0] at (0.55*\len,0.25*\len) {\normalsize $\bullet_z$};
\node[rotate=0] at (0,0) {\scriptsize $\Arr$};

\filldraw [black] (-0.5*\len,0) circle (1pt);
\filldraw [black] (0.5*\len,0) circle (1pt);
\filldraw [black] (-0.5*\len,0.5*\len) circle (1pt);
\filldraw [black] (0.5*\len,0.5*\len) circle (1pt);
\node at (0,0) [anchor=north west] {$b$};

\begin{scope}[yshift=0.5*\len]
\draw[line width=1] 
    (-0.5*\len,0) to (-0.5*\len,0.5*\len)
    (0.5*\len,0) to (0.5*\len,0.5*\len);
\node at (-0.45*\len,0.25*\len) {\normalsize $\bullet_y$};
\node[rotate=0] at (0.55*\len,0.25*\len) {\normalsize $\bullet_y$};

\filldraw [black] (-0.5*\len,0) circle (1pt);
\filldraw [black] (0.5*\len,0) circle (1pt);
\filldraw [black] (-0.5*\len,0.5*\len) circle (1pt);
\filldraw [black] (0.5*\len,0.5*\len) circle (1pt);

\draw (0,0.25*\len) node    {$f$};
\end{scope}
\begin{scope}[yshift=\len]
\draw[line width=1] 
    (-0.5*\len,0) to (-0.5*\len,0.5*\len)
    (0.5*\len,0) to (0.5*\len,0.5*\len)
    (-0.5*\len,0.5*\len) to (0.5*\len,0.5*\len);

\node at (-0.45*\len,0.25*\len) {\normalsize $\bullet_x$};
\node at (0.55*\len,0.25*\len) {\normalsize $\bullet_x$};
\node[rotate=180] at (0,0.5*\len) {\scriptsize $\Arr$};

\filldraw [black] (-0.5*\len,0) circle (1pt);
\filldraw [black] (0.5*\len,0) circle (1pt);
\filldraw [black] (-0.5*\len,0.5*\len) circle (1pt);
\filldraw [black] (0.5*\len,0.5*\len) circle (1pt);
\node at (0,0.5*\len) [anchor=south west] {$a$};
\end{scope}
\end{tikzpicture}
\!\!\!=\!\!\!
\begin{tikzpicture}[baseline={([yshift=-.5ex]current bounding box.center)}]
\draw[line width=1]
(-0.5*\len,0) .. controls (-1.1*\len,0.25*\len) and (-1.1*\len,1.25*\len).. (-0.5*\len,1.5*\len)
node[midway,rotate=-90] {\scriptsize $\Arr$}
    node[midway,left] {$s$};
\draw[line width=1]
(0.5*\len,0) .. controls (1.1*\len,0.25*\len) and (1.1*\len,1.25*\len).. (0.5*\len,1.5*\len)
node[midway,rotate=-90] {\scriptsize $\Arr$}
node[midway,right] {$s$};

\draw (0.78*\len,0.60*\len) node   {$\gamma^k$};
\draw (-0.72*\len,0.60*\len) node   {$\overline{\gamma}_k$};
\draw[line width=1] 
    (-0.5*\len,0) to (0.5*\len,0)
    (-0.5*\len,0) to (-0.5*\len,0.5*\len)
    (0.5*\len,0) to (0.5*\len,0.5*\len);

\node at (-0.45*\len,0.25*\len) {\normalsize $\bullet_z$};
\node[rotate=0] at (0.55*\len,0.25*\len) {\normalsize $\bullet_z$};
\node[rotate=0] at (0,0) {\scriptsize $\Arr$};

\filldraw [black] (-0.5*\len,0) circle (1pt);
\filldraw [black] (0.5*\len,0) circle (1pt);
\filldraw [black] (-0.5*\len,0.5*\len) circle (1pt);
\filldraw [black] (0.5*\len,0.5*\len) circle (1pt);

\node at (0,0) [anchor=north west] {$b$};

\begin{scope}[yshift=0.5*\len]
\draw[line width=1] 
    (-0.5*\len,0) to (-0.5*\len,0.5*\len)
    (0.5*\len,0) to (0.5*\len,0.5*\len);
\node at (-0.45*\len,0.25*\len) {\normalsize $\bullet_y$};
\node[rotate=0] at (0.55*\len,0.25*\len) {\normalsize $\bullet_y$};

\filldraw [black] (-0.5*\len,0) circle (1pt);
\filldraw [black] (0.5*\len,0) circle (1pt);
\filldraw [black] (-0.5*\len,0.5*\len) circle (1pt);
\filldraw [black] (0.5*\len,0.5*\len) circle (1pt);

\draw (0,0.25*\len) node    {$f$};
\end{scope}
\begin{scope}[yshift=\len]
\draw[line width=1] 
    (-0.5*\len,0) to (-0.5*\len,0.5*\len)
    (0.5*\len,0) to (0.5*\len,0.5*\len)
    (-0.5*\len,0.5*\len) to (0.5*\len,0.5*\len);

\node at (-0.45*\len,0.25*\len) {\normalsize $\bullet_x$};
\node[rotate=0] at (0.55*\len,0.25*\len) {\normalsize $\bullet_x$};
\node[rotate=180] at (0,0.5*\len) {\scriptsize $\Arr$};

\filldraw [black] (-0.5*\len,0) circle (1pt);
\filldraw [black] (0.5*\len,0) circle (1pt);
\filldraw [black] (-0.5*\len,0.5*\len) circle (1pt);
\filldraw [black] (0.5*\len,0.5*\len) circle (1pt);

\node at (0,0.5*\len) [anchor=south west] {$a$};
\end{scope}
\end{tikzpicture}
\!\!\!=\!\!\!
\sum_{t\in\Irr\cx}\!\!\!
\begin{tikzpicture}[baseline={([yshift=-.5ex]current bounding box.center)}]
\draw[line width=1] 
(-0.5*\len,0.5*\len) .. controls (-1*\len,0.75*\len) and (-1*\len,1.25*\len).. (-0.5*\len,1.5*\len);
\draw[line width=1] 
(-0.5*\len,0) .. controls (-1.25*\len,0.25*\len) and (-1.25*\len,1.25*\len).. (-0.5*\len,1.5*\len)
node[midway,rotate=-90] {\scriptsize $\Arr$}
    node[midway,left] {$s$};
\draw[line width=1] 
(0.5*\len,0.5*\len) .. controls (1*\len,0.75*\len) and (1*\len,1.25*\len).. (0.5*\len,1.5*\len);
\draw[line width=1] 
(0.5*\len,0) .. controls (1.25*\len,0.25*\len) and (1.25*\len,1.25*\len).. (0.5*\len,1.5*\len)
node[midway,rotate=-90] {\scriptsize $\Arr$}
node[midway,right] {$s$};

\draw (0.8*\len,0.45*\len) node   {$\varphi^k$};
\draw (-0.8*\len,0.45*\len) node   {$\overline{\varphi}_k$};
\draw (0.68*\len,1*\len) node   {$\beta^j$};
\draw (-0.64*\len,1*\len) node   {$\overline{\beta}_{\!j}$};

\node at (-0.83*\len,1*\len) {$\bullet_t$};
\node at (0.9*\len,0.9*\len) {$\bullet_t$};
\draw[line width=1] 
    (-0.5*\len,0) to (0.5*\len,0)
    (-0.5*\len,0) to (-0.5*\len,0.5*\len)
    (0.5*\len,0) to (0.5*\len,0.5*\len);
\node at (-0.45*\len,0.25*\len) {\normalsize $\bullet_z$};
\node[rotate=0] at (0.55*\len,0.25*\len) {\normalsize $\bullet_z$};
\node[rotate=0] at (0,0) {\scriptsize $\Arr$};

\filldraw [black] (-0.5*\len,0) circle (1pt);
\filldraw [black] (0.5*\len,0) circle (1pt);
\filldraw [black] (-0.5*\len,0.5*\len) circle (1pt);
\filldraw [black] (0.5*\len,0.5*\len) circle (1pt);

\node at (0,0) [anchor=north west] {$b$};
\begin{scope}[yshift=0.5*\len]
\draw[line width=1] 
    (-0.5*\len,0) to (-0.5*\len,0.5*\len)
    (0.5*\len,0) to (0.5*\len,0.5*\len);
\node at (-0.45*\len,0.25*\len) {\normalsize $\bullet_y$};
\node[rotate=0] at (0.55*\len,0.25*\len) {\normalsize $\bullet_y$};

\filldraw [black] (-0.5*\len,0) circle (1pt);
\filldraw [black] (0.5*\len,0) circle (1pt);
\filldraw [black] (-0.5*\len,0.5*\len) circle (1pt);
\filldraw [black] (0.5*\len,0.5*\len) circle (1pt);

\draw (0,0.25*\len) node    {$f$};
\end{scope}
\begin{scope}[yshift=\len]
\draw[line width=1] 
    (-0.5*\len,0) to (-0.5*\len,0.5*\len)
    (0.5*\len,0) to (0.5*\len,0.5*\len)
    (-0.5*\len,0.5*\len) to (0.5*\len,0.5*\len);

\node at (-0.45*\len,0.25*\len) {\normalsize $\bullet_x$};
\node[rotate=0] at (0.55*\len,0.25*\len) {\normalsize $\bullet_x$};
\node[rotate=180] at (0,0.5*\len) {\scriptsize $\Arr$};

\filldraw [black] (-0.5*\len,0) circle (1pt);
\filldraw [black] (0.5*\len,0) circle (1pt);
\filldraw [black] (-0.5*\len,0.5*\len) circle (1pt);
\filldraw [black] (0.5*\len,0.5*\len) circle (1pt);

\node at (0,0.5*\len) [anchor=south west] {$a$};
\end{scope}
\end{tikzpicture}
\end{equation*}
which translates into equation~\eqref{star_triprod}. The first part of (iii) simply states that resolving different labels into simple objects are independent procedures and therefore commute. The second part of (iii) follows from a similar argument to the one that proves (ii).
Item (iv) is immediate from the definition, considering $x$ is a simple object. The fifth assertion follows by applying Lemmas~\ref{bases_comp},~\ref{lem:base_change} and~\ref{lem:pairing_properties} (i).
\end{proof}
\subsection{Spaces of tubes and the Tube algebra}
The rotational invariance of the graphical calculus allows for representing certain graphs as annuli or tubes:
two non-consecutive edges of a graph with the same label up to orientation reversal are merged. This leads to a loop labeled by the product of the labels of the edges in between the merged edges.
\def\len{2cm}
\def\radL{1.4cm}
\def\radS{0.5cm}
\begin{equation*}
\begin{tikzpicture}[xscale=0.8,baseline={([yshift=-.5ex]current bounding box.center)}]
    \draw[line width=1] 
    (-0.75*\len,0) to (0.75*\len,0)
    (-0.75*\len,0) to (-0.75*\len,0.5*\len)
    (0.75*\len,0) to (0.75*\len,0.5*\len)
    (-0.75*\len,0.5*\len) to (0.75*\len,0.5*\len);

\node[rotate=-90] at (-0.75*\len,0.25*\len) {\scriptsize $\Arr$};
\node[rotate=-90] at (0.75*\len,0.25*\len) {\scriptsize $\Arr$};
\node[rotate=180] at (0,0) {\scriptsize $\Arr$};
\node[rotate=180] at (-0.4*\len,0.5*\len) {\scriptsize $\Arr$};
\node[rotate=180] at (0.4*\len,0.5*\len) {\scriptsize $\Arr$};

\filldraw [black] (-0.75*\len,0) circle (1pt);
\filldraw [black] (0.75*\len,0) circle (1pt);
\filldraw [black] (0,0.5*\len) circle (1pt);
\filldraw [black] (-0.75*\len,0.5*\len) circle (1pt);
\filldraw [black] (0.75*\len,0.5*\len) circle (1pt);
\node at (-0.75*\len,0.25*\len) [anchor=east] {$x$};
\node at (0.75*\len,0.25*\len) [anchor=west] {$x$};
\node at (0,0) [anchor=north] {$b$};
\node at (0.4*\len,0.5*\len) [anchor=south] {$a$};
\node at (-0.4*\len,0.5*\len) [anchor=south] {$\tilde{a}$};

\draw (0,0.25*\len) node    {$ {f}$};
\end{tikzpicture}
\quad\equiv\quad
\begin{tikzpicture}[baseline={([yshift=-.5ex]current bounding box.center)}]
\draw[line width=1] (0,0) circle (\radL)
(0,0) circle (\radS);
\draw[line width=1] 
    (0,\radS) to (0,\radL);
\node[rotate=90] at (0,0.6*\radL) {\scriptsize $\Arr$};
\node[rotate=180] at (270:\radL) {\scriptsize $\Arr$};
\node[rotate=-90] at (0:\radS) {\scriptsize $\Arr$};
\node[rotate=90] at (180:\radS) {\scriptsize $\Arr$};
\filldraw [black] (0,\radS) circle (1.5pt);
\filldraw [black] (270:\radS) circle (1.5pt);
\filldraw [black] (0,\radL) circle (1.5pt);
\node at (0,0.7*\radL)  {$=$};
\node at (0.05,0.7*\radL) [anchor=west] {$x$};
\node at (-40:1.1*\radL) [anchor=north] {$b$};
\node at (0:\radS) [anchor=west] {$a$};
\node at (180:\radS) [anchor=east] {$\tilde{a}$};
\node at (-90:0.45*\radL) [anchor=north]  {$ {f}$};
\end{tikzpicture}
\end{equation*}

\begin{defi}\label{def:data-for-tube-alg}
Let $\cx$ be a spherical semisimple multitensor category, and let $\rI = \coprod_{i \in \Gamma} \rI_{i}$ and $\rJ = \coprod_{i, j\in \Gamma}\rJ_{i j}$ be collections of isomorphism classes $\Irr\cx_{i } \subset \IndexSet_{i} \subset \Iso\cx_{i }$ and $\mathrm{J}_{i j} \subset \Iso\cx_{i j}$.
\begin{enumerate}[(i)]
    \item Given a pair of objects $a,b\in\IndexSet$, define the \textit{space of tubes from $b$ to $a$} as the vector space    
\begin{equation*}
    \mathrm{T}_{a;b}^{\mathrm{J}} \coloneqq\bigoplus_{x\in \mathrm{J}}\,\Hsp{\overline{b}\,\overline{x}\,a\,x}
\end{equation*}
We refer to a vector $f\in \Hsp{\overline{b}\,\overline{x}\,a\,x}$ as a \textit{tube element} and simply write $\mathrm{T}_{a;b}$ for the choice $\mathrm{J}_{i j} = \Irr\cx_{i j}$.
\item Given $a,b,c\in\IndexSet$, the \textit{welding} of tube elements is defined by the map
\begin{equation}\label{eq:tube_weld}
\mathrm{w}\Colon\mathrm{T}_{a;b}^{\mathrm{J}}\otimes \mathrm{T}_{b;c} ^{\mathrm{J}}\to\mathrm{T}_{a;c}^{\mathrm{J}}
\end{equation}
given for $f\in\Hsp{\overline{b}\overline{x}ax}$ and $g\in\Hsp{\overline{c}\,\overline{y}by}$ by
\begin{equation*}
\star_{xy}(f\circ_b g)=\sum_{t\in\Irr\cx}\!\!
\def\len{1.8cm}
\begin{tikzpicture}[baseline={([yshift=-.5ex]current bounding box.center)}]

\draw[line width=1]
(-0.5*\len,0) .. controls (-1*\len,0.25*\len) and (-1*\len,0.75*\len).. (-0.5*\len,\len);
\draw[line width=1]
(0.5*\len,0) .. controls (1*\len,0.25*\len) and (1*\len,0.75*\len).. (0.5*\len,\len);

\draw (0.7*\len,0.5*\len) node {${\eta^i}$};
\draw (-0.7*\len,0.5*\len) node {${\overline{{\eta }}_i}$};
\node at (-0.87*\len,0.5*\len) [anchor=east] {$t$};
\node at (0.87*\len,0.5*\len) [anchor=west] {$t$};
\node[rotate=-90] at (-0.87*\len,0.5*\len) {\scriptsize $\Arr$};
\node[rotate=-90] at (0.87*\len,0.5*\len) {\scriptsize $\Arr$};
\draw[line width=1] 
    (-0.5*\len,0) to (0.5*\len,0)
    (-0.5*\len,0) to (-0.5*\len,0.5*\len)
    (0.5*\len,0) to (0.5*\len,0.5*\len)
    (-0.5*\len,0.5*\len) to (0.5*\len,0.5*\len);

\node at (-0.45*\len,0.25*\len) {\normalsize $\bullet_y$};
\node at (0.55*\len,0.25*\len) {\normalsize $\bullet_y$};
\node[rotate=180] at (0,0.5*\len) {\scriptsize $\Arr$};
\node[rotate=180] at (0,0) {\scriptsize $\Arr$};

\filldraw [black] (-0.5*\len,0) circle (1pt);
\filldraw [black] (0.5*\len,0) circle (1pt);
\filldraw [black] (-0.5*\len,0.5*\len) circle (1pt);
\filldraw [black] (0.5*\len,0.5*\len) circle (1pt);

\node at (0.2*\len,0.5*\len) [anchor=south] {$b$};
\node at (0.2*\len,0) [anchor=north] {$c$};

\draw (-0.1*\len,0.23*\len) node  {${g}$};

\begin{scope}[yshift=0.5*\len]
\draw[line width=1] 
    (-0.5*\len,0) to (0.5*\len,0)
    (-0.5*\len,0) to (-0.5*\len,0.5*\len)
    (0.5*\len,0) to (0.5*\len,0.5*\len)
    (-0.5*\len,0.5*\len) to (0.5*\len,0.5*\len);

\node at (-0.45*\len,0.25*\len) {\normalsize $\bullet_x$};
\node at (0.55*\len,0.25*\len) {\normalsize $\bullet_x$};
\node[rotate=180] at (0,0.5*\len) {\scriptsize $\Arr$};

\filldraw [black] (-0.5*\len,0) circle (1pt);
\filldraw [black] (0.5*\len,0) circle (1pt);
\filldraw [black] (-0.5*\len,0.5*\len) circle (1pt);
\filldraw [black] (0.5*\len,0.5*\len) circle (1pt);

\node at (0,0.5*\len) [anchor=south west] {$a$};
\draw (-0.1*\len,0.25*\len) node  {${f}$};
\end{scope}
\end{tikzpicture}
\,.
\end{equation*}
\end{enumerate}
\end{defi}

We note that $\Hsp{\overline{b} \overline{x} a x}$ is nontrivial only if $x \in \rJ_{i j}$ holds for $i, j \in \Gamma$ $a \in \rI_{i i}$, $b \in \rI_{j j}$, so we ought to take the direct sum over this small choice of indexes, but supress it to avoid cumbersome notation.
In the following we will write $\Irr \cx \subset \IndexSet$ to mean $\Irr\cx_{i } \subset \IndexSet_{i}$ for $i \in \Gamma$, and similarly for $\rJ$.

\begin{prop}\label{prop:associativity}
Welding of tube elements is associative, i.e., the diagram
\begin{equation*}
    \begin{tikzcd}[column sep=normal, row sep=normal]
        \mathrm{T}_{a;b}^{\mathrm{J}}\otimes\mathrm{T}_{b;c}^{\mathrm{J}}\otimes\mathrm{T}_{c;d} ^{\mathrm{J}}\ar[r,"\id\otimes\mathrm{w}"]\ar[d,swap,"\mathrm{w}\otimes\id"]
        &\mathrm{T}_{a;b}^{\mathrm{J}}\otimes\mathrm{T}_{b;d}^{\mathrm{J}}
        \ar[d,"\mathrm{w}"]\\
        \mathrm{T}_{a;c}^{\mathrm{J}}\otimes\mathrm{T}_{c;d}^{\mathrm{J}}
        \ar[r,swap,"\mathrm{w}"]& 
        \mathrm{T}_{a;d}^{\mathrm{J}}
    \end{tikzcd}
\end{equation*}
commutes for all $a,b,c,d\in\Iso\cx$.
\end{prop}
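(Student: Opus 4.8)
The plan is to reduce the commutativity of the square to the consistency properties of the resolution maps $\star$ recorded in Lemma~\ref{star_prop} together with the associativity of relative composition in Lemma~\ref{comp_prop}. First I would pass to elementary tensors: by bilinearity and the direct-sum decompositions defining $\mathrm{T}_{a;b}^{\mathrm{J}}$, $\mathrm{T}_{b;c}^{\mathrm{J}}$, $\mathrm{T}_{c;d}^{\mathrm{J}}$, it suffices to chase a single triple $f\in\Hsp{\overline{b}\,\overline{x}\,a\,x}$, $g\in\Hsp{\overline{c}\,\overline{y}\,b\,y}$, $h\in\Hsp{\overline{d}\,\overline{z}\,c\,z}$ with $x,y,z\in\rJ$ homogeneous around the square. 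It is worth recording at this point the intermediate identity $\mathrm{w}(f\otimes g)=\star_{xy}(f\circ_b g)\in\bigoplus_{t\in\Irr\cx}\Hsp{\overline{c}\,\overline{t}\,a\,t}$, which fixes the meaning of the middle object produced by the first welding and keeps the indices in the second welding unambiguous.

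Next I would compute the left path. Unwinding the two applications of $\mathrm{w}$, the composite $\mathrm{w}(\mathrm{w}(f\otimes g)\otimes h)$ equals $\sum_{t\in\Irr\cx}\star_{tz}\!\bigl(\star_{xy}(f\circ_b g)_t\circ_c h\bigr)$. The cabled edges $x,y$ are disjoint from the edge $b$ glued inside $f\circ_b g$ and from the edge $c$ glued against $h$, so the distributivity in Lemma~\ref{star_prop}(i) lets me pull the gluing $\circ_c h$ out through $\star_{xy}$, rewriting the summand as $\star_{tz}\!\bigl(\star_{xy}((f\circ_b g)\circ_c h)_t\bigr)$; summing over $t$, the right-hand identity in Lemma~\ref{star_prop}(ii) collapses this to $\star_{xyz}\!\bigl((f\circ_b g)\circ_c h\bigr)$. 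Running the mirror-image computation on the right path gives $\mathrm{w}(f\otimes\mathrm{w}(g\otimes h))=\sum_{s\in\Irr\cx}\star_{xs}\!\bigl(f\circ_b\star_{yz}(g\circ_c h)_s\bigr)$; here one first uses Lemma~\ref{comp_prop}(ii) to rewrite $f\circ_b(-)$ as $(-)\circ_{\overline{b}}f$ so that Lemma~\ref{star_prop}(i) again applies, and then the left-hand identity in Lemma~\ref{star_prop}(ii) collapses the sum to $\star_{xyz}\!\bigl(f\circ_b(g\circ_c h)\bigr)$.

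To finish, I would observe that the two relative compositions are performed along distinct boundary edges — the edge $b$ shared by $f$ and $g$, and the edge $c$ shared by $g$ and $h$ — so that Lemma~\ref{comp_prop}(iv) yields $(f\circ_b g)\circ_c h=f\circ_b(g\circ_c h)$. Both paths therefore produce $\star_{xyz}\!\bigl(f\circ_b g\circ_c h\bigr)$, and the square commutes. A diagrammatic rendering — stacking the three annuli for $f$, $g$, $h$ and then resolving the four cabled edges — makes each of these steps visually transparent and is probably the cleanest way to present the argument.

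I do not expect a genuine obstacle here. In particular, the normalization constants hidden in $\star$ (the $\sqrt{\rd_x}$-type factors coming from $\bullet$) need no separate attention, since Lemma~\ref{star_prop}(ii) is precisely the identity comparing a one-step resolution $\star_{xyz}$ with the two-step resolutions $\star_{tz}\circ\star_{xy}$ and $\star_{xs}\circ\star_{yz}$, so those factors are already reconciled there. The only point demanding care is the cyclic bookkeeping: through each cyclic rotation of the $\vSp$-spaces one must track which boundary label is being resolved by a $\star$ and which is being glued by a $\circ$, and confirm that the hypotheses of Lemma~\ref{star_prop}(i) (the resolved cable not touching the glued edge) genuinely hold at each step and that, when needed, Lemma~\ref{comp_prop}(ii) has been applied to put a gluing on the correct side of a cable.
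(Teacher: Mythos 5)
Your proposal is correct and follows essentially the same route as the paper: reduce to homogeneous tube elements, use Lemma~\ref{star_prop}(i) to commute the resolution $\star$ past the gluing $\circ$, and then invoke Lemma~\ref{star_prop}(ii) to identify both two-step resolutions with $\star_{xyz}(f\circ_b g\circ_c h)$. Your explicit invocations of Lemma~\ref{comp_prop}(ii) (to place the gluing on the correct side before applying the distributivity) and of Lemma~\ref{comp_prop}(iv) (for the associativity of the relative compositions, which the paper leaves implicit in the unparenthesized expression $f\circ_b g\circ_c h$) are minor but welcome refinements rather than a different argument.
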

\begin{proof}
Associativity follows from Lemma~\ref{star_prop}. Indeed, given $f\in(\mathrm{T}_{a;b}^{\mathrm{J}})_x$, $g\in(\mathrm{T}_{b;c}^{\mathrm{J}})_y$ and $h\in(\mathrm{T}_{c;d}^{\mathrm{J}})_z$ we have on one hand that
\begin{equation}\label{eq:W_asso_1}
    \mathrm{w}(f\otimes \mathrm{w}(g\otimes h))=\mathrm{w}(f\otimes\star_{yz}( g\circ_{c} h))
    =\sum_{t\in\Irr\cx}\star_{xt}(f\circ_b\star_{yz}( g\circ_{c} h)_t)
    \overset{\eqref{distr_star}}{=}
    \sum_{t\in\Irr\cx}\star_{xt}(\star_{yz}(f\circ_b g\circ_{c} h)_t)
\end{equation}
On the other hand, it holds that
\begin{equation}\label{eq:W_asso_2}
    \mathrm{w}(\mathrm{w}(f\otimes g)\otimes h)=\mathrm{w}(\star_{xy}(f\circ_b g)\otimes h)
    =\sum_{t\in\Irr\cx}\star_{tz}(\star_{xy}(f\circ_b g)_t\circ_{c}h)
    \overset{\eqref{distr_star}}
    {=}\sum_{t\in\Irr\cx}\star_{tz}(\star_{xy}(f\circ_b g\circ_{c}h)_t)
\end{equation}
The last steps in~\eqref{eq:W_asso_1} and~\eqref{eq:W_asso_2} are graphically depicted as 
\begin{equation*}
\sum_{s,t\in\Irr\cx}\!\!\!
\def\len{2cm}
\begin{tikzpicture}[baseline={([yshift=-.5ex]current bounding box.center)}]
\draw[line width=1]
(-0.5*\len,0) .. controls (-1*\len,0.25*\len) and (-1*\len,0.75*\len).. (-0.5*\len,\len);
\draw[line width=1]
(-0.5*\len,0) .. controls (-1.25*\len,0.25*\len) and (-1.25*\len,1.25*\len).. (-0.5*\len,1.5*\len);
\draw[line width=1] 
(0.5*\len,0) .. controls (1*\len,0.25*\len) and (1*\len,0.75*\len).. (0.5*\len,\len);
\draw[line width=1]
(0.5*\len,0) .. controls (1.25*\len,0.25*\len) and (1.25*\len,1.25*\len).. (0.5*\len,1.5*\len);

\draw (0.75*\len,1.1*\len) node {\normalsize $ {\psi^k}$};
\draw (-0.75*\len,1.1*\len) node {\normalsize $ {\overline{\psi}_k}$};
\draw (0.67*\len,0.45*\len) node {\normalsize $ {\alpha^i}$};
\draw (-0.67*\len,0.45*\len) node {\normalsize $ {\overline{\alpha}_i}$};

\node at (-1.1*\len,0.75*\len) [anchor=east] {$s$};
\node at (1.1*\len,0.75*\len) [anchor=west] {$s$};
\node at (-0.8*\len,0.65*\len) {$\bullet_t$};
\node at (0.87*\len,0.65*\len) {$\bullet_t$};
\node[rotate=-90] at (-1.07*\len,0.75*\len) {\scriptsize $\Arr$};
\node[rotate=-90] at (1.07*\len,0.75*\len) {\scriptsize $\Arr$};
\draw[line width=1] 
    (-0.5*\len,0) to (0.5*\len,0)
    (-0.5*\len,0) to (-0.5*\len,0.5*\len)
    (0.5*\len,0) to (0.5*\len,0.5*\len)
    (-0.5*\len,0.5*\len) to (0.5*\len,0.5*\len);

\node at (-0.45*\len,0.25*\len) {\normalsize $\bullet_z$};
\node at (0.55*\len,0.25*\len) {\normalsize $\bullet_z$};
\node[rotate=180] at (0,0) {\scriptsize $\Arr$};
\node[rotate=180] at (0,0.5*\len) {\scriptsize $\Arr$};

\filldraw [black] (-0.5*\len,0) circle (1pt);
\filldraw [black] (0.5*\len,0) circle (1pt);
\filldraw [black] (-0.5*\len,0.5*\len) circle (1pt);
\filldraw [black] (0.5*\len,0.5*\len) circle (1pt);

\node at (0,0) [anchor=north west] {$d$};
\node at (0.2*\len,0.5*\len) [anchor=south] {$c$};
\draw (-0.1*\len,0.25*\len) node {${h}$};

\begin{scope}[yshift=0.5*\len]
\draw[line width=1] 
    (-0.5*\len,0) to (0.5*\len,0)
    (-0.5*\len,0) to (-0.5*\len,0.5*\len)
    (0.5*\len,0) to (0.5*\len,0.5*\len)
    (-0.5*\len,0.5*\len) to (0.5*\len,0.5*\len);

\node at (-0.45*\len,0.25*\len) {\normalsize $\bullet_y$};
\node at (0.55*\len,0.25*\len) {\normalsize $\bullet_y$};
\node[rotate=180] at (0,0.5*\len) {\scriptsize $\Arr$};

\filldraw [black] (-0.5*\len,0) circle (1pt);
\filldraw [black] (0.5*\len,0) circle (1pt);
\filldraw [black] (-0.5*\len,0.5*\len) circle (1pt);
\filldraw [black] (0.5*\len,0.5*\len) circle (1pt);

\node at (0.2*\len,0.5*\len) [anchor=south] {$b$};

\draw (-0.1*\len,0.25*\len) node    {$ {g}$};
\end{scope}
\begin{scope}[yshift=\len]
\draw[line width=1] 
    (-0.5*\len,0) to (0.5*\len,0)
    (-0.5*\len,0) to (-0.5*\len,0.5*\len)
    (0.5*\len,0) to (0.5*\len,0.5*\len)
    (-0.5*\len,0.5*\len) to (0.5*\len,0.5*\len);

\node at (-0.45*\len,0.25*\len) {\normalsize $\bullet_x$};
\node at (0.55*\len,0.25*\len) {\normalsize $\bullet_x$};
\node[rotate=180] at (0,0.5*\len) {\scriptsize $\Arr$};

\filldraw [black] (-0.5*\len,0) circle (1pt);
\filldraw [black] (0.5*\len,0) circle (1pt);
\filldraw [black] (-0.5*\len,0.5*\len) circle (1pt);
\filldraw [black] (0.5*\len,0.5*\len) circle (1pt);
\node at (0,0.5*\len) [anchor=south west] {$a$};

\draw (-0.1*\len,0.25*\len) node    {$ {f}$};
\end{scope}
\end{tikzpicture}
\!\overset{\eqref{star_triprod}}{=}\!
\sum_{s,t\in\Irr\cx}\!\!\!
\def\len{2cm}
\begin{tikzpicture}[baseline={([yshift=-.5ex]current bounding box.center)}]

\draw[line width=1]
(-0.5*\len,0.5*\len) .. controls (-1*\len,0.75*\len) and (-1*\len,1.25*\len).. (-0.5*\len,1.5*\len);
\draw[line width=1]
(-0.5*\len,0) .. controls (-1.25*\len,0.25*\len) and (-1.25*\len,1.25*\len).. (-0.5*\len,1.5*\len);
\draw[line width=1] 
(0.5*\len,0.5*\len) .. controls (1*\len,0.75*\len) and (1*\len,1.25*\len).. (0.5*\len,1.5*\len);
\draw[line width=1]
(0.5*\len,0) .. controls (1.25*\len,0.25*\len) and (1.25*\len,1.25*\len).. (0.5*\len,1.5*\len);

\draw (0.75*\len,0.4*\len) node {\normalsize $\ {\varphi^k}$};
\draw (-0.75*\len,0.4*\len) node {\normalsize $\ {\overline{\varphi}_k}$};
\draw (0.68*\len,1*\len) node {\normalsize $\ {\beta^j}$};
\draw (-0.64*\len,1*\len) node {\normalsize $\ {\overline{\beta}_{\!j}}$};

\node at (-1.1*\len,0.75*\len) [anchor=east] {$s$};
\node at (1.1*\len,0.75*\len) [anchor=west] {$s$};
\node at (-0.83*\len,1*\len) {$\bullet_t$};
\node at (0.9*\len,0.9*\len) {$\bullet_t$};
\node[rotate=-90] at (-1.07*\len,0.75*\len) {\scriptsize $\Arr$};
\node[rotate=-90] at (1.07*\len,0.75*\len) {\scriptsize $\Arr$};
\draw[line width=1] 
    (-0.5*\len,0) to (0.5*\len,0)
    (-0.5*\len,0) to (-0.5*\len,0.5*\len)
    (0.5*\len,0) to (0.5*\len,0.5*\len)
    (-0.5*\len,0.5*\len) to (0.5*\len,0.5*\len);

\node at (-0.45*\len,0.25*\len) {\normalsize $\bullet_z$};
\node at (0.55*\len,0.25*\len) {\normalsize $\bullet_z$};
\node[rotate=180] at (0,0) {\scriptsize $\Arr$};
\node[rotate=180] at (0,0.5*\len) {\scriptsize $\Arr$};

\filldraw [black] (-0.5*\len,0) circle (1pt);
\filldraw [black] (0.5*\len,0) circle (1pt);
\filldraw [black] (-0.5*\len,0.5*\len) circle (1pt);
\filldraw [black] (0.5*\len,0.5*\len) circle (1pt);

\node at (0,0) [anchor=north west] {$d$};
\node at (0.2*\len,0.5*\len) [anchor=south] {$c$};

\draw (-0.1*\len,0.25*\len) node    {$  {h}$};

\begin{scope}[yshift=0.5*\len]
\draw[line width=1] 
    (-0.5*\len,0) to (0.5*\len,0)
    (-0.5*\len,0) to (-0.5*\len,0.5*\len)
    (0.5*\len,0) to (0.5*\len,0.5*\len)
    (-0.5*\len,0.5*\len) to (0.5*\len,0.5*\len);

\node at (-0.45*\len,0.25*\len) {\normalsize $\bullet_y$};
\node at (0.55*\len,0.25*\len) {\normalsize $\bullet_y$};
\node[rotate=180] at (0,0.5*\len) {\scriptsize $\Arr$};

\filldraw [black] (-0.5*\len,0) circle (1pt);
\filldraw [black] (0.5*\len,0) circle (1pt);
\filldraw [black] (-0.5*\len,0.5*\len) circle (1pt);
\filldraw [black] (0.5*\len,0.5*\len) circle (1pt);

\node at (0.2*\len,0.5*\len) [anchor=south] {$b$};

\draw (-0.1*\len,0.25*\len) node    {$  {g}$};
\end{scope}
\begin{scope}[yshift=\len]
\draw[line width=1] 
    (-0.5*\len,0) to (0.5*\len,0)
    (-0.5*\len,0) to (-0.5*\len,0.5*\len)
    (0.5*\len,0) to (0.5*\len,0.5*\len)
    (-0.5*\len,0.5*\len) to (0.5*\len,0.5*\len);

\node at (-0.45*\len,0.25*\len) {\normalsize $\bullet_x$};
\node at (0.55*\len,0.25*\len) {\normalsize $\bullet_x$};
\node[rotate=180] at (0,0.5*\len) {\scriptsize $\Arr$};

\filldraw [black] (-0.5*\len,0) circle (1pt);
\filldraw [black] (0.5*\len,0) circle (1pt);
\filldraw [black] (-0.5*\len,0.5*\len) circle (1pt);
\filldraw [black] (0.5*\len,0.5*\len) circle (1pt);

\node at (0,0.5*\len) [anchor=south west] {$a$};

\draw (-0.1*\len,0.25*\len) node    {$  {f}$};
\end{scope}
\end{tikzpicture}
\end{equation*}
which both equal to $\star_{xyz}(f\circ_b g\circ_{c}h)$, leading to the desired result.
\end{proof}

\begin{rem}
We will fix the label set to be $\mathrm{J}_{i j}=\Irr\cx_{i j}$.
This is intrinsically related to the following construction, see also Section~\ref{sec:drinfeld_center}.
Given $a \in \cx_{i }$ and $j \in \Gamma$, we set
\begin{equation*}
    \mathrm{Z}_j(a)=\bigoplus_{x\in\Irr\cx_{i j}}\,\overline{x}\otimes a\otimes x
\end{equation*}
in the ind-category of $\cx_{j j}$.
Since $\cx$ is semisimple, this can be interpreted as a coend construction.
Therefore, the vector spaces of tubes can be described as $\mathrm{T}_{a;b}=\Hsp{\overline{b}\; \mathrm{Z}_j(a)}$ for $b \in \cx_{j j}$, as long as the above assumption on $\rJ$ stands.
\end{rem}

When $\cx$ is a spherical fusion category, the above specializes to the \emph{tube algebra} $\Tube$, which is a symmetric Frobenius algebra whose representations realize the Drinfeld center of $\cx$.
The corresponding claim for multifusion setting is straightforward, but if we allow for $\cx$ to have infinitely many isomorphism classes of simple objects, $\Tube$ is no longer a symmetric Frobenius algebra, since it is not unital.

Let $\cx$ be a spherical semisimple multitensor category, and let $\Irr\cx\subset \IndexSet\subset\Iso\cx$ be in the diagonal part as in Definition~\ref{def:data-for-tube-alg}.
The vector space 
\begin{equation*}
\Tubecx\coloneqq \bigoplus_{a,b\in\IndexSet}\mathrm{T}_{a;b}=\bigoplus_{a,b\in\IndexSet}\bigoplus_{x\in \Irr \cx}\,\Hsp{\overline{b}\,\overline{x}\,a\,x}
\end{equation*}
is endowed with the structure of an associative $\bk$-algebra with local units. The product is given by welding of tube elements, i.e., for $f\in\Hsp{\overline{b}\overline{x}ax}$ and $g\in\Hsp{\overline{c}\,\overline{y}by}$ 
\begin{equation*}
f\cdot g\coloneqq \mathrm{w}(f\otimes g)=\star_{xy}(f\circ_b g)
\end{equation*}
which is associative by Proposition~\ref{prop:associativity}.
Now, given a finite set $F\subset\IndexSet$, multiplication by the idempotent
\begin{equation*}
\Id_F\coloneqq\;\sum_{a\in F}\,\Id_a\in \bigoplus_{a\in F}\,\Hsp{\overline{a}\,a}\,.
\end{equation*}
from both sides projects $\Tubecx$ to $\bigoplus_{a,b\in F}\mathrm{T}_{a;b}$.

\begin{defi}
Let $\cx$ be a spherical semisimple multitensor category and $\Irr\cx\subset \IndexSet\subset\Iso\cx$ be in the diagonal part as in Definition~\ref{def:data-for-tube-alg}. The associative $\bk$-algebra with local units $\Tubecx$ is called the \textit{tube algebra of $\cx$}.
\end{defi}

In particular, a homogeneous element of the tube algebra $f\in\Hsp{\overline{b}\overline{x}ax}$ can be depicted as
\def\len{2cm}
\def\radL{1.4cm}
\def\radS{0.5cm}
\begin{equation*}
\tubelem{a}{b}{x}{f}
\;\equiv\quad
\begin{tikzpicture}[baseline={([yshift=-.5ex]current bounding box.center)}]
\draw[line width=1] (0,0) circle (\radL)
(0,0) circle (\radS);
\draw[line width=1] 
    (0,\radS) to (0,\radL);
\node[rotate=90] at (0,0.6*\radL) {\scriptsize $\Arr$};
\node[rotate=180] at (270:\radL) {\scriptsize $\Arr$};
\node[rotate=180] at (270:\radS) {\scriptsize $\Arr$};
\filldraw [black] (0,\radS) circle (1.5pt);
\filldraw [black] (0,\radL) circle (1.5pt);
\node at (0,0.7*\radL)  {$=$};
\node at (0.05,0.7*\radL) [anchor=west] {$x$};
\node at (-40:1.1*\radL) [anchor=north] {$b$};
\node at (270:\radS) [anchor=north] {$a$};
\node at (-15:0.5*\radL) [anchor=north]  {$ {f}$};
\end{tikzpicture}
\end{equation*}
which justifies the terminology.

The tube algebra comes with a symmetric linear form given by the trace coming from the spherical structure of $\cx$, defined by
\begin{equation*}
    \epsilon\Colon\Tubecx\to\bk,\qquad \epsilon(f)=
    \begin{cases}
    \tr_a(f) & \text{if $f\in\Hsp{\overline{a}\,\un_i a \un_i}$}, \\
    0 & \text{if $f \in\Hsp{\overline{b} \overline{x} a x}$, $x \neq \un_i$ or $a \neq b$,}
    \end{cases}
\end{equation*}
where $a \in \cx_{i }$ and we interpret $\tr_a$ as a scalar by~\eqref{scalar_id}.
In the case that $\cx$ is fusion and $\IndexSet$ is finite, $\Tubecx$ is a Frobenius algebra
\begin{equation*}
    \Tubecx^*\cong \bigoplus_{a,b\in \IndexSet}\bigoplus_{x\in \Irr \cx}\,\Hsp{a\,x\,\overline{b}\,\overline{x}}^*\cong 
    \bigoplus_{a,b\in \IndexSet}\bigoplus_{x\in \Irr \cx}\,\Hsp{x\,b\,\overline{x}\,\overline{a}}\cong \Tubecx\,.
\end{equation*}

\section{Tube algebra representations}\label{sec:tub-alg-rep}

The goal of this section is to explicitly define a braided monoidal structure on the category $\Rep\Tubecx$ of tube algebra representations of a spherical semisimple multitensor category $\cx$. In Section~\ref{sec:drinfeld_center} this structure is related to the naturally induced on the Drinfeld center of the ind-completion of $\cx$.

\subsection{Tensor product of representations}\label{sec:tensor_prod}

Let $\cx$ be a spherical semisimple multitensor category and suppose we are given $\Irr\cx\subset \IndexSet\subset\Iso\cx$ in the diagonal part as in Definition~\ref{def:data-for-tube-alg}.
Every tube algebra representation $M\in\Rep\Tubecx$ admits a canonical decomposition
\begin{equation*}
    M\cong\bigoplus_{a\in\IndexSet}\,M_a,\quad \text{ where}\quad M_a\coloneqq M.\Id_a  
\end{equation*}
determined by the action of the local units $\Id_a$.

In order to define the tensor product of two tube algebra representations, we will consider the following auxiliary space
\begin{equation}\label{eq:prod-mult-space}
\auxprod\coloneqq\mathrm{T}_{ab;c}=\bigoplus_{x\in \Irr\cx}\,\Hsp{\overline{c}\,\overline{x}\,a\,b\,x}
\end{equation}
where $a$ and $b$ represent the degree of the vectors of the representations and $c$ the label where the tube algebra acts.

\begin{prop}\label{prop:tubeaction_onprod}
The tube algebra $\Tubecx$ acts from the right on $\auxprod$ via
\begin{equation}\label{eq:tubeaction_onprod}
    \pi.f\coloneqq\mathrm{w}(\pi\otimes f)=\star_{yx}(\pi\circ_c f)
\end{equation}
for $f\in\Hsp{\overline{d}\,\overline{y}\,c\,y}$ and $\pi\in\Hsp{\overline{c}\,\overline{x}\,a\,b\,x}$.
\end{prop}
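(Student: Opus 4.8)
The plan is to recognise that $\auxprod$ is, up to the harmless replacement of an object of $\IndexSet$ by the (possibly non-$\IndexSet$) object $a\otimes b$, just another space of tubes, and that the asserted action is literally the welding map; everything then follows from the machinery already in place. Concretely, by~\eqref{eq:prod-mult-space} we have $\auxprod=\mathrm{T}_{ab;c}$ in the notation of Definition~\ref{def:data-for-tube-alg}, now with $a\otimes b\in\Iso\cx$ occupying the ``outer'' slot. The welding formula $\star_{xy}({-}\circ_b{-})$ and all the identities established for it --- in particular Proposition~\ref{prop:associativity} --- are stated and proved for an \emph{arbitrary} object of $\Iso\cx$ in that slot, so they apply verbatim with $a\otimes b$ there. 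Hence $\pi.f=\mathrm{w}(\pi\otimes f)$ is exactly the welding map
\[
\mathrm{w}\colon\mathrm{T}_{ab;c}\otimes\mathrm{T}_{c;d}\longrightarrow\mathrm{T}_{ab;d}=\Auxprod{a}{b}{d},
\]
and the second equality in the statement is the explicit form of $\mathrm{w}$ from Definition~\ref{def:data-for-tube-alg}(ii). Summing over $c$ and extending bilinearly over $\Tubecx=\bigoplus_{c,d\in\IndexSet}\mathrm{T}_{c;d}$, this gives a right action on $\bigoplus_{c\in\IndexSet}\Auxprod{a}{b}{c}$, of which each $\auxprod$ is recovered as the image of the local unit $\Id_c$.

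Module associativity would then be read off from Proposition~\ref{prop:associativity} applied to the quadruple of objects $(a\otimes b,\,c,\,d,\,e)$: going around the commuting square there, the welding maps are $\pi\mapsto\pi.f$, the tube-algebra multiplication $f\cdot g$ on $\mathrm{T}_{c;d}\otimes\mathrm{T}_{d;e}$, and their two composites, so commutativity is precisely $\pi.(f\cdot g)=(\pi.f).g$. Since that proposition is already valid for all objects of $\Iso\cx$, no further computation is required.

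The one thing needing a short separate argument is the locality axiom for the right $\Tubecx$-module $\bigoplus_{c\in\IndexSet}\Auxprod{a}{b}{c}$. Given a finite-dimensional subspace $W$, I would pick a finite $F\subset\IndexSet$ with $W\subset\bigoplus_{c\in F}\Auxprod{a}{b}{c}$ and set $e=\Id_F=\sum_{c\in F}\Id_c$. For $\pi\in\Auxprod{a}{b}{c}$ the product $\pi.\Id_{c'}=\mathrm{w}(\pi\otimes\Id_{c'})$ vanishes for $c'\neq c$ (the boundary labels do not match), while $\pi.\Id_c=\pi$: indeed $\Id_c$ is the identity for $\circ_c$ by Lemma~\ref{comp_prop}(i), so $\pi\circ_c\Id_c=\pi$, and since $\Id_c$ lies in the $\un$-labelled summand of $\mathrm{T}_{c;c}$ the ensuing resolution only pairs the loop $x\in\Irr\cx$ of $\pi$ against a trivial loop, returning $\pi$ by Lemma~\ref{star_prop}(iv). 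Hence $W.e=W$, and the same computation shows $\Id_F$ is idempotent, so the module is locally presented.

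I do not expect a real obstacle here: the content has all been front-loaded into the welding construction, Proposition~\ref{prop:associativity} and Lemma~\ref{star_prop}. The only care needed is bookkeeping --- keeping the ``outer'' object $a\otimes b$ free to range outside $\IndexSet$ while the ``inner'' objects $c,d,e$ stay inside it, and keeping the order of the resolved loops consistent with the convention fixed in Definition~\ref{def:data-for-tube-alg}(ii).
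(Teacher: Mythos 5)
Your proposal is correct and follows essentially the same route as the paper: the paper likewise derives module associativity directly from Proposition~\ref{prop:associativity} (which is stated for arbitrary objects of $\Iso\cx$, so it applies with $a\otimes b$ in the outer slot) and establishes locality via the idempotents $\Id_c$ attached to the finitely many labels appearing in a finite-dimensional subspace. Your extra verification that $\pi.\Id_c=\pi$ via Lemma~\ref{comp_prop}(i) and Lemma~\ref{star_prop}(iv) is a welcome detail the paper leaves implicit.
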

\begin{proof}
The module associativity follows from Proposition~\ref{prop:associativity}. Locality is ensured by taking the sum of the idempotents $\Id_c$ for each basis element $\pi\in\Hsp{\overline{c}\,\overline{x}\,a\,b\,x}$ in a finite-dimensional $W\subset\auxprod$.
\end{proof}
Now, given tube algebra representations $M,N\in\Rep\Tubecx$ and $c\in\IndexSet$ define the vector space
\begin{equation*}
(M\overset{\sim}{\Boxt} N)_c\coloneqq   \bigoplus_{a,b\in \IndexSet}\,M_a\otimes N_b\otimes \auxprod
\end{equation*}
A homogeneous vector $m\otimes n \otimes \pi\in M_a\otimes N_b\otimes \auxprod$ is graphically depicted as
\begin{equation*}
m\otimes n \otimes \pielem{a}{b}{c}{x}{\pi}
\equiv\; \tenEl{\pi}{c}{x}{a}{b}{m}{ n }
\end{equation*}

The tube algebra action on $\auxprod$ defines an action on $M\overset{\sim}{\Boxt} N$. However, we need to account for the actions on the representations $M$ and $N$. To this end, we define an equivalence relation on $M\overset{\sim}{\Boxt} N$:
\begin{enumerate}[(i)]
    \item Given a tube algebra element $f\in\Hsp{\overline{a}\,\overline{u}a'u}$ acting on $m\in M_{a'}$, and vectors $n\in N_b$ and $\pi\in\auxprod$ we consider the following relation
\def\len{1.8cm}
\begin{equation}
\label{eq:Rel1}
m.f\otimes n \otimes\pielem{a}{b}{c}{x}{\pi}
\;\sim\,
\sum_{y,\tilde{b}\in\Irr\cx}\rd_{\tilde{b}}\;m\otimes n.\,\frac{\overline{\beta}_j}{\rd_u}\otimes\!\! \def\len{2cm}
\begin{tikzpicture}[xscale=0.8,baseline={([yshift=-.5ex]current bounding box.center)}]
\draw[line width=1]
(-0.75*\len,0) .. controls (-1.25*\len,0.25*\len) and (-1.25*\len,0.75*\len).. (-0.75*\len,\len);
\draw[line width=1]
(0.75*\len,0) .. controls (1.25*\len,0.25*\len) and (1.25*\len,0.75*\len).. (0.75*\len,\len);

\node at (-0.7*\len,0.25*\len) {\normalsize $\bullet_x$};
\node at (0.8*\len,0.25*\len) {\normalsize $\bullet_x$};
\node at (-0.7*\len,0.75*\len) {\normalsize $\bullet_u$};
\node at (0.8*\len,0.75*\len) {\normalsize $\bullet_u$};

\draw[line width=1] 
    (-0.75*\len,0) to (0.75*\len,0)
    (-0.75*\len,0) to (-0.75*\len,0.5*\len)
    (0.75*\len,0) to (0.75*\len,0.5*\len)
    (-0.75*\len,0.5*\len) to (0.75*\len,0.5*\len)
    (0,0.5*\len) to (0,\len)
    (0,\len) to (0.75*\len,\len)
    (0.75*\len,0.5*\len) to (0.75*\len,\len)
    (-0.75*\len,0.5*\len) to (-0.75*\len,\len)
    (0,\len) to (-0.75*\len,\len);

\node[rotate=-90] at (-1.125*\len,0.5*\len) {\scriptsize $\Arr$};
\node[rotate=-90] at (1.125*\len,0.5*\len) {\scriptsize $\Arr$};
\node[rotate=180] at (0,0) {\scriptsize $\Arr$};
\node[rotate=-90] at (0,0.75*\len) {\scriptsize $\Arr$};

\node[rotate=180] at (-0.4*\len,\len) {\scriptsize $\Arr$};
\node[rotate=180] at (-0.4*\len,0.5*\len) {\scriptsize $\Arr$};
\node[rotate=180] at (0.4*\len,0.5*\len) {\scriptsize $\Arr$};
\node[rotate=180] at (0.4*\len,\len) {\scriptsize $\Arr$};

\filldraw [black] (0,\len) circle (1pt);
\filldraw [black] (0.75*\len,\len) circle (1pt);
\filldraw [black] (-0.75*\len,\len) circle (1pt);
\filldraw [black] (-0.75*\len,0) circle (1pt);
\filldraw [black] (0.75*\len,0) circle (1pt);
\filldraw [black] (0,0.5*\len) circle (1pt);
\filldraw [black] (-0.75*\len,0.5*\len) circle (1pt);
\filldraw [black] (0.75*\len,0.5*\len) circle (1pt);

\node at (1.125*\len,0.5*\len) [anchor=west] {$y$};
\node at (-1.125*\len,0.5*\len) [anchor=east] {$y$};
\node at (0,0.75*\len) [anchor=east] {$u$};
\node at (0,0) [anchor=north] {$c$};
\node at (0.4*\len,0.5*\len) [anchor=north] {$a$};
\node at (0.4*\len,\len) [anchor=south] {$a'$};
\node at (-0.4*\len,0.5*\len) [anchor=north] {$b$};
\node at (-0.4*\len,\len) [anchor=south] {$\Tilde{b}$};

\node at (0.4*\len,0.75*\len) {{$f$}};
\node at (-0.4*\len,0.75*\len) { {$\beta^j$}};

\node at (-0.9*\len,0.5*\len) { {$\overline{\varphi}_{\!k}$}};
\node at (0.95*\len,0.5*\len) { {$\varphi^k$}};

\draw (0,0.25*\len) node {${\pi}$};
\end{tikzpicture}
\end{equation}
where $(\beta,\overline{\beta})$ is a pair of dual bases of $\Hsp{\overline{b}\,\overline{u}\tilde{b}u}$.
\item  Similarly, accounting for the tube algebra action on $N$, for $g\in\Hsp{\overline{b}\,\overline{v}b'v}$, and $m\in M_{a}$, $n\in N_{b'}$ and $\pi\in\auxprod$ we set the relation
\def\len{1.8cm}
\begin{equation}\label{eq:Rel2}
m\otimes n.g\otimes\pielem{a}{b}{c}{x}{\pi}
\sim
\sum_{y,\tilde{a}\in\Irr\cx}\rd_{\tilde{a}}\;m.\,\frac{\overline{\alpha}_{i}}{\rd_v}\otimes n \otimes \star_{vx}\left[
\!\!
\def\len{2cm}
\begin{tikzpicture}[xscale=0.8,baseline={([yshift=-.5ex]current bounding box.center)}]

\draw[line width=1] 
    (-0.75*\len,0) to (0.75*\len,0)
    (-0.75*\len,0) to (-0.75*\len,0.5*\len)
    (0.75*\len,0) to (0.75*\len,0.5*\len)
    (-0.75*\len,0.5*\len) to (0.75*\len,0.5*\len)
    (0,0.5*\len) to (0,\len)
    (0,\len) to (0.75*\len,\len)
    (0.75*\len,0.5*\len) to (0.75*\len,\len)
    (-0.75*\len,0.5*\len) to (-0.75*\len,\len)
    (0,\len) to (-0.75*\len,\len);

\node[rotate=180] at (0,0) {\scriptsize $\Arr$};
\node[rotate=-90] at (0,0.75*\len) {\scriptsize $\Arr$};
\node[rotate=-90] at (-0.75*\len,0.75*\len) {\scriptsize $\Arr$};
\node[rotate=-90] at (0.75*\len,0.75*\len) {\scriptsize $\Arr$};
\node[rotate=-90] at (-0.75*\len,0.25*\len) {\scriptsize $\Arr$};
\node[rotate=-90] at (0.75*\len,0.25*\len) {\scriptsize $\Arr$};

\node[rotate=180] at (-0.4*\len,\len) {\scriptsize $\Arr$};
\node[rotate=180] at (-0.4*\len,0.5*\len) {\scriptsize $\Arr$};
\node[rotate=180] at (0.4*\len,0.5*\len) {\scriptsize $\Arr$};
\node[rotate=180] at (0.4*\len,\len) {\scriptsize $\Arr$};

\filldraw [black] (0,\len) circle (1pt);
\filldraw [black] (0.75*\len,\len) circle (1pt);
\filldraw [black] (-0.75*\len,\len) circle (1pt);
\filldraw [black] (-0.75*\len,0) circle (1pt);
\filldraw [black] (0.75*\len,0) circle (1pt);
\filldraw [black] (0,0.5*\len) circle (1pt);
\filldraw [black] (-0.75*\len,0.5*\len) circle (1pt);
\filldraw [black] (0.75*\len,0.5*\len) circle (1pt);

\node at (-0.75*\len,0.25*\len) [anchor=east] {$x$};
\node at (0.75*\len,0.25*\len) [anchor=west] {$x$};
\node at (0.75*\len,0.75*\len) [anchor=west] {$v$};
\node at (-0.75*\len,0.75*\len) [anchor=east] {$v$};
\node at (0,0.75*\len) [anchor=east] {$v$};
\node at (0,0) [anchor=north] {$c$};
\node at (0.4*\len,0.5*\len) [anchor=north] {$a$};
\node at (0.4*\len,\len) [anchor=south] {$\tilde{a}$};
\node at (-0.4*\len,0.5*\len) [anchor=north] {$b$};
\node at (-0.4*\len,\len) [anchor=south] {$b'$};

\node at (0.4*\len,0.75*\len) { {$\alpha^i$}};
\node at (-0.4*\len,0.75*\len) {${g}$};

\draw (0,0.25*\len) node  {${\pi}$};
\end{tikzpicture}
\right]
\end{equation}    
for $(\alpha,\overline{\alpha})$ a pair of dual bases of $\Hsp{\overline{a}\,\overline{v}\tilde{a}v}$.
\end{enumerate}
\begin{defi}\label{def:tensor_prod}
Given tube algebra representations $M,N\in \Rep\Tubecx$, their \textit{tensor product} is defined as the quotient
\begin{equation}\label{tensor_prod}
    M\Boxt N\coloneqq\faktor{M\overset{\sim}{\Boxt} N}{\sim}
\end{equation}
where the equivalence relation $\sim$ is defined by~\eqref{eq:Rel1} and~\eqref{eq:Rel2}.
\end{defi}

\begin{rem}
In the setting of tensor categories, the summand of $x = \un$ in~\eqref{eq:prod-mult-space} is often used to describe the monoidal product, which can be interpreted as a variation of the Day convolution product, see for example~\cite[Section 3]{levinwen2024}.
While it is possible to reduce the computation to the case $x = \un_i$ for some $i$ (see Proposition~\ref{prop:rel_unit_simple} (iii)), we chose to work with this more flexible presentation because there is no preferred choice of the $0$-cell $i \in \Gamma$, and the braiding will inevitably introduce nontrivial labels $x$.
\end{rem}

\begin{rem}
The previously defined relations can be also graphically expressed in terms of tube-like notation: the relation starting from an action on $M$ as
\begin{equation*}
\tenEl{\pi}{c}{x}{a}{b}{m.f}{n}
\qquad\sim \qquad
\sum_{\tilde{b}\in\Irr\cx}\;\rd_{\tilde{b}}\rd_u^{-1}\;\star_{ux}\left(\tenEl{\! {\beta^j}\circ_u \!f\!\circ_{ab}\!\pi}{c}{ux}{a'}{\tilde{b}}{m}{ n . {\overline{\beta}_{\!j}}}\right)
\end{equation*}
and the relation involving the tube algebra action on $N$ is graphically presented as
\begin{equation*}
\tenEl{\pi}{c}{x}{a}{b}{m}{n.g}
\qquad\sim \qquad
\sum_{\tilde{a}\in\Irr\cx}\,\rd_{\tilde{a}}\rd_v^{-1}\;\star_{vx}\left(\tenEl{ {\alpha^i}\!\circ_v \!g\!\circ_{ab}\!\pi}{c}{vx}{\tilde{a}}{b'}{m.{\overline{\alpha}_{\!i}}}{n}\right)\;.
\end{equation*}
\end{rem}

\begin{prop}\label{prop:action_tensor_prod}
Let $M,N\in\Rep\Tubecx$ be tube algebra representations.
The tube algebra action~\eqref{eq:tubeaction_onprod} on $\auxprod$ descends to an action on the tensor product $M\Boxt N$.
\end{prop}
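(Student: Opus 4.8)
The plan is to check that the right $\Tubecx$-action of Proposition~\ref{prop:tubeaction_onprod} preserves the linear subspace $R\subseteq M\overset{\sim}{\Boxt}N$ spanned by the differences of the two sides of the relations~\eqref{eq:Rel1} and~\eqref{eq:Rel2}; since $M\Boxt N=(M\overset{\sim}{\Boxt}N)/R$ by Definition~\ref{def:tensor_prod}, the action then passes to the quotient. The action affects only the auxiliary factor, $(m\otimes n\otimes\pi).h=m\otimes n\otimes(\pi.h)$ with $\pi.h=\mathrm{w}(\pi\otimes h)\in\mathrm{T}_{ab;d}$ for a homogeneous tube element $h\in\Hsp{\overline d\,\overline z\,c\,z}$, so it suffices to take one generator of $R$ and show that acting by such an $h$ on the $c$-leg lands again in $R$. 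I will do this for~\eqref{eq:Rel1}; the case of~\eqref{eq:Rel2} is identical after exchanging the roles of $M$ and $N$, and of $u$ and $v$, the resolution $\star_{vx}$ displayed there playing the role of $\star_{ux}$ below.

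Fix the generator attached to~\eqref{eq:Rel1}, coming from $m\in M_{a'}$, $f\in\Hsp{\overline a\,\overline u\,a'\,u}$, $n\in N_b$ and $\pi\in\auxprod$: it is the difference of $m.f\otimes n\otimes\pi$ and the sum $\sum_{y,\tilde b}\rd_{\tilde b}\,m\otimes\rd_u^{-1}(n.\overline\beta_j)\otimes\Xi$, where $\Xi=\rd_u^{-1}\,\star_{ux}(\beta^j\circ_u f\circ_{ab}\pi)\in\mathrm{T}_{a'\tilde b;c}$. Acting on the first term by $h$ on the $c$-leg yields $m.f\otimes n\otimes(\pi.h)$; since $\pi.h\in\mathrm{T}_{ab;d}$ this is \emph{literally} the left-hand side of~\eqref{eq:Rel1} with $(\pi,c)$ replaced by $(\pi.h,d)$, so it is $\sim$-equivalent to $\sum_{y,\tilde b}\rd_{\tilde b}\,m\otimes\rd_u^{-1}(n.\overline\beta_j)\otimes\Xi''$ with $\Xi''=\rd_u^{-1}\,\star_{u\ast}(\beta^j\circ_u f\circ_{ab}(\pi.h))$ the corresponding diagram built from $\pi.h$ (here $\star_{u\ast}$ resolves the $u$-leg together with the leg produced by the welding $\pi.h=\mathrm{w}(\pi\otimes h)$). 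Acting on the second term by $h$ yields $\sum_{y,\tilde b}\rd_{\tilde b}\,m\otimes\rd_u^{-1}(n.\overline\beta_j)\otimes(\Xi.h)$. Everything therefore reduces to the purely diagrammatic identity $\Xi.h=\Xi''$: welding $h$ onto the disjoint $c$-leg commutes with the construction of $\Xi$ — the relative compositions $\circ_u$ and $\circ_{ab}$ and the resolution $\star_{ux}$.

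To establish $\Xi.h=\Xi''$ I would argue exactly as in the proof of Proposition~\ref{prop:associativity}. Unfolding $\pi.h=\star_{zx}(\pi\circ_c h)$ and $\Xi.h=\mathrm{w}(\Xi\otimes h)$, associativity of relative composition (Lemma~\ref{comp_prop}) moves $\circ_c h$ next to $\pi$, while the distributivity~\eqref{distr_star} of $\star$ over $\circ$ pulls all $\star$-resolutions outside the $\circ_u,\circ_{ab}$-gluings; after these moves $\Xi.h$ and $\Xi''$ are each $\rd_u^{-1}$ times a nested application of $\star$ to one and the same element $\beta^j\circ_u f\circ_{ab}(\pi\circ_c h)$. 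The only genuinely nontrivial point — and the one I expect to be the main obstacle — is that in $\Xi.h$ this nested $\star$ first resolves the fused $(ux)$-leg and then pairs the outcome against the $z$-leg of $h$, whereas in $\Xi''$ it first resolves the fused $(xz)$-leg and then pairs the outcome against the $u$-leg; the two agree because, by the triple-product identity~\eqref{star_triprod} together with the commutativity of resolutions along independent labels (Lemma~\ref{star_prop}(iii)), both equal the simultaneous resolution $\star_{uxz}$ of the common element, once the sums over $\Irr\cx$ are reindexed accordingly. Hence $\Xi.h=\Xi''$ term by term, the generator of $R$ is carried into $R$, the subspace $R$ is $\Tubecx$-invariant, and~\eqref{eq:tubeaction_onprod} descends to a well-defined action on $M\Boxt N$.
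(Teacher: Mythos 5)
Your proposal is correct and follows essentially the same route as the paper: act on the $c$-leg, apply the relation to the welded element $\pi.h$, and reduce everything to the interchange of nested $\star$-resolutions via Lemma~\ref{star_prop}~(ii) (the identity~\eqref{star_triprod}), exactly as in the paper's computation. Your explicit formulation in terms of the invariance of the subspace $R$ of relation differences is just a cleaner packaging of the same argument.
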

\begin{proof}
We need to verify that the action is compatible with the relations defining the quotient $M\Boxt N$. 
\def\len{1.75cm}
For the first relation~\eqref{eq:Rel1}, notice that $(m.g\otimes n \otimes\pi).f$ is given by
\begin{equation*}
m.g\otimes n \otimes\star_{xy}\left(
\begin{tikzpicture}[baseline={([yshift=-.5ex]current bounding box.center)},xscale=0.8]
\begin{scope}[yshift=-0.5*\len]
\draw[line width=1] 
    (-0.75*\len,0) to (0.75*\len,0)
    (-0.75*\len,0) to (-0.75*\len,0.5*\len)
    (0.75*\len,0) to (0.75*\len,0.5*\len)
    (-0.75*\len,0.5*\len) to (0.75*\len,0.5*\len);
\filldraw [black] (-0.75*\len,0) circle (1pt);
\filldraw [black] (0.75*\len,0) circle (1pt);
\node[rotate=180] at (0,0) {\scriptsize $\Arr$};
\node[rotate=-90] at (-0.75*\len,0.25*\len) {\scriptsize $\Arr$};
\node[rotate=-90] at (0.75*\len,0.25*\len) {\scriptsize $\Arr$};
\node at (0,0) [anchor=north] {$d$};
\node at (-0.75*\len,0.25*\len)[anchor=east] {$y$};
\node at (0.75*\len,0.25*\len)[anchor=west] {$y$};    
%
\draw (0.1*\len,0.25*\len) node {$ {f}$};
 \end{scope}
    \draw[line width=1] 
    (-0.75*\len,0) to (0.75*\len,0)
    (-0.75*\len,0) to (-0.75*\len,0.5*\len)
    (0.75*\len,0) to (0.75*\len,0.5*\len)
    (-0.75*\len,0.5*\len) to (0.75*\len,0.5*\len);
\node[rotate=-90] at (-0.75*\len,0.25*\len) {\scriptsize $\Arr$};
\node[rotate=-90] at (0.75*\len,0.25*\len) {\scriptsize $\Arr$};
\node[rotate=180] at (0,0) {\scriptsize $\Arr$};
\node[rotate=180] at (-0.4*\len,0.5*\len) {\scriptsize $\Arr$};
\node[rotate=180] at (0.4*\len,0.5*\len) {\scriptsize $\Arr$};
\filldraw [black] (-0.75*\len,0) circle (1pt);
\filldraw [black] (0.75*\len,0) circle (1pt);
\filldraw [black] (0,0.5*\len) circle (1pt);
\filldraw [black] (-0.75*\len,0.5*\len) circle (1pt);
\filldraw [black] (0.75*\len,0.5*\len) circle (1pt);
\node at (-0.75*\len,0.25*\len)[anchor=east] {$x$};
\node at (0.75*\len,0.25*\len)[anchor=west] {$x$};
\node at (-0.1*\len,0) [anchor=north] {$c$};
\node at (0.4*\len,0.5*\len) [anchor=south] {$a$};
\node at (-0.4*\len,0.5*\len) [anchor=south] {$b$};
\draw (0,0.25*\len) node      {$ {\pi}$};
\end{tikzpicture}
\right)
\overset{\eqref{eq:Rel1}}{\sim}\sum_{t,\tilde{b},z\in\Irr\cx}m\otimes n .\overline{\beta}_{\!j}\otimes
\star_{uz}\circ(\star_{xy})_z\left(\begin{tikzpicture}[baseline={([yshift=-.5ex]current bounding box.center)},xscale=0.9]
\draw[line width=1] 
    (-0.75*\len,0) to (0.75*\len,0)
    (-0.75*\len,0) to (-0.75*\len,0.5*\len)
    (0.75*\len,0) to (0.75*\len,0.5*\len)
    (-0.75*\len,0.5*\len) to (0.75*\len,0.5*\len)
    (0,0.5*\len) to (0,\len)
    (0,\len) to (0.75*\len,\len)
    (0.75*\len,0.5*\len) to (0.75*\len,\len)
    (-0.75*\len,0.5*\len) to (-0.75*\len,\len)
    (0,\len) to (-0.75*\len,\len)
    ;
\node[rotate=180] at (0,0) {\scriptsize $\Arr$};
\node[rotate=-90] at (0,0.75*\len) {\scriptsize $\Arr$};
\node[rotate=180] at (-0.4*\len,\len) {\scriptsize $\Arr$};
\node[rotate=180] at (-0.4*\len,0.5*\len) {\scriptsize $\Arr$};
\node[rotate=180] at (0.4*\len,0.5*\len) {\scriptsize $\Arr$};
\node[rotate=180] at (0.4*\len,\len) {\scriptsize $\Arr$};
\node[rotate=-90] at (0.75*\len,0.25*\len) {\scriptsize $\Arr$};
\node[rotate=-90] at (-0.75*\len,0.25*\len) {\scriptsize $\Arr$};
\node[rotate=-90] at (0.75*\len,0.75*\len) {\scriptsize $\Arr$};
\node[rotate=-90] at (-0.75*\len,0.75*\len) {\scriptsize $\Arr$};
\filldraw [black] (0,\len) circle (1pt);
\filldraw [black] (0.75*\len,\len) circle (1pt);
\filldraw [black] (-0.75*\len,\len) circle (1pt);
\filldraw [black] (-0.75*\len,0) circle (1pt);
\filldraw [black] (0.75*\len,0) circle (1pt);
\filldraw [black] (0,0.5*\len) circle (1pt);
\filldraw [black] (-0.75*\len,0.5*\len) circle (1pt);
\filldraw [black] (0.75*\len,0.5*\len) circle (1pt);
%
\node at (0,0.75*\len) [anchor=west] {\footnotesize $u$};
\node at (-0.75*\len,0.75*\len) [anchor=east] {$u$};
\node at (0.75*\len,0.75*\len) [anchor=west] {$u$};
\node at (-0.75*\len,0.25*\len) [anchor=east] {$x$};
\node at (0.75*\len,0.25*\len) [anchor=west] {$x$};
\node at (0,0) [anchor=north] {$c$};
\node at (0.4*\len,0.5*\len) [anchor=north] {$a$};
\node at (0.4*\len,\len) [anchor=south] {$\tilde{a}$};
\node at (-0.4*\len,0.5*\len) [anchor=north] {$b$};
\node at (-0.4*\len,\len) [anchor=south] {$\tilde{b}$};
\node at (0.4*\len,0.75*\len) { {$g$}};
\node at (-0.4*\len,0.75*\len) { {$\beta^j$}};
\draw (0,0.25*\len) node      {$ {\pi}$};
\begin{scope}[yshift=-0.5*\len]
\draw[line width=1] 
    (-0.75*\len,0) to (0.75*\len,0)
    (-0.75*\len,0) to (-0.75*\len,0.5*\len)
    (0.75*\len,0) to (0.75*\len,0.5*\len)
    (-0.75*\len,0.5*\len) to (0.75*\len,0.5*\len);
\filldraw [black] (-0.75*\len,0) circle (1pt);
\filldraw [black] (0.75*\len,0) circle (1pt);
\node[rotate=180] at (0,0) {\scriptsize $\Arr$};
\node[rotate=-90] at (0.75*\len,0.25*\len) {\scriptsize $\Arr$};
\node[rotate=-90] at (-0.75*\len,0.25*\len) {\scriptsize $\Arr$};
\node at (0,0) [anchor=north] {$d$};
\node at (-0.75*\len,0.25*\len) [anchor=east] {$y$};
\node at (0.75*\len,0.25*\len) [anchor=west] {$y$};
    \draw (0.1*\len,0.20*\len) node      {$ {f}$};
\end{scope}
\end{tikzpicture}
\right).
\end{equation*}
Applying Lemma~\ref{star_prop} (ii) we obtain the expression
\begin{equation*}
\sum_{t,\tilde{b},z\in\Irr\cx}m\otimes n.\overline{\beta}_j\otimes
\star_{zy}\circ(\star_{ux})_z\left[(\beta^j\circ_u g)\circ_{ab}\pi\circ_c f\right]=\sum_{\tilde{b}\in\Irr\cx}m\otimes n.\overline{\beta}_{\!j}\otimes\star_{ux}\left[(\beta^j\circ_u g)\circ_{ab}\pi\right].f    
\end{equation*}
proving the desired result. A similar argument shows the compatibility of the tube algebra action with the relation~\eqref{eq:Rel2}.
\end{proof}
\begin{prop}\label{prop:tensor_prod}
The tensor product from Definition~\ref{def:tensor_prod} extends to a functor
\begin{equation*}
    \Boxt \Colon \Rep\Tubecx\times \Rep\Tubecx\to\Rep\Tubecx
\end{equation*}
given for homomorphisms $F\colon M\to M'$ and $G\colon N\to N'$ between tube algebra representations by the assignment
\begin{equation*}
\begin{aligned}
    F\Boxt G\colon &\qquad M\Boxt N &\to&\; M'\Boxt N'\,,\\
    &m\otimes n\otimes \pi &\mapsto&\; F(m)\otimes G(n)\otimes \pi
\end{aligned} 
\end{equation*}
that, in particular, is a well-defined homomorphism of tube algebra representations.
\end{prop}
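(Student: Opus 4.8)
The plan is to check three things, in this order: (a) the assignment $m\otimes n\otimes\pi\mapsto F(m)\otimes G(n)\otimes\pi$ is compatible with the relations~\eqref{eq:Rel1} and~\eqref{eq:Rel2}, so that it descends from $M\overset{\sim}{\Boxt}N$ to the quotient $M\Boxt N$; (b) the induced map is a homomorphism of $\Tubecx$-representations; (c) the assignment is functorial and $M\Boxt N$ again lies in $\Rep\Tubecx$. The guiding observation is that the module vectors $m$ and $n$ enter both the quotient relations and the $\Tubecx$-action only through the respective module actions, which $F$ and $G$ intertwine by hypothesis, while all of the diagrammatic data ($\pi$, the $\star$-resolutions, the sums over dual bases) is left untouched.

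For (a) it suffices to treat the two generating relations. Apply $F\otimes G\otimes\id$ to both sides of~\eqref{eq:Rel1}. On the left-hand side one gets $F(m.f)\otimes G(n)\otimes\pi=\bigl(F(m)\bigr).f\otimes G(n)\otimes\pi$, using that $F$ is $\Tubecx$-linear; on the right-hand side, since the only appearance of a module vector other than $m$ in each summand is $n\mapsto n.\overline{\beta}_j$, one gets $\sum_{y,\tilde b}\rd_{\tilde b}\,F(m)\otimes\bigl(G(n)\bigr).\tfrac{\overline{\beta}_j}{\rd_u}\otimes(\cdots)$, where $(\cdots)$ is literally the same diagram, built from $\pi$, $f$ and $\beta^j$, as in~\eqref{eq:Rel1}. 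But this is exactly the relation~\eqref{eq:Rel1} applied in $M'\overset{\sim}{\Boxt}N'$ to the class of $\bigl(F(m)\bigr).f\otimes G(n)\otimes\pi$, so the two images agree in $M'\Boxt N'$. The identical argument, with the two factors exchanged and using $\Tubecx$-linearity of $G$, handles~\eqref{eq:Rel2}. Hence $F\Boxt G$ is a well-defined linear map $M\Boxt N\to M'\Boxt N'$.

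For (b), recall from Proposition~\ref{prop:action_tensor_prod} that the $\Tubecx$-action on $M\Boxt N$ is induced from~\eqref{eq:tubeaction_onprod}, which modifies only the third tensor factor, so $(m\otimes n\otimes\pi).g=m\otimes n\otimes(\pi.g)$ in $M\Boxt N$; therefore
\[
(F\Boxt G)\bigl((m\otimes n\otimes\pi).g\bigr)=F(m)\otimes G(n)\otimes(\pi.g)=\bigl((F\Boxt G)(m\otimes n\otimes\pi)\bigr).g,
\]
so $F\Boxt G$ is a morphism in $\Rep\Tubecx$. Functoriality ($\id_M\Boxt\id_N=\id_{M\Boxt N}$ and $(F'\circ F)\Boxt(G'\circ G)=(F'\Boxt G')\circ(F\Boxt G)$) is immediate from the defining formula together with the corresponding identities for tensor products of linear maps. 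Finally, $M\Boxt N\in\Rep\Tubecx$: any finite-dimensional subspace is contained in the span of finitely many classes $m_i\otimes n_i\otimes\pi_i$ with $\pi_i\in\Hsp{\overline{c_i}\,\overline{x_i}\,a_ib_ix_i}$, and $\Id_F$ for the finite set $F=\{c_i\}$ acts as the identity on this span, since $\pi_i.\Id_{c_i}=\pi_i$ exactly as in the proof of Proposition~\ref{prop:tubeaction_onprod}.

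I do not expect a genuine obstacle: the substantive work—associativity of welding and compatibility of the $\Tubecx$-action with the quotient relations—was already done in Propositions~\ref{prop:associativity} and~\ref{prop:action_tensor_prod}, and the present statement reduces to the defining property of module homomorphisms. The one point requiring a little care is the bookkeeping in step (a): one must note that the diagrammatic factor on the right-hand side of~\eqref{eq:Rel1} (respectively~\eqref{eq:Rel2})—the $\star$-resolved welding of $\pi$ with $f$ (resp.\ $g$) together with the inserted dual-basis vector—depends only on $\pi$, $f$, $g$ and the chosen bases, not on $m$ or $n$, so that after applying $F\otimes G\otimes\id$ one genuinely recovers an instance of the same relation in the target category.
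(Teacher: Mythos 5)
Your proposal is correct and follows the same route as the paper, whose proof is just a terse version of your argument: the relations~\eqref{eq:Rel1} and~\eqref{eq:Rel2} touch $m$ and $n$ only through the module actions (which $F$ and $G$ intertwine, so $F\otimes G\otimes\id$ carries each generating relation to the corresponding relation in $M'\overset{\sim}{\Boxt}N'$), while the $\Tubecx$-action~\eqref{eq:tubeaction_onprod} touches only $\pi$, making $F\Boxt G$ an intertwiner. The extra checks of functoriality and locality are fine and consistent with what the paper already established in Propositions~\ref{prop:tubeaction_onprod} and~\ref{prop:action_tensor_prod}.
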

\begin{proof}
A routine check shows that $F\Boxt G$ preserves the relations~\eqref{eq:Rel1} and~\eqref{eq:Rel2}.
Since the action~\eqref{eq:tubeaction_onprod} is solely defined on $\pi$ it is clear that $F\Boxt G$ is an intertwiner.
\end{proof}

\begin{prop}\label{prop:rel_unit_simple}
Let $M,N$ be tube algebra representations. 
\begin{enumerate}[\rm (i)]
\item The following relation, obtained by decomposing the label $a$ in simple objects, 
\begin{equation}\label{eq:rel_simple_decomp1}
\begin{aligned}
m\otimes n \otimes\pielem{a}{b}{c}{x}{\pi}
&\quad\sim&
\sum_{t\in\Irr\cx} \rd_t \;m.\overline{\alpha}_i\otimes n \otimes \!\!\begin{tikzpicture}[baseline={([yshift=-.5ex]current bounding box.center)},xscale=0.8]
    \draw[line width=1] 
    (-0.75*\len,0) to (0.75*\len,0)
    (-0.75*\len,0) to (-0.75*\len,0.5*\len)
    (0.75*\len,0) to (0.75*\len,0.5*\len)
    (-0.75*\len,0.5*\len) to (0.75*\len,0.5*\len);

\node[rotate=-90] at (-0.75*\len,0.25*\len) {\scriptsize $\Arr$};
\node[rotate=-90] at (0.75*\len,0.25*\len) {\scriptsize $\Arr$};
\node[rotate=180] at (0,0) {\scriptsize $\Arr$};
\node[rotate=180] at (-0.4*\len, 0.5*\len) {\scriptsize $\Arr$};
\node[rotate=180] at (0.4*\len, 0.5*\len) {\scriptsize $\Arr$};
\node[rotate=180] at (0.4*\len, 0.875*\len) {\scriptsize $\Arr$};

\filldraw [black] (-0.75*\len,0) circle (1pt);
\filldraw [black] (0.75*\len,0) circle (1pt);
\filldraw [black] (0,0.5*\len) circle (1pt);
\filldraw [black] (-0.75*\len,0.5*\len) circle (1pt);
\filldraw [black] (0.75*\len,0.5*\len) circle (1pt);

\node at (-0.75*\len,0.25*\len) [anchor=east] {$x$};
\node at (0.75*\len,0.25*\len) [anchor=west] {$x$};
\node at (0.4*\len,0.5*\len) [anchor=north] {$a$};
\node at (-0.4*\len,0.5*\len) [anchor=south] {$b$};
\node at (0,0) [anchor=north] {$c$};
\node at (0.4*\len,0.875*\len) [anchor=south] {$t$};

\node at (0.4*\len,0.7*\len) {{$\alpha^i$}};

\draw (0,0.25*\len) node {${\pi}$};

\draw[line width=1] (0.75*\len,0.5*\len) arc (0:180:0.375*\len);

\end{tikzpicture}
\end{aligned}
\end{equation}
holds for any $m\otimes n \otimes\pi\in M_a\otimes N_b\otimes \auxprod$.
\item Similarly, the relation
\begin{equation}\label{eq:rel_simple_decomp2}
\begin{aligned}
m\otimes n \otimes\pielem{a}{b}{c}{x}{\pi}
&\quad\sim&
\sum_{s\in\Irr\cx} \rd_s \;m\otimes n.\overline{\beta}_j \otimes \!\!
\begin{tikzpicture}[xscale=0.8,baseline={([yshift=-.5ex]current bounding box.center)}]
    \draw[line width=1] 
    (-0.75*\len,0) to (0.75*\len,0)
    (-0.75*\len,0) to (-0.75*\len,0.5*\len)
    (0.75*\len,0) to (0.75*\len,0.5*\len)
    (-0.75*\len,0.5*\len) to (0.75*\len,0.5*\len);

\node[rotate=-90] at (-0.75*\len,0.25*\len) {\scriptsize $\Arr$};
\node[rotate=-90] at (0.75*\len,0.25*\len) {\scriptsize $\Arr$};
\node[rotate=180] at (0,0) {\scriptsize $\Arr$};
\node[rotate=180] at (-0.4*\len, 0.5*\len) {\scriptsize $\Arr$};
\node[rotate=180] at (0.4*\len, 0.5*\len) {\scriptsize $\Arr$};
\node[rotate=180] at (-0.4*\len, 0.875*\len) {\scriptsize $\Arr$};

\filldraw [black] (-0.75*\len,0) circle (1pt);
\filldraw [black] (0.75*\len,0) circle (1pt);
\filldraw [black] (0,0.5*\len) circle (1pt);
\filldraw [black] (-0.75*\len,0.5*\len) circle (1pt);
\filldraw [black] (0.75*\len,0.5*\len) circle (1pt);

\node at (-0.75*\len,0.25*\len) [anchor=east] {$x$};
\node at (0.75*\len,0.25*\len) [anchor=west] {$x$};
\node at (0.4*\len,0.5*\len) [anchor=south] {$a$};
\node at (-0.4*\len,0.5*\len) [anchor=north] {$b$};
\node at (0,0) [anchor=north] {$c$};
\node at (-0.4*\len,0.875*\len) [anchor=south] {$s$};

\node at (-0.4*\len,0.65*\len) {{$\beta^j$}};

\draw (0,0.25*\len) node  {${\pi}$};

\draw[line width=1]  (0,0.5*\len) arc (0:180:0.375*\len);

\end{tikzpicture}
\end{aligned}
\end{equation}
holds for every $m\otimes n \otimes\pi\in M_a\otimes N_b\otimes \auxprod$.
\item Every vector in $(M\Boxt N)_c$ can be expressed in terms of vectors in  $M_t\otimes N_s \otimes (\Auxprod{t}{s}{c})_\un$ where ${t,s\in\Irr\cx}$ are simple objects. Explicitly, the following relation
\begin{equation}\label{eq:rel_unit}
\rd_x\; m\otimes n\otimes 
\pielem{a}{b}{c}{x}{\pi}
\sim
\sum_{t,s\in\Irr\cx}\rd_{t}\rd_{s}\,m.\overline{\mu}_{i}\otimes n.\overline{\nu}_{j} \otimes 
\begin{tikzpicture}[baseline={([yshift=-.5ex]current bounding box.center)}]

\draw[line width=1]
(-0.75*\len,0.5*\len) .. controls (-0.75*\len,0.75*\len) and (-0.5*\len,\len).. (0,\len);
\draw[line width=1]
(0.75*\len,0.5*\len) .. controls (0.75*\len,0.75*\len) and (0.5*\len,\len).. (0,\len);

\draw[line width=1,dotted] 
    (-0.75*\len,0) to (-0.75*\len,0.5*\len)
    (0.75*\len,0) to (0.75*\len,0.5*\len);
\draw[line width=1] 
    (-0.75*\len,0) to (0.75*\len,0)    
    (-0.75*\len,0.5*\len) to (0.75*\len,0.5*\len)
    (0,0.5*\len) to (0,\len);

\node[rotate=180] at (0,0) {\scriptsize $\Arr$};
\node[rotate=90] at (0,0.75*\len) {\scriptsize $\Arr$};
\node[rotate=180] at (-0.6*\len,0.5*\len) {\scriptsize $\Arr$};
\node[rotate=180] at (-0.2*\len,0.5*\len) {\scriptsize $\Arr$};
\node[rotate=0] at (0.6*\len,0.5*\len) {\scriptsize $\Arr$};
\node[rotate=180] at (0.2*\len,0.5*\len) {\scriptsize $\Arr$};
\node[rotate=215] at (-0.45*\len,0.9*\len) {\scriptsize $\Arr$};
\node[rotate=145] at (0.45*\len,0.9*\len) {\scriptsize $\Arr$};

\filldraw [black] (0,\len) circle (1pt);
\filldraw [black] (-0.75*\len,0) circle (1pt);
\filldraw [black] (0.75*\len,0) circle (1pt);
\filldraw [black] (0,0.5*\len) circle (1pt);
\filldraw [black] (-0.75*\len,0.5*\len) circle (1pt);
\filldraw [black] (0.75*\len,0.5*\len) circle (1pt);
\filldraw [black] (-0.4*\len,0.5*\len) circle (1pt);
\filldraw [black] (0.4*\len,0.5*\len) circle (1pt);

\node at (0,0.75*\len) [anchor=east,font=\footnotesize] {$x$};
\node at (0,0) [anchor=north] {$c$};
\node at (0.6*\len,0.5*\len) [anchor=north] {$x$};
\node at (0.2*\len,0.5*\len) [anchor=north] {$a$};
\node at (0.4*\len,\len) [anchor=south] {$t$};
\node at (-0.2*\len,0.5*\len) [anchor=north] {$b$};
\node at (-0.6*\len,0.5*\len) [anchor=north] {$x$};
\node at (-0.4*\len,\len) [anchor=south] {$s$};

\node at (0.3*\len,0.75*\len) {$\mu^i$};
\node at (-0.35*\len,0.75*\len) {$\nu^j$};

\draw (0,0.25*\len) node   [font=\normalsize]  {${\pi}$};
\end{tikzpicture}
\end{equation}
holds for any $m\otimes n\otimes \pi\in M_a\otimes N_b\otimes \auxprod$.
\end{enumerate}

\end{prop}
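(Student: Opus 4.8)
The plan is to obtain all three statements from the two defining relations \eqref{eq:Rel1} and \eqref{eq:Rel2}, specialized to suitable tube elements, together with the graphical lemmas of the previous subsections. Since the content is mostly bookkeeping I will only indicate the mechanism.

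For (ii), the idea is to specialize \eqref{eq:Rel1} to the case where the tube element moved off of $M$ is the local unit $\Id_a$: writing $m = m.\Id_a$ puts $m\otimes n\otimes\pi$ into the form on the left of \eqref{eq:Rel1} with $f = \Id_a\in\Hsp{\overline a\,a}$, so that in the notation there $u = \un_i$ and $a' = a$. Three simplifications then occur. First, $\rd_u = \rd_{\un_i} = 1$, so the coefficient $\overline\beta_j/\rd_u$ becomes $\overline\beta_j$. Second, the dual bases $(\beta,\overline\beta)$ live in $\Hsp{\overline b\,\overline{\un_i}\,\tilde b\,\un_i} = \Hsp{\overline b\,\tilde b}$, which by Schur's lemma is supported on the simple summands $\tilde b = s$ of $b$, and the ensuing resolution is precisely the decomposition of $\Id_b$ over simple objects given by the dominance relation (Lemma~\ref{lem:pairing_properties}(ii)). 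Third, the auxiliary $y$-resolution drawn into the diagram of \eqref{eq:Rel1} collapses to its single surviving term because $x$ is simple (Lemma~\ref{star_prop}(iv)), and the cap by $f = \Id_a$ is trivial. Reading off what is left — $\pi$ with its $b$-leg capped by $\beta^j$, the $a$-leg untouched — gives exactly \eqref{eq:rel_simple_decomp2}. Statement (i) is obtained in the same way by specializing instead \eqref{eq:Rel2} to $g = \Id_b$ (so $v = \un_i$, $b' = b$), the only new point being that the explicit operation $\star_{vx} = \star_{\un_i x} = \star_x$ appearing in \eqref{eq:Rel2} reduces to the identity on the bracketed diagram, again by Lemma~\ref{star_prop}(iv); this yields \eqref{eq:rel_simple_decomp1}.

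For (iii), I would first apply (i) and then (ii) to a general homogeneous vector $m\otimes n\otimes\pi$, which already expresses it as a combination of vectors whose $M$- and $N$-gradings are simple. It then remains to lower the loop label $x$ of the $\Pi$-component to $\un_i$, and this is carried out by one further use of \eqref{eq:Rel1} (symmetrically, \eqref{eq:Rel2}), now with the nontrivial choice $u = x$: one expands $\Id_a = \sum_{t,i}\rd_t\,\overline\mu_{t,i}\cdot\mu_t^i$ as a sum of weldings of dual-basis vectors of the label-$x$ tube spaces $\Hsp{\overline t\,\overline x\,a\,x}$ and $\Hsp{\overline a\,\overline x\,t\,x}$ (an identity of the same nature as the decomposition of $\Id_b$ used in (ii), with the appropriate dimension weights coming from the dominance relation), rewrites $m$ as $\sum_{t,i}\rd_t\,(m.\overline\mu_{t,i}).\mu_t^i$, and lets \eqref{eq:Rel1} push each $\mu_t^i$ into $\pi$. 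In the resulting diagram every occurrence of the label $x$ — from $\pi$ itself, from $\mu_t^i$, and from the dual basis $\overline\beta_j$ produced on the $N$-side — is glued to an oppositely oriented $x$-edge, so the loop label collapses to $\un_i$; the coefficient $\rd_x$ on the left of \eqref{eq:rel_unit} is exactly the $1/\rd_u$ that \eqref{eq:Rel1} contributes when $u = x$, while the $\rd_t\rd_s$ on the right come from the weight of the $\Id_a$-expansion and from the sum over the simple $\tilde b = s$. The consistency identities of Lemma~\ref{star_prop}, possibly together with Lemmas~\ref{bases_comp} and~\ref{lem:base_change}, are what allow one to rearrange the various $\star$- and $\bullet$-operations into the shape displayed in \eqref{eq:rel_unit}.

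The conceptual input is light; the main obstacle is the diagrammatic bookkeeping. In each specialization one must check that the auxiliary resolutions — $\star_{\un_i(-)}$, the drawn-out $y$-loop, and, for (iii), the several $x$-gluings — genuinely collapse, keep the orientations straight so that the correct leg of $\pi$ is capped by the correct dual basis vector, and, above all in (iii), track the dimension factors $\rd_x$, $\rd_t$, $\rd_s$ together with the $\sqrt{\rd}$-factors hidden in the $\bullet$-products so that the coefficients in \eqref{eq:rel_unit} come out precisely as stated. Verifying that the diagram obtained from the specialized relation is literally the one written in the statement, rather than merely an isomorphic presentation of it, is where the care is required.
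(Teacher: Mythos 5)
Your parts (i) and (ii) are exactly the paper's argument: (ii) is \eqref{eq:Rel1} with $f=\Id_a$ (so $u=\un$, $\rd_u=1$, the $\tilde b$-sum is the dominance decomposition of $\Id_b$, and the $ux$-resolution collapses by Lemma~\ref{star_prop}(iv) since $x$ is simple), and (i) is the symmetric specialization of \eqref{eq:Rel2} with $g=\Id_b$. No issues there.

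Part (iii), however, rests on a false identity. You propose to rewrite $m=m.\Id_a$ using an expansion $\Id_a=\sum_{t,i}\rd_t\,\overline{\mu}_{t,i}\cdot\mu_t^i$, where $\overline{\mu}_{t,i}$ and $\mu_t^i$ are dual bases of the loop-label-$x$ tube spaces. This is not an identity in $\Tubecx$. By the dominance relation, $\sum_{t,i}\rd_t\,\overline{\mu}_{t,i}\circ_t\mu_t^i$ is the unit morphism $\Id_{\overline{x}ax}$, so the welded sum equals $\star_{x\overline{x}}(\Id_{\overline{x}ax})$; for non-invertible $x$ this element has nonzero components at every loop label $y\subset x\otimes\overline{x}$, not only at $y=\un$, and even its $\un$-component carries extra factors of $\rd_x$ (the total ``mass'' of the sum is $\rd_a\rd_x^2$ rather than $\rd_a$). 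Since $m.\Id_a$ versus $m.\bigl(\sum\rd_t\,\overline{\mu}_{t,i}\cdot\mu_t^i\bigr)$ is an equality of elements of $M$ itself --- not something taken modulo $\sim$ --- the discrepancy cannot be absorbed by the quotient, and the single factor $\rd_x=1/\rd_u$ you track does not account for it. The underlying difficulty is that starting from $m\otimes n\otimes\pi$ there is no way to \emph{introduce} the actions $m.\overline{\mu}_i$, $n.\overline{\nu}_j$ except through the relations \eqref{eq:Rel1}--\eqref{eq:Rel2} themselves. The paper therefore argues in the opposite direction: it starts from the right-hand side of \eqref{eq:rel_unit}, uses \eqref{eq:Rel2} to move $\overline{\nu}_j$ off of $n$ and back into the third tensor factor, collapses the resulting dual-basis sums via Lemma~\ref{star_prop}(iv),(v) and Lemma~\ref{lem:pairing_properties}(ii) (this is where the tube-algebra product $\overline{\mu}_i\cdot\overline{\alpha}_k$ is correctly handled and the factors $\rd_x$ emerge), and finally reaches the left-hand side by one application of part (i). Your sketch of (iii) would need to be replaced by an argument of this shape.
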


\begin{proof}
Relation (i) follows from applying~\eqref{eq:Rel2} to $g=\Id_b$. Similarly, (ii) corresponds to relation~\eqref{eq:Rel1} for $f=\Id_a$. To show (iii), notice that by~\eqref{eq:Rel2} the right hand side of~\eqref{eq:rel_unit} is related to 
\begin{equation*}
\sum_{t,s,\tilde{a}\in\Irr\cx}\rd_{t}\rd_{s}\rd_{\tilde{a}}\,m.\left(\overline{\mu}_{i}\cdot\frac{\overline{\alpha}_{k}}{\rd_x}\right)\otimes n\otimes \;
\star_{x}\left[
\begin{tikzpicture}[baseline={([yshift=-.5ex]current bounding box.center)},yscale=0.75]

\draw[line width=1]
(-0.75*\len,0.5*\len) .. controls (-0.75*\len,0.75*\len) and (-0.5*\len,\len).. (0,\len);
\draw[line width=1]
(0.75*\len,0.5*\len) .. controls (0.75*\len,0.75*\len) and (0.5*\len,\len).. (0,\len);

\draw[line width=1,dotted] 
    (-0.75*\len,0) to (-0.75*\len,0.5*\len)
    (0.75*\len,0) to (0.75*\len,0.5*\len);
\draw[line width=1] 
    (-0.75*\len,1.5*\len) to (-0.75*\len,0.5*\len)
    (0.75*\len,1.5*\len) to (0.75*\len,0.5*\len)
    (-0.75*\len,0) to (0.75*\len,0)    
    (-0.75*\len,0.5*\len) to (0.75*\len,0.5*\len)
    (-0.75*\len,1.5*\len) to (0.75*\len,1.5*\len)
    (0,0.5*\len) to (0,1.5*\len);

\node[rotate=-90] at (-0.75*\len,1*\len) {\scriptsize $\Arr$};
\node[rotate=-90] at (0.75*\len,1*\len) {\scriptsize $\Arr$};

\node[rotate=180] at (0,0) {\scriptsize $\Arr$};
\node[rotate=90] at (0,0.75*\len) {\scriptsize $\Arr$};
\node[rotate=-90] at (0,1.25*\len) {\scriptsize $\Arr$};
\node[rotate=180] at (-0.6*\len,0.5*\len) {\scriptsize $\Arr$};
\node[rotate=180] at (-0.2*\len,0.5*\len) {\scriptsize $\Arr$};
\node[rotate=0] at (0.6*\len,0.5*\len) {\scriptsize $\Arr$};
\node[rotate=180] at (0.2*\len,0.5*\len) {\scriptsize $\Arr$};
\node[rotate=180] at (0.4*\len,1.5*\len) {\scriptsize $\Arr$};
\node[rotate=180] at (-0.4*\len,1.5*\len) {\scriptsize $\Arr$};
\node[rotate=215] at (-0.45*\len,0.9*\len) {\scriptsize $\Arr$};
\node[rotate=145] at (0.45*\len,0.9*\len) {\scriptsize $\Arr$};

\filldraw [black] (0,\len) circle (1pt);
\filldraw [black] (-0.75*\len,0) circle (1pt);
\filldraw [black] (0.75*\len,0) circle (1pt);
\filldraw [black] (0,0.5*\len) circle (1pt);
\filldraw [black] (-0.75*\len,0.5*\len) circle (1pt);
\filldraw [black] (0.75*\len,0.5*\len) circle (1pt);
\filldraw [black] (-0.4*\len,0.5*\len) circle (1pt);
\filldraw [black] (0.4*\len,0.5*\len) circle (1pt);

\node at (0.75*\len,1*\len) [anchor=west] {$x$};
\node at (-0.75*\len,1*\len) [anchor=east] {$x$};
\node at (0,0.75*\len) [anchor=east,font=\scriptsize] {$x$};
\node at (0,1.25*\len) [anchor=east,font=\scriptsize] {$x$};
\node at (0,0) [anchor=north] {$c$};
\node at (0.6*\len,0.5*\len) [anchor=north] {$x$};
\node at (0.2*\len,0.5*\len) [anchor=north] {$a$};
\node at (0.4*\len,1.5*\len) [anchor=south] {$\tilde{a}$};
\node at (-0.4*\len,1.5*\len) [anchor=south] {$b$};
\node at (0.55*\len,0.85*\len) [anchor=south] {$t$};
\node at (-0.2*\len,0.5*\len) [anchor=north] {$b$};
\node at (-0.6*\len,0.5*\len) [anchor=north] {$x$};
\node at (-0.55*\len,0.85*\len) [anchor=south] {$s$};

\node at (0.3*\len,0.75*\len) {$\mu^i$};
\node at (-0.35*\len,0.75*\len) {$\nu^j$};
\node at (0.35*\len,1.25*\len) {$\alpha^k$};
\node at (-0.3*\len,1.25*\len) {$\overline{\nu}_j$};

\draw (0,0.2*\len) node   [font=\normalsize]  {${\pi}$};
\end{tikzpicture}
\right]
\end{equation*}
which in turn, by using Lemma~\ref{star_prop} (iv) and Lemma~\ref{lem:pairing_properties} (ii), gives the expression
\begin{equation}\label{eq:inter_step}
\rd_x^{-1}\sum_{t,\tilde{a}\in\Irr\cx}\rd_{t}\rd_{\tilde{a}}\,m.\left(\overline{\mu}_{i}\cdot\overline{\alpha}_{k}\right)\otimes n\otimes\,
\begin{tikzpicture}[baseline={([yshift=-.5ex]current bounding box.center)},yscale=0.75]

\draw[line width=1]
(0.75*\len,0.5*\len) .. controls (0.75*\len,0.75*\len) and (0.5*\len,\len).. (0,\len);

\draw[line width=1,dotted] 
    (-0.75*\len,0) to (-0.75*\len,0.5*\len)
    (0.75*\len,0) to (0.75*\len,0.5*\len);
\draw[line width=1] 
    (0.75*\len,1.5*\len) to (0.75*\len,0.5*\len)
    (-0.75*\len,0) to (0.75*\len,0)    
    (-0.75*\len,0.5*\len) to (0.75*\len,0.5*\len)
    (0,1.5*\len) to (0.75*\len,1.5*\len)
    (0,0.5*\len) to (0,1.5*\len);

\node[rotate=-90] at (0.75*\len,1*\len) {\scriptsize $\Arr$};
\node[rotate=180] at (0,0) {\scriptsize $\Arr$};
\node[rotate=90] at (0,0.75*\len) {\scriptsize $\Arr$};
\node[rotate=-90] at (0,1.25*\len) {\scriptsize $\Arr$};
\node[rotate=180] at (-0.6*\len,0.5*\len) {\scriptsize $\Arr$};
\node[rotate=180] at (-0.2*\len,0.5*\len) {\scriptsize $\Arr$};
\node[rotate=0] at (0.6*\len,0.5*\len) {\scriptsize $\Arr$};
\node[rotate=180] at (0.2*\len,0.5*\len) {\scriptsize $\Arr$};
\node[rotate=180] at (0.4*\len,1.5*\len) {\scriptsize $\Arr$};
\node[rotate=145] at (0.45*\len,0.9*\len) {\scriptsize $\Arr$};

\filldraw [black] (0,\len) circle (1pt);
\filldraw [black] (-0.75*\len,0) circle (1pt);
\filldraw [black] (0.75*\len,0) circle (1pt);
\filldraw [black] (0,0.5*\len) circle (1pt);
\filldraw [black] (-0.75*\len,0.5*\len) circle (1pt);
\filldraw [black] (0.75*\len,0.5*\len) circle (1pt);
\filldraw [black] (-0.4*\len,0.5*\len) circle (1pt);
\filldraw [black] (0.4*\len,0.5*\len) circle (1pt);

\node at (0.75*\len,1*\len) [anchor=west] {$x$};
\node at (0,0.75*\len) [anchor=east,font=\scriptsize] {$x$};
\node at (0,1.25*\len) [anchor=east,font=\scriptsize] {$x$};
\node at (0,0) [anchor=north] {$c$};
\node at (0.6*\len,0.5*\len) [anchor=north] {$x$};
\node at (0.2*\len,0.5*\len) [anchor=north] {$a$};
\node at (0.4*\len,1.5*\len) [anchor=south] {$\tilde{a}$};
\node at (0.55*\len,0.85*\len) [anchor=south] {$t$};
\node at (-0.2*\len,0.5*\len) [anchor=north] {$b$};
\node at (-0.6*\len,0.5*\len) [anchor=north] {$x$};

\node at (0.3*\len,0.72*\len) {$\mu^i$};
\node at (0.3*\len,1.25*\len) {$\alpha^k$};
\node at (-0.3*\len,1*\len) {$\Id_x$};

\draw (0,0.2*\len) node  {${\pi}$};

\draw[dotted] (0,1.5*\len) arc (-90:90:-0.5*\len);
\end{tikzpicture}
\;.
\end{equation}
Now, since the product of tube algebra elements is $\overline{\mu}_{i}\cdot\overline{\alpha}_{k}=\star_{\overline{x}x}(\overline{\mu}_{i}\circ_t\overline{\alpha}_{k})$, we obtain by Lemma~\ref{star_prop} (v) that
\begin{equation*}
\sum_{t\in\Irr\cx}\rd_{t}  \star_{\overline{x}x}(\overline{\mu}_{i}\circ_t\overline{\alpha}_{k}) \otimes \alpha^k\circ_t\mu^i\circ_{xx}\Id_x=\rd_x^2 \rd_x^{-1}\langle \Id_x,\Id_x\rangle\, \overline{\eta}_i\otimes \eta^i\circ_\un \Id_x
\end{equation*}
where $(\overline{\eta},\eta)$ is a pair of dual bases of $\Hsp{\overline{a}\,\tilde{a}}$.
Replacing this identity in~\eqref{eq:inter_step} we get the expression
\begin{equation*}
\rd_x\sum_{\tilde{a}\in\Irr\cx}\rd_{\tilde{a}}\,m.\overline{\eta}_{i}\otimes n\otimes
\begin{tikzpicture}[xscale=0.8,yscale=0.9,baseline={([yshift=-.5ex]current bounding box.center)}]
        \draw[line width=1] 
    (-0.75*\len,0) to (0.75*\len,0)
    (-0.75*\len,0) to (-0.75*\len,0.5*\len)
    (0.75*\len,0) to (0.75*\len,0.5*\len)
    (-0.75*\len,0.5*\len) to (0.75*\len,0.5*\len);

\node[rotate=-90] at (-0.75*\len,0.25*\len) {\scriptsize $\Arr$};
\node[rotate=-90] at (0.75*\len,0.25*\len) {\scriptsize $\Arr$};
\node[rotate=180] at (0,0) {\scriptsize $\Arr$};
\node[rotate=180] at (-0.4*\len, 0.5*\len) {\scriptsize $\Arr$};
\node[rotate=180] at (0.4*\len, 0.5*\len) {\scriptsize $\Arr$};
\node[rotate=180] at (0.4*\len, 0.875*\len) {\scriptsize $\Arr$};

\filldraw [black] (-0.75*\len,0) circle (1pt);
\filldraw [black] (0.75*\len,0) circle (1pt);
\filldraw [black] (0,0.5*\len) circle (1pt);
\filldraw [black] (-0.75*\len,0.5*\len) circle (1pt);
\filldraw [black] (0.75*\len,0.5*\len) circle (1pt);

\node at (-0.75*\len,0.25*\len) [anchor=east] {$x$};
\node at (0.75*\len,0.25*\len) [anchor=west] {$x$};
\node at (0.4*\len,0.5*\len) [anchor=north] {$a$};
\node at (-0.4*\len,0.5*\len) [anchor=south] {$b$};
\node at (0,0) [anchor=north] {$c$};
\node at (0.4*\len,0.875*\len) [anchor=south] {$\tilde{a}$};
\node at (0.4*\len,0.68*\len) {{$\eta^i$}};
\draw (0,0.25*\len) node   [font=\normalsize]  {${\pi}$};
\draw[line width=1]  (0.75*\len,0.5*\len) arc (0:180:0.375*\len);
\end{tikzpicture}
\end{equation*}
which is related to the left hand side of~\eqref{eq:rel_unit} by~\eqref{eq:rel_simple_decomp1}.
\end{proof}

\subsection{Associators and pentagon axiom}
The purpose of this section is to define the associators for the tensor product $M\Boxt N$ of tube algebra representations presented in Section~\ref{sec:tensor_prod}.
\begin{defi}\label{def:associator}
Given tube algebra representations $M,N,L\in\Rep\Tubecx$, the \textit{associator}
\begin{equation}\label{eq:associator}
    \mathrm{A}_{M,N,L}\Colon (M\Boxt N) \Boxt L \to M\Boxt (N \Boxt L)
\end{equation}
is defined as the assignment given by
\def\len{1.2cm}
\begin{equation*}
    (m\otimes n\otimes \pi)\otimes l\otimes\rho\mapsto
\sum_{\tilde{d}\in\Irr\cx}\rd_{\tilde{d}}\; m\otimes \left(n \otimes l.\,\frac{\overline{\gamma}_k}{\rd_y} \otimes \Id_{b\tilde{d}}\right)
\otimes \star_{yx}\left[
\begin{tikzpicture}[baseline={([yshift=-.5ex]current bounding box.center)}]
    \draw[line width=1] 
    (-1.2*\len,0) to (1.2*\len,0)
    (-1.2*\len,0) to (-1.2*\len,0.75*\len)
    (1.2*\len,0) to (1.2*\len,0.75*\len)
    (1.2*\len,0.75*\len) to (1.2*\len,1.5*\len)
    (-0.4*\len,0.75*\len) to (-0.4*\len,1.5*\len)
    (-1.2*\len,0.75*\len) to (-1.2*\len,1.5*\len)
    (-1.2*\len,1.5*\len) to (1.2*\len,1.5*\len)
    (-1.2*\len,0.75*\len) to (1.2*\len,0.75*\len);

\node[rotate=-90] at (-0.4*\len,1.125*\len) {\scriptsize $\Arr$};
\node[rotate=-90] at (-1.2*\len,1.125*\len) {\scriptsize $\Arr$};
\node[rotate=-90] at (1.2*\len,1.125*\len) {\scriptsize $\Arr$};
\node[rotate=-90] at (-1.2*\len,0.375*\len) {\scriptsize $\Arr$};
\node[rotate=-90] at (1.2*\len,0.375*\len) {\scriptsize $\Arr$};
\node[rotate=180] at (0,0) {\scriptsize $\Arr$};
\node[rotate=180] at (-0.8*\len,0.75*\len) {\scriptsize $\Arr$};
\node[rotate=180] at (0.4*\len,0.75*\len) {\scriptsize $\Arr$};
\node[rotate=180] at (0.8*\len,1.5*\len) {\scriptsize $\Arr$};
\node[rotate=180] at (0,1.5*\len) {\scriptsize $\Arr$};
\node[rotate=180] at (-0.8*\len,1.5*\len) {\scriptsize $\Arr$};

\filldraw [black] (-1.2*\len,0) circle (1pt);
\filldraw [black] (1.2*\len,0) circle (1pt);
\filldraw [black] (-0.4*\len,0.75*\len) circle (1pt);
\filldraw [black] (-1.2*\len,0.75*\len) circle (1pt);
\filldraw [black] (1.2*\len,0.75*\len) circle (1pt);
\filldraw [black] (1.2*\len,1.5*\len) circle (1pt);
\filldraw [black] (0.4*\len,1.5*\len) circle (1pt);
\filldraw [black] (-1.2*\len,1.5*\len) circle (1pt);
\filldraw [black] (-0.4*\len,1.5*\len) circle (1pt);

\node at (-0.4*\len,1.125*\len) [anchor=west] {$y$};

\node at (-1.2*\len,1.125*\len) [anchor=east] {$y$};
\node at (1.2*\len,1.125*\len)  [anchor=west]{$y$};
\node at (-1.2*\len,0.375*\len) [anchor=east] {$x$};
\node at (1.2*\len,0.375*\len)  [anchor=west]{$x$};
\node at (0,0) [anchor=north] {$r$};
\node at (0.4*\len,0.75*\len) [anchor=north] {$c$};
\node at (0.8*\len,1.5*\len) [anchor=south] {$a$};
\node at (0,1.5*\len) [anchor=south] {$b$};
\node at (-0.8*\len,1.5*\len) [anchor=south] {$\tilde{d}$};
\node at (-0.8*\len,0.75*\len) [anchor=north] {$d$};
\draw (0*\len,0.4*\len) node {${\rho}$};
\draw (-0.8*\len,1.125*\len) node  {{$\gamma^k$}};
\node at (0.4*\len,1.125*\len) {{$\pi$}};
\end{tikzpicture}
\right]
\end{equation*}
for $\pi\in \auxprod$ and $\rho\in \Auxprod{c}{d}{r}$.
\end{defi}

\begin{prop}\label{prop:associators_well-defined}
$\mathrm{A}_{M,N,L}$ is a well-defined natural isomorphism of $\Tubecx$-representations.
\end{prop}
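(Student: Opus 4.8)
The plan is to verify, in order: that the stated formula descends from the free iterated tensor product to the quotient $(M\Boxt N)\Boxt L$ (well-definedness), that the resulting map intertwines the tube actions, that it admits a two-sided inverse, and that it is natural in the three arguments. First I would note that the assignment in Definition~\ref{def:associator}, read on representatives in $M_a\otimes N_b\otimes\auxprod\otimes L_d\otimes\Auxprod{c}{d}{r}$, is manifestly independent of the chosen dual bases $(\gamma,\overline{\gamma})$: it is bilinear in the pair $(\overline{\gamma}_k,\gamma^k)$ and hence factors through the canonical element $\sum_k\overline{\gamma}_k\otimes\gamma^k$ of \eqref{eq:dual_bases}. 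It then remains to check compatibility with the four families of relations that present $(M\Boxt N)\Boxt L$: the relations \eqref{eq:Rel1} and \eqref{eq:Rel2} internal to $M\Boxt N$ (the tube acting on the $M$-slot, resp.\ the $N$-slot), and the relations \eqref{eq:Rel1}, \eqref{eq:Rel2} for the outer product (the tube acting on the $M\Boxt N$-slot via welding on $\pi$, resp.\ on the $L$-slot via welding on $\rho$). Each of these is a diagrammatic identity obtained by expanding both sides into $\star$-resolutions of composites in $\cx$ and applying the consistency relations of Lemma~\ref{star_prop}, principally the distributivity \eqref{distr_star} and the triple-product identity \eqref{star_triprod}, together with the base-change Lemma~\ref{lem:base_change}, Lemma~\ref{bases_comp} to slide the inserted dual bases past one another, and Lemma~\ref{lem:pairing_properties} for the pairing bookkeeping; one also keeps track of local finiteness of the sums over $\Irr\cx$ exactly as in the construction of the tensor product. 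A convenient simplification is Proposition~\ref{prop:rel_unit_simple}(iii): modulo the already-established relations every vector may be taken supported on simple degrees with tube label $\un$, and in that reduced situation the formula for $\mathrm{A}_{M,N,L}$ collapses to the associativity of relative composition in $\cx$ (Lemma~\ref{comp_prop}(iv)), after which the general statement follows by re-expanding via \eqref{eq:rel_unit}.

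For $\Tubecx$-linearity, recall that the tube algebra acts on both $(M\Boxt N)\Boxt L$ and $M\Boxt(N\Boxt L)$ only through welding on the outermost auxiliary tensor — on $\rho$ for the source and on the outer $\pi$-type tensor for the target — along the boundary loop labeled by $r$. The reassociation carried out by $\mathrm{A}_{M,N,L}$ only rearranges the inner data and leaves the edge $r$ untouched, so the identity $\mathrm{A}_{M,N,L}(\xi.g)=\mathrm{A}_{M,N,L}(\xi).g$ reduces to the associativity of welding, Proposition~\ref{prop:associativity}, using Lemma~\ref{star_prop}(i) to commute the $\star$-resolutions appearing in $\mathrm{A}$ past the composition with $g$.

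For invertibility, I would write the mirror map $\mathrm{A}^{-1}_{M,N,L}\colon M\Boxt(N\Boxt L)\to(M\Boxt N)\Boxt L$ — now inserting a dual basis onto the $M$-slot and a unit morphism $\Id_{\tilde{d}b}$ on the other side — verify by the same techniques that it descends to the quotient, and then check $\mathrm{A}^{-1}\circ\mathrm{A}=\id$ and $\mathrm{A}\circ\mathrm{A}^{-1}=\id$. The decisive inputs here are Lemma~\ref{lem:pairing_properties}(i), so that a $\circ$-contraction of one inserted dual basis against the other collapses to a scalar multiple of a unit morphism, and Lemma~\ref{star_prop}(iv), so that $\star_x$ is trivial on simple $x$; together these make the two successive base-insertions telescope, and the net effect is precisely \eqref{eq:rel_unit} run forward and then backward. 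Naturality is then immediate: $(F\Boxt G)\Boxt H$ and $F\Boxt(G\Boxt H)$ act only on the $M,N,L$ vector-slots, $F,G,H$ are intertwiners, and $\mathrm{A}$ touches those slots only through the linear action of the inserted bases, so the naturality square commutes on the nose.

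The main obstacle is the well-definedness step: essentially all the content sits in pushing the four relation-families through the $\star$-resolutions, and in particular in tracking the dimension factors $\rd_t,\rd_{\tilde d},\rd_y$ and the precise placement of the inserted dual bases so that Lemma~\ref{bases_comp} and Lemma~\ref{lem:pairing_properties}(iii) apply cleanly. The second delicate point is confirming that the mirror map is a genuine two-sided (not merely one-sided) inverse; this is most transparently done after the reduction of Proposition~\ref{prop:rel_unit_simple}(iii), where the composite $\mathrm{A}^{-1}\circ\mathrm{A}$ becomes the identity relabeling up to the telescoping of dual-basis insertions described above.
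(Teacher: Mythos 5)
Your overall strategy matches the paper's: direct verification of the defining relations using Lemmas~\ref{star_prop}, \ref{lem:base_change} and \ref{bases_comp}, the intertwiner property via associativity of welding (Proposition~\ref{prop:associativity}), an explicit mirror map as inverse, and naturality for free. Your bookkeeping of \emph{four} relation families (inner and outer instances of \eqref{eq:Rel1} and \eqref{eq:Rel2}) is if anything more explicit than the paper, which writes out only the outer instance of \eqref{eq:Rel1} and declares the remaining checks analogous.

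The one step that would fail if you leaned on it is the proposed ``convenient simplification'' via Proposition~\ref{prop:rel_unit_simple}(iii) for the well-definedness check itself. That reduction says the quotient is \emph{spanned} by vectors with simple degrees and unit tube label; it lets you verify an identity between two maps already known to descend to the quotient (which is exactly how the paper uses it: for the pentagon and hexagon axioms and for $\mathrm{A}^{-1}\circ\mathrm{A}=\id$), but it cannot be used to show that a map defined on representatives descends in the first place, since the relations you would be invoking to reduce to special representatives are precisely the relations whose preservation is in question. So the direct check of the relation families is not optional, and the actual content of the proposition is the computation you only gesture at: after applying $\mathrm{A}_{M,N,L}$ to both sides of \eqref{eq:Rel1}, the two expressions are matched by the dual-basis identity
\begin{equation*}
\sum_{t\in\Irr\cx}\rd_{t}\,(\overline{\beta}_j\cdot\overline{\alpha}_i)\otimes\alpha^i\circ_{t}\beta^j
=\rd_z\rd_y\sum_{u\in\Irr\cx}\overline{\gamma}_k\otimes\overline{\nu}_i\circ_u\gamma^k\circ_{\overline{u}}\nu_i
\end{equation*}
extracted from Lemmas~\ref{bases_comp} and~\ref{lem:base_change}, combined with Lemma~\ref{star_prop}(ii) to reorganize the iterated $\star$-resolutions. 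Your proposal names the right lemmas but does not exhibit this computation, and it is here (together with the dimension factors $\rd_y$, $\rd_z$, $\rd_{\tilde d}$ you rightly worry about) that all the work sits.
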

\begin{proof}
Naturality is immediate from the definition. 
It follows from Proposition~\ref{prop:associativity} and the definition of the tube algebra action~\eqref{eq:tubeaction_onprod} that $\mathrm{A}_{M,N,L}$ is an intertwiner. To check that the associator is well-defined under relation~\eqref{eq:Rel1}, notice that on one hand
\((m\otimes n \otimes \pi.f)\otimes l\otimes \rho \in M_a\otimes N_b \otimes \Auxprod{a}{b}{c}\otimes L_d\otimes \Auxprod{c}{d}{r}\)
gets assigned by $\mathrm{A}_{M,N,L}$ the value
\def\len{1.4cm}
\begin{equation}\label{eq:omega1}
\sum_{\tilde{d},u\in\Irr\cx}\rd_{\tilde{d}}\rd_{yz}\, m\otimes \left(n \otimes l.\,\frac{\overline{\gamma}_k}{\rd_u} \otimes \Id_{b\tilde{d}}\right)
\otimes \star_{ux}\left[
\begin{tikzpicture}[baseline={([yshift=-.5ex]current bounding box.center)},xscale=1.5]
    \draw[line width=1] 
    (-1.2*\len,0) to (1.2*\len,0)
    (-1.2*\len,0) to (-1.2*\len,0.5*\len)
    (1.2*\len,0) to (1.2*\len,0.5*\len)
    (1.2*\len,0.5*\len) to (1.2*\len,1.5*\len)
    (-0.4*\len,0.5*\len) to (-0.4*\len,1.5*\len)
    (-1.2*\len,0.5*\len) to (-1.2*\len,1.5*\len)
    (-1.2*\len,1.5*\len) to (1.2*\len,1.5*\len)
    (-1.2*\len,0.5*\len) to (1.2*\len,0.5*\len)
    (0,\len) to (-0.4*\len,1.5*\len)
    (0,\len) to (-0.4*\len,0.5*\len)
    (1.2*\len,1.5*\len) to (0.8*\len,\len)
    (1.2*\len,0.5*\len) to (0.8*\len,\len)
    (-0*\len,\len) to (0.8*\len,\len);
    
\node[rotate=-90] at (-0.4*\len,\len) {\scriptsize $\Arr$};
\node[rotate=-45] at (-0.2*\len,1.25*\len) {\scriptsize $\Arr$};
\node[rotate=-135] at (\len,1.25*\len) {\scriptsize $\Arr$};
\node[rotate=-135] at (-0.2*\len,0.75*\len) {\scriptsize $\Arr$};
\node[rotate=-45] at (\len,0.75*\len) {\scriptsize $\Arr$};
\node[rotate=-90] at (-1.2*\len,\len) {\scriptsize $\Arr$};
\node[rotate=-90] at (1.2*\len,\len) {\scriptsize $\Arr$};
\node[rotate=-90] at (-1.2*\len,0.25*\len) {\scriptsize $\Arr$};
\node[rotate=-90] at (1.2*\len,0.25*\len) {\scriptsize $\Arr$};
\node[rotate=180] at (0,0) {\scriptsize $\Arr$};
\node[rotate=180] at (-0.8*\len,0.5*\len) {\scriptsize $\Arr$};
\node[rotate=180] at (0.4*\len,0.5*\len) {\scriptsize $\Arr$};
\node[rotate=180] at (0.4*\len,\len) {\scriptsize $\Arr$};
\node[rotate=180] at (0.8*\len,1.5*\len) {\scriptsize $\Arr$};
\node[rotate=180] at (0,1.5*\len) {\scriptsize $\Arr$};
\node[rotate=180] at (-0.8*\len,1.5*\len) {\scriptsize $\Arr$};

\filldraw [black] (-1.2*\len,0) circle (1pt);
\filldraw [black] (1.2*\len,0) circle (1pt);
\filldraw [black] (0,\len) circle (1pt);
\filldraw [black] (0.8*\len,\len) circle (1pt);
\filldraw [black] (-0.4*\len,0.5*\len) circle (1pt);
\filldraw [black] (-1.2*\len,0.5*\len) circle (1pt);
\filldraw [black] (1.2*\len,0.5*\len) circle (1pt);
\filldraw [black] (1.2*\len,1.5*\len) circle (1pt);
\filldraw [black] (0.4*\len,1.5*\len) circle (1pt);
\filldraw [black] (-1.2*\len,1.5*\len) circle (1pt);
\filldraw [black] (-0.4*\len,1.5*\len) circle (1pt);

\node at (-0.4*\len,\len) [anchor=east] {$u$};

\node at (-1.2*\len,\len) [anchor=east] {$u$};
\node at (1.2*\len,\len)  [anchor=west]{$u$};
\node at (-0.2*\len,1.25*\len) [anchor=west,font=\footnotesize] {$y$};
\node at (\len,1.25*\len)  [anchor=east,font=\footnotesize]{$y$};
\node at (-0.2*\len,0.75*\len) [anchor=west,font=\footnotesize] {$z$};
\node at (\len,0.75*\len)  [anchor=east,font=\footnotesize]{$z$};
\node at (-1.2*\len,0.25*\len) [anchor=east] {$x$};
\node at (1.2*\len,0.25*\len)  [anchor=west]{$x$};
\node at (0,0) [anchor=north] {$r$};
\node at (0.25*\len,0.5*\len) [anchor=north,font=\footnotesize] {$\tilde{c}$};
\node at (0.25*\len,\len) [anchor=north,font=\footnotesize] {$c$};
\node at (0.8*\len,1.5*\len) [anchor=south] {$a$};
\node at (0,1.5*\len) [anchor=south] {$b$};
\node at (-0.8*\len,1.5*\len) [anchor=south] {$\tilde{d}$};
\node at (-0.8*\len,0.5*\len) [anchor=south,font=\footnotesize] {$d$};
\draw (-0.15*\len,0.25*\len) node {${\rho}$};
\draw (-0.8*\len,1.1*\len) node  {{$\gamma^k$}};
\draw (1.05*\len,\len) node [font=\small]{{$\nu^i$}};
\draw (-0.25*\len,\len) node [font=\small]{{$\overline{\nu}_i$}};
\node at (0.4*\len,1.25*\len) {{$\pi$}};
\node at (0.5*\len,0.75*\len) {{$f$}};
\end{tikzpicture}
\right]\,.
\end{equation}
On the other hand, the right-hand side of relation~\eqref{eq:Rel1}
is mapped by $\mathrm{A}_{M,N,L}$ to
\begin{equation}\label{eq:omega2}
\sum_{\tilde{d},{t}\in\Irr\cx}\rd_{\tilde{d}}\rd_{{t}}\, m\otimes\left( n \otimes l.\frac{\overline{\beta}_j\cdot \overline{\alpha}_i}{\rd_z\rd_y}\otimes\Id_{b\tilde{d}}\right)
\otimes \star_{yzx}\left[
\begin{tikzpicture}[baseline={([yshift=-.5ex]current bounding box.center)},xscale=1.2,yscale=1.2]
    \draw[line width=1] 
    (-1.2*\len,0) to (1.2*\len,0)
    (-1.2*\len,0) to (-1.2*\len,0.5*\len)
    (1.2*\len,0) to (1.2*\len,0.5*\len)
    (1.2*\len,0.5*\len) to (1.2*\len,1.5*\len)
    (-0.4*\len,0.5*\len) to (-0.4*\len,1.5*\len)
    (-1.2*\len,0.5*\len) to (-1.2*\len,1.5*\len)
    (-1.2*\len,1.5*\len) to (1.2*\len,1.5*\len)
    (-1.2*\len,0.5*\len) to (1.2*\len,0.5*\len)
    (-1.2*\len,\len) to (-0.4*\len,\len)
    (-0.4*\len,\len) to (1.2*\len,\len);

\node[rotate=-90] at (-0.4*\len,1.25*\len) {\scriptsize $\Arr$};
\node[rotate=-90] at (-0.4*\len,0.75*\len) {\scriptsize $\Arr$};
\node[rotate=-90] at (-1.2*\len,0.75*\len) {\scriptsize $\Arr$};
\node[rotate=-90] at (-1.2*\len,1.25*\len) {\scriptsize $\Arr$};
\node[rotate=-90] at (1.2*\len,0.75*\len) {\scriptsize $\Arr$};
\node[rotate=-90] at (1.2*\len,1.25*\len) {\scriptsize $\Arr$};

\node[rotate=-90] at (-1.2*\len,0.25*\len) {\scriptsize $\Arr$};
\node[rotate=-90] at (1.2*\len,0.25*\len) {\scriptsize $\Arr$};
\node[rotate=180] at (0,0) {\scriptsize $\Arr$};
\node[rotate=180] at (-0.8*\len,0.5*\len) {\scriptsize $\Arr$};
\node[rotate=180] at (-0.8*\len,\len) {\scriptsize $\Arr$};
\node[rotate=180] at (0.4*\len,0.5*\len) {\scriptsize $\Arr$};
\node[rotate=180] at (0.4*\len,\len) {\scriptsize $\Arr$};
\node[rotate=180] at (0.8*\len,1.5*\len) {\scriptsize $\Arr$};
\node[rotate=180] at (0,1.5*\len) {\scriptsize $\Arr$};
\node[rotate=180] at (-0.8*\len,1.5*\len) {\scriptsize $\Arr$};

\filldraw [black] (-1.2*\len,0) circle (1pt);
\filldraw [black] (1.2*\len,0) circle (1pt);
\filldraw [black] (-0.4*\len,\len) circle (1pt);
\filldraw [black] (-1.2*\len,\len) circle (1pt);
\filldraw [black] (1.2*\len,\len) circle (1pt);
\filldraw [black] (-0.4*\len,0.5*\len) circle (1pt);
\filldraw [black] (-1.2*\len,0.5*\len) circle (1pt);
\filldraw [black] (1.2*\len,0.5*\len) circle (1pt);
\filldraw [black] (1.2*\len,1.5*\len) circle (1pt);
\filldraw [black] (0.4*\len,1.5*\len) circle (1pt);
\filldraw [black] (-1.2*\len,1.5*\len) circle (1pt);
\filldraw [black] (-0.4*\len,1.5*\len) circle (1pt);

\node at (-1.2*\len,1.25*\len)  [anchor=east]{$y$};
\node at (-1.2*\len,0.75*\len) [anchor=east] {$z$};
\node at (-0.4*\len,1.25*\len) [anchor=west,font=\footnotesize] {$y$};
\node at (1.2*\len,1.25*\len)  [anchor=west]{$y$};
\node at (-0.4*\len,0.75*\len) [anchor=west,font=\footnotesize] {$z$};
\node at (1.2*\len,0.75*\len)  [anchor=west]{$z$};
\node at (-1.2*\len,0.25*\len) [anchor=east] {$x$};
\node at (1.2*\len,0.25*\len)  [anchor=west]{$x$};
\node at (0,0) [anchor=north] {$r$};
\node at (0.25*\len,0.5*\len) [anchor=north,font=\footnotesize] {$\tilde{c}$};
\node at (0.25*\len,\len) [anchor=north,font=\footnotesize] {$c$};
\node at (0.8*\len,1.5*\len) [anchor=south] {$a$};
\node at (0,1.5*\len) [anchor=south] {$b$};
\node at (-0.8*\len,1.5*\len) [anchor=south] {$\tilde{d}$};
\node at (-0.65*\len,\len) [anchor=south,font=\scriptsize] {${t}$};
\node at (-0.8*\len,0.5*\len) [anchor=north,font=\footnotesize] {$d$};
\draw (-0.15*\len,0.25*\len) node {${\rho}$};
\draw (-0.85*\len,1.3*\len) node  {{$\alpha^i$}};
\draw (-0.85*\len,0.75*\len) node {{$\beta^j$}};
\node at (0.4*\len,1.25*\len) {{$\pi$}};
\node at (0.5*\len,0.75*\len) {{$f$}};
\end{tikzpicture}
\right]\,.
\end{equation}
It follows from Lemma~\ref{bases_comp} and~\ref{lem:base_change} that
\begin{equation*}
\sum_{{t}\in\Irr\cx}\rd_{{t}}\,  (\overline{\beta}_j\cdot \overline{\alpha}_i) \otimes \alpha^i\circ_{{t}}\beta^j=\rd_z\rd_y\sum_{u\in\Irr\cx} \overline{\gamma}_k\otimes \overline{\nu_i}\circ_u\gamma^k \circ_{\overline{u}}\nu_i
\end{equation*}
where $(\overline{\gamma},\gamma)$ is a pair of dual bases of $\Hsp{\overline{d}\,\overline{u}\,\tilde{d} u}$ and $(\overline{\nu},\nu)$ of $\Hsp{\overline{z}\,\overline{y}\,u}$. Applying this formula and Lemma~\ref{star_prop} (ii) to the expression~\eqref{eq:omega2} we obtain~\eqref{eq:omega1}. That the associator is well-defined under relation~\eqref{eq:Rel2}  can be verified using analogous arguments.

Lastly, the assignment given for $\pi\in\Auxprod{b}{c}{d}$ and $\rho\in \Auxprod{a}{d}{r}$ by the value
\def\len{1.2cm}
\begin{equation*}
    m\otimes( n \otimes l\otimes \pi)\otimes\rho \mapsto\sum_{\tilde{a}\in\Irr\cx}\rd_{\tilde{a}}\, \left(m.\frac{\overline{\alpha}_i}{\rd_y}\otimes n\otimes \Id_{\tilde{a}b} \right)\otimes l\otimes
    \star_{yx}\left[
\begin{tikzpicture}[baseline={([yshift=-.5ex]current bounding box.center)}]
    \draw[line width=1] 
    (-1.2*\len,0) to (1.2*\len,0)
    (-1.2*\len,0) to (-1.2*\len,0.75*\len)
    (1.2*\len,0) to (1.2*\len,0.75*\len)
    (1.2*\len,0.75*\len) to (1.2*\len,1.5*\len)
    (0.4*\len,0.75*\len) to (0.4*\len,1.5*\len)
    (-1.2*\len,0.75*\len) to (-1.2*\len,1.5*\len)
    (-1.2*\len,1.5*\len) to (1.2*\len,1.5*\len)
    (-1.2*\len,0.75*\len) to (1.2*\len,0.75*\len);

\node[rotate=-90] at (0.4*\len,1.125*\len) {\scriptsize $\Arr$};
\node[rotate=-90] at (-1.2*\len,1.125*\len) {\scriptsize $\Arr$};
\node[rotate=-90] at (1.2*\len,1.125*\len) {\scriptsize $\Arr$};
\node[rotate=-90] at (-1.2*\len,0.375*\len) {\scriptsize $\Arr$};
\node[rotate=-90] at (1.2*\len,0.375*\len) {\scriptsize $\Arr$};
\node[rotate=180] at (0,0) {\scriptsize $\Arr$};
\node[rotate=180] at (-0.4*\len,0.75*\len) {\scriptsize $\Arr$};
\node[rotate=180] at (0.8*\len,0.75*\len) {\scriptsize $\Arr$};
\node[rotate=180] at (0.8*\len,1.5*\len) {\scriptsize $\Arr$};
\node[rotate=180] at (0,1.5*\len) {\scriptsize $\Arr$};
\node[rotate=180] at (-0.8*\len,1.5*\len) {\scriptsize $\Arr$};

\filldraw [black] (-1.2*\len,0) circle (1pt);
\filldraw [black] (1.2*\len,0) circle (1pt);
\filldraw [black] (0.4*\len,0.75*\len) circle (1pt);
\filldraw [black] (-1.2*\len,0.75*\len) circle (1pt);
\filldraw [black] (1.2*\len,0.75*\len) circle (1pt);
\filldraw [black] (1.2*\len,1.5*\len) circle (1pt);
\filldraw [black] (0.4*\len,1.5*\len) circle (1pt);
\filldraw [black] (-1.2*\len,1.5*\len) circle (1pt);
\filldraw [black] (-0.4*\len,1.5*\len) circle (1pt);

\node at (0.4*\len,1.125*\len) [anchor=east] {$y$};

\node at (-1.2*\len,1.125*\len) [anchor=east] {$y$};
\node at (1.2*\len,1.125*\len)  [anchor=west]{$y$};
\node at (-1.2*\len,0.375*\len) [anchor=east] {$x$};
\node at (1.2*\len,0.375*\len)  [anchor=west]{$x$};
\node at (0,0) [anchor=north] {$r$};
\node at (0.8*\len,0.75*\len) [anchor=north] {$a$};
\node at (0.8*\len,1.5*\len) [anchor=south] {$\tilde{a}$};
\node at (0,1.5*\len) [anchor=south] {$b$};
\node at (-0.8*\len,1.5*\len) [anchor=south] {$c$};
\node at (-0.4*\len,0.75*\len) [anchor=north] {$d$};

\draw (0*\len,0.3*\len) node {${\rho}$};
\draw (0.8*\len,1.125*\len) node  {{$\alpha^i$}};
\node at (-0.4*\len,1.125*\len) {{$\pi$}};
\end{tikzpicture}
\right]\,.
\end{equation*}
provides the inverse $\mathrm{A}^{-1}_{M,N,L}\colon M\Boxt (N \Boxt L)\to  (M\Boxt N) \Boxt L$ for the associator. Indeed, using relation~\eqref{eq:rel_simple_decomp1}, we obtain that the image of a vector $(m\otimes n\otimes \pi)\otimes l\otimes\rho$ under the composition  $\mathrm{A}^{-1}_{M,N,L}\circ \mathrm{A}_{M,N,L}$ is given by the expression
\begin{equation*}
\sum_{\tilde{a},\tilde{d}\in\Irr\cx}\rd_{\tilde{a}}\rd_{\tilde{d}}\; \left(m\otimes n \otimes \overline{\alpha_i}\right)\otimes l.\,\frac{\overline{\gamma}_k}{\rd_y} 
\otimes \star_{yx}\left[
\begin{tikzpicture}[baseline={([yshift=-.5ex]current bounding box.center)}]
\draw[line width=1] (1.2*\len,1.5*\len) arc (0:180:0.8*\len) node[midway,rotate=180]{\scriptsize $\Arr$} node[midway,above]{ $\tilde{a}$};
    \draw[line width=1] 
    (-1.2*\len,0) to (1.2*\len,0)
    (-1.2*\len,0) to (-1.2*\len,0.75*\len)
    (1.2*\len,0) to (1.2*\len,0.75*\len)
    (1.2*\len,0.75*\len) to (1.2*\len,1.5*\len)
    (-0.4*\len,0.75*\len) to (-0.4*\len,1.5*\len)
    (-1.2*\len,0.75*\len) to (-1.2*\len,1.5*\len)
    (-1.2*\len,1.5*\len) to (1.2*\len,1.5*\len)
    (-1.2*\len,0.75*\len) to (1.2*\len,0.75*\len);

\node[rotate=-90] at (-0.4*\len,1.125*\len) {\scriptsize $\Arr$};
\node[rotate=-90] at (-1.2*\len,1.125*\len) {\scriptsize $\Arr$};
\node[rotate=-90] at (1.2*\len,1.125*\len) {\scriptsize $\Arr$};
\node[rotate=-90] at (-1.2*\len,0.375*\len) {\scriptsize $\Arr$};
\node[rotate=-90] at (1.2*\len,0.375*\len) {\scriptsize $\Arr$};
\node[rotate=180] at (0,0) {\scriptsize $\Arr$};
\node[rotate=180] at (-0.8*\len,0.75*\len) {\scriptsize $\Arr$};
\node[rotate=180] at (0.4*\len,0.75*\len) {\scriptsize $\Arr$};
\node[rotate=180] at (0.8*\len,1.5*\len) {\scriptsize $\Arr$};
\node[rotate=180] at (0,1.5*\len) {\scriptsize $\Arr$};
\node[rotate=180] at (-0.8*\len,1.5*\len) {\scriptsize $\Arr$};

\filldraw [black] (-1.2*\len,0) circle (1pt);
\filldraw [black] (1.2*\len,0) circle (1pt);
\filldraw [black] (-0.4*\len,0.75*\len) circle (1pt);
\filldraw [black] (-1.2*\len,0.75*\len) circle (1pt);
\filldraw [black] (1.2*\len,0.75*\len) circle (1pt);
\filldraw [black] (1.2*\len,1.5*\len) circle (1pt);
\filldraw [black] (0.4*\len,1.5*\len) circle (1pt);
\filldraw [black] (-1.2*\len,1.5*\len) circle (1pt);
\filldraw [black] (-0.4*\len,1.5*\len) circle (1pt);

\node at (-0.4*\len,1.125*\len) [anchor=west] {$y$};

\node at (-1.2*\len,1.125*\len) [anchor=east] {$y$};
\node at (1.2*\len,1.125*\len)  [anchor=west]{$y$};
\node at (-1.2*\len,0.375*\len) [anchor=east] {$x$};
\node at (1.2*\len,0.375*\len)  [anchor=west]{$x$};
\node at (0,0) [anchor=north] {$r$};
\node at (0.4*\len,0.75*\len) [anchor=north] {$c$};
\node at (0.8*\len,1.5*\len) [anchor=south] {$a$};
\node at (0,1.5*\len) [anchor=south] {$b$};
\node at (-0.8*\len,1.5*\len) [anchor=south] {$\tilde{d}$};
\node at (-0.8*\len,0.75*\len) [anchor=north] {$d$};
\draw (0*\len,0.4*\len) node {${\rho}$};
\draw (-0.8*\len,1.125*\len) node  {{$\gamma^k$}};
\node at (0.4*\len,1.125*\len) {{$\pi$}};
\node at (0.4*\len,2*\len) {{$\alpha^i$}};
\end{tikzpicture}
\right]
\end{equation*}
which can be further simplified to  $(m\otimes n\otimes \pi)\otimes l\otimes\rho$ by transporting $\pi$ according to~\eqref{eq:base_change} and applying relation~\eqref{eq:Rel2} in the representation $(M\Boxt N)\Boxt L$. That
$\mathrm{A}_{M,N,L}\circ \mathrm{A}_{M,N,L}^{-1}= \id_{M\Boxt (N \Boxt L)}$ can be verified with similar arguments.
\end{proof}

\def\len{1.4cm}
\begin{lem}\label{lem:associators_unit} 
Let $M,N,L$ be tube algebra representations. Then, it holds that
\[
\mathrm{A}_{M,N,L}\left[\left(
m\otimes n\otimes
\begin{tikzpicture}[xscale=0.8,baseline={([yshift=-.5ex]current bounding box.center)}]
    \draw[line width=1] 
    (-0.75*\len,0) to (0.75*\len,0)
    (-0.75*\len,0.5*\len) to (0.75*\len,0.5*\len);
    \draw[line width=1,dotted] 
    (-0.75*\len,0) to (-0.75*\len,0.5*\len)
    (0.75*\len,0) to (0.75*\len,0.5*\len);

\node[rotate=180] at (0,0) {\scriptsize $\Arr$};
\node[rotate=180] at (-0.4*\len,0.5*\len) {\scriptsize $\Arr$};
\node[rotate=180] at (0.4*\len,0.5*\len) {\scriptsize $\Arr$};

\filldraw [black] (-0.75*\len,0) circle (1pt);
\filldraw [black] (0.75*\len,0) circle (1pt);
\filldraw [black] (0,0.5*\len) circle (1pt);
\filldraw [black] (-0.75*\len,0.5*\len) circle (1pt);
\filldraw [black] (0.75*\len,0.5*\len) circle (1pt);

\node at (0,0) [anchor=north] {$d$};
\node at (0.4*\len,0.5*\len) [anchor=south] {$a$};
\node at (-0.4*\len,0.5*\len) [anchor=south] {$b$};

\draw (0,0.25*\len) node {${\pi}$};
\end{tikzpicture}
\right)\otimes l\otimes \rho\right]
=
m\otimes\left(n\otimes l\otimes \Id_{bc}\right)\otimes \pi\circ_d\rho\]   
\end{lem}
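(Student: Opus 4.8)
The plan is to compute directly: substitute the given element into the defining formula of the associator (Definition~\ref{def:associator}) and exploit that the inner tensorand now carries the trivial tube label. Write $\pi$ for the inner tube, regarded as the $x=\un_i$-homogeneous component of $\Auxprod{a}{b}{d}$, i.e.\ as an element of $\Hsp{\overline{d}\,a\,b}$; here $\un_i$ is the unit of the relevant diagonal category, which is \emph{simple}, $\un_i\in\Irr\cx$. Consequently $\sqrt{\rd_{\un_i}}=1$, so every $\bullet_{\un_i}$ appearing in the definition is just $\circ_{\un_i}$; the two outer strands of the displayed diagram that are labeled $x$ become dotted (unit) edges, which may be deleted by the unit-insertion rule~(ii) of the graphical calculus; and, since $\un_i$ is simple, the operator $\star_{yx}$ reduces to $\star_{y}$ (Lemma~\ref{star_prop}~(iv), together with the fact that $\star$ commutes with gluing, Lemma~\ref{star_prop}~(i)). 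After these reductions the $\Pi$-tensorand becomes $\star_{y}$ of the composite of $\rho$, $\gamma^k$ and $\pi$ glued along $d$, $y$ and $b,a,c$, while the $(N\Boxt L)$-tensorand is $n\otimes l.\tfrac{\overline{\gamma}_k}{\rd_y}\otimes\Id_{b\tilde d}$.

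The core step is to eliminate the summation over $\tilde d\in\Irr\cx$ together with the dual-basis pair $(\gamma,\overline{\gamma})$. Because the inner tube is unital, $\gamma^k$ enters the $\Pi$-tensorand only through a relative composition with $\pi$ and $\rho$, so $l.\tfrac{\overline{\gamma}_k}{\rd_y}$ and $\gamma^k$ are in exactly the position to be recombined. This is carried out as in the identity-verification part of the proof of Proposition~\ref{prop:associators_well-defined}: transporting the relevant strand by the base-change formula~\eqref{eq:base_change} (Lemma~\ref{lem:base_change}), applying the defining relation~\eqref{eq:Rel2} of the tensor product, and invoking the dominance relation (Lemma~\ref{lem:pairing_properties}~(ii)) to collapse $\sum_{\tilde d}\rd_{\tilde d}\,\overline{\gamma}_k\otimes\gamma^k$ into an identity morphism. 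The prefactors $\rd_{\tilde d}$ and $\rd_y^{-1}$ are absorbed precisely in this step; $\tilde d$ is thereby replaced by $\deg l = c$, and what remains of the $(N\Boxt L)$-tensorand is $n\otimes l\otimes\Id_{bc}$ (using $\Id_{bc}=\Id_b\circ_{\un}\Id_c$ from Lemma~\ref{comp_prop}~(i) to recombine the two identity strands).

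It then remains to identify the $\Pi$-tensorand. Once $\gamma$ has been removed, the label $y$ survives only as the (simple) tube label of the remaining diagram, so $\star_{y}$ acts as the identity (Lemma~\ref{star_prop}~(iv)), and the diagram is simply $\pi$ glued to $\rho$ along $d$. Using the associativity and symmetry of relative composition (Lemma~\ref{comp_prop}~(ii)--(iv)) and edge fusion (graphical rule~(i)) to read the adjacent $b$- and $c$-strands as $bc$, this composite is exactly $\pi\circ_d\rho\in\Auxprod{a}{bc}{r}$, and the lemma follows.

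The main obstacle I expect is bookkeeping rather than any conceptual difficulty: one must keep careful track of the dimension constants ($\rd_{\tilde d}$, $\rd_y^{-1}$, and the $\sqrt{\rd_{\un_i}}=1$ factors) and of the precise placement of the strands in the diagram of Definition~\ref{def:associator}, so that the recombination step matches verbatim the corresponding computation in the proof of Proposition~\ref{prop:associators_well-defined} and all constants cancel exactly; a secondary delicate point is to confirm that, after the $\gamma$-summation has collapsed, the surviving $\star_{y}$ genuinely acts trivially rather than producing a residual sum over $\Irr\cx$.
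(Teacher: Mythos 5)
Your proposal is correct and follows essentially the same route as the paper: the paper's one-line proof likewise substitutes into Definition~\ref{def:associator} and then recombines the dual-basis sum via relation~\eqref{eq:rel_simple_decomp2}, which is exactly the packaged form of the base-change/defining-relation/dominance argument you spell out. The reductions you note for the unit tube label ($\rd_{\un}=1$, deletion of dotted edges, and triviality of $\star$ over the remaining simple label of $\rho$) are precisely the simplifications implicit in the paper's ``direct computation''.
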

\begin{proof}
Directly from the definition of the associator~\eqref{eq:associator}, we obtain an expression that can be further simplified, by means of the relation~\eqref{eq:rel_simple_decomp2}, into the desired result.
\end{proof}
\begin{prop}\label{prop:pentagon_axiom}
The associators $\mathrm{A}_{M,N,L}$ defined in~\eqref{eq:associator} satisfy the pentagon axiom
\begin{equation*}
\begin{tikzcd}
    &((M\Boxt N)\Boxt L)\Boxt K&\\
        (M\Boxt(N\Boxt L))\Boxt K&&(M\Boxt N)\Boxt (L\Boxt K)\\
        M\Boxt ((N\Boxt L)\Boxt K)&&M\Boxt (N\Boxt (L\Boxt K))
\arrow["\mathrm{A}_{M,N,L}\Boxt \id_K", from=1-2,to=2-1,swap]
\arrow["\mathrm{A}_{M\Boxt N,L,K}", from=1-2,to=2-3]
\arrow["\mathrm{A}_{M,N\Boxt L,K}", from=2-1,to=3-1,swap]
\arrow["\mathrm{A}_{M,N,L\Boxt K}", from=2-3,to=3-3]
\arrow["\id_M\Boxt\mathrm{A}_{N,L,K}", from=3-1,to=3-3,swap]
\end{tikzcd}
\end{equation*}
for all tube algebra representations $M,N,L,K$.
\end{prop}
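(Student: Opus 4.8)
The plan is to verify the pentagon identity on a spanning set of $((M\Boxt N)\Boxt L)\Boxt K$ that is chosen so that Lemma~\ref{lem:associators_unit} makes every associator appearing in the diagram completely explicit. Since by Proposition~\ref{prop:associators_well-defined} each of the five maps in the pentagon is a well-defined $\Tubecx$-intertwiner, it suffices to check that the two composites agree on such a spanning set. Applying Proposition~\ref{prop:rel_unit_simple}(iii) three times — first to the product $((M\Boxt N)\Boxt L)\Boxt K$ (with $(M\Boxt N)\Boxt L$ playing the role of the first factor), then to $(M\Boxt N)\Boxt L$, then to $M\Boxt N$ — one sees that classes of vectors of the form
\begin{equation*}
\bigl((m\otimes n\otimes\pi)\otimes l\otimes\rho\bigr)\otimes k\otimes\tau,\qquad m\in M_a,\ n\in N_b,\ l\in L_d,\ k\in K_e,
\end{equation*}
with all degrees $a,b,d,e$ simple and with all three multiplicity elements in the unit summand, $\pi\in(\Auxprod{a}{b}{c})_{\un}=\Hsp{\overline c\,a\,b}$, $\rho\in(\Auxprod{c}{d}{r})_{\un}=\Hsp{\overline r\,c\,d}$ and $\tau\in(\Auxprod{r}{e}{s})_{\un}=\Hsp{\overline s\,r\,e}$, span the domain.

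The point of this reduction is that the relative composition $\circ$ of two unit-form multiplicity elements is again unit-form, as is every $\Id_x$; hence, starting from such a vector, the hypothesis of Lemma~\ref{lem:associators_unit} (that the multiplicity element sitting in the \emph{first} tensor factor be in the unit summand) is satisfied at every step along either leg of the pentagon, and the conclusion of that lemma feeds into the next application. Computing the two composites is then purely a matter of iterating Lemma~\ref{lem:associators_unit}. Along the upper route $\mathrm{A}_{M,N,L\Boxt K}\circ\mathrm{A}_{M\Boxt N,L,K}$ one obtains
\begin{equation*}
m\otimes\bigl(n\otimes(l\otimes k\otimes\Id_{de})\otimes\Id_{b(de)}\bigr)\otimes\bigl(\pi\circ_c(\rho\circ_r\tau)\bigr),
\end{equation*}
while along the lower route $(\id_M\Boxt\mathrm{A}_{N,L,K})\circ\mathrm{A}_{M,N\Boxt L,K}\circ(\mathrm{A}_{M,N,L}\Boxt\id_K)$ one obtains
\begin{equation*}
m\otimes\bigl(n\otimes(l\otimes k\otimes\Id_{de})\otimes(\Id_{bd}\circ_{bd}\Id_{(bd)e})\bigr)\otimes\bigl((\pi\circ_c\rho)\circ_r\tau\bigr).
\end{equation*}
These two expressions coincide: $\Id_{bd}\circ_{bd}\Id_{(bd)e}=\Id_{(bd)e}=\Id_{b(de)}$ by the unit properties of Lemma~\ref{comp_prop}(i) together with strictness of $\cx$ (so $(bd)e=b(de)$ as objects), and $(\pi\circ_c\rho)\circ_r\tau=\pi\circ_c(\rho\circ_r\tau)$ by associativity of the relative composition (again Lemma~\ref{comp_prop}, using that the double dual of a tensor product is the product of the double duals). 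This is exactly the pentagon identity on the chosen spanning vectors.

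The only genuine work is the bookkeeping in those two displays: at each application of Lemma~\ref{lem:associators_unit} one must keep track of which representation plays the role of ``$M$'', ``$N$'', ``$L$'' (namely $M\Boxt N$, $N\Boxt L$, $L\Boxt K$ at various points) and which object is carried as the degree of the bundled factor (the tensor products $de$, $bd$, $(bd)e$), after which the two elementary identities above finish the job. The one place a reader might be uneasy, and thus the main obstacle, is checking that the threefold use of Proposition~\ref{prop:rel_unit_simple}(iii) really produces a spanning set of the triply-bracketed product in which \emph{all} multiplicity elements are unit-form; this holds because $\Irr\cx\subset\IndexSet$ and each pass of~\eqref{eq:rel_unit} simultaneously forces the auxiliary label to $\un$ and the relevant degrees to be simple, so the procedure terminates after three passes. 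One could instead compare the two composites directly from the general formula in Definition~\ref{def:associator} by means of Lemmas~\ref{star_prop}, \ref{bases_comp} and~\ref{lem:base_change}, but this is substantially more laborious, and reducing to unit form is precisely what trivializes it.
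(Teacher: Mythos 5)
Your proposal is correct and follows essentially the same route as the paper: reduce via Proposition~\ref{prop:rel_unit_simple}(iii) to spanning vectors whose multiplicity elements all lie in the unit summand, then iterate Lemma~\ref{lem:associators_unit} along both legs and observe that the results agree by the unit and associativity properties of the relative composition (Lemma~\ref{comp_prop}). Your extra care in justifying that the threefold reduction yields a genuine spanning set, and in spelling out the two composites before identifying them, only makes explicit what the paper leaves implicit.
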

\begin{proof}
In view of Proposition~\ref{prop:rel_unit_simple} (iii), it is enough to check that the pentagon axiom holds for elements of the form
\def\len{1.4cm}
\begin{equation}\label{eq:four_product_element}
\left(\left(m\otimes n\otimes\pielemUnit{a}{b}{u}{\pi}\right)\otimes
l\otimes \pielemUnit{u}{c}{v}{\rho}\right) \otimes k\otimes\pielemUnit{v}{d}{w}{\tau}
\end{equation}
where $a,b,c,d,u,v\in\Irr\cx$. Using Lemma~\ref{lem:associators_unit}, one obtains that both compositions
\[
\id_M\Boxt\mathrm{A}_{N,L,K}\circ  \mathrm{A}_{M,N\Boxt L,K}\circ \mathrm{A}_{M,N,L}\Boxt \id_K
\quad \text{and} \quad
\mathrm{A}_{M,N,L\Boxt K}\circ \mathrm{A}_{M\Boxt N,L,K}
\]
assign to~\eqref{eq:four_product_element} the value
\def\len{1.4cm}
\begin{equation*}
m\otimes \left(n\otimes
\left(l
\otimes k\otimes\Id_{cd}\right)\otimes\Id_{bcd}\right)\otimes
\begin{tikzpicture}[baseline={([yshift=-.5ex]current bounding box.center)}]
    \draw[line width=1] 
    (-\len,0) to (\len,0)
    (0,1.5*\len) to (\len,1.5*\len)
    (-0.5*\len,\len) to (\len,\len)
    (-\len,0.5*\len) to (\len,0.5*\len);
    
    \draw[line width=1,dotted] 
    (-\len,0) to (-\len,0.5*\len)
    (-0.5*\len,\len) to (-0.5*\len,0.5*\len)
    (0,1.5*\len) to (0,\len)
    (\len,0) to (\len,1.5*\len);

\node[rotate=180] at (0,0) {\scriptsize $\Arr$};
\node[rotate=180] at (-0.75*\len,0.5*\len) {\scriptsize $\Arr$};
\node[rotate=180] at (-0.25*\len,\len) {\scriptsize $\Arr$};
\node[rotate=180] at (0.5*\len,\len) {\scriptsize $\Arr$};
\node[rotate=180] at (0.25*\len,0.5*\len) {\scriptsize $\Arr$};
\node[rotate=180] at (0.75*\len,1.5*\len) {\scriptsize $\Arr$};
\node[rotate=180] at (0.25*\len,1.5*\len) {\scriptsize $\Arr$};

\filldraw [black] (-\len,0) circle (1pt);
\filldraw [black] (\len,0) circle (1pt);
\filldraw [black] (-0.5*\len,0.5*\len) circle (1pt);
\filldraw [black] (-0.5*\len,\len) circle (1pt);
\filldraw [black] (-\len,0.5*\len) circle (1pt);
\filldraw [black] (0,1.5*\len) circle (1pt);
\filldraw [black] (0,\len) circle (1pt);
\filldraw [black] (\len,1.5*\len) circle (1pt);
\filldraw [black] (0.5*\len,1.5*\len) circle (1pt);
\filldraw [black] (\len,\len) circle (1pt);
\filldraw [black] (\len,0.5*\len) circle (1pt);

\node at (0,0) [anchor=north] {$w$};
\node at (0.25*\len,0.5*\len) [anchor=north] {$v$};
\node at (-0.75*\len,0.5*\len) [anchor=south] {$d$};
\node at (-0.25*\len,\len) [anchor=south] {$c$};
\node at (0.5*\len,\len) [anchor=north] {$u$};
\node at (0.75*\len,1.5*\len) [anchor=south] {$a$};
\node at (0.25*\len,1.5*\len) [anchor=south] {$b$};

\draw (0.5*\len,1.25*\len) node {${\pi}$};
\draw (0.25*\len,0.75*\len) node {${\rho}$};
\draw (-0.25*\len,0.25*\len) node {${\tau}$};

\end{tikzpicture}
\end{equation*}
thereby proving the statement.
\end{proof}

\subsection{Monoidal unit and dualizable objects}

\begin{defi}
Define the \textit{trivial tube algebra representation} $\mathbb{I}$ as the vector space 
$\bigoplus_{a\in \IndexSet}\;\Hsp{\overline{a}\un}$
with tube algebra action given by the partial trace
\begin{equation*}
    l.f\coloneqq  \rd_x\,\,\ptr_x(l\circ_a f)
\end{equation*}
for $l\in\Hsp{\overline{a}\,\un}$ and $f\in\Hsp{\overline{b}\overline{x}ax}$.
\end{defi}
\begin{rem}
In case $\cx$ is a tensor category and we consider the tube algebra associated to the index set $\IndexSet=\Irr\cx$, the trivial representation is one-dimensional and the action is given by the full trace.
\end{rem}
The trivial representation $\mathbb{I}$ serves as monoidal unit for the tensor product~\eqref{tensor_prod}. In order to show this, define the \textit{right unitor} morphism by the assignment
\begin{equation*}
   \mathbf{r}_M\Colon M\to M\,\Boxt\,\mathbb{I}, \qquad m_a\mapsto m_a\otimes \Id_\un\otimes \Id_a
\end{equation*}
 and, similarly, the \textit{left unitor} morphism as
\begin{equation*}
   \mathbf{l}_M\Colon M\to \mathbb{I}\,\Boxt\,M, \qquad m_a\mapsto  \Id_\un\otimes m_a \otimes\Id_a\,.
\end{equation*}

\begin{prop}
For any tube algebra representation $M\in\Rep\Tubecx$, the unitors $\mathbf{r}_M$ and $\mathbf{l}_M$ are well-defined isomorphisms of tube algebra representations.
\end{prop}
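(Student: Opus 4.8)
The plan is to treat $\mathbf{r}_M$ in detail; $\mathbf{l}_M$ then follows by the left--right mirror of every diagram, with~\eqref{eq:Rel1} and~\eqref{eq:Rel2} exchanged. Well-definedness of $\mathbf{r}_M$ as a linear map requires nothing: it is prescribed on the graded pieces $M_a$ and extended linearly over $M=\bigoplus_{a}M_a$, and $m_a\otimes\Id_{\un}\otimes\Id_a$ does land in $(M\Boxt\mathbb{I})_a$. So the substance is the intertwining property and bijectivity, and the conceptual guide is that $M\Boxt\mathbb{I}$ ought to be nothing but $M\otimes_{\Tubecx}\Tubecx\cong M$.

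The reason is that $\mathbb{I}$ is small: $\mathbb{I}_a=\Hsp{\overline a\,\un}\cong\Hom_\cx(a,\un)$ vanishes unless $\un_i$ is a summand of $a\in\cx_i$, and every \emph{simple} $s$ with $\mathbb{I}_s\neq0$ is a unit class $\un_j$; moreover for $a\in\cx_i$ one has $a\otimes\un_i=a$, so that $\Auxprod{a}{\un_i}{c}=\mathrm{T}_{a\un_i;c}=\mathrm{T}_{a;c}$, the action~\eqref{eq:tubeaction_onprod} on $\auxprod$ becomes right welding, and $\Id_a\in\Auxprod{a}{\un_i}{a}$ is the local unit of $\Tubecx$ at $a$. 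Granting this, $\Id_a.g=\Id_a\cdot g=g$ for any tube element $g$ of source degree $a$, so $\mathbf{r}_M(m_a).g=m_a\otimes\Id_{\un}\otimes g$; hence $\mathbf{r}_M$ is an intertwiner as soon as this class equals $\mathbf{r}_M(m_a.g)=(m_a.g)\otimes\Id_{\un}\otimes\Id_{a'}$. That last identity I would get from~\eqref{eq:Rel1} with $f=g$, $n=\Id_{\un}$, $\pi=\Id_{a'}$: in this specialisation the sum over $\tilde b\in\Irr\cx$ collapses to the lone term where $\tilde b$ is a unit class (since $n.\overline{\beta}_j$ must sit in $\mathbb{I}_{\tilde b}$), and on that term the partial--trace formula for the $\mathbb{I}$-action, the value $\langle\Id_x,\Id_x\rangle=\rd_x$, and Lemma~\ref{lem:pairing_properties}(i) cancel the dimension factors $\rd_{\tilde b}$, $\rd_u$ and the $\bullet$-normalisations, returning exactly $m_a\otimes\Id_{\un}\otimes g$.

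For bijectivity I would write down the inverse. Using Proposition~\ref{prop:rel_unit_simple}(iii), every class of $(M\Boxt\mathbb{I})_c$ can be brought to a finite sum of vectors $m_t\otimes\Id_{\un}\otimes\pi$ with $t\in\Irr\cx$ simple and $\pi\in(\mathrm{T}_{t;c})_{\un}$ a tube element of trivial strand; set $\mathbf{r}_M^{-1}$ of such a vector to $m_t.\pi\in M_c$, using the original $M$-action. One checks this is independent of the presentation --- i.e. descends to the quotient --- by running~\eqref{eq:Rel1} backwards (module-associativity of $M$) and~\eqref{eq:Rel2} in the special case $N=\mathbb{I}$ (where, via the partial--trace action and the dominance relation of Lemma~\ref{lem:pairing_properties}(ii), it reduces to an identity already implied by~\eqref{eq:Rel1}), and it is an intertwiner because the $\Tubecx$-action only touches $\pi$. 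Then $\mathbf{r}_M^{-1}\circ\mathbf{r}_M(m_a)=m_a.\Id_a=m_a$ by locality of the $M$-action, and $\mathbf{r}_M\circ\mathbf{r}_M^{-1}=\id$ because~\eqref{eq:Rel1} (applied with $f=\pi$) identifies $m_t\otimes\Id_{\un}\otimes\pi$ with $\mathbf{r}_M(m_t.\pi)$.

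The main obstacle I anticipate is exactly the dimension bookkeeping underlying the previous two paragraphs: one must verify that, specialised to the trivial representation, the sums over simple objects and the $\star$-resolutions in~\eqref{eq:Rel1} and~\eqref{eq:Rel2} collapse to a single $\rd_x\,\ptr_x$ which then cancels the normalising factor $\rd_x$ of the $\mathbb{I}$-action --- equivalently, that modulo~\eqref{eq:Rel1}--\eqref{eq:Rel2} the space $M\overset{\sim}{\Boxt}\mathbb{I}$ is faithfully $M\otimes_{\Tubecx}\Tubecx\cong M$, so that no information about $M$ is lost or created. Once that is pinned down, naturality, intertwining on the $\auxprod$-slot, and the reduction of a general class to the image of $\mathbf{r}_M$ are all routine.
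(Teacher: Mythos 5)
Your proposal is correct and follows essentially the same route as the paper: both identify $(M\Boxt\mathbb{I})_c$ with $M_c$ by using~\eqref{eq:Rel2} to absorb the $\mathbb{I}$-component into the $\auxprod$-slot and~\eqref{eq:Rel1} to absorb the result into the $M$-action (your inverse $m_t\otimes\Id_\un\otimes\pi\mapsto m_t.\pi$ is exactly the paper's $m\otimes l\otimes\pi\mapsto m.(l\circ_b\pi)$ specialised via Proposition~\ref{prop:rel_unit_simple}), while intertwining comes from~\eqref{eq:Rel1} together with $\Id_a$ being a welding unit. The dimension bookkeeping you flag does collapse as you predict (the sum over $\tilde b$ reduces to unit classes and the dominance relation cancels the $\rd$-factors), so no gap remains.
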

\begin{proof}
Given $m\in M_a$, $l\in\Hsp{\overline{b}\,\un}$ and $\pi\in\auxprod$, we have that
\begin{equation*}
    m\otimes l\otimes \pi \overset{\eqref{eq:Rel2}}{\sim} m\otimes \Id_\un\otimes l\circ_b\pi \overset{\eqref{eq:Rel1}}{\sim}
    m.l\circ_b\pi \otimes \Id_\un\otimes \Id_c
\end{equation*}
and thus $(M\,\Boxt\,\mathbb{I})_c\cong M_c$, for every $c\in\IndexSet$. Moreover, relation~\eqref{eq:Rel1} ensures that $\mathbf{r}_M$ is an intertwiner. A similar argument shows the statement for $\mathbf{l}_M$.
\end{proof}

\begin{prop}
The triangle axiom holds, i.e., the diagram
\begin{equation*}
\begin{tikzcd}
    (M\Boxt \mathbb{I}) \Boxt N\ar[rr,"\mathrm{A}_{M,\mathbb{I},N}"]&& M\Boxt (\mathbb{I}\, \Boxt N)\\
    &M\Boxt N\ar[ul,"\mathbf{r}_M\Boxt \id_N"]\ar[ur,swap,"\id_M\Boxt\mathbf{l}_N"]&
\end{tikzcd}
\end{equation*}
commutes for all tube algebra representations $M$ and $N$.
\end{prop}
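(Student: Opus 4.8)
The plan is to chase an arbitrary homogeneous generator of $M\Boxt N$ around the triangle and to check that the two composites agree on it. So I would start from $m\otimes n\otimes\sigma$ with $m\in M_a$, $n\in N_b$, and $\sigma\in\Hsp{\overline{c}\,\overline{x}\,a\,b\,x}\subset\auxprod$; since such vectors span $M\Boxt N$, matching the two legs on them is enough. I would first record that nonvanishing of $\auxprod$ forces $a$, $b$, $c$ into a single diagonal block $\cx_i$, so that the various copies of the monoidal unit occurring below all equal $\un_i$ and no separate bookkeeping for $\un$ is needed.

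For the composite $\mathrm{A}_{M,\mathbb{I},N}\circ(\mathbf{r}_M\Boxt\id_N)$: by the description of $\Boxt$ on morphisms (Proposition~\ref{prop:tensor_prod}) together with the definition of $\mathbf{r}_M$, the generator is sent to $(m\otimes\Id_\un\otimes\Id_a)\otimes n\otimes\sigma$ in $(M\Boxt\mathbb{I})\Boxt N$. The key observation is that the $\mathbb{I}$-side tube here is the unit morphism $\Id_a\in\Hsp{\overline{a}\,a}$, which lives in the trivial ($x=\un$) summand of the relevant $\auxprod$; this is exactly the hypothesis under which Lemma~\ref{lem:associators_unit} computes the associator in closed form. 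I would apply that lemma with the triple $(M,\mathbb{I},N)$ in place of $(M,N,L)$, the middle tube $\pi=\Id_a$, the role of $l$ played by $n$, and the role of $\rho$ by $\sigma$; it returns $m\otimes(\Id_\un\otimes n\otimes\Id_{\un b})\otimes(\Id_a\circ_a\sigma)$. Finally I would simplify the two tube pieces: $\Id_{\un b}=\Id_\un\circ_\un\Id_b=\Id_b$ by Lemma~\ref{comp_prop}~(i), and $\Id_a\circ_a\sigma=\sigma$ because gluing the unit tube of $a$ onto the $a$-strand of $\sigma$ does nothing (a snake identity). The upshot is $m\otimes n\otimes\sigma\mapsto m\otimes(\Id_\un\otimes n\otimes\Id_b)\otimes\sigma$.

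For the composite $\id_M\Boxt\mathbf{l}_N$: by the same formula for $\Boxt$ on morphisms and the definition of $\mathbf{l}_N$, the generator is sent directly to $m\otimes\mathbf{l}_N(n)\otimes\sigma=m\otimes(\Id_\un\otimes n\otimes\Id_b)\otimes\sigma$, which is precisely the element produced by the other leg; hence the triangle commutes. I do not expect a real obstacle here: the only nontrivial input is Lemma~\ref{lem:associators_unit}, and the one point requiring care is getting the substitution into that lemma right — in particular, matching the trivially-labeled tube $\Id_a$ coming from $\mathbf{r}_M$ to the slot $\pi$ of the lemma (the one forced to have trivial label) rather than to the slot $\rho$ (which may carry an arbitrary label).
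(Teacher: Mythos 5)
Your proof is correct and follows essentially the same route as the paper: apply $\mathbf{r}_M\Boxt\id_N$, feed the resulting unit‑labelled middle tube $\Id_a$ into Lemma~\ref{lem:associators_unit}, simplify $\Id_{\un b}=\Id_b$ and $\Id_a\circ_a\sigma=\sigma$, and compare with $\id_M\Boxt\mathbf{l}_N$. Your extra care in placing $\Id_a$ into the $\pi$-slot (the one with trivial label) rather than the $\rho$-slot is exactly the point that makes the lemma applicable.
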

\begin{proof}
From Lemma~\ref{lem:associators_unit} it follows that
\begin{equation*}
\mathrm{A}_{M,\mathbb{I},N}\circ \mathbf{r}_M\Boxt \id_N(m\otimes n\otimes \pi)
    =m\otimes(\Id_\un\otimes n\otimes \Id_b)\otimes\pi
    =\id_M\Boxt\mathbf{l}_N(m\otimes n\otimes \pi)
\end{equation*}
for any $m\otimes n\otimes \pi\in M_a\otimes N_b\otimes \auxprod$, which proves the desired result.
\end{proof}
Now that we count with a monoidal unit in $\Rep\Tubecx$, we can discuss dualizability of representations. From the infinite nature of this tensor category we know that not every representation admits a dual object.

\begin{defi}
A tube algebra representation $M\in\Rep\Tubecx$ is called \textit{locally finite} when $M_a$ is finite-dimensional for every $a\in\IndexSet$ and the support $\{t\in\Irr\cx\,|\,M_t\neq0\}$ is finite.
\end{defi}

\begin{rem}
In the case that $\IndexSet = \Irr\cx$, the situation simplifies. We have that locally finite representations are actually finite-dimensional.
\end{rem}
To define the dual of a locally finite representation, we make use of the following involutive structure on the tube algebra: given $a,b\in\IndexSet$ the assignment
\begin{equation*}
    \left(-\right)^\#\colon \mathrm{T}_{a,b}\to\mathrm{T}_{\overline{b},\overline{a}},\qquad
\raisebox{0.3em}{\tubelem{a}{b}{x}{f}}\mapsto \raisebox{0.3em}{\tubelem{\overline{b}}{\overline{a}}{\overline{x}}{f}}
\end{equation*}
extends to the tube algebra $\Tubecx$ and obeys $(f\cdot g)^\#=g^\#\cdot f^\#$.
\begin{defi}
Let $M\in\Rep\Tubecx$ be a locally finite tube algebra representation. The \textit{dual tube algebra representation of $M$} is a triple consisting of:
\begin{enumerate}[$($i$)$]
    \item The representation $\overline{M}$ given for $a\in\IndexSet$ by the vector space $\overline{M}_a\coloneqq M_{\,\overline{a}}^*=\Hom_\bk(M_{\,\overline{a}},\bk)$ with tube algebra action
\begin{equation*}
    \varphi.f(m)\coloneqq \varphi\left(m.f^\#\right)
\end{equation*}
for $\varphi\in M_{\,\overline{a}}^*$, any tube element $f\in\mathrm{T}_{a,b}$ and $m\in M_{\,\overline{b}}$.
\item The evaluation morphism defined by the assignment
\begin{equation*}
    \ev_M\colon \overline{M}\Boxt M\to \mathbb{I},\qquad  \varphi\otimes m\otimes\!\!\raisebox{0.3em}{\pielem{a}{b}{c}{x}{\pi}}
    \mapsto\rd_x\;  \varphi.\overline{\eta}_k\left(m\right)\;\ptr_{x}\left(
\begin{tikzpicture}[baseline={([yshift=-.5ex]current bounding box.center)},xscale=0.8]
    \draw[line width=1,dotted]
    (-0.75*\len,0.5*\len) .. controls (-0.75*\len,1.15*\len) and (0.75*\len,1.15*\len).. (0.75*\len,0.5*\len);
    \draw[line width=1] 
    (-0.75*\len,0) to (0.75*\len,0)
    (-0.75*\len,0) to (-0.75*\len,0.5*\len)
    (0.75*\len,0) to (0.75*\len,0.5*\len)
    (-0.75*\len,0.5*\len) to (0.75*\len,0.5*\len);

\node[rotate=-90] at (-0.75*\len,0.25*\len) {\scriptsize $\Arr$};
\node[rotate=-90] at (0.75*\len,0.25*\len) {\scriptsize $\Arr$};
\node[rotate=180] at (0,0) {\scriptsize $\Arr$};
\node[rotate=180] at (-0.4*\len,0.5*\len) {\scriptsize $\Arr$};
\node[rotate=180] at (0.4*\len,0.5*\len) {\scriptsize $\Arr$};

\filldraw [black] (-0.75*\len,0) circle (1pt);
\filldraw [black] (0.75*\len,0) circle (1pt);
\filldraw [black] (0,0.5*\len) circle (1pt);
\filldraw [black] (-0.75*\len,0.5*\len) circle (1pt);
\filldraw [black] (0.75*\len,0.5*\len) circle (1pt);

\node at (-0.75*\len,0.25*\len) [anchor=east] {$x$};
\node at (0.75*\len,0.25*\len) [anchor=west] {$x$};
\node at (0,0) [anchor=north] {$c$};
\node at (0.4*\len,0.5*\len) [anchor=north] {$a$};
\node at (-0.4*\len,0.5*\len) [anchor=north] {$b$};

\draw (0,0.25*\len) node {${\pi}$};
\draw (0,0.7*\len) node {${\eta^k}$};
\end{tikzpicture}
\right)
\end{equation*}
where $\varphi\in\overline{M}_a$, $m\in M_b$ and $(\overline{\eta},\eta)$ is a pair of dual bases of $\Hsp{a\,b}$.
\item The coevaluation morphism is given by
\def\len{2cm}
\begin{equation*}
    \coev_M\colon \mathbb{I}\to M\Boxt \overline{M},\qquad{\unelem{c}{l}}  \mapsto\sum_{t\in\Irr\cx}\rd_t\;  \sum_{i}m_t^i\otimes \overline{m}_t^i\otimes 
\begin{tikzpicture}[xscale=1.25,baseline={([yshift=-.5ex]current bounding box.center)}]
\draw[line width=1,dotted]
    (-0.25*\len,0) .. controls (-0.2*\len,0.3*\len) and (0.2*\len,0.3*\len).. (0.25*\len,0);
\draw[line width=1,dotted]
    (-0.25*\len,0) to (-0.25*\len,0.5*\len)
    (0.25*\len,0) to (0.25*\len,0.5*\len);

\draw[line width=1] 
    (-0.25*\len,0) to (0.25*\len,0)(-0.25*\len,0.5*\len) to (0.25*\len,0.5*\len);

\node[rotate=180] at (0,0) {\scriptsize $\Arr$};
\node[rotate=180] at (-0.13*\len,0.5*\len) {\scriptsize $\Arr$};
\node[rotate=0] at (0.13*\len,0.5*\len) {\scriptsize $\Arr$};

\filldraw [black] (-0.25*\len,0) circle (1pt);
\filldraw [black] (0.25*\len,0) circle (1pt);
\filldraw [black] (0,0.5*\len) circle (1pt);
\filldraw [black] (0.25*\len,0.5*\len) circle (1pt);
\filldraw [black] (-0.25*\len,0.5*\len) circle (1pt);

\node at (0,0) [anchor=north]{$c$};
\node at (-0.13*\len,0.5*\len) [anchor=south]{\small$t$};
\node at (0.13*\len,0.5*\len) [anchor=south]{\small$t$};
\draw (0,0.12*\len) node {${l}$};
\draw (0,0.35*\len) node {$\Id_t$};
\end{tikzpicture}
\end{equation*}
where $t$ is taken from $\Irr\cx_k$ with $c \in \rI_k$, and $\{m_t^i\}_i$ is a basis for $M_t$ and $\{\overline{m}_t^i\}_i$ its dual basis, that means that $\overline{m}_t^j(m_t^i)=\delta_{i,j}$.
\end{enumerate}
\end{defi}
\begin{prop}\label{prop:rigidity}
The full $\bk$-linear subcategory $\Rep^{\mathrm{l.f.}}\Tubecx\subset \Rep\Tubecx$ of locally finite tube algebra representations is rigid.
\end{prop}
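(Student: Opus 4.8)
The plan is to establish rigidity in three stages: (1) $\overline{M}$ is a well-defined $\Tubecx$-representation which is again locally finite, so that $\Rep^{\mathrm{l.f.}}\Tubecx$ is closed under $\Boxt$ and forms a monoidal subcategory; (2) the maps $\ev_M$ and $\coev_M$ are morphisms of $\Tubecx$-representations; (3) the two zig-zag identities hold. For the monoidality of the subcategory, note that by Proposition~\ref{prop:rel_unit_simple}~(iii) the space $(M\Boxt N)_c$ is spanned by vectors in $M_t\otimes N_s\otimes(\Auxprod{t}{s}{c})_{\un}$ with $t,s\in\Irr\cx$ ranging over the (finite) supports of $M$ and $N$; each such summand is finite-dimensional, so $M\Boxt N$ is locally finite whenever $M$ and $N$ are.

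For stage (1), module associativity of $\overline{M}$ is immediate from $(f\cdot g)^{\#}=g^{\#}\cdot f^{\#}$: one has $(\varphi.(f\cdot g))(m)=\varphi\bigl(m.(g^{\#}\cdot f^{\#})\bigr)=\varphi\bigl((m.g^{\#}).f^{\#}\bigr)=\bigl((\varphi.f).g\bigr)(m)$, and locality is inherited from $M$. Since $\overline{M}_a=M_{\,\overline{a}}^{*}$ is finite-dimensional (note $\overline{a}\in\IndexSet$, which we may assume closed under duals) and the support of $\overline{M}$ is the image under duality of that of $M$, the representation $\overline{M}$ is locally finite. For stage (2), one first checks that the formulas for $\ev_M$ and $\coev_M$ are independent of the chosen pairs of dual bases (standard), and then that they descend to the quotient defining $\Boxt$: compatibility of $\ev_M$ with~\eqref{eq:Rel1} is exactly the definition of the $\overline{M}$-action $\varphi.f^{\#}$, while compatibility with~\eqref{eq:Rel2} follows by collapsing the internal sum with the dominance relation~\eqref{dominance} together with Lemma~\ref{lem:pairing_properties}~(i) and a partial-trace bookkeeping of the $\sqrt{\rd_{\bullet}}$-normalizations; the argument for $\coev_M$ is dual (now with the partial-trace action of $\Tubecx$ on $\mathbb{I}$). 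The intertwiner property of both maps is then a direct check, since in each case the $\Tubecx$-action touches only the tube-element factor and is carried through the (partial) trace defining $\mathbb{I}$.

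For stage (3), by Proposition~\ref{prop:rel_unit_simple}~(iii) it suffices to evaluate the composite
\[
M\xrightarrow{\mathbf{l}_M}\mathbb{I}\Boxt M\xrightarrow{\coev_M\Boxt\id_M}(M\Boxt\overline{M})\Boxt M\xrightarrow{\mathrm{A}_{M,\overline{M},M}}M\Boxt(\overline{M}\Boxt M)\xrightarrow{\id_M\Boxt\ev_M}M\Boxt\mathbb{I}\xrightarrow{\mathbf{r}_M^{-1}}M
\]
on vectors of the reduced form, and similarly for the composite $\overline{M}\cong\overline{M}\Boxt\mathbb{I}\to\overline{M}\Boxt(M\Boxt\overline{M})\to(\overline{M}\Boxt M)\Boxt\overline{M}\to\mathbb{I}\Boxt\overline{M}\cong\overline{M}$. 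Applying $\coev_M$ produces a sum over $t\in\Irr\cx$ of terms carrying $\Id_t$ and a pair $\{m_t^i\}$, $\{\overline{m}_t^i\}$ of dual bases of $M_t$; rewriting the associator in the clean shape of Lemma~\ref{lem:associators_unit} and then applying $\ev_M$ produces a pairing of an element of $\overline{M}$ against one of $M$ together with a partial trace over the label $x$, and the inserted $\Id_t$ forces this pairing to reconstruct $\id_{M_t}$. The remaining sum over $t$ then collapses by the dominance relation~\eqref{dominance} and Lemma~\ref{lem:pairing_properties}~(i), and tracking the accumulated factors $\rd_t,\rd_x$ against the $\rd_x^{-1}$ appearing in $\ev_M$ and $\coev_M$ leaves exactly $\id_M$ after $\mathbf{r}_M^{-1}$. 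The second zig-zag identity for $\overline{M}$ is proved by an analogous manipulation, using that $\epsilon$ is symmetric and that $(-)^{\#}$ is an anti-involution.

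The main obstacle is the bookkeeping in stage (3) (and in checking~\eqref{eq:Rel2}-compatibility of $\ev_M$): all three structure maps and the associator carry dimension factors, so the delicate point is to confirm that they combine to the scalar $1$ rather than to some $\rd_{\bullet}$-multiple. This is precisely where sphericality of $\cx$ and the $\sqrt{\rd_x}$-normalization of $\bullet_x$ enter, through Lemmas~\ref{bases_comp},~\ref{lem:base_change} and~\ref{lem:pairing_properties}; once the normalizations are pinned down, the underlying graphical identities are routine instances of the dominance relation and Lemma~\ref{lem:associators_unit}.
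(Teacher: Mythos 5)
Your proposal follows essentially the same route as the paper's proof: closure of $\Rep^{\mathrm{l.f.}}\Tubecx$ under $\Boxt$ via Proposition~\ref{prop:rel_unit_simple}~(iii), well-definedness of $\ev_M$ and $\coev_M$ against the relations~\eqref{eq:Rel1} and~\eqref{eq:Rel2} using the dominance relation~\eqref{dominance} and the base change~\eqref{eq:base_change}, and the snake identities by a direct dual-basis computation collapsing via Lemma~\ref{lem:pairing_properties}~(i). The extra preliminary check that $\overline{M}$ is itself a locally finite representation is a sensible addition that the paper leaves implicit, and the level of detail you defer to graphical bookkeeping matches what the paper itself defers.
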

\begin{proof}
That the subcategory $\Rep^{\mathrm{l.f.}}\Tubecx$ is closed under the tensor product $\Boxt$ follows from Proposition~\ref{prop:rel_unit_simple} (iii). To check rigidity, we start by proving that $\ev_M$ is well-defined. Consider a vector $\varphi.f\otimes m\otimes \pi$ in the tensor product $\overline{M}\Boxt M$. The image of the right-hand side of~\eqref{eq:Rel1} under $\ev_M$ equals
\begin{equation*}
    \rd_x\;\varphi\left(m.\overline{\beta}_j\right) 
\def\len{1.6cm}
\ptr_{yx}\left(
\begin{tikzpicture}[baseline={([yshift=-.5ex]current bounding box.center)},xscale=1]
 \draw[line width=1,dotted]
    (-0.75*\len,\len) .. controls (-0.75*\len,1.65*\len) and (0.75*\len,1.65*\len).. (0.75*\len,\len);
\draw[line width=1] 
    (-0.75*\len,0) to (0.75*\len,0)
    (-0.75*\len,0) to (-0.75*\len,0.5*\len)
    (0.75*\len,0) to (0.75*\len,0.5*\len)
    (-0.75*\len,0.5*\len) to (0.75*\len,0.5*\len)
    (0,0.5*\len) to (0,\len)
    (0,\len) to (0.75*\len,\len)
    (0.75*\len,0.5*\len) to (0.75*\len,\len)
    (-0.75*\len,0.5*\len) to (-0.75*\len,\len)
    (0,\len) to (-0.75*\len,\len)
    ;
\node[rotate=180] at (0,0) {\scriptsize $\Arr$};
\node[rotate=-90] at (0,0.75*\len) {\scriptsize $\Arr$};
\node[rotate=-90] at (-0.75*\len,0.75*\len) {\scriptsize $\Arr$};
\node[rotate=-90] at (0.75*\len,0.75*\len) {\scriptsize $\Arr$};
\node[rotate=-90] at (-0.75*\len,0.25*\len) {\scriptsize $\Arr$};
\node[rotate=-90] at (0.75*\len,0.25*\len) {\scriptsize $\Arr$};

\node[rotate=0] at (-0.4*\len,\len) {\scriptsize $\Arr$};
\node[rotate=180] at (-0.4*\len,0.5*\len) {\scriptsize $\Arr$};
\node[rotate=180] at (0.4*\len,0.5*\len) {\scriptsize $\Arr$};
\node[rotate=180] at (0.4*\len,\len) {\scriptsize $\Arr$};

\filldraw [black] (0,\len) circle (1pt);
\filldraw [black] (0.75*\len,\len) circle (1pt);
\filldraw [black] (-0.75*\len,\len) circle (1pt);
\filldraw [black] (-0.75*\len,0) circle (1pt);
\filldraw [black] (0.75*\len,0) circle (1pt);
\filldraw [black] (0,0.5*\len) circle (1pt);
\filldraw [black] (-0.75*\len,0.5*\len) circle (1pt);
\filldraw [black] (0.75*\len,0.5*\len) circle (1pt);

\node at (-0.75*\len,0.25*\len) [anchor=east] {$x$};
\node at (0.75*\len,0.25*\len) [anchor=west] {$x$};
\node at (0.75*\len,0.75*\len) [anchor=west] {$y$};
\node at (-0.75*\len,0.75*\len) [anchor=east] {$y$};
\node at (0,0.75*\len) [anchor=west] {$y$};
\node at (0,0) [anchor=north] {$c$};
\node at (0.4*\len,0.5*\len) [anchor=north] {$a$};
\node at (0.4*\len,\len) [anchor=south] {$\tilde{a}$};
\node at (-0.4*\len,0.5*\len) [anchor=north] {$b$};
\node at (-0.4*\len,\len) [anchor=south] {$\tilde{a}$};

\node at (0.45*\len,0.75*\len) { {$f$}};
\node at (-0.4*\len,0.75*\len) {${\beta^j}$};
\draw (0,1.25*\len) node  {${\Id_{\tilde{a}}}$};
\draw (0,0.25*\len) node  {${\pi}$};
\end{tikzpicture}
\right)
\end{equation*}
after applying the identity~\eqref{dominance}. We obtain by means of~\eqref{eq:base_change} the expression
\begin{equation*}
    \rd_x\;\varphi\left(m.\left(\overline{\eta}_k^\#\cdot f^\#\right)\right)\otimes\ptr_x\left(\eta^k\circ_{ab}\pi\right)=\ev_M\left(\varphi.f\otimes m\otimes \pi\right)
\end{equation*}
for $(\overline{\eta},\eta)$a pair of dual bases of $\Hsp{a\,b}$. Compatibility with relation~\eqref{eq:Rel2} follows analogously. A routine check shows that $\ev_M$ is an intertwiner of tube algebra representations. To prove that $\coev_M$ is compatible with the action requires a more involved calculation using~\eqref{eq:rel_unit}.
We check now the snake relation. By direct computation one can verify that the composition 
\begin{equation*}
M\xrightarrow{\;\mathbf{l}_M\;}\mathbb{I}\Boxt M\xrightarrow{\;\coev_M\Boxt\id\;}\left(M\Boxt \overline{M}\right) \Boxt M    \xrightarrow{\;\mathrm{A}_{M,\overline{M},M}\;} M\Boxt \left(\overline{M}\Boxt M\right)\xrightarrow{\;\id\Boxt\ev_M\;} M\Boxt \mathbb{I}   \xrightarrow{\;\mathbf{r}_M^{-1}\;}M
\end{equation*}
assigns to a vector $m\in M_a$ the value
\begin{equation*}
\sum_{t\in\Irr\cx}\rd_t\;  \sum_{i}
\left(\overline{m}_t^i.\eta^k\right)(m)
\;m_t^i.\overline{\eta}_k=\sum_{j}  
\overline{m}_a^j(m)
\;m_a^j=m
\end{equation*}
where $(\eta,\overline{\eta})$ is a pair of dual bases of $\Hsp{a\,t}$
and the first equality follows from Lemma~\ref{lem:pairing_properties} (i) and the independence of the coevaluation map from the choice of basis. The second equality follows by the definition of the dual basis to $\{m_a^j\}_j$. The remaining snake relation holds similarly.
\end{proof}

\subsection{Braiding and twist}
We turn now to the definition of the braiding in $\Rep\Tubecx$. To this end, we need first to look into the following special class of tube elements.
\def\len{2cm}
\begin{defi}
The \textit{twist element of $a\in\Iso\cx_i$} is defined as the tube element   
\begin{equation*}
\twist_a\coloneqq\rd_{a}^{-1}
\begin{tikzpicture}[xscale=0.75,baseline={([yshift=-.5ex]current bounding box.center)}]
\draw[line width=1] 
    (-0.5*\len,0) to (0.5*\len,0)
    (-0.5*\len,0) to (-0.5*\len,0.5*\len)
    (0.5*\len,0) to (0.5*\len,0.5*\len)
    (-0.5*\len,0.5*\len) to (0.5*\len,0.5*\len);

\draw[line width=1,dotted] 
    (0.5*\len,0.5*\len) to (-0.5*\len,0);

\node[rotate=90] at (-0.5*\len,0.25*\len) {\scriptsize $\Arr$};
\node[rotate=90] at (0.5*\len,0.25*\len) {\scriptsize $\Arr$};
\node[rotate=180] at (0,0) {\scriptsize $\Arr$};
\node[rotate=180] at (0,0.5*\len) {\scriptsize $\Arr$};

\filldraw [black] (-0.5*\len,0) circle (1pt);
\filldraw [black] (0.5*\len,0) circle (1pt);
\filldraw [black] (-0.5*\len,0.5*\len) circle (1pt);
\filldraw [black] (0.5*\len,0.5*\len) circle (1pt);

\node at (-0.5*\len,0.25*\len) [anchor=east] {$a$};
\node at (0.5*\len,0.25*\len) [anchor=west] {$a$};
\node at (0,0) [anchor=north] {$a$};
\node at (0,0.5*\len) [anchor=south] {$a$};

\draw (-0.25*\len,0.3*\len) node    {${\Id_a}$};
\draw (0.25*\len,0.2*\len) node    {${\Id_a}$};
\end{tikzpicture}
\; \in\mathrm{T}_{a;a}^{\Iso\cx_i}\;.
\end{equation*}
\end{defi}
In the case that  $a\notin\IndexSet$, the twist element does not properly belong to the tube algebra $\Tubecx$. However, tube algebra actions can be extended to tube elements $ f\in\mathrm{T}_{a;b}^{\Iso\cx}$ by resolving the $x$ label into simple objects, i.e., by considering $\star_x(f)\in \Tubecx$ and defining the action $m.f\coloneqq m.\star_x(f)$ for $m\in M$.
\begin{lem}Twist elements obey the following properties.
\begin{enumerate}[\rm (i)]
\item The twist element of $a\in\Iso\cx_i$ is invertible under~\eqref{eq:tube_weld} and its inverse is given by
\begin{equation*}
\twist_a^{-1}=\rd_{a}^{-1}
\begin{tikzpicture}[xscale=0.75,baseline={([yshift=-.5ex]current bounding box.center)}]
\draw[line width=1] 
    (-0.5*\len,0) to (0.5*\len,0)
    (-0.5*\len,0) to (-0.5*\len,0.5*\len)
    (0.5*\len,0) to (0.5*\len,0.5*\len)
    (-0.5*\len,0.5*\len) to (0.5*\len,0.5*\len);

\draw[line width=1,dotted] 
    (-0.5*\len,0.5*\len) to (0.5*\len,0);

\node[rotate=-90] at (-0.5*\len,0.25*\len) {\scriptsize $\Arr$};
\node[rotate=-90] at (0.5*\len,0.25*\len) {\scriptsize $\Arr$};
\node[rotate=180] at (0,0) {\scriptsize $\Arr$};
\node[rotate=180] at (0,0.5*\len) {\scriptsize $\Arr$};

\filldraw [black] (-0.5*\len,0) circle (1pt);
\filldraw [black] (0.5*\len,0) circle (1pt);
\filldraw [black] (-0.5*\len,0.5*\len) circle (1pt);
\filldraw [black] (0.5*\len,0.5*\len) circle (1pt);

\node at (-0.5*\len,0.25*\len) [anchor=east] {$a$};
\node at (0.5*\len,0.25*\len) [anchor=west] {$a$};
\node at (0,0) [anchor=north] {$a$};
\node at (0,0.5*\len) [anchor=south] {$a$};

\draw (0.25*\len,0.3*\len) node    {${\Id_a}$};
\draw (-0.25*\len,0.2*\len) node    {${\Id_a}$};
\end{tikzpicture}
\;.
\end{equation*}
\item For any $f\in  \mathrm{T}_{a;b}^{\Iso\cx_i}$ it holds that
\begin{equation}\label{eq:twist-central}
    \twist_a\cdot f=f\cdot \twist_b\;\;\in \mathrm{T}_{a;b}^{\Iso\cx_i}\;.
\end{equation}
\item For any $a\in\Iso\cx_i$, it holds that
\begin{equation*}
\star_a(\twist_a)=\sum_{t\in\Irr\cx_i}\rd_t\,\overline{\alpha}_k\cdot\twist_t\cdot \alpha^k\;,
\end{equation*}
where $(\alpha,\overline{\alpha})$ is a pair of dual bases of $\Hsp{\overline{a}t}$.
\end{enumerate}
\end{lem}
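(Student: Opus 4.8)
The plan is to verify the three identities by unfolding the welding product $\mathrm{w}$ in the graphical calculus of Section~\ref{sec:graph-calc} and simplifying with the absorption properties of the unit morphisms from Lemma~\ref{comp_prop}, the dominance and pairing identities of Lemmas~\ref{lem:pairing_properties},~\ref{lem:base_change},~\ref{bases_comp}, and the compatibilities of the resolution maps $\star$ from Lemma~\ref{star_prop}. In every case the picture of $\twist_a$ is a square whose dotted $\un$-diagonal cuts it into two triangles, each filled by a copy of $\Id_a$; the point is that this represents a full twist of the marked tube, and the three claims are the algebraic shadows of elementary topological moves on annuli.

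For (i), I would compute $\mathrm{w}(\twist_a\otimes\twist_a^{-1})=\rd_a^{-2}\,\star_{aa}(D\circ_a D')$, where $D,D'$ denote the underlying (unnormalized) squares of $\twist_a$ and $\twist_a^{-1}$. Because the two dotted diagonals run in opposite directions, stacking $D$ on $D'$ along the $a$-edge produces a diagram in which the four copies of $\Id_a$ straighten out under repeated use of the snake relations and of $f\circ_x\Id_x=f$ (Lemma~\ref{comp_prop}(i)); resolving the leftover $a\otimes a$ loop with $\star_{aa}$ then kills every summand except $t=\un$, and the two factors $\rd_a^{-1}$ cancel against the $\rd_a$-factors introduced by the $\bullet$-products inside $\star_{aa}$, leaving the local unit $\Id_a\in\mathrm{T}_{a;a}$, which is the welding identity at $a$. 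The same computation with the factors swapped gives $\mathrm{w}(\twist_a^{-1}\otimes\twist_a)=\Id_a$, so $\twist_a^{-1}$ is a two-sided inverse.

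For (ii), writing $f\in\Hsp{\overline{b}\,\overline{x}\,a\,x}$, I would expand $\twist_a\cdot f=\star_{ax}(\twist_a\circ_a f)$ and $f\cdot\twist_b=\star_{xb}(f\circ_b\twist_b)$. Welding a twist element onto $f$ amounts to threading the radial edge of $f$ once around the tube by means of the two $\Id$'s and the $\un$-diagonal of the twist; using the cyclic invariance of the $\Hsp$-spaces (the identification recorded right after the definition of $\Hsp{-}$) together with $f\circ_x\Id_x=f$, both expressions collapse to the $\star$-resolution of one and the same cyclically rotated copy of $f$. I expect this to be the main obstacle: one must keep careful track of the orientations of the two radial edges and of where each $\un$-diagonal sits, so that ``twisting $f$ from the outside'' and ``twisting it from the inside'' are seen to coincide --- the content is that a $2\pi$ twist of the annulus slides freely through $f$, but turning that slide into an explicit chain of $\Id$-absorptions and snake moves is where the real bookkeeping lies.

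For (iii), I would expand $\star_a(\twist_a)$ straight from the definition of $\star_x$: insert the canonical element $\sum_{t\in\Irr\cx_i}\overline{\alpha}_k\otimes\alpha^k$ of $\Hsp{a\,\overline{t}}\otimes\Hsp{\overline{a}\,t}$ and apply $\bullet_a\otimes\bullet_{\overline a}$ to the two $a$-edges of the square. Since those edges are joined, through the two copies of $\Id_a$, to the top and bottom boundary edges, the $\Id_a$'s are absorbed into $\alpha^k$ and $\overline\alpha_k$ (Lemma~\ref{comp_prop}(i)), and what remains of $\twist_a$ between $\overline\alpha_k$ and $\alpha^k$ is precisely the triangulated square defining $\twist_t$; a dimension count --- the $\rd_a^{-1}$ from $\twist_a$, the $\rd_t^{-1}$ inside $\twist_t$, the $\rd$-factors from the two $\bullet$-products, and one application of $\rd_y\,f\circ_a g=\langle f,g\rangle\,\Id_y$ (Lemma~\ref{lem:pairing_properties}(i)) --- collects into the claimed coefficient $\rd_t$. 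Alternatively, (iii) can also be deduced from the resolution identities of Lemma~\ref{star_prop} applied to the identity morphisms appearing in $\twist_a$.
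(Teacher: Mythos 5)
Your argument is correct and, for parts (i) and (ii), follows essentially the paper's route; the genuinely different step is (iii). For (i) the paper simply declares the inverse property an immediate consequence of the definition of welding, and your elaboration is the right one: after fusing the two middle triangles the right-hand resolution contracts the inserted dual basis element to an element of $\Hom(\un_i,t)$, which vanishes unless $t=\un_i$, and the surviving component is $\Id_a$. One hygiene point here: the radial edges of $\twist_a$ and $\twist_a^{-1}$ carry opposite orientations, so the welding resolves $\overline{a}\otimes a$ rather than $a\otimes a$; this is precisely what guarantees that $\un_i$ occurs as a summand at all. For (ii), the ``bookkeeping'' you flag as the real obstacle is discharged in the paper by a single application of Lemma~\ref{lem:base_change} to the resolved label $z$: writing $\twist_a^{-1}\cdot f$ with a pair of dual bases $(\eta,\overline{\eta})$ of $\Hsp{ax\overline{z}}$ and transporting $f$ across the basis pair yields the same expression with dual bases $(\nu,\overline{\nu})$ of $\Hsp{xb\overline{z}}$, which is exactly $f\cdot\twist_b^{-1}$; you list that lemma in your toolbox but for (ii) you invoke only cyclic invariance, so you should cite it explicitly as the step that closes the argument. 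For (iii) you expand $\star_a(\twist_a)$ from the definition and chase dimension factors; the paper instead applies~\eqref{eq:twist-central} to the tube elements $\alpha^k\in\mathrm{T}_{t;a}$ to move every $\twist_t$ past $\alpha^k$ and become a common right factor $\twist_a$, and then reassembles $\sum_t\rd_t\,\overline{\alpha}_k\circ_t\alpha^k=\Id_a$ by the dominance relation~\eqref{dominance}. Your direct computation works, but the paper's derivation buys a coefficient-free argument (no tracking of $\rd_a^{-1}$, $\rd_t^{-1}$ and the $\bullet$-normalizations), whereas yours has the small advantage of not depending on (ii).
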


\begin{proof}
The first statement is an immediate consequence of the definition of welding of tube elements. To verify assertion (ii) notice that from the definition of twist elements we have that
\begin{equation*}
\twist_a^{-1}\cdot f=\sum_{z\in\Irr\cx}\hspace{-1.5em}
\begin{tikzpicture}[baseline={([yshift=-.5ex]current bounding box.center)},xscale=0.8]
\draw[line width=1]
    (-0.5*\len,0) .. controls (-1.45*\len,1*\len) and (0.25*\len,1*\len).. (0.5*\len,0.5*\len)
    node[midway,rotate=180] {\scriptsize $\Arr$}
    node[midway,above] {\footnotesize $z$};
\draw[line width=1]
    (0.5*\len,0) .. controls (1*\len,0*\len) and (1*\len,1*0.55*\len).. (1*\len,0.5*\len)
    node[midway,rotate=-115] {\scriptsize $\Arr$}
    node[midway,right] {\footnotesize $z$};
    
\draw[line width=1] 
    (-0.5*\len,0) to (0.5*\len,0)
    (-0.5*\len,0) to (-0.5*\len,0.5*\len)
    (0.5*\len,0) to (0.5*\len,0.5*\len)
    (-0.5*\len,0.5*\len) to (1*\len,0.5*\len);

\node[rotate=180] at (0,0) {\scriptsize $\Arr$};
\node[rotate=180] at (0,0.5*\len) {\scriptsize $\Arr$};
\node[rotate=180] at (0.75*\len,0.5*\len) {\scriptsize $\Arr$};

\filldraw [black] (-0.5*\len,0) circle (1pt);
\filldraw [black] (0.5*\len,0) circle (1pt);
\filldraw [black] (-0.5*\len,0.5*\len) circle (1pt);
\filldraw [black] (0.5*\len,0.5*\len) circle (1pt);
\filldraw [black] (\len,0.5*\len) circle (1pt);

\node at (-0.45*\len,0.25*\len) { $\bullet_x$};
\node at (0.55*\len,0.25*\len) { $\bullet_x$};
\node at (0,0) [anchor=north] {$b$};
\node at (0.15*\len,0.5*\len) [anchor=south] {$a$};
\node at (0.75*\len,0.5*\len) [anchor=south] {$a$};

\draw (0,0.25*\len) node  {${f}$};
\draw (0.8*\len,0.325*\len) node  {${\eta^j}$};
\draw (-0.25*\len,0.65*\len) node  {${\overline{\eta}_j}$};
\end{tikzpicture}
\overset{\eqref{eq:base_change}}{=}\sum_{z\in\Irr\cx}
\begin{tikzpicture}[baseline={([yshift=-.5ex]current bounding box.center)},xscale=0.8]
\draw[line width=1]
    (0.5*\len,0.5*\len) .. controls (1.45*\len,-0.5*\len) and (-0.25*\len,-0.5*\len).. (-0.5*\len,0)
    node[midway,rotate=180] {\scriptsize $\Arr$}
    node[midway,below] {\footnotesize $z$};
\draw[line width=1]
    (-0.5*\len,0.5*\len) .. controls (-1*\len,1*0.55*\len) and (-1*\len,0).. (-1*\len,0)
    node[midway,rotate=-115] {\scriptsize $\Arr$}
    node[midway,left] {\footnotesize $z$};
    
\draw[line width=1] 
    (-1*\len,0) to (0.5*\len,0)
    (-0.5*\len,0) to (-0.5*\len,0.5*\len)
    (0.5*\len,0) to (0.5*\len,0.5*\len)
    (0.5*\len,0.5*\len) to (-0.5*\len,0.5*\len);

\node at (-0.45*\len,0.25*\len) { $\bullet_x$};
\node at (0.55*\len,0.25*\len) { $\bullet_x$};
\node[rotate=180] at (0,0) {\scriptsize $\Arr$};
\node[rotate=180] at (0,0.5*\len) {\scriptsize $\Arr$};
\node[rotate=180] at (-0.75*\len,0) {\scriptsize $\Arr$};

\filldraw [black] (-0.5*\len,0) circle (1pt);
\filldraw [black] (0.5*\len,0) circle (1pt);
\filldraw [black] (-0.5*\len,0.5*\len) circle (1pt);
\filldraw [black] (0.5*\len,0.5*\len) circle (1pt);
\filldraw [black] (-\len,0) circle (1pt);

\node at (-0.1*\len,0.05*\len) [anchor=north] {$b$};
\node at (0,0.5*\len) [anchor=south] {$a$};
\node at (-0.75*\len,0) [anchor=north] {$b$};

\draw (0,0.25*\len) node  {${f}$};
\draw (-0.7*\len,0.2*\len) node {${\overline{\nu}_j}$};
\draw (0.325*\len,-0.15*\len) node   {${\nu^j}$};
\end{tikzpicture}
\hspace{-1.5em} =f\cdot \twist^{-1}_b
\end{equation*}
where $(\eta,\overline{\eta})$ is a pair of dual bases of $\Hsp{ax\overline{z}}$ and $(\nu,\overline{\nu})$ of $\Hsp{xb\overline{z}}$. The third statement follows by applying (ii) and relation~\eqref{dominance}.
\end{proof} 
Twist elements define a twist on $\Rep\Tubecx$ as it will be shown in Proposition~\ref{prop:twist}, but first we define the braiding with help of the (inverse) twist element, as well.
\begin{defi}\label{def:braiding}
Let $M,N$ be tube algebra representations. The \textit{braiding}
\begin{equation}\label{eq:braiding}
    \mathrm{B}_{M,N}\Colon M\Boxt N\to N\Boxt M
\end{equation}
is defined by the assignment
\begin{equation*}
m\otimes n\otimes\pielem{a}{b}{c}{x}{\pi} 
\mapsto
\sum_{z\in\Irr\cx} n \otimes m.\twist_{a}^{-1}\otimes\!\!
\begin{tikzpicture}[yscale=1,xscale=1,baseline={([yshift=-.5ex]current bounding box.center)}]
 \draw[line width=1]
    (0.5*\len,0) .. controls (1.1*\len,0*\len) and (1.1*\len,2*0.375*\len).. (0.5*\len,0.75*\len) node [midway,rotate=-90]  {\scriptsize $\Arr$}
    node [midway,right]  { $z$};

\draw[line width=1] 
    (-0.5*\len,0) to (0.5*\len,0)
    (-0.5*\len,0) to (-0.5*\len,0.75*\len)
    (0.5*\len,0) to (0.5*\len,0.75*\len)
    (-0.5*\len,0.75*\len) to (0.5*\len,0.75*\len)
    (-0.5*\len,0.75*\len) to (-1*\len,0.75*\len)
    (-0.5*\len,0) to [in=-90,out=180] (-1*\len,0.75*\len);

\node[rotate=90] at (0.5*\len,1.5*0.375*\len) {\scriptsize $\Arr$};
\node[rotate=-70] at (-0.94*\len,0.375*\len) {\scriptsize $\Arr$};
\node[rotate=180] at (0,0) {\scriptsize $\Arr$};
\node[rotate=180] at (-0.75*\len,0.75*\len) {\scriptsize $\Arr$};
\node[rotate=180] at (0,0.75*\len) {\scriptsize $\Arr$};

\filldraw [black] (-0.5*\len,0) circle (1pt);
\filldraw [black] (0.5*\len,0) circle (1pt);
\filldraw [black] (-0.5*\len,0.75*\len) circle (1pt);
\filldraw [black] (0.5*\len,0.375*\len) circle (1pt);
\filldraw [black] (0.5*\len,0.75*\len) circle (1pt);
\filldraw [black] (-1*\len,0.75*\len) circle (1pt);

\node at (-0.45*\len,0.375*\len)  {$\bullet_x$};
\node at (0.55*\len,0.5*0.375*\len) {$\bullet_x$};
\node at (0,0) [anchor=north] {$c$};
\node at (0,0.75*\len) [anchor=south] {$b$};
\node at (-0.75*\len,0.75*\len) [anchor=south] {$a$};
\node at (0.5*\len,1.5*0.375*\len) [anchor=east] {$a$};
\node at (-1*\len,0.35*\len) [anchor=east] {$z$};

\draw (0,0.375*\len) node {${\pi}$};
\node at (-0.7*\len,0.45*\len) {{$\overline{\varphi}_{\!k}$}};
\node at (0.7*\len,0.375*\len) {{$\varphi^k$}};
\end{tikzpicture}
\end{equation*}
for $m\otimes n\otimes \pi\in M_a\otimes N_b\otimes\auxprod$.
\end{defi}

\begin{prop}
The above $\mathrm{B}_{M,N}$ is a well-defined natural isomorphism of $\Tubecx$-representations.
\end{prop}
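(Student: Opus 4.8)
The plan is to establish the four required properties of $\mathrm{B}_{M,N}$ in turn: that it is well-defined on the quotient \eqref{tensor_prod}, that it intertwines the $\Tubecx$-action \eqref{eq:tubeaction_onprod}, that it is natural in $M$ and $N$, and that it is invertible.

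\emph{Well-definedness.} First I would verify compatibility with the two defining relations \eqref{eq:Rel1} and \eqref{eq:Rel2}. Relation \eqref{eq:Rel2}, which strips a tube element $g\in\Hsp{\overline{b}\,\overline{v}b'v}$ off of $n\in N_{b'}$, is the more straightforward of the two: in the formula for $\mathrm{B}_{M,N}$ the $b$-strand is transported unchanged into the new auxiliary slot, so the check comes down to the distributivity of the resolution maps $\star_{\bullet}$ over relative composition (Lemma~\ref{star_prop}~(i)) and base change (Lemma~\ref{lem:base_change}), exactly as in the proof of Proposition~\ref{prop:action_tensor_prod}. Relation \eqref{eq:Rel1}, which strips $f\in\Hsp{\overline{a}\,\overline{u}a'u}\in\mathrm{T}_{a';a}$ off of $m\in M_{a'}$, is the essential point: in $\mathrm{B}_{M,N}$ the vector $m.f\in M_a$ acquires the inverse twist $\twist_a^{-1}$, which must be pushed past $f$ back onto the $a'$-strand. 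This is precisely the centrality of the twist element — the computation of \eqref{eq:twist-central} applies verbatim to $f\in\mathrm{T}_{a';a}$ and gives $f\cdot\twist_a^{-1}=\twist_{a'}^{-1}\cdot f$, whence $(m.f).\twist_a^{-1}=(m.\twist_{a'}^{-1}).f$. After this substitution, a graphical rearrangement of the rotation part of $\mathrm{B}_{M,N}$ — sliding the $z$-loop, re-expressing it via \eqref{eq:base_change}, and using Lemma~\ref{star_prop}~(i) — matches the two sides.

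\emph{Intertwiner property and naturality.} The $\Tubecx$-action on $M\Boxt N$ touches only the auxiliary factor, by welding \eqref{eq:tubeaction_onprod}. The rotation built into $\mathrm{B}_{M,N}$ consists of a relative composition and a resolution of the $x$-label over $\Irr\cx$, both of which commute with welding performed on the other boundary label by associativity (Proposition~\ref{prop:associativity}) and Lemma~\ref{star_prop}~(i); hence $\mathrm{B}_{M,N}(\pi.h)=\mathrm{B}_{M,N}(\pi).h$, following the same manipulation as in Proposition~\ref{prop:action_tensor_prod}. Naturality is immediate: for intertwiners $F,G$ the map $F\Boxt G$ affects only the representation factors, on which $\mathrm{B}_{M,N}$ acts merely through the (extended) $\Tubecx$-action — with which $F,G$ commute — so $\mathrm{B}_{M',N'}\circ(F\Boxt G)=(G\Boxt F)\circ\mathrm{B}_{M,N}$.

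\emph{Invertibility.} I expect the inverse $\mathrm{B}_{M,N}^{-1}\colon N\Boxt M\to M\Boxt N$ to be given by the mirror-image formula, with $\twist_a^{-1}$ replaced by the honest twist element and the $z$-rotation performed in the opposite direction. In the composite $\mathrm{B}_{M,N}^{-1}\circ\mathrm{B}_{M,N}$ the two rotations cancel after an application of the snake relations of Lemma~\ref{comp_prop}, while the two twist contributions are first brought onto a common strand, again using \eqref{eq:twist-central}, and then cancel by invertibility of the twist element; the residual expression is reduced to the identity through \eqref{eq:rel_simple_decomp1} and \eqref{eq:rel_simple_decomp2}, and the opposite composite is treated symmetrically. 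The main difficulty throughout is the well-definedness under \eqref{eq:Rel1}: it is the only place where the precise normalization of $\twist_a$ and its centrality are genuinely needed, and it carries the heaviest graphical bookkeeping.
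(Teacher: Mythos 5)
Your proposal is correct and follows essentially the same route as the paper: check compatibility with \eqref{eq:Rel1} and \eqref{eq:Rel2} by pushing the inverse twist past the acting tube element (the centrality \eqref{eq:twist-central}, which in the paper's computation appears implicitly in the passage from $m.(f\cdot\twist_a^{-1})$ to $m.\twist_{a'}^{-1}$) and then matching diagrams via base change \eqref{eq:base_change} and Lemma~\ref{bases_comp}, with the inverse given by the mirror formula using the honest twist element. The only cosmetic differences are that the paper attributes the intertwiner property to Lemma~\ref{bases_comp} rather than to Proposition~\ref{prop:associativity} and Lemma~\ref{star_prop}~(i), and that it leaves naturality unremarked, which you supply correctly.
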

\begin{proof}
That $\mathrm{B}_{M,N}$ is an intertwiner follows from composing pairs of dual bases according to Lemma~\ref{bases_comp}.
To prove that the braiding is well-defined, we consider the relation~\eqref{eq:Rel1}. For simplicity, assume that $a\in\Irr\cx$, then
\begin{equation*}
\rd_{y}^{-1}\sum_{\tilde{b},z,w\in\Irr\cx} n.\overline{\beta}_j \otimes m.\twist_{a'}^{-1}\otimes\!\!
\begin{tikzpicture}[baseline={([yshift=-.5ex]current bounding box.center)},xscale=0.75]
\draw[line width=1]
(-0.75*\len,0) .. controls (-1.25*\len,0.25*\len) and (-1.25*\len,0.75*\len).. (-0.75*\len,\len);
\draw[line width=1]
(0.75*\len,0) .. controls (1.25*\len,0.25*\len) and (1.25*\len,0.75*\len).. (0.75*\len,\len);
\draw[line width=1]
(0.75*\len,0) .. controls (2*\len,0) and (1.5*\len,1.75*\len).. (0,\len)
node[midway,rotate=-80]{\scriptsize{$\Arr$}}
node[midway,right]{\footnotesize{$w$}};

\node at (-0.7*\len,0.25*\len) {\normalsize $\bullet_x$};
\node at (0.8*\len,0.25*\len) {\normalsize $\bullet_x$};
\node at (-0.7*\len,0.75*\len) {\normalsize $\bullet_y$};
\node at (0.8*\len,0.75*\len) {\normalsize $\bullet_y$};

\draw[line width=1] 
    (-0.75*\len,0) to (0.75*\len,0)
    (-0.75*\len,0) to (-0.75*\len,0.5*\len)
    (0.75*\len,0) to (0.75*\len,0.5*\len)
    (-0.75*\len,0.5*\len) to (0.75*\len,0.5*\len)
    (0,0.5*\len) to (0,\len)
    (0,\len) to (0.75*\len,\len)
    (0.75*\len,0.5*\len) to (0.75*\len,\len)
    (-0.75*\len,0.5*\len) to (-0.75*\len,\len)
    (0,\len) to (-1.5*\len,\len)
    (-0.75*\len,0) to [in=-90,out=180] (-1.5*\len,\len)
    ;

\node at (-1.05*\len,0.35*\len) {\footnotesize $\bullet_z$};
\node at (1.15*\len,0.35*\len) {\footnotesize$\bullet_z$};
\node[rotate=180] at (0,0) {\scriptsize $\Arr$};
\node[rotate=-90] at (0,0.75*\len) {\scriptsize $\Arr$};
\node[rotate=-80] at (-1.4*\len,0.5*\len) {\scriptsize $\Arr$};
\node[rotate=180] at (-0.4*\len,\len) {\scriptsize $\Arr$};
\node[rotate=180] at (-1.2*\len,\len) {\scriptsize $\Arr$};
\node[rotate=180] at (-0.4*\len,0.5*\len) {\scriptsize $\Arr$};
\node[rotate=180] at (0.4*\len,0.5*\len) {\scriptsize $\Arr$};
\node[rotate=180] at (0.4*\len,\len) {\scriptsize $\Arr$};

\filldraw [black] (0,\len) circle (1pt);
\filldraw [black] (0.75*\len,\len) circle (1pt);
\filldraw [black] (-0.75*\len,\len) circle (1pt);
\filldraw [black] (-1.5*\len,\len) circle (1pt);
\filldraw [black] (-0.75*\len,0) circle (1pt);
\filldraw [black] (0.75*\len,0) circle (1pt);
\filldraw [black] (0,0.5*\len) circle (1pt);
\filldraw [black] (-0.75*\len,0.5*\len) circle (1pt);
\filldraw [black] (0.75*\len,0.5*\len) circle (1pt);

\node at (-1.4*\len,0.5*\len) [anchor=east] {$w$};
\node at (0,0.75*\len) [anchor=west] {$y$};
\node at (0,0) [anchor=north] {$c$};
\node at (0.4*\len,0.5*\len) [anchor=north] {$a$};
\node at (0.45*\len,1.05*\len) [anchor=north] {\footnotesize$a'$};
\node at (-0.4*\len,0.5*\len) [anchor=north] {$b$};
\node at (-0.4*\len,\len) [anchor=south] {$\tilde{b}$};
\node at (-1.15*\len,\len) [anchor=south] {$a'$};

\node at (0.45*\len,0.7*\len) {{$f$}};
\node at (-0.35*\len,0.75*\len) { {$\beta^j$}};

\node at (-0.9*\len,0.5*\len) { {\footnotesize$\overline{\psi}_{\!k}$}};
\node at (0.95*\len,0.5*\len) {\footnotesize {$\psi^k$}};
\node at (-1.25*\len,0.75*\len) { {\footnotesize$\overline{\mu}_{i}$}};
\node at (1.2*\len,0.8*\len) {\footnotesize {$\mu^i$}};
\draw (0,0.25*\len) node {${\pi}$};
\end{tikzpicture}
\end{equation*}
represents the image of the right hand side of~\eqref{eq:Rel1} under $\mathrm{B}_{M,N}$.

Now, the braiding $\mathrm{B}_{M,N}$ assigns to $m.f\otimes n \otimes\pi$ the following value
\begin{equation*}
\sum_{z\in\Irr\cx} n \otimes m.(f\cdot\twist_{a}^{-1})\otimes\!\!
\begin{tikzpicture}[yscale=0.8,xscale=0.8,baseline={([yshift=-.5ex]current bounding box.center)}]
 \draw[line width=1]
    (0.5*\len,0) .. controls (1*\len,0*\len) and (1*\len,2*0.375*\len).. (0.5*\len,0.75*\len)
    node[midway,rotate=-90]{\scriptsize $\Arr$}
    node[midway,right]{$z$};
\draw[line width=1] 
    (-0.5*\len,0) to (0.5*\len,0)
    (-0.5*\len,0) to (-0.5*\len,0.75*\len)
    (0.5*\len,0) to (0.5*\len,0.75*\len)
    (-0.5*\len,0.75*\len) to (0.5*\len,0.75*\len)
    (-0.5*\len,0.75*\len) to (-1*\len,0.75*\len)
    (-0.5*\len,0) to [in=-90,out=180] (-1*\len,0.75*\len);

\node[rotate=90] at (0.5*\len,1.5*0.375*\len) {\scriptsize $\Arr$};
\node[rotate=-70] at (-0.94*\len,0.375*\len) {\scriptsize $\Arr$};
\node[rotate=180] at (0,0) {\scriptsize $\Arr$};
\node[rotate=180] at (-0.75*\len,0.75*\len) {\scriptsize $\Arr$};
\node[rotate=180] at (0,0.75*\len) {\scriptsize $\Arr$};

\filldraw [black] (-0.5*\len,0) circle (1pt);
\filldraw [black] (0.5*\len,0) circle (1pt);
\filldraw [black] (-0.5*\len,0.75*\len) circle (1pt);
\filldraw [black] (0.5*\len,0.375*\len) circle (1pt);
\filldraw [black] (0.5*\len,0.75*\len) circle (1pt);
\filldraw [black] (-1*\len,0.75*\len) circle (1pt);

\node at (-0.45*\len,0.375*\len)  {$\bullet_x$};
\node at (0.55*\len,0.5*0.375*\len) {$\bullet_x$};
\node at (0,0) [anchor=north] {$c$};
\node at (0,0.75*\len) [anchor=south] {$b$};
\node at (-0.75*\len,0.75*\len) [anchor=south] {$a$};
\node at (0.5*\len,1.5*0.375*\len) [anchor=east] {$a$};
\node at (-1*\len,0.35*\len) [anchor=east] {$z$};

\draw (0,0.375*\len) node {${\pi}$};
\node at (-0.7*\len,0.40*\len) {{$\overline{\varphi}_{\!k}$}};
\node at (0.7*\len,0.42*\len) {{$\varphi^k$}};
\end{tikzpicture} 
\overset{\eqref{eq:Rel2}}{\sim}\!\!
\sum_{\tilde{b},z,w\in\Irr\cx} \!\!\!\!n.\frac{\overline{\beta}_j}{\rd_{y}} \otimes m.\twist_{a'}^{-1}\otimes\!\!
\begin{tikzpicture}[yscale=0.8,xscale=0.75,baseline={([yshift=-.5ex]current bounding box.center)}]
\draw[line width=1]
    (-0.5*\len,0) .. controls (-1.65*\len,0*\len) and (-1.5*\len,1.15*\len).. (-1*\len,1.25*\len)
    node[midway,rotate=-90]{\scriptsize $\Arr$}
    node[midway,left]{\footnotesize $w$};
\draw[line width=1]
    (0.5*\len,0) .. controls (1.5*\len,0*\len) and (1.5*\len,1.25*\len).. (0.5*\len,1.25*\len)
    node[midway,rotate=-90]{\scriptsize $\Arr$}
    node[midway,right]{\footnotesize $w$};    
\draw[line width=1]
    (0.5*\len,0) .. controls (1*\len,0*\len) and (1*\len,2*0.375*\len).. (0.5*\len,0.75*\len)
    node[midway]{ $~\bullet_z$};
\draw[line width=1] 
    (-0.5*\len,0) to (0.5*\len,0)
    (-0.5*\len,0) to (-0.5*\len,0.75*\len)
    (0.5*\len,0) to (0.5*\len,0.75*\len)
    (-0.5*\len,0.75*\len) to (0.5*\len,0.75*\len)
    (-0.5*\len,0.75*\len) to (-1*\len,0.75*\len)
    (-0.5*\len,0) to [in=-90,out=180] (-1*\len,0.75*\len);
\draw[line width=1] 
    (-1*\len,1.25*\len) to (0.5*\len,1.25*\len)
    (-0.5*\len,0.75*\len) to (-0.5*\len,1.25*\len)
    (0.5*\len,0.75*\len) to (0.5*\len,1.25*\len)
    (-1*\len,0.75*\len) to (-1*\len,1.25*\len);
    
\node[rotate=90] at (0.5*\len,1.5*0.375*\len) {\scriptsize $\Arr$};
\node[rotate=20] at (-0.9*\len,0.425*\len) {\footnotesize $\bullet_z$};
\node[rotate=-90] at (-0.5*\len,1*\len) {\scriptsize $\Arr$};
\node at (-1*\len,1*\len) {\footnotesize $~\bullet_y$};
\node at (0.5*\len,1*\len) {\footnotesize $~\bullet_y$};
\node[rotate=180] at (0,0) {\scriptsize $\Arr$};
\node[rotate=180] at (-0.75*\len,0.75*\len) {\scriptsize $\Arr$};
\node[rotate=180] at (0,1.25*\len) {\scriptsize $\Arr$};
\node[rotate=180] at (-0.75*\len,1.25*\len) {\scriptsize $\Arr$};
\node[rotate=180] at (0,0.75*\len) {\scriptsize $\Arr$};

\filldraw [black] (-0.5*\len,0) circle (1pt);
\filldraw [black] (0.5*\len,0) circle (1pt);
\filldraw [black] (-0.5*\len,0.75*\len) circle (1pt);
\filldraw [black] (0.5*\len,0.375*\len) circle (1pt);
\filldraw [black] (0.5*\len,0.75*\len) circle (1pt);
\filldraw [black] (-1*\len,0.75*\len) circle (1pt);
\filldraw [black] (-0.5*\len,1.25*\len) circle (1pt);
\filldraw [black] (0.5*\len,1.25*\len) circle (1pt);
\filldraw [black] (-1*\len,1.25*\len) circle (1pt);

\node at (-0.5*\len,1*\len) [anchor=west]  {$y$};
\node at (-0.45*\len,0.375*\len)  {$\bullet_x$};
\node at (0.55*\len,0.5*0.375*\len) {$\bullet_x$};
\node at (0,0) [anchor=north] {$c$};
\node at (0,0.75*\len) [anchor=north] {\footnotesize$b$};
\node at (-0.75*\len,0.75*\len) [anchor=north] {\footnotesize$a$};
\node at (0,1.25*\len) [anchor=south] {$\tilde{b}$};
\node at (-0.75*\len,1.25*\len) [anchor=south] {$a'$};

\node at (0.5*\len,1.5*0.375*\len) [anchor=east] {$a$};

\draw (-0.75*\len,1*\len) node {${f}$};
\draw (0*\len,1*\len) node {${\beta^j}$};
\draw (0,0.325*\len) node {${\pi}$};
\node at (-0.65*\len,0.25*\len) {\footnotesize{$\overline{\varphi}_{\!k}$}};
\node at (0.7*\len,0.45*\len) {\footnotesize{$\varphi^k$}};
\node at (-1.15*\len,0.65*\len) {\footnotesize{$\overline{\nu}_i$}};
\node at (1*\len,0.75*\len) {\footnotesize{$\nu^i$}};
\end{tikzpicture}
\end{equation*}
that can be further simplified by base change~\eqref{eq:base_change} which translates $f$ to the right side of $\beta^j$. Lemma~\ref{bases_comp} gives us a pair of dual bases for the space $\Hsp{\overline{x}\overline{y}a'w}$ common to both expressions, thereby showing the desired result. A similar argument implies that the braiding preserves relation~\eqref{eq:Rel2}.
Finally, one can show that the assignment given by 
\def\len{2cm}
\begin{equation*}
n\otimes m\otimes
\pielem{b}{a}{c}{y}{\upsilon}
\mapsto
\sum_{z\in\Irr\cx}  n \otimes m.\twist_{a}\otimes
\begin{tikzpicture}[yscale=1,xscale=0.9,baseline={([yshift=-.5ex]current bounding box.center)}]
 \draw[line width=1]
    (-0.5*\len,0) .. controls (-1*\len,0*\len) and (-1*\len,2*0.375*\len).. (-0.5*\len,0.75*\len)
    node[midway,rotate=-90] {\scriptsize $\Arr$}
    node[midway,left] {$z$};
  
\draw[line width=1] 
    (-0.5*\len,0) to (0.5*\len,0)
    (-0.5*\len,0) to (-0.5*\len,0.75*\len)
    (0.5*\len,0) to (0.5*\len,0.75*\len)
    (-0.5*\len,0.75*\len) to (0.5*\len,0.75*\len)
    (0.5*\len,0.75*\len) to (1*\len,0.75*\len)
    (0.5*\len,0) to[in=-90,out=0] (1*\len,0.75*\len);

\node[rotate=-115] at (0.94*\len,0.375*\len) {\scriptsize $\Arr$};
\node[rotate=180] at (0,0) {\scriptsize $\Arr$};

\node[rotate=180] at (0.75*\len,0.75*\len) {\scriptsize $\Arr$};
\node[rotate=180] at (0,0.75*\len) {\scriptsize $\Arr$};
\node[rotate=-90] at (-0.5*\len,1.5*0.375*\len) {\scriptsize $\Arr$};

\filldraw [black] (-0.5*\len,0) circle (1pt);
\filldraw [black] (0.5*\len,0) circle (1pt);
\filldraw [black] (-0.5*\len,0.75*\len) circle (1pt);
\filldraw [black] (-0.5*\len,0.375*\len) circle (1pt);
\filldraw [black] (0.5*\len,0.75*\len) circle (1pt);
\filldraw [black] (\len,0.75*\len) circle (1pt);

\node at (-0.45*\len,0.5*0.375*\len)  {$\bullet_y$};
\node at (0.55*\len,0.375*\len) {$\bullet_y$};
\node at (0,0) [anchor=north] {$c$};
\node at (0,0.75*\len) [anchor=south] {$b$};
\node at (0.75*\len,0.75*\len) [anchor=south] {$a$};
\node at (-0.5*\len,1.5*0.375*\len) [anchor=west] {$a$};
\node at (1*\len,0.35*\len) [anchor=west] {$z$};

\draw (0,0.375*\len) node {$ {\upsilon}$};
\node at (-0.7*\len,0.375*\len) {{$\overline{\psi}_{\!k}$}};
\node at (0.8*\len,0.45*\len) {{$\psi^k$}};
\end{tikzpicture}
\end{equation*}
provides the inverse braiding $\mathrm{B}_{M,N}^{-1}\colon N\Boxt M\to M\Boxt N$, by means of Lemma~\ref{bases_comp} and~\eqref{eq:rel_simple_decomp1}.
\end{proof}

\begin{prop}\label{prop:hexagon_axioms}
Let $M,N$ and $L$ be tube algebra representations. The hexagon axioms
{
\begin{equation*}
\begin{tikzcd}[column sep=large]
&(M\Boxt N)\Boxt L\ar[r,"\quad~\mathrm{A}_{M,N,L}\quad~"]
\ar[dl,"\mathrm{B}_{M,N}\Boxt \id_L",swap]
&M\Boxt (N\Boxt L)\ar[dr,"\mathrm{B}_{M,N\Boxt L}"]&\\
(N\Boxt M)\Boxt L\ar[dr,"\mathrm{A}_{N,M,L}",swap]
&&&(N\Boxt L)\Boxt M\ar[dl,"\mathrm{A}_{N,L,M}"]\\
&N\Boxt (M\Boxt L)\ar[r,"\id_N\Boxt\mathrm{B}_{M,L}",swap]&N\Boxt (L\Boxt M)&
\end{tikzcd}
\end{equation*}
\text{ and }
\begin{equation*}
\begin{tikzcd}[column sep=large]
&(M\Boxt N)\Boxt L\ar[r,"\mathrm{B}_{M\Boxt N,L}"]
&L\Boxt (M\Boxt N)\ar[dr,"\mathrm{A}^{-1}_{L,M,N}"]&\\
M\Boxt (N\Boxt L)\ar[ur,"\mathrm{A}^{-1}_{M,N,L}"]\ar[dr,"\id_M\Boxt\mathrm{B}_{N,L}",swap]
&&&(L\Boxt M)\Boxt N \ar[dl,"\mathrm{B}_{L,M}\Boxt\id_N"]\\
&M\Boxt (L\Boxt N )\ar[r,"\mathrm{A}^{-1}_{M,L,N}",swap]&(M\Boxt L)\Boxt N&
\end{tikzcd}
\end{equation*}}
are fulfilled by the braiding~\eqref{eq:braiding} and associators~\eqref{eq:associator}. 
\end{prop}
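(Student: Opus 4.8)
The plan is to follow the template of the pentagon proof (Proposition~\ref{prop:pentagon_axiom}). By Proposition~\ref{prop:rel_unit_simple}~(iii), applied twice, every vector of $(M\Boxt N)\Boxt L$ is a combination of vectors of the reduced shape $((m\otimes n\otimes\pi)\otimes l\otimes\rho)$ in which $m,n,l$ sit in simple degrees and both auxiliary tube vectors $\pi,\rho$ are carried by the monoidal unit $x=\un$ --- the three-representation analogue of~\eqref{eq:four_product_element}. Since both paths of each hexagon are $\Tubecx$-intertwiners, it is enough to verify the identities on such elements. On them the associators are computed by Lemma~\ref{lem:associators_unit}, so what is missing is the corresponding evaluation of the braiding. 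Thus the first step is to record how $\mathrm{B}_{M,N}$ from Definition~\ref{def:braiding} acts on a reduced vector $m\otimes n\otimes\pi$ with $x=\un$: the two lateral strands of the defining diagram become dotted, the factors $\bullet_x$ disappear, and the diagram collapses to the assignment $n\otimes m.\twist_a^{-1}\otimes\Xi$, where $\Xi$ is obtained from $\pi$ by a single insertion of a pair of dual bases~\eqref{eq:dual_bases} along the $a$-strand; when $\pi$ itself is a unit tube element this is a short, explicit tube diagram, giving the braiding companion of Lemma~\ref{lem:associators_unit}.

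With these two evaluations in hand I would chase the reduced element around the first hexagon. Along $\mathrm{A}_{N,L,M}\circ\mathrm{B}_{M,N\Boxt L}\circ\mathrm{A}_{M,N,L}$ one regroups, braids $m$ past the recombined factor $N\Boxt L$, and regroups again, the last step using the explicit formula~\eqref{eq:associator}; along $\mathrm{A}_{N,L,M}\circ\mathrm{A}_{N,M,L}\circ(\mathrm{B}_{M,N}\Boxt\id_L)$ one braids $m$ past $N$ first and then applies~\eqref{eq:associator} twice. In each path the $M$-leg acquires exactly one inverse twist $\twist_a^{-1}$, while the data $(n,l,\pi,\rho)$ together with the single $a$-loop created by the braiding get assembled into a fixed glued tube. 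Realigning the dual bases inserted along the way --- an application of Lemma~\ref{bases_comp} and Lemma~\ref{lem:base_change}, exactly as in the proof of Proposition~\ref{prop:associators_well-defined} --- and resolving the auxiliary strands with the dominance relation~\eqref{dominance}, one sees that the two glued tubes coincide. The second hexagon is verified by an entirely analogous computation (alternatively, by checking the first hexagon for the reversed braiding $\mathrm{B}_{N,M}^{-1}$, whose interaction with the associators is the mirror image of the one just described).

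The hard part will be the bookkeeping of the inverse twist through the two changes of bracketing. Concretely, I expect everything to hinge on a single graphical identity: inserting $\twist_a^{-1}$ on the $M$-leg and then performing the regroupings carried out by the associators gives the same result as performing those regroupings first and inserting an inverse twist on the recombined leg. This is where the three properties of twist elements enter --- centrality~\eqref{eq:twist-central} lets the twist slide through the intermediate rearrangements, invertibility keeps it a legitimate tube element throughout, and the decomposition $\star_a(\twist_a)=\sum_{t\in\Irr\cx}\rd_t\,\overline{\alpha}_k\cdot\twist_t\cdot\alpha^k$ is precisely what allows a twist on a non-simple object to be traded for twists on the simple summands appearing once the strands introduced by the braiding are resolved over $\Irr\cx$. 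Granting this identity, the two sides of each hexagon collapse to the same tube diagram and nothing remains; apart from it, the computation is long but mechanical, being a further exercise in Lemma~\ref{star_prop}, Lemma~\ref{bases_comp} and Lemma~\ref{lem:base_change}, in the spirit of Propositions~\ref{prop:associativity} and~\ref{prop:associators_well-defined}.
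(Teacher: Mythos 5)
Your overall strategy coincides with the paper's: reduce via Proposition~\ref{prop:rel_unit_simple}~(iii) to elements whose auxiliary tubes carry the unit label and whose degrees are simple, evaluate both composites explicitly using Lemma~\ref{lem:associators_unit} and Definition~\ref{def:braiding}, and reconcile the results with the dual-basis machinery. However, there is a genuine gap in the step you identify as the crux. You claim that ``in each path the $M$-leg acquires exactly one inverse twist $\twist_a^{-1}$,'' and that the whole verification reduces to a single identity saying that inserting the twist commutes with the regroupings performed by the associators. This is not what happens: the path $(\id_N\Boxt\mathrm{B}_{M,L})\circ\mathrm{A}_{N,M,L}\circ(\mathrm{B}_{M,N}\Boxt\id_L)$ applies a braiding to the $M$-leg \emph{twice}, so $m$ picks up the \emph{square} of the twist element (compare~\eqref{eq:interstep_hex}, where $m.\twist_a^2$ appears together with extra factors $\rd_a^{-2}$ and a sum of dual bases $\overline{\gamma}_k\otimes\gamma^k$), whereas the path through $\mathrm{B}_{M,N\Boxt L}$ produces only one twist factor, as in~\eqref{eq:interstep_hex1}. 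The two expressions are therefore \emph{not} equal on the nose as elements of the big tensor product $M\overset{\sim}{\Boxt}(\cdot)$; they are only identified after passing to the quotient.

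Concretely, the reconciliation requires using the defining relations~\eqref{eq:Rel1}--\eqref{eq:Rel2} of $\Boxt$ to move one of the two twist factors off the module leg and into the auxiliary tube, where it cancels against the loop created by the second braiding; the resulting nested sums of dual bases are then collapsed with Lemma~\ref{star_prop}~(v), and the final identification uses that the degree label is simple so that the surviving pair of dual bases of $\Hsp{\overline{t}\,c}$ forces $t=c$ and can be chosen to be $(\id_c,\rd_c^{-1}\id_c)$. None of this is captured by your proposed ``twist commutes with regrouping'' identity, and the three properties of twist elements you invoke (centrality, invertibility, and $\star_a(\twist_a)=\sum_t\rd_t\,\overline{\alpha}_k\cdot\twist_t\cdot\alpha^k$) are not the operative tools here --- the paper's argument does not use them in this proof. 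Your plan would stall at the point where the two sides visibly disagree by a factor of $\twist_a$ and a layer of dual-basis insertions, unless you supply the quotient-relation argument just described.
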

\begin{proof}
Owing to Proposition~\ref{prop:rel_unit_simple} (iii), it suffices to verify the hexagon for elements of the form
\def\len{1.4cm}
\begin{equation}\label{eq:three_product_element}
\left(m\otimes n\otimes \pielemUnit{a}{b}{d}{\pi}\right)\otimes
l\otimes \pielemUnit{d}{c}{v}{\rho}
\end{equation}
where $a,b,c,d\in\Irr\cx$. The image of~\eqref{eq:three_product_element} under $\mathrm{A}_{N,L,M}\circ\mathrm{B}_{M,N\Boxt L}\circ\mathrm{A}_{M,N,L}$ equals
\begin{equation}\label{eq:interstep_hex1}
    \rd_a^{-1}\, n\otimes(l\otimes m.\twist_a\otimes \Id_{ca})\otimes 
\begin{tikzpicture}[xscale=0.8,baseline={([yshift=-.5ex]current bounding box.center)}]
 \draw[line width=1] 
    (0.4*\len,0.75*\len) to node[midway,left] {\footnotesize$d$} node[midway,rotate=-55] {\scriptsize $\Arr$}
    (1.2*\len,0);

\draw[line width=1] 
    (-1.2*\len,0) to (1.2*\len,0)
    (-1.2*\len,0) to (-1.2*\len,0.75*\len)
    (1.2*\len,0) to  (1.2*\len,0.75*\len)
    (-1.2*\len,0.75*\len) to (1.2*\len,0.75*\len);
    
  \draw[line width=1,dotted] 
    (-1.2*\len,0)  to (-0.4*\len,0.75*\len);

\node[rotate=90] at (-1.2*\len,0.375*\len) {\scriptsize $\Arr$};
\node[rotate=90] at (1.2*\len,0.375*\len) {\scriptsize $\Arr$};
\node[rotate=180] at (0,0) {\scriptsize $\Arr$};
\node[rotate=180] at (-0.8*\len,0.75*\len) {\scriptsize $\Arr$};
\node[rotate=180] at (0.8*\len,0.75*\len) {\scriptsize $\Arr$};
\node[rotate=180] at (0,0.75*\len) {\scriptsize $\Arr$};

\filldraw [black] (-1.2*\len,0) circle (1pt);
\filldraw [black] (1.2*\len,0) circle (1pt);
\filldraw [black] (0.4*\len,0.75*\len) circle (1pt);
\filldraw [black] (-0.4*\len,0.75*\len) circle (1pt);
\filldraw [black] (-1.2*\len,0.75*\len) circle (1pt);
\filldraw [black] (1.2*\len,0.75*\len) circle (1pt);

\node at (-1.2*\len,0.375*\len) [anchor=east] {$a$};
\node at (1.2*\len,0.375*\len) [anchor=west] {$a$};
\node at (0,0) [anchor=north] {$v$};
\node at (0.8*\len,0.75*\len) [anchor=south] {$b$};
\node at (0,0.75*\len) [anchor=south] {$c$};
\node at (-0.8*\len,0.75*\len) [anchor=south] {$a$};

\draw (1*\len,0.55*\len) node {${\pi}$};
\draw (-0.15*\len,0.375*\len) node {${\rho}$};
\node at (-0.9*\len,0.55*\len) {\footnotesize{$\Id_a$}};
\end{tikzpicture}
\end{equation}    
On the other hand, $\id_N\Boxt\mathrm{B}_{M,L}\circ\mathrm{A}_{N,M,L}\circ\mathrm{B}_{M,N}\Boxt \id_L$ assigns to~\eqref{eq:three_product_element} the value
\begin{equation}\label{eq:interstep_hex}
\rd_a^{-2}\sum_{\tilde{c}\in\Irr\cx} \rd_{\tilde{c}}\, n\otimes\underbrace{\left(l.\frac{\overline{\gamma}_k}{\rd_a}\otimes m.\twist_a^2\otimes
\begin{tikzpicture}[xscale=0.8,yscale=0.9,baseline={([yshift=-.5ex]current bounding box.center)}]
    \draw[line width=1] 
     (1.2*\len,0.75*\len) to (1.2*\len,1.5*\len)
    (-0.4*\len,0.75*\len) to (-0.4*\len,1.5*\len)
    (-0.4*\len,1.5*\len) to (1.2*\len,1.5*\len)
    (-0.4*\len,0.75*\len) to (1.2*\len,0.75*\len);
    \draw[line width=1,dotted]
    (-0.4*\len,0.75*\len) to (0.4*\len,1.5*\len); 
\node[rotate=90] at (-0.4*\len,1.125*\len) {\scriptsize $\Arr$};
\node[rotate=90] at (1.2*\len,1.125*\len) {\scriptsize $\Arr$};
\node[rotate=180] at (0,0.75*\len) {\scriptsize $\Arr$};
\node[rotate=180] at (0.8*\len,0.75*\len) {\scriptsize $\Arr$};
\node[rotate=180] at (0.8*\len,1.5*\len) {\scriptsize $\Arr$};
\node[rotate=180] at (0,1.5*\len) {\scriptsize $\Arr$};
\filldraw [black] (-0.4*\len,0.75*\len) circle (1pt);
\filldraw [black] (1.2*\len,0.75*\len) circle (1pt);
\filldraw [black] (1.2*\len,1.5*\len) circle (1pt);
\filldraw [black] (0.4*\len,1.5*\len) circle (1pt);
\filldraw [black] (0.4*\len,0.75*\len) circle (1pt);
\filldraw [black] (-0.4*\len,1.5*\len) circle (1pt);
\node at (-0.4*\len,1.125*\len) [anchor=east,font=\footnotesize] {$a$};
%
\node at (1.2*\len,1.125*\len)  [anchor=west]{$a$};
\node at (0,0.75*\len) [anchor=north] {$\tilde{c}$};
\node at (0.8*\len,0.75*\len) [anchor=north] {$a$};
\node at (0.8*\len,1.5*\len) [anchor=south] {$\tilde{c}$};
\node at (0,1.5*\len) [anchor=south] {$a$};
\node at (0.55*\len,1.12*\len) {\footnotesize{$\Id_{a\tilde{c}}$}};
\node at (-0.15*\len,1.3*\len) {\footnotesize{$\Id_a$}};
\end{tikzpicture}
\right)}_{x}\otimes 
\begin{tikzpicture}[xscale=0.8,yscale=0.9,baseline={([yshift=-.5ex]current bounding box.center)}]
    \draw[line width=1] 
    (-1.2*\len,0) to (1.2*\len,0)
    (1.2*\len,0.75*\len) to (1.2*\len,1.5*\len)
    (-0.4*\len,0.75*\len) to (-0.4*\len,1.5*\len)
    (-1.2*\len,0.75*\len) to (-1.2*\len,1.5*\len)
    (-1.2*\len,1.5*\len) to (1.2*\len,1.5*\len)
    (-1.2*\len,0.75*\len) to (1.2*\len,0.75*\len);
    \draw[line width=1,dotted]
    (-1.2*\len,0) to (-1.2*\len,0.75*\len)
    (1.2*\len,0) to (1.2*\len,0.75*\len)
    (-0.4*\len,0.75*\len) to (0.4*\len,1.5*\len); 

\node[rotate=90] at (-0.4*\len,1.125*\len) {\scriptsize $\Arr$};
\node[rotate=90] at (-1.2*\len,1.125*\len) {\scriptsize $\Arr$};
\node[rotate=90] at (1.2*\len,1.125*\len) {\scriptsize $\Arr$};
\node[rotate=180] at (0,0) {\scriptsize $\Arr$};
\node[rotate=180] at (-0.8*\len,0.75*\len) {\scriptsize $\Arr$};
\node[rotate=180] at (0.4*\len,0.75*\len) {\scriptsize $\Arr$};
\node[rotate=180] at (0.8*\len,1.5*\len) {\scriptsize $\Arr$};
\node[rotate=180] at (0,1.5*\len) {\scriptsize $\Arr$};
\node[rotate=180] at (-0.8*\len,1.5*\len) {\scriptsize $\Arr$};

\filldraw [black] (-1.2*\len,0) circle (1pt);
\filldraw [black] (1.2*\len,0) circle (1pt);
\filldraw [black] (-0.4*\len,0.75*\len) circle (1pt);
\filldraw [black] (-1.2*\len,0.75*\len) circle (1pt);
\filldraw [black] (1.2*\len,0.75*\len) circle (1pt);
\filldraw [black] (1.2*\len,1.5*\len) circle (1pt);
\filldraw [black] (0.4*\len,1.5*\len) circle (1pt);
\filldraw [black] (-1.2*\len,1.5*\len) circle (1pt);
\filldraw [black] (-0.4*\len,1.5*\len) circle (1pt);

\node at (-0.4*\len,1*\len) [anchor=east,font=\footnotesize] {$a$};

\node at (-1.2*\len,1*\len) [anchor=east] {$a$};
\node at (1.2*\len,1*\len)  [anchor=west]{$a$};
\node at (0,0) [anchor=north] {$v$};
\node at (0.4*\len,0.75*\len) [anchor=north] {$d$};
\node at (0.8*\len,1.5*\len) [anchor=south] {$b$};
\node at (0,1.5*\len) [anchor=south] {$a$};
\node at (-0.8*\len,1.5*\len) [anchor=south] {$\tilde{c}$};
\node at (-0.8*\len,0.75*\len) [anchor=north] {$c$};
\draw (0,0.35*\len) node {${\rho}$};
\draw (-0.8*\len,1.2*\len) node {{$\gamma^k$}};
\node at (0.5*\len,1.125*\len) {{$\pi$}};
\node at (-0.15*\len,1.3*\len) {\footnotesize{$\Id_a$}};
\end{tikzpicture}
\end{equation}
Now, using relation~\eqref{eq:Rel2}, it follows that in $L\Boxt M$ the vector $x$ is related to 
\def\len{2cm}
\begin{equation*}
\rd_a^{-1}\;\sum_{t\in\Irr\cx}\rd_t\;l.(\overline{\gamma}_k\cdot\overline{\eta}_j)\otimes m.\twist_a\otimes
\begin{tikzpicture}[xscale=1,yscale=1,baseline={([yshift=-.5ex]current bounding box.center)}]
\draw[line width=1] 
     (0,0) to (\len,0)
    (-0.5*\len,0.5*\len) to (0.5*\len,0.5*\len)
    (0.5*\len,0.5*\len) to (0.5*\len,0)
    (0,0.5*\len) to (0,0);

\draw[line width=1,dotted]
    (-0.5*\len,0.5*\len) to (0,0)
    (0.5*\len,0.5*\len) to (\len,0); 
    
\node[rotate=180] at (0.25*\len,0.5*\len) {\scriptsize $\Arr$};
\node[rotate=180] at (0.25*\len,0) {\scriptsize $\Arr$};
\node[rotate=180] at (-0.25*\len,0.5*\len) {\scriptsize $\Arr$};
\node[rotate=180] at (0.75*\len,0) {\scriptsize $\Arr$};
\node[rotate=-90] at (0,0.25*\len) {\scriptsize $\Arr$};
\node[rotate=-90] at (0.5*\len,0.25*\len) {\scriptsize $\Arr$};
\filldraw [black] (0,0) circle (1pt);
\filldraw [black] (\len,0) circle (1pt);
\filldraw [black] (-0.5*\len,0.5*\len) circle (1pt);
\filldraw [black] (0.5*\len,0.5*\len) circle (1pt);
\filldraw [black] (0.5*\len,0) circle (1pt);
\filldraw [black] (0,0.5*\len) circle (1pt);
%
\node at (0.25*\len,0.5*\len) [anchor=south] {$t$};
\node at (-0.25*\len,0.5*\len) [anchor=south] {$a$};
\node at (0.75*\len,0) [anchor=north] {$a$};
\node at (0.25*\len,0) [anchor=north] {$\tilde{c}$};
\node at (0.25*\len,0.25*\len) {\footnotesize{$\eta^j$}};
\node at (0.7*\len,0.15*\len) {\footnotesize{$\Id_a$}};
\node at (-0.2*\len,0.35*\len) {\footnotesize{$\Id_a$}};
\end{tikzpicture}
\end{equation*}
and thus, again by~\eqref{eq:Rel2}, we have that in $N\Boxt (L\Boxt M)$ the vector~\eqref{eq:interstep_hex} is related to 
\def\len{1.4cm}
\begin{equation*}
\rd_a^{-3}\sum_{\tilde{c},t\in\Irr\cx} \rd_{\tilde{c}}\,\rd_t\, n\otimes\left(l.(\overline{\gamma}_k\cdot\overline{\eta}_j)\otimes m.\twist_a\otimes \Id_{ta}
\right)\otimes 
\begin{tikzpicture}[xscale=0.8,yscale=0.9,baseline={([yshift=-.5ex]current bounding box.center)}]
    \draw[line width=1] 
    (-1.2*\len,0) to (1.2*\len,0)
    (1.2*\len,0.75*\len) to (1.2*\len,1.5*\len)
    (-0.4*\len,0.75*\len) to (-0.4*\len,1.5*\len)   
    (-1.2*\len,0.75*\len) to (-1.2*\len,1.5*\len)
    (-1.2*\len,1.5*\len) to (1.2*\len,1.5*\len)
    (-1.2*\len,2.25*\len) to (0.4*\len,2.25*\len)    
    (-1.2*\len,0.75*\len) to (1.2*\len,0.75*\len);
    \draw[line width=1,dotted]
    (0.4*\len,2.25*\len) to (0.4*\len,1.5*\len)     
    (-1.2*\len,0) to (-1.2*\len,0.75*\len)
    (1.2*\len,0) to (1.2*\len,0.75*\len)
    (-0.4*\len,0.75*\len) to (0.4*\len,1.5*\len)
    (-1.2*\len,1.5*\len) to (-1.2*\len,2.25*\len)    ; 

\node[rotate=90] at (-0.4*\len,1.125*\len) {\scriptsize $\Arr$};
\node[rotate=90] at (-1.2*\len,1.125*\len) {\scriptsize $\Arr$};
\node[rotate=90] at (1.2*\len,1.125*\len) {\scriptsize $\Arr$};
\node[rotate=180] at (0,0) {\scriptsize $\Arr$};
\node[rotate=180] at (-0.8*\len,0.75*\len) {\scriptsize $\Arr$};
\node[rotate=180] at (0.4*\len,0.75*\len) {\scriptsize $\Arr$};
\node[rotate=180] at (0.8*\len,1.5*\len) {\scriptsize $\Arr$};
\node[rotate=180] at (0,1.5*\len) {\scriptsize $\Arr$};
\node[rotate=180] at (-0.8*\len,1.5*\len) {\scriptsize $\Arr$};
\node[rotate=180] at (0,2.25*\len) {\scriptsize $\Arr$};
\node[rotate=180] at (-0.8*\len,2.25*\len) {\scriptsize $\Arr$};

\filldraw [black] (-1.2*\len,0) circle (1pt);
\filldraw [black] (1.2*\len,0) circle (1pt);
\filldraw [black] (-0.4*\len,0.75*\len) circle (1pt);
\filldraw [black] (-1.2*\len,0.75*\len) circle (1pt);
\filldraw [black] (-1.2*\len,2.25*\len) circle (1pt);
\filldraw [black] (-0.4*\len,2.25*\len) circle (1pt);
\filldraw [black] (0.4*\len,2.25*\len) circle (1pt);
\filldraw [black] (1.2*\len,0.75*\len) circle (1pt);
\filldraw [black] (1.2*\len,1.5*\len) circle (1pt);
\filldraw [black] (0.4*\len,1.5*\len) circle (1pt);
\filldraw [black] (-1.2*\len,1.5*\len) circle (1pt);
\filldraw [black] (-0.4*\len,1.5*\len) circle (1pt);

\node at (-0.4*\len,1*\len) [anchor=east,font=\footnotesize] {$a$};

\node at (-1.2*\len,1*\len) [anchor=east] {$a$};
\node at (1.2*\len,1*\len)  [anchor=west]{$a$};
\node at (0,0) [anchor=north] {$v$};
\node at (0.4*\len,0.75*\len) [anchor=north] {$d$};
\node at (0.8*\len,1.5*\len) [anchor=south] {$b$};
\node at (0,1.5*\len) [anchor=south] {$a$};
\node at (-0.8*\len,1.5*\len) [anchor=south] {$\tilde{c}$};
\node at (0,2.25*\len) [anchor=south] {$t$};
\node at (-0.8*\len,2.25*\len) [anchor=south] {$a$};
\node at (-0.8*\len,0.75*\len) [anchor=north] {$c$};
\draw (0,0.35*\len) node {${\rho}$};
\draw (-0.8*\len,1.2*\len) node {{$\gamma^k$}};
\draw (-0.4*\len,1.95*\len) node {{$\eta^j$}};
\node at (0.5*\len,1.125*\len) {{$\pi$}};
\node at (-0.15*\len,1.3*\len) {\footnotesize{$\Id_a$}};
\end{tikzpicture}
\end{equation*}
According to Lemma~\ref{star_prop} (v) we have that
\begin{equation*}
\sum_{\tilde{c}\in\Irr\cx} \rd_{\tilde{c}}\,\overline{\gamma}_k\cdot\overline{\eta}_j\otimes (\eta^j\circ_{\tilde{c}}\gamma^k)\circ_a\Id_a=\rd_a^2\;\overline{\zeta}_i\otimes \Id_a\circ_\un\zeta^i     
\end{equation*}
where $(\overline{\zeta},\zeta)$ is a pair of dual bases of $\Hsp{\overline{t}\,c}$, but since $c\in\Irr\cx$, it follows that $t=c$. We can choose the bases $\zeta=\id_c$ and $\overline{\zeta}=\rd_c^{-1}\id_c$ to finally conclude that the expressions~\eqref{eq:interstep_hex} and~\eqref{eq:interstep_hex1} agree. In a similar manner, one can verify the second hexagon axiom.
\end{proof}

\begin{defi}
The \textit{twist} of $M\in\Rep \Tubecx$ is given by the assignment
\begin{equation*}
    \Theta_M\colon M\to M,\quad m_a\mapsto m_a.\twist_a\;.
\end{equation*}
\end{defi}
\begin{prop}\label{prop:twist}
$\Theta$ defines a twist on $\Rep \Tubecx$ in the sense of\rm{~\cite[Definition 8.10.1]{EGNO}}.   
\end{prop}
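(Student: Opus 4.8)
The plan is to verify that $\Theta$ is a natural automorphism of the identity functor of $\Rep\Tubecx$ satisfying the balancing identity $\Theta_{M\Boxt N}=(\Theta_M\Boxt\Theta_N)\circ\mathrm{B}_{N,M}\circ\mathrm{B}_{M,N}$ of \cite[Definition~8.10.1]{EGNO}, and, on the rigid part $\Rep^{\mathrm{l.f.}}\Tubecx$, the compatibility $\Theta_{\overline M}=(\Theta_M)^{*}$ with duality.

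First I would check that $\Theta_M$ is a well-defined automorphism of $M$. That it is a $\Tubecx$-intertwiner is precisely the centrality~\eqref{eq:twist-central}: for $m\in M_a$ and a tube $f\in\mathrm{T}_{a;b}$ we have $\Theta_M(m.f)=(m.f).\twist_b=m.(f\cdot\twist_b)$ and $(\Theta_M m).f=m.(\twist_a\cdot f)$, which agree; for $a,b\notin\IndexSet$ this uses that resolving the $x$-label commutes with welding, i.e.\ Lemma~\ref{star_prop}(i). It is invertible with inverse $m_a\mapsto m_a.\twist_a^{-1}$ since $\twist_a$ is invertible under welding, and naturality in $M$ is immediate, since any intertwiner $F$ satisfies $F(m.\twist_a)=F(m).\twist_a$. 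Finally $\Theta_\mathbb{I}=\id_\mathbb{I}$: this follows formally by applying the balancing identity to $M=N=\mathbb{I}$ and using that $\mathrm{B}_{\mathbb{I},\mathbb{I}}$ is the identity under the unitors, giving $\Theta_\mathbb{I}=\Theta_\mathbb{I}^{2}$; alternatively one computes it directly from the $\mathbb{I}$-action, the partial trace, and the normalization $\rd_a^{-1}$ in $\twist_a$.

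The core of the proof is the balancing identity. By Proposition~\ref{prop:rel_unit_simple}(iii) it suffices to test it on classes $m\otimes n\otimes\pi$ with $m\in M_a$, $n\in N_b$, $a,b\in\Irr\cx$, and $\pi\in(\Auxprod{a}{b}{c})_{\un}$. Applying $\mathrm{B}_{M,N}$ and then $\mathrm{B}_{N,M}$ to such a class leaves the module vectors in the form $m.\twist_a^{-1}$ and $n.\twist_b^{-1}$, of unchanged degrees $a$ and $b$, so that the subsequent $\Theta_M\Boxt\Theta_N$ cancels these twists (using $\twist_a^{-1}\cdot\twist_a=\Id_a$, and likewise for $b$) and the whole left-hand side becomes $\sum m\otimes n\otimes\pi''$ for an explicit expression $\pi''$ obtained from $\pi$ by the two braiding diagrams. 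On the other hand $\Theta_{M\Boxt N}(m\otimes n\otimes\pi)=m\otimes n\otimes(\pi.\twist_c)$, because the $\Tubecx$-action on $M\Boxt N$ only touches the $\Auxprod{a}{b}{c}$-slot, via~\eqref{eq:tubeaction_onprod}. So everything reduces to the diagrammatic identity $\sum\pi''=\pi.\twist_c$, and this is where the work lies: one must show that the two nested windings of the $a$- and $b$-strands produced by the double braiding unwind, using centrality~\eqref{eq:twist-central}, the resolution $\star_a(\twist_a)=\sum_{t}\rd_t\,\overline\alpha_k\cdot\twist_t\cdot\alpha^k$, and Lemmas~\ref{bases_comp} and~\ref{lem:base_change}, into the single action of $\twist_c$ on the $c$-strand. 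Tracking the summations (from the two applications of $\mathrm{B}$ and from resolving $\twist_a,\twist_b$ over simples) and matching them against the dominance relation~\eqref{dominance} is the main obstacle; once these are lined up, the identity follows from the same bookkeeping already used in Proposition~\ref{prop:hexagon_axioms}.

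Finally, for the ribbon compatibility on $\Rep^{\mathrm{l.f.}}\Tubecx$ I would first observe that the $\#$-involution reverses all strand orientations and that $\Id_a=\Id_{\overline a}$, $\rd_a=\rd_{\overline a}$, so that comparing the defining pictures gives $\twist_a^{\#}=\twist_{\overline a}$. Then for $\varphi\in\overline M_a=M_{\overline a}^{*}$ and $m\in M_{\overline a}$,
\[
\Theta_{\overline M}(\varphi)(m)=(\varphi.\twist_a)(m)=\varphi\bigl(m.\twist_a^{\#}\bigr)=\varphi\bigl(m.\twist_{\overline a}\bigr)=(\varphi\circ\Theta_M)(m),
\]
and the right-hand side is exactly the transpose $(\Theta_M)^{*}$ for the duality of Proposition~\ref{prop:rigidity}, which completes the verification.
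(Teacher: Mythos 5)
Your proposal is correct and follows essentially the same route as the paper: invertibility and naturality of $\Theta_M$ are immediate, the intertwiner property is exactly the centrality~\eqref{eq:twist-central}, and the balancing identity is reduced (via Proposition~\ref{prop:rel_unit_simple}~(iii)) to a diagrammatic computation with dual bases using~\eqref{eq:twist-central}, Lemma~\ref{bases_comp} and~\eqref{eq:base_change} — which is precisely what the paper invokes, and which the paper likewise leaves as ``a direct computation'' rather than writing out. The extra items you verify ($\Theta_{\mathbb{I}}=\id$ and $\Theta_{\overline{M}}=\overline{\Theta_M}$) are not part of~\cite[Definition 8.10.1]{EGNO} and are handled separately in the paper, but they are correct and harmless here.
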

\begin{proof}
It is clear that $\Theta_M$ is a natural isomorphism, with inverse given by the action of the inverse twist element. That $\Theta_M$ is an intertwiner follows from~\eqref{eq:twist-central}. Given tube algebra representations $M,N\in\Rep\Tubecx$, one can check that $\Theta_{M}\Boxt \Theta_{N}\circ \mathrm{B}_{N,M}\circ\mathrm{B}_{M,N}$ and $\Theta_{M\Boxt N}$ agree, by a direct computation involving~\eqref{eq:twist-central} and Lemma~\ref{bases_comp}.
\end{proof}

\begin{thm}
Let $\cx$ be a spherical semisimple multitensor category. The data $\left(\Boxt,\mathbb{I},\mathrm{A},\mathrm{B},\Theta\right)$ endow the $\bk$-linear category $\Rep\Tubecx$ with the structure of a braided monoidal category with twist. Moreover, the full $\bk$-linear subcategory $\Rep^{\mathrm{l.f.}}\Tubecx$ of locally finite tube algebra representations is a ribbon semisimple tensor category.
\end{thm}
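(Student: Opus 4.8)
The plan is to read the first assertion off the propositions already established in Sections~\ref{sec:graph-calc} and~\ref{sec:tub-alg-rep}, and then to isolate the two genuinely new points needed for the ribbon statement. For the braided monoidal structure with twist I would simply assemble the pieces: $\Boxt$ is a bifunctor into $\Rep\Tubecx$ by Propositions~\ref{prop:action_tensor_prod} and~\ref{prop:tensor_prod}; the associators are $\Tubecx$-linear natural isomorphisms by Proposition~\ref{prop:associators_well-defined} and satisfy the pentagon identity by Proposition~\ref{prop:pentagon_axiom}; the object $\mathbb{I}$ with the unitors $\mathbf{l}$, $\mathbf{r}$ is a two-sided unit and the triangle axiom holds by the two propositions following the definition of $\mathbb{I}$; the braiding $\mathrm{B}$ is a $\Tubecx$-linear natural isomorphism (the proposition preceding Proposition~\ref{prop:hexagon_axioms}) satisfying both hexagons by Proposition~\ref{prop:hexagon_axioms}; and $\Theta$ is a twist by Proposition~\ref{prop:twist}. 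No new computation is required for this part.

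Next I would check that the full subcategory $\Rep^{\mathrm{l.f.}}\Tubecx$ is closed under all of this structure. Closure under $\Boxt$ is precisely the input used in the proof of Proposition~\ref{prop:rigidity}: by Proposition~\ref{prop:rel_unit_simple}(iii) each graded component $(M\Boxt N)_a$ is a quotient of $\bigoplus_{t,s\in\Irr\cx}M_t\otimes N_s\otimes\Hsp{\overline{a}\,t\,s}$, which is finite dimensional when $M,N$ are locally finite, and the support of $M\Boxt N$ lies in the finite set of simple summands of the $t'\otimes s'$ with $t'\in\operatorname{supp}M$, $s'\in\operatorname{supp}N$. I would also observe that $\mathbb{I}=\bigoplus_{a\in\IndexSet}\Hsp{\overline{a}\,\un}$ is locally finite, each summand being a finite dimensional morphism space and $\mathbb{I}_t\neq 0$ only for $t\in\{\un_i:i\in\Gamma\}$. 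Rigidity of $\Rep^{\mathrm{l.f.}}\Tubecx$ and its closure under duals are Proposition~\ref{prop:rigidity}, and the components of $\mathrm{A}$, $\mathbf{l}$, $\mathbf{r}$, $\mathrm{B}$, $\Theta$ between locally finite representations lie in the full subcategory automatically; hence it inherits a rigid braided monoidal category with twist.

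The one new identity to prove is the balancing--duality (ribbon) compatibility $\Theta_{\overline{M}}=(\Theta_M)^{\vee}$, and I expect this to be the main --- though modest --- obstacle. The key step is the diagrammatic identity $\twist_a^{\#}=\twist_{\overline{a}}$ for the involution $(-)^{\#}$ on $\Tubecx$; drawing the two tube diagrams should make it immediate, since both $\twist_a$ and $(-)^{\#}$ are manifestly symmetric under the identification of an object with its dual. Granting it, for $\varphi\in\overline{M}_a=M_{\overline{a}}^{*}$ and $m\in M_{\overline{a}}$ I would unwind the definitions of the dual action and of $\Theta$ to obtain $\Theta_{\overline{M}}(\varphi)(m)=(\varphi.\twist_a)(m)=\varphi\bigl(m.\twist_a^{\#}\bigr)=\varphi\bigl(m.\twist_{\overline{a}}\bigr)=\varphi\bigl(\Theta_M(m)\bigr)$, that is $\Theta_{\overline{M}}=\bigl(\varphi\mapsto\varphi\circ\Theta_M\bigr)=(\Theta_M)^{\vee}$, which is exactly the ribbon axiom. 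Together with Proposition~\ref{prop:twist} this gives the ribbon structure.

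Finally, for semisimplicity I would invoke the classical linear equivalence between tube algebra representations and the Drinfeld center (\cites{MR1782145,MR1966525,MR3447719,NY15tubealgebra}), under which the locally finite representations correspond to the objects of $\cz(\cx)$; since $\cx$ is semisimple and rigid, $\cz(\cx)$ is a semisimple multitensor category with finite dimensional morphism spaces and objects of finite length, and these properties transport back along the equivalence. This linear equivalence is upgraded to a ribbon braided equivalence in Theorem~\ref{thm:main}, but only its purely linear content --- which is independent of the constructions of this section --- is used here, so there is no circularity. Combining the three parts yields the theorem; the one point demanding genuine care is thus the ribbon identity above, everything else being assembly of prior results.
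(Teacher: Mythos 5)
Your proposal is correct and follows essentially the same route as the paper: assemble Propositions~\ref{prop:tensor_prod},~\ref{prop:pentagon_axiom},~\ref{prop:hexagon_axioms} and~\ref{prop:twist} for the braided monoidal structure with twist, invoke Proposition~\ref{prop:rigidity} for rigidity of the locally finite subcategory, and reduce the ribbon compatibility $\Theta_{\overline{M}}=\overline{\Theta_M}$ to the identity $\twist_a^{\#}=\twist_{\overline{a}}$ together with the identification of the categorical dual of $\Theta_M$ with its linear transpose (the one step you should spell out, as the paper does via a computation analogous to the snake-relation check in Proposition~\ref{prop:rigidity}). You in fact say slightly more than the paper, which leaves semisimplicity of $\Rep^{\mathrm{l.f.}}\Tubecx$ and closure of that subcategory under the structure maps implicit; your treatment of both points, including the non-circular appeal to the purely linear equivalence with the center for semisimplicity, is sound.
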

\begin{proof}
The statement compiles the results of Proposition~\ref{prop:tensor_prod},~\ref{prop:pentagon_axiom},~\ref{prop:hexagon_axioms} and~\ref{prop:twist}. The rigidity of the subcategory $\Rep^{\mathrm{l.f.}}\Tubecx$ was shown in Proposition~\ref{prop:rigidity}. It remains to be checked that $\Theta_{\overline{M}}=\overline{\Theta_M}$. A computation similar to~\eqref{prop:rigidity} shows that for $\varphi\in\overline{M}_a$ it holds that 
\begin{equation*}
    \overline{\Theta_M}(\varphi)= \textstyle{\sum_{j}}\;\varphi(
m_a^j.\twist_a)  \;
\overline{m}_a^j\,,
\end{equation*}
but, since $\twist_a^\#=\twist_{\overline{a}}$ and $\{m_a^j\}$ and $\{\overline{
m}_a^j\}$ are dual bases, the result folows.
\end{proof}

\section{Tube algebra representations and the Drinfeld center}\label{sec:drinfeld_center}

We are now ready to compare the category of representations of $\Tubecx$ and the Drinfeld center of $\cx$.
For simplicity on the arguments, we will consider from now on that $\rI$ in Definition~\ref{def:data-for-tube-alg} given by $\mathrm{I}_i=\Irr\cx_{i }$, even though the statements given next hold for a larger indexing set.

As shown in~\cite[Proposition 4.2]{NY15tubealgebra}, there is an equivalence of $\bk$-linear categories
\begin{equation}\label{eq:equivalence}
    \EQ\Colon\mathcal{Z}(\Ind\cx)\xrightarrow{\quad\simeq\quad}\Rep \Tube,
\end{equation}
where $\EQ(X) = \EQ(X, \sigma)$ is given by the direct summands $\EQ(X)_d = \Hsp{\overline{d}X}$ for $d \in \Irr\cx_{i }$, together with the right tube algebra action defined by the formula
\begin{equation*}
m.f\coloneqq\rd_x
\begin{tikzpicture}[xscale=0.8,yscale=0.9,baseline={([yshift=-.5ex]current bounding box.center)}]
\draw[line width=1] 
    (-0.5*\len,0) to (0.5*\len,0)
    (-0.5*\len,0) to (-0.5*\len,0.5*\len)
    (0.5*\len,0) to (0.5*\len,0.5*\len)
    (-0.5*\len,0.5*\len) to (0.5*\len,0.5*\len);
\draw[line width=1]
    (0.5*\len,0.5*\len) .. controls (0.35*\len,0.9*\len) and (-0.35*\len,0.9*\len).. (-0.5*\len,0.5*\len)
    node[midway,above] {\scriptsize $X$}
    node[midway,rotate=180] {\scriptsize $\Arr$};
\draw[line width=1]
    (0.5*\len,0) .. controls (0.75*\len,0.25*\len) and (0.75*\len,1.25*\len) .. (0,1.25*\len)
    node[above] {\scriptsize $X$}
    node[rotate=180] {\scriptsize $\Arr$};    
\draw[line width=1]
    (-0.5*\len,0) .. controls (-0.75*\len,0.25*\len) and (-0.75*\len,1.25*\len) .. (0,1.25*\len);    

\node[rotate=-90] at (-0.5*\len,0.25*\len) {\scriptsize $\Arr$};
\node[rotate=-90] at (0.5*\len,0.25*\len) {\scriptsize $\Arr$};
\node[rotate=180] at (0,0) {\scriptsize $\Arr$};
\node[rotate=180] at (0,0.5*\len) {\scriptsize $\Arr$};

\filldraw [black] (-0.5*\len,0) circle (1pt);
\filldraw [black] (0.5*\len,0) circle (1pt);
\filldraw [black] (-0.5*\len,0.5*\len) circle (1pt);
\filldraw [black] (0.5*\len,0.5*\len) circle (1pt);
\node at (-0.5*\len,0.25*\len) [anchor=west] {\footnotesize$x$};
\node at (0.5*\len,0.25*\len) [anchor=east] {\footnotesize$x$};
\node at (0,0) [anchor=north] {$b$};
\node at (0,0.5*\len) [anchor=north] {\footnotesize$a$};
\draw (0,0.2*\len) node  {${f}$};
\draw (0,0.65*\len) node  {${m}$};
\draw (-0.15*\len,1.1*\len) node  {\footnotesize$\sigma_{x}$};
\end{tikzpicture}
\end{equation*}
for $m\in \Hsp{\overline{a}X}$ and $f\in\Tube$. Module associativity follows from the naturality of $\sigma$ and Lemma~\ref{lem:pairing_properties} (ii). Locality of the action is ensured since $\sigma_\un=\id_X$.

The purpose of this section is to explicitly define a braided monoidal structure on the equivalence~\eqref{eq:equivalence}. To this end, consider for central objects $X,Y\in\cz(\Ind\cx)$ the assignment
\begin{equation}\label{eq:monoidal_str_T}
\begin{aligned}
\Psi_{X,Y}\Colon \EQ(X)\Boxt \EQ(Y)&\to \EQ(X\otimes Y)\\
\centrelem{s}{X}{m}\otimes \centrelem{t}{Y}{n}\otimes \pielem{s}{t}{c}{x}{\pi}
&\mapsto
\begin{tikzpicture}[xscale=0.8,baseline={([yshift=-.5ex]current bounding box.center)}]
\draw[line width=1]
    (0,0.5*\len) .. controls (0.15*\len,0.9*\len) and (0.6*\len,0.9*\len).. node[midway,rotate=180] {\scriptsize $\Arr$}(0.75*\len,0.5*\len)
    node[midway,above] {\scriptsize $X$};
\draw[line width=1]
    (0,0.5*\len) .. controls (-0.15*\len,0.9*\len) and (-0.6*\len,0.9*\len).. node[midway,rotate=180] {\scriptsize $\Arr$}(-0.75*\len,0.5*\len)
    node[midway,above] {\scriptsize $Y$};    
\draw[line width=1]
    (0.75*\len,0) .. controls (1.25*\len,0.25*\len) and (1.25*\len,1.25*\len).. node[midway,rotate=125] {\scriptsize $\Arr$}(0,1.25*\len) 
    node[midway,right] {\footnotesize $X$};
\draw[line width=1]
    (-0.75*\len,0) .. controls (-1.25*\len,0.25*\len) and (-1.25*\len,1.25*\len).. node[midway,rotate=245] {\scriptsize $\Arr$}(0,1.25*\len) 
    node[midway,left] {\footnotesize $Y$};

\draw[line width=1] 
    (-0.75*\len,0) to (0.75*\len,0)
    (-0.75*\len,0) to (-0.75*\len,0.5*\len)
    (0.75*\len,0) to (0.75*\len,0.5*\len)
    (-0.75*\len,0.5*\len) to (0.75*\len,0.5*\len);

\node[rotate=-90] at (-0.75*\len,0.25*\len) {\scriptsize $\Arr$};
\node[rotate=-90] at (0.75*\len,0.25*\len) {\scriptsize $\Arr$};
\node[rotate=180] at (0,0) {\scriptsize $\Arr$};
\node[rotate=180] at (-0.4*\len,0.5*\len) {\scriptsize $\Arr$};
\node[rotate=180] at (0.4*\len,0.5*\len) {\scriptsize $\Arr$};

\filldraw [black] (-0.75*\len,0) circle (1pt);
\filldraw [black] (0.75*\len,0) circle (1pt);
\filldraw [black] (0,0.5*\len) circle (1pt);
\filldraw [black] (0,1.25*\len) circle (1pt);
\filldraw [black] (-0.75*\len,0.5*\len) circle (1pt);
\filldraw [black] (0.75*\len,0.5*\len) circle (1pt);

\node at (-0.75*\len,0.25*\len) [anchor=west] {$x$};
\node at (0.75*\len,0.25*\len) [anchor=east] {$x$};
\node at (0,0) [anchor=north] {$c$};
\node at (0.4*\len,0.5*\len) [anchor=north] {$s$};
\node at (-0.4*\len,0.5*\len) [anchor=north] {$t$};

\draw (0,1.05*\len) node {\footnotesize${\sigma^{XY}_x}$};
\draw (0,0.25*\len) node {${\pi}$};
\draw (0.375*\len,0.65*\len) node {$m$};
\draw (-0.375*\len,0.65*\len) node {$n$};
\end{tikzpicture}
\end{aligned}
\end{equation}

\begin{prop}
The assignment~\eqref{eq:monoidal_str_T} is a well-defined isomorphism of tube algebra representations.
\end{prop}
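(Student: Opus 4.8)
The plan is to verify, in order: that $\Psi_{X,Y}$ respects the two defining relations \eqref{eq:Rel1} and \eqref{eq:Rel2} of the quotient \eqref{tensor_prod}, so that it descends to a linear map $\EQ(X)\Boxt\EQ(Y)\to\EQ(X\otimes Y)$; that this descended map is $\Tubecx$-linear; and that it is bijective. Throughout, the two structural inputs from the center that we exploit are the naturality of the half-braiding $\sigma^{XY}$ and the fact that, by the definition of the monoidal product on $\cz(\Ind\cx)$, one has $\sigma^{XY}_a=(\id_X\otimes\sigma^Y_a)\circ(\sigma^X_a\otimes\id_Y)$, together with the hexagon identity $\sigma^{XY}_{a b}=(\sigma^{XY}_a\otimes\id_b)\circ(\id_a\otimes\sigma^{XY}_b)$; diagrammatically these let us reroute the strand labelled $X$ (resp.\ $Y$) past any $\Tubecx$-label and split a composite label through the two half-braiding factors.

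The intertwiner property is the easiest point and is essentially the computation that makes the $\EQ$-action of \eqref{eq:equivalence} a module action, now carried out in the presence of the two passenger morphisms $m$ and $n$. Since the $\Tubecx$-action on $\EQ(X)\Boxt\EQ(Y)$ touches only the auxiliary component $\pi$ through welding \eqref{eq:tubeaction_onprod}, while $\Psi_{X,Y}$ glues $m$ and $n$ onto $X$ and $Y$ only through $\sigma^{XY}_x$, applying $\Psi_{X,Y}$ to $\pi.g=\star_{yx}(\pi\circ_c g)$ and then sliding $g$ around the resulting annulus by naturality of $\sigma^{XY}$ and Lemma~\ref{lem:pairing_properties}~(ii) reproduces exactly the $\EQ(X\otimes Y)$-action formula applied to $\Psi_{X,Y}(m\otimes n\otimes\pi)$.

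For well-definedness under \eqref{eq:Rel1}, I would compute $\Psi_{X,Y}$ of both sides. On the left, $\Psi_{X,Y}$ produces the diagram of \eqref{eq:monoidal_str_T} with $m.f$ in the $X$-slot; substituting the definition of the $\EQ(X)$-action of $m.f$ introduces a strand labelled $u$ passing over $X$ via $\sigma^X_u$. On the right-hand side of \eqref{eq:Rel1} the new tube-label is $ux$, so $\Psi_{X,Y}$ glues $m$ onto $X\otimes Y$ through $\sigma^{XY}_{ux}$ and $n.\overline{\beta}_j$ onto $Y$; expanding $\sigma^{XY}_{ux}$ by the hexagon and $\sigma^{XY}_u$ by its factorization through $\sigma^X_u$ and $\sigma^Y_u$ matches the $\sigma^X_u$-strand on the $X$-side and relegates a $\sigma^Y_u$-strand to the $Y$-side, where it is absorbed into $n.\overline{\beta}_j$. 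The remaining identity — that the two bracketings of the $\beta$-, $\varphi$- and $f$-morphisms coincide — is an instance of Lemmas~\ref{bases_comp} and \ref{lem:base_change} together with \ref{lem:pairing_properties}~(i), applied exactly as in Proposition~\ref{prop:action_tensor_prod}. The verification for \eqref{eq:Rel2} is symmetric, with the roles of $X$, $Y$ and of the two half-braiding factors exchanged.

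For bijectivity I would exhibit the inverse explicitly. By Proposition~\ref{prop:rel_unit_simple}~(iii) every element of $(\EQ(X)\Boxt\EQ(Y))_c$ is represented with tube-label $x=\un$ and simple grades $a=t$, $b=s$; on such representatives $\Psi_{X,Y}$ reduces, since $\sigma^{XY}_\un=\id$, to the bare gluing map $\Hsp{\overline{t}X}\otimes\Hsp{\overline{s}Y}\otimes\Hsp{\overline{c}\,t\,s}\to\Hsp{\overline{c}XY}$. By semisimplicity of $\cx$ the strands $X$ and $Y$ can be resolved over simple objects, so $\bigoplus_{t,s\in\Irr\cx}\Hsp{\overline{t}X}\otimes\Hsp{\overline{s}Y}\otimes\Hsp{\overline{c}\,t\,s}\xrightarrow{\ \cong\ }\Hsp{\overline{c}XY}=\EQ(X\otimes Y)_c$ via these gluings, in the same way as the isomorphism $\sum_{t}\circ_t$ of Section~\ref{sec:graph-calc}. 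I would therefore define $\Psi_{X,Y}^{-1}$ on $\ell\in\Hsp{\overline{c}XY}$ by cutting $\ell$ along $X$ and along $Y$ with dual bases of $\Hsp{\overline{t}X}$ and $\Hsp{\overline{s}Y}$, with weights $\rd_t\rd_s$ dictated by Lemma~\ref{lem:pairing_properties}~(i), check $\Psi_{X,Y}\circ\Psi_{X,Y}^{-1}=\id$ directly, and check $\Psi_{X,Y}^{-1}\circ\Psi_{X,Y}=\id$ by first bringing a general $m\otimes n\otimes\pi$ into the canonical form of \eqref{eq:rel_unit} and then reassembling. The main obstacle is this last step: both the well-definedness checks and the identity $\Psi_{X,Y}^{-1}\circ\Psi_{X,Y}=\id$ are lengthy but mechanical dual-basis bookkeeping of the same flavour as the proofs of Propositions~\ref{prop:action_tensor_prod} and \ref{prop:rel_unit_simple}, and the only genuine content is keeping the $\sigma^{XY}$-strand consistently routed through the hexagon factorization while the various labels are resolved over $\Irr\cx$.
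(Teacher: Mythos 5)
Your proposal is correct and follows essentially the same route as the paper: check compatibility with relations \eqref{eq:Rel1} and \eqref{eq:Rel2} by expanding the module actions, rerouting the half-braiding strands via naturality and the hexagon/tensor-product factorization of $\sigma^{XY}$, and closing the dual bases with the dominance relation; then exhibit the inverse by cutting along $X$ and $Y$ with dual bases weighted by $\rd_t\rd_s$ and verify the two composites using \eqref{eq:rel_unit} and \eqref{eq:base_change}. Your reduction of the bijectivity check to representatives with $x=\un$ and simple grades via Proposition~\ref{prop:rel_unit_simple}~(iii) is a slightly cleaner packaging of the same dual-basis argument the paper carries out directly.
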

\begin{proof}
We verify the preservation of relation~\eqref{eq:Rel1}, a vector $m.f\otimes n\otimes \pi$ gets assigned the value
\begin{equation}\label{eq:inter_step_m_str1}
\rd_x\rd_y\;
\begin{tikzpicture}[xscale=0.8,baseline={([yshift=-.5ex]current bounding box.center)}]
\draw[line width=1]
    (0,1*\len) .. controls (0.15*\len,1.4*\len) and (0.6*\len,1.4*\len).. node[midway,rotate=180] {\scriptsize $\Arr$}(0.75*\len,1*\len)
    node[midway,above] {\scriptsize $X$};
\draw[line width=1]
    (0,0.5*\len) .. controls (-0.15*\len,0.9*\len) and (-0.6*\len,0.9*\len).. node[midway,rotate=180] {\scriptsize $\Arr$}(-0.75*\len,0.5*\len)
    node[midway,above] {\scriptsize $Y$};

\draw[line width=1]
    (0,0.5*\len) to node[pos=0.8,rotate=105] {\scriptsize $\Arr$}
    node[pos=0.7,left] {\scriptsize $x$}(-0.25*\len,1.5*\len);    

\draw[line width=1]
    (0.75*\len,0) .. controls (1.75*\len,0.25*\len) and (2*\len,1.8*\len).. node[midway,rotate=105] {\scriptsize $\Arr$}(-0.25*\len,1.5*\len)
    node[midway,right] {\footnotesize $X$};

\draw[line width=1]
    (-0.75*\len,0) .. controls (-1.25*\len,0.25*\len) and (-1.25*\len,1.25*\len).. node[midway,rotate=245] {\scriptsize $\Arr$}(-0.25*\len,1.5*\len) 
    node[midway,left] {\footnotesize $Y$};

\draw[line width=1] 
    (-0.75*\len,0) to (0.75*\len,0)
    (-0.75*\len,0) to (-0.75*\len,0.5*\len)
    (0.75*\len,0) to (0.75*\len,1*\len)
    (-0.75*\len,0.5*\len) to (0.75*\len,0.5*\len)
    (0,0.5*\len) to (0,1*\len)
    (0,1*\len) to (0.75*\len,1*\len);

\node[rotate=-90] at (-0.75*\len,0.25*\len) {\scriptsize $\Arr$};
\node[rotate=-90] at (0.75*\len,0.25*\len) {\scriptsize $\Arr$};
\node[rotate=-90] at (0,0.75*\len) {\scriptsize $\Arr$};
\node[rotate=-90] at (0.75*\len,0.75*\len) {\scriptsize $\Arr$};
\node[rotate=180] at (0,0) {\scriptsize $\Arr$};
\node[rotate=180] at (-0.4*\len,0.5*\len) {\scriptsize $\Arr$};
\node[rotate=180] at (0.4*\len,0.5*\len) {\scriptsize $\Arr$};
\node[rotate=180] at (0.4*\len,1*\len) {\scriptsize $\Arr$};

\filldraw [black] (-0.75*\len,0) circle (1pt);
\filldraw [black] (0.75*\len,0) circle (1pt);
\filldraw [black] (0,0.5*\len) circle (1pt);
\filldraw [black] (0,1*\len) circle (1pt);
\filldraw [black] (-0.75*\len,0.5*\len) circle (1pt);
\filldraw [black] (0.75*\len,0.5*\len) circle (1pt);
\filldraw [black] (0.75*\len,1*\len) circle (1pt);
\filldraw [black] (-0.25*\len,1.5*\len) circle (1pt);

\node at (-0.75*\len,0.25*\len) [anchor=west] {\footnotesize$x$};
\node at (0.75*\len,0.25*\len) [anchor=east] {\footnotesize$x$};
\node at (0,0.75*\len) [anchor=west] {\footnotesize$y$};
\node at (0.75*\len,0.75*\len) [anchor=east] {\footnotesize$y$};

\node at (0,0) [anchor=north] {$c$};
\node at (0.4*\len,0.5*\len) [anchor=north] {\footnotesize$s$};
\node at (0.4*\len,1.03*\len) [anchor=north] {\footnotesize$s'$};
\node at (-0.4*\len,0.5*\len) [anchor=north] {\footnotesize$t$};

\draw (1.1*\len,0.75*\len) node {\footnotesize$\sigma^{X}_{{xy}}$};
\draw (-0.65*\len,1*\len) node {\footnotesize$\sigma^{Y}_{{x}}$};
\draw (0.35*\len,0.68*\len) node {${f}$};
\draw (0,0.25*\len) node {${\pi}$};
\draw (0.375*\len,1.15*\len) node {$m$};
\draw (-0.375*\len,0.65*\len) node {$n$};
\end{tikzpicture}
\end{equation}
by the map $\Psi_{X,Y}$. On the other hand,
\begin{equation}\label{eq:inter_step_m_str2}
\sum_{z\in\Irr\cx}\rd_x\rd_y\rd_z\;
\begin{tikzpicture}[xscale=0.8,baseline={([yshift=-.5ex]current bounding box.center)}]
\draw[line width=1]
(0.75*\len,0) .. controls (1.25*\len,0.25*\len) and (1.25*\len,0.75*\len).. (0.75*\len,\len) node[midway, rotate=-90] {\scriptsize $\Arr$} node[midway,right] {\small $z$};
\draw[line width=1]
    (0,1*\len) .. controls (0.15*\len,1.4*\len) and (0.6*\len,1.4*\len).. node[midway,rotate=180] {\scriptsize $\Arr$}(0.75*\len,1*\len)
    node[midway,above] {\scriptsize $X$};
\draw[line width=1]
    (0,0.5*\len) .. controls (-0.15*\len,0.9*\len) and (-0.6*\len,0.9*\len).. node[midway,rotate=180] {\scriptsize $\Arr$}(-0.75*\len,0.5*\len)
    node[midway,above] {\scriptsize $Y$};

\draw[line width=1]
    (0,1*\len) to node[pos=0.7,rotate=95] {\scriptsize $\Arr$}
    node[pos=0.7,left] {\scriptsize $z$}(-0.15,1.75*\len);    

\draw[line width=1]
    (0.75*\len,0) .. controls (1.85*\len,0.15*\len) and (2*\len,1.8*\len).. node[midway,rotate=105] {\scriptsize $\Arr$}(-0.15,1.75*\len)
    node[midway,right] {\footnotesize $X$};

\draw[line width=1]
    (-0.75*\len,0) .. controls (-2*\len,0*\len) and (-2.25*\len,1.75*\len).. node[midway,rotate=255] {\scriptsize $\Arr$}(-0.15,1.75*\len)
    node[midway,left] {\footnotesize $Y$};

\draw[line width=1] 
    (-0.75*\len,0) to (0.75*\len,0)
    (-0.75*\len,0) to (-0.75*\len,0.5*\len)
    (0.75*\len,0) to (0.75*\len,1*\len)
    (-1.25*\len,0.5*\len) to (0.75*\len,0.5*\len)
    (0,0.5*\len) to (0,1*\len)
    (0,1*\len) to (0.75*\len,1*\len)
    (-0.75*\len,0) to [in=-90,out=180] node[pos=0.8, rotate=-75]  {\scriptsize $\Arr$} node[pos=0.8,left] {\footnotesize $z$} (-1.25*\len,0.5*\len);

\node[rotate=-90] at (-0.75*\len,0.25*\len) {\scriptsize $\Arr$};
\node[rotate=-90] at (0.75*\len,0.25*\len) {\scriptsize $\Arr$};
\node[rotate=-90] at (0,0.75*\len) {\scriptsize $\Arr$};
\node[rotate=-90] at (0.75*\len,0.75*\len) {\scriptsize $\Arr$};
\node[rotate=180] at (0,0) {\scriptsize $\Arr$};
\node at (-1*\len,0.5*\len) {\scriptsize $\Arr$};
\node[rotate=180] at (-0.4*\len,0.5*\len) {\scriptsize $\Arr$};
\node[rotate=180] at (0.4*\len,0.5*\len) {\scriptsize $\Arr$};
\node[rotate=180] at (0.4*\len,1*\len) {\scriptsize $\Arr$};

\filldraw [black] (-0.75*\len,0) circle (1pt);
\filldraw [black] (0.75*\len,0) circle (1pt);
\filldraw [black] (0,0.5*\len) circle (1pt);
\filldraw [black] (0,1*\len) circle (1pt);
\filldraw [black] (-0.75*\len,0.5*\len) circle (1pt);
\filldraw [black] (-1.25*\len,0.5*\len) circle (1pt);
\filldraw [black] (0.75*\len,0.5*\len) circle (1pt);
\filldraw [black] (0.75*\len,1*\len) circle (1pt);
\filldraw [black] (-0.15,1.75*\len) circle (1pt);

\node at (-0.75*\len,0.25*\len) [anchor=west] {\footnotesize$x$};
\node at (0.75*\len,0.25*\len) [anchor=east] {\footnotesize$x$};
\node at (0,0.75*\len) [anchor=west] {\footnotesize$y$};
\node at (0.75*\len,0.75*\len) [anchor=east] {\footnotesize$y$};

\node at (0,0) [anchor=north] {$c$};
\node at (0.4*\len,0.5*\len) [anchor=north] {\footnotesize$s$};
\node at (0.4*\len,1.03*\len) [anchor=north] {\footnotesize$s'$};
\node at (-0.4*\len,0.5*\len) [anchor=north] {\footnotesize$t$};
\node at (-1*\len,0.5*\len) [anchor=south] {\footnotesize$y$};

\draw (1.15*\len,1.15*\len) node {\footnotesize$\sigma^{X}_{z}$};
\draw (-0.9*\len,1.15*\len) node {\footnotesize$\sigma^{Y}_{\overline{y}z}$};
\draw (0.35*\len,0.68*\len) node {${f}$};
\draw (0,0.25*\len) node {${\pi}$};
\draw (0.375*\len,1.15*\len) node {$m$};
\draw (-0.375*\len,0.65*\len) node {$n$};
\node at (-0.95*\len,0.3*\len) {\footnotesize{$\overline{\nu}_i$}};
\node at (0.9*\len,0.5*\len) {\footnotesize{$\nu^i$}};
\end{tikzpicture}
\end{equation}
is the right hand side of~\eqref{eq:Rel1}.
Applying the naturality of $\sigma^X$ and $\sigma^Y$, we can transport the morphisms $\overline{\nu}_i$ and $\nu^i$ to be glued together and by~\eqref{dominance} we obtain that the expressions~\eqref{eq:inter_step_m_str1} and~\eqref{eq:inter_step_m_str2} coincide.
As usual the second relation~\eqref{eq:Rel2} holds by the symmetry of the situation. One can verify, with similar arguments, that the following assignment gives the inverse of $\Psi_{M,N}$
\begin{equation*}
\raisebox{-0.25em}{\pielemUnit{\text{\footnotesize $X$}}{\text{\footnotesize $Y$}}{c}{f}}\mapsto\sum_{t,s\in\Irr\cx}\rd_t\rd_s\;\overline{\alpha}_i\otimes\overline{\beta}_j\otimes 
\begin{tikzpicture}[xscale=0.8,baseline={([yshift=-.5ex]current bounding box.center)}]
\draw[line width=1]
    (0,0.5*\len) .. controls (0.15*\len,0.9*\len) and (0.6*\len,0.9*\len).. node[midway,rotate=180] {\scriptsize $\Arr$}(0.75*\len,0.5*\len)
    node[midway,above] { $s$};
\draw[line width=1]
    (0,0.5*\len) .. controls (-0.15*\len,0.9*\len) and (-0.6*\len,0.9*\len).. node[midway,rotate=180] {\scriptsize $\Arr$}(-0.75*\len,0.5*\len)
    node[midway,above] { $t$};    

\draw[line width=1] 
    (-0.75*\len,0) to (0.75*\len,0)
    (-0.75*\len,0.5*\len) to (0.75*\len,0.5*\len);
    \draw[line width=1,dotted] 
    (-0.75*\len,0) to (-0.75*\len,0.5*\len)
    (0.75*\len,0) to (0.75*\len,0.5*\len);

\node[rotate=180] at (0,0) {\scriptsize $\Arr$};
\node[rotate=180] at (-0.4*\len,0.5*\len) {\scriptsize $\Arr$};
\node[rotate=180] at (0.4*\len,0.5*\len) {\scriptsize $\Arr$};

\filldraw [black] (-0.75*\len,0) circle (1pt);
\filldraw [black] (0.75*\len,0) circle (1pt);
\filldraw [black] (0,0.5*\len) circle (1pt);
\filldraw [black] (-0.75*\len,0.5*\len) circle (1pt);
\filldraw [black] (0.75*\len,0.5*\len) circle (1pt);

\node at (0,0) [anchor=north] {$c$};
\node at (0.4*\len,0.5*\len) [anchor=north] {\scriptsize$X$};
\node at (-0.4*\len,0.5*\len) [anchor=north] {\scriptsize$Y$};

\draw (0,0.25*\len) node {${f}$};
\draw (0.375*\len,0.65*\len) node {$\alpha^i$};
\draw (-0.375*\len,0.65*\len) node {$\beta^j$};
\end{tikzpicture}
\end{equation*}
by making use of Lemma~\ref{lem:base_change} and relation~\eqref{eq:rel_unit}. That $\Psi_{X,Y}$ is an intertwiner follows from the naturality of the half-braidings involved and relation~\eqref{dominance}.
\end{proof}

\begin{prop}
The natural isomorphisms~\eqref{eq:monoidal_str_T} endow the equivalence~\eqref{eq:equivalence} with the structure of a braided monoidal equivalence.
\end{prop}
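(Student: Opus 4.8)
Since $\EQ$ is already an equivalence of $\bk$-linear categories by~\cite[Proposition 4.2]{NY15tubealgebra} and each $\Psi_{X,Y}$ is an isomorphism of $\Tubecx$-representations by the preceding proposition, it remains to check that the family $\Psi$ turns $\EQ$ into a braided monoidal functor; an equivalence carrying such a structure is automatically a braided monoidal equivalence, the quasi-inverse inheriting the structure. Concretely we must verify: (a)~naturality of $\Psi_{X,Y}$ in both arguments; (b)~compatibility with the unit constraints; (c)~the monoidal (associativity) coherence; and (d)~compatibility with the braidings. Each reduces to a diagrammatic identity in the calculus of Section~\ref{sec:graph-calc}.

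For (a), given morphisms $F\colon X\to X'$ and $G\colon Y\to Y'$ in $\cz(\Ind\cx)$, the identity $\EQ(F\otimes G)\circ\Psi_{X,Y}=\Psi_{X',Y'}\circ(\EQ(F)\Boxt\EQ(G))$ amounts to pushing $F\otimes G$ from the bottom to the top of the diagram in~\eqref{eq:monoidal_str_T}; this is legitimate because $F\otimes G$, being a morphism in the center, commutes with the half-braiding $\sigma^{XY}$ and may thus be slid past the $\sigma^{XY}_x$-node. For (b), the unit object of $\cz(\Ind\cx)$ is $(\un,\id)$, whose half-braiding is trivial, so the $\EQ$-action on $\EQ(\un)_d=\Hsp{\overline d\,\un}$ collapses to $m.f=\rd_x\,\ptr_x(m\circ_d f)$, which is exactly the action defining the trivial representation $\mathbb{I}$. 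Identifying $\EQ(\un)=\mathbb{I}$ in this way, and using $\sigma^{\un Y}_x=\sigma^Y_x$ together with the relations~\eqref{eq:Rel1} and~\eqref{eq:Rel2}, one reads off from~\eqref{eq:monoidal_str_T} that $\Psi_{\un,Y}$ intertwines $\mathbf{l}_{\EQ(Y)}$ with $\EQ$ applied to the left unitor of $\Ind\cx$, and symmetrically for $\mathbf{r}$, giving the triangle-type compatibility.

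For (c), since $\Ind\cx$ is strict the associator of $\cz(\Ind\cx)$ is the identity and $\EQ$ carries it to the identity, so the required coherence is
\[
\Psi_{X\otimes Y,Z}\circ(\Psi_{X,Y}\Boxt\id_{\EQ(Z)})
=\Psi_{X,Y\otimes Z}\circ(\id_{\EQ(X)}\Boxt\Psi_{Y,Z})\circ\mathrm{A}_{\EQ(X),\EQ(Y),\EQ(Z)}.
\]
By Proposition~\ref{prop:rel_unit_simple}(iii) it suffices to test this on vectors whose interior tube labels are the monoidal unit, and on those $\mathrm{A}$ is computed explicitly by Lemma~\ref{lem:associators_unit}. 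Unwinding~\eqref{eq:monoidal_str_T} and using that the half-braiding of a tensor product in $\cz(\Ind\cx)$ is the composite of the half-braidings of the factors, both sides become the same string diagram assembled from $m$, $n$, $l$, the nodes $\sigma^X,\sigma^Y,\sigma^Z$, and the relative composition of the two underlying morphisms; relation~\eqref{dominance} handles the summation over simple objects.

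For (d), write $\sigma_{X,Y}=\sigma^Y_X$ for the braiding of $\cz(\Ind\cx)$; we must show $\Psi_{Y,X}\circ\mathrm{B}_{\EQ(X),\EQ(Y)}=\EQ(\sigma^Y_X)\circ\Psi_{X,Y}$, where $\EQ(\sigma^Y_X)$ denotes post-composition by $\sigma^Y_X\colon X\otimes Y\to Y\otimes X$. Reducing once more, via Proposition~\ref{prop:rel_unit_simple}(iii), to vectors with interior label $\un$, the only delicate point is the factor $\twist_a^{-1}$ occurring in Definition~\ref{def:braiding}: we first compute the $\EQ$-action of the inverse twist element on $m\in\EQ(X)_a=\Hsp{\overline a X}$ and identify it with composition of $m$ by the inverse of the canonical ribbon twist of $X$, so that graphically the self-crossing introduced by $\mathrm{B}$ and the curl coming from $\twist_a^{-1}$ cancel by a Reidemeister-I type move. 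After this cancellation the left-hand side is exactly $\Psi_{Y,X}$ with $m$ and $n$ interchanged through the half-braiding $\sigma^Y$ acting on $X$, which is precisely post-composition of $\Psi_{X,Y}(\,\cdot\,)$ with $\sigma^Y_X$; so the two sides agree. This cancellation of the framing contribution is the heart of the argument and the step most likely to require care; everything else is a routine application of Lemma~\ref{bases_comp} and~\eqref{dominance}. Having verified (a)--(d), $\EQ$ together with $\Psi$ is a braided monoidal equivalence. (The same computation of the $\EQ$-action of $\twist_a$ gives $\Theta_{\EQ(X)}=\EQ(\theta_X)$ for the canonical ribbon twist $\theta$ of $\cz(\Ind\cx)$, which, with Proposition~\ref{prop:rigidity}, upgrades the restriction to locally finite objects to a ribbon equivalence.)
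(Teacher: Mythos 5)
Your proposal follows essentially the same route as the paper on the two conditions the paper actually verifies: you reduce both the monoidal coherence and the braiding compatibility to vectors with trivial interior tube label via Proposition~\ref{prop:rel_unit_simple}(iii), handle the associator with Lemma~\ref{lem:associators_unit}, and use the fact that the half-braiding of $X\otimes Y$ is the composite of those of the factors. You additionally spell out naturality of $\Psi$ and the unit compatibility, which the paper treats as implicit; that is a welcome addition and your arguments for them are correct.

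The one place where you diverge is the braiding check, and there your argument is asserted rather than carried out. You reduce it to the claim that the action of $\twist_a^{-1}$ on $m\in\Hsp{\overline a\,X}$ is composition with an inverse ribbon twist, so that the curl cancels the self-crossing by a Reidemeister-I move. Two remarks. First, $X\in\cz(\Ind\cx)$ need not be dualizable, so ``the canonical ribbon twist of $X$'' does not literally exist; what is true, and what you should say, is that $m.\twist_a^{-1}$ is computed by a partial trace over the \emph{simple} object $a$ of the half-braiding $\sigma^X_a$, which by naturality of $\sigma$ plays the role of $\theta_X^{-1}\circ m$ degree by degree. Second, the cancellation you invoke is precisely the content of the paper's explicit computation: one uses naturality of $\sigma^X$ to transport $m$ onto the dual basis element $\overline\varphi_k$, then the base change identity~\eqref{eq:base_change} to move $m$ past $n$, then naturality of $\sigma^X$ and $\sigma^Y$ together with the dominance relation~\eqref{dominance} to resolve the sum over $z$ and recover a single $\sigma^Y_X$. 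Your topological picture is the right intuition, but in this diagrammatic calculus the ``Reidemeister-I move'' is not an axiom and must be justified by exactly this chain of identities; as written, the heart of the argument is named but not proved. With that step filled in, the proof is complete and equivalent to the paper's.
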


\begin{proof}
Given central objects $X,Y,Z\in \cz(\Ind\cx)$, the associativiy condition 
\begin{equation}\label{eq:associativity_psi}
    \begin{tikzcd}[column sep=2.5cm]
        (\EQ(X)\Boxt \EQ(Y))\Boxt\EQ(Z)\ar[d,"\Psi_{X,Y}\Boxt \id",swap]\ar[r,"\mathrm{A}_{\EQ(X),\EQ(Y),\EQ(Z)}"]
        & \EQ(X)\Boxt (\EQ(Y)\Boxt\EQ(Z))\ar[d,"\id\Boxt \Psi_{Y,Z}"]\\
        \EQ(X\otimes Y)\Boxt\EQ(Z) \ar[d," \Psi_{X\otimes Y, Z}",swap]
        & \EQ(X)\Boxt \EQ(Y\otimes Z)\ar[d," \Psi_{X,Y\otimes Z}"]\\
        \EQ(X\otimes Y\otimes Z)\ar[r,"\cong"]& \EQ(X\otimes Y\otimes Z)
    \end{tikzcd}
\end{equation}
must be fulfilled. In view of Proposition~\ref{prop:rel_unit_simple} (iii), it is enough to check the commutativity of the previous diagram for vectors of the form 
\def\len{1.4cm}
\begin{equation}\label{eq:element_interstep}
\left(\centrelem{s}{X}{m}\otimes \centrelem{t}{Y}{n}\otimes{\def\len{1.6cm}\pielemUnit{s}{t}{c}{\pi}}\right)\otimes
\centrelem{r}{Z}{l}\otimes{\def\len{1.6cm}\pielemUnit{c}{r}{d}{\rho}}\;\in (\EQ(X)\Boxt \EQ(Y))\Boxt \EQ(Z)\,.
\end{equation}
A direct computation, involving Lemma~\ref{lem:associators_unit}, shows that both compositions in the diagram~\eqref{eq:associativity_psi} assign to~\eqref{eq:element_interstep} the value
\def\len{1.2cm}
\begin{equation}\begin{tikzpicture}[baseline={([yshift=-.5ex]current bounding box.center)}]
\draw[line width=1]
    (1.2*\len,1.5*\len) .. controls (1.05*\len,2.05*\len) and (0.55*\len,2.05*\len).. node[midway,rotate=180] {\scriptsize $\Arr$}(0.4*\len,1.5*\len)
    node[midway,above] {\scriptsize $X$};
\draw[line width=1]
    (0.4*\len,1.5*\len) .. controls (0.25*\len,2.05*\len) and (-0.25*\len,2.05*\len).. node[midway,rotate=180] {\scriptsize $\Arr$}(-0.4*\len,1.5*\len)
    node[midway,above] {\scriptsize $Y$};     
\draw[line width=1]
    (-0.4*\len,0.75*\len) .. controls (-0.55*\len,1.3*\len) and (-1.05*\len,1.3*\len).. node[midway,rotate=180] {\scriptsize $\Arr$}(-1.2*\len,0.75*\len)
    node[midway,above] {\scriptsize $Z$};
    \draw[line width=1] 
    (-1.2*\len,0) to (1.2*\len,0)
    (-0.4*\len,1.5*\len) to (1.2*\len,1.5*\len)
    (-1.2*\len,0.75*\len) to (1.2*\len,0.75*\len);
    \draw[line width=1,dotted] 
    (-1.2*\len,0) to (-1.2*\len,0.75*\len)
        (-0.4*\len,0.75*\len) to (-0.4*\len,1.5*\len)
    (1.2*\len,0) to (1.2*\len,1.5*\len);
\node[rotate=180] at (0,0) {\scriptsize $\Arr$};
\node[rotate=180] at (-0.8*\len,0.75*\len) {\scriptsize $\Arr$};
\node[rotate=180] at (0.4*\len,0.75*\len) {\scriptsize $\Arr$};
\node[rotate=180] at (0.8*\len,1.5*\len) {\scriptsize $\Arr$};
\node[rotate=180] at (0,1.5*\len) {\scriptsize $\Arr$};
\filldraw [black] (-1.2*\len,0) circle (1pt);
\filldraw [black] (1.2*\len,0) circle (1pt);
\filldraw [black] (-0.4*\len,0.75*\len) circle (1pt);
\filldraw [black] (-1.2*\len,0.75*\len) circle (1pt);
\filldraw [black] (1.2*\len,0.75*\len) circle (1pt);
\filldraw [black] (1.2*\len,1.5*\len) circle (1pt);
\filldraw [black] (0.4*\len,1.5*\len) circle (1pt);
\filldraw [black] (-0.4*\len,1.5*\len) circle (1pt);
%
%
\node at (0,0) [anchor=north] {$d$};
\node at (0.4*\len,0.75*\len) [anchor=north] {$c$};
\node at (0.8*\len,1.5*\len) [anchor=north] {$s$};
\node at (0,1.5*\len) [anchor=north] {$t$};
\node at (-0.8*\len,0.75*\len) [anchor=north] {$r$};
\draw (0*\len,0.35*\len) node {${\rho}$};
\draw (-0.8*\len,0.95*\len) node  {{$l$}};
\draw (0.8*\len,1.7*\len) node  {{$m$}};
\draw (0*\len,1.7*\len) node  {{$n$}};
\node at (0.4*\len,1.125*\len) {{$\pi$}};
\end{tikzpicture}
\end{equation}
in $\EQ(X\otimes Y\otimes Z)$ and thus we conclude that the desired commutativity holds. To prove that $\EQ$ is braided, we have to check that the diagram
\begin{equation*}
\begin{tikzcd}[column sep=2cm, row sep=normal]
    \EQ(X)\Boxt \EQ(Y) \ar[d,"\Psi_{X,Y}",swap]\ar[r,"\mathrm{B}_{\EQ(X), \EQ(Y)}"]
    &\EQ(Y)\Boxt \EQ(X) \ar[d,"\Psi_{Y,X}"]\\
    \EQ(X\otimes Y)\ar[r,"\EQ(\sigma_{X,Y})",swap]&\EQ(Y\otimes X)
\end{tikzcd}
\end{equation*}
commutes. For simplicity, we rely once more on Proposition~\ref{prop:rel_unit_simple} (iii), and verify the commutativity for 
\def\len{1.4cm}
\begin{equation*}
\centrelem{s}{X}{m}\otimes \centrelem{t}{Y}{n}\otimes{\def\len{1.6cm}\pielemUnit{s}{t}{c}{\pi}}\;\in \EQ(X)\Boxt \EQ(Y)\,.
\end{equation*}
Its image under $\EQ(\sigma_{X,Y})\circ \Psi_{X,Y}$ is given by the value
\def\len{1.4cm}
\begin{equation}\label{eq:braided_interstep}
\begin{tikzpicture}[xscale=0.8,baseline={([yshift=-.5ex]current bounding box.center)}]
\draw[line width=1]
    (0,1.75*\len) .. controls (0.5*\len,1.75*\len) and (0.85*\len,0.9*\len).. node[midway,rotate=115] {\scriptsize $\Arr$}(0.75*\len,0.5*\len)
    node[midway,right] {\footnotesize $X$};
\draw[line width=1]
    (0,1.75*\len) .. controls (-0.5*\len,1.75*\len) and (-0.85*\len,0.9*\len).. node[midway,rotate=255] {\scriptsize $\Arr$}(-0.75*\len,0.5*\len)
    node[midway,left] {\footnotesize $Y$};

\draw[line width=1]
    (0,0.5*\len) .. controls (0.15*\len,0.9*\len) and (0.6*\len,0.9*\len).. node[midway,rotate=180] {\scriptsize $\Arr$}(0.75*\len,0.5*\len)
    node[midway,above] {\scriptsize $Y$};
\draw[line width=1]
    (0,0.5*\len) .. controls (-0.15*\len,0.9*\len) and (-0.6*\len,0.9*\len).. node[midway,rotate=180] {\scriptsize $\Arr$}(-0.75*\len,0.5*\len)
    node[midway,above] {\scriptsize $X$};    

\draw[line width=1] 
    (-0.75*\len,0) to (0.75*\len,0)
    (-0.75*\len,0.5*\len) to (0.75*\len,0.5*\len);
    \draw[line width=1,dotted] 
    (-0.75*\len,0) to (-0.75*\len,0.5*\len)
    (0.75*\len,0) to (0.75*\len,0.5*\len);

\node[rotate=180] at (0,0) {\scriptsize $\Arr$};
\node[rotate=180] at (-0.4*\len,0.5*\len) {\scriptsize $\Arr$};
\node[rotate=180] at (0.4*\len,0.5*\len) {\scriptsize $\Arr$};

\filldraw [black] (-0.75*\len,0) circle (1pt);
\filldraw [black] (0.75*\len,0) circle (1pt);
\filldraw [black] (0,0.5*\len) circle (1pt);
\filldraw [black] (0,1.75*\len) circle (1pt);
\filldraw [black] (-0.75*\len,0.5*\len) circle (1pt);
\filldraw [black] (0.75*\len,0.5*\len) circle (1pt);

\node at (0,0) [anchor=north] {$d$};
\node at (0.4*\len,0.5*\len) [anchor=north] {\scriptsize$s$};
\node at (-0.4*\len,0.5*\len) [anchor=north] {\scriptsize$t$};

\draw (0,1.25*\len) node {$\sigma^Y_X$};
\draw (0,0.25*\len) node {$\pi$};
\draw (0.375*\len,0.65*\len) node {$m$};
\draw (-0.375*\len,0.65*\len) node {$n$};
\end{tikzpicture}
\end{equation}
in $\EQ(Y\otimes X)$, and
\def\len{1.6cm}
\begin{equation*}
\sum_{z\in\Irr\cx}\rd_z
\begin{tikzpicture}[yscale=1,xscale=1,baseline={([yshift=-.5ex]current bounding box.center)}]
 \draw[line width=1]
    (0.5*\len,0) .. controls (1.1*\len,0*\len) and (1.1*\len,2*0.375*\len).. (0.5*\len,0.75*\len) node [midway,rotate=-90]  {\scriptsize $\Arr$}
    node [midway,right]  {\footnotesize $z$};
\draw[line width=1]
    (0*\len,1.75*\len) .. controls (-2.75*\len,1.75*\len) and (-1.75*\len,0*\len).. node[pos=0.4,rotate=-105] {\scriptsize $\Arr$}(-0.5*\len,0*\len)
    node[pos=0.4,left] {\footnotesize $X$};
\draw[line width=1]
    (0*\len,1.75*\len) .. controls (0.75*\len,1.75*\len) and (2.25*\len,0.15*\len).. node[pos=0.4,rotate=135] {\scriptsize $\Arr$}(0.5*\len,0*\len)
    node[pos=0.4,right] {\footnotesize $Y$};    
\draw[line width=1] 
    (-0.5*\len,0) to (0.5*\len,0)
    (-0.5*\len,0.75*\len) to (-0.5*\len,1.25*\len)
    (-1*\len,0.75*\len) to (-1*\len,1.25*\len)
    (-0.5*\len,0.75*\len) to (0*\len,1.75*\len)
    (0.5*\len,0) to (0.5*\len,0.75*\len)
    (-0.5*\len,0.75*\len) to (0.5*\len,0.75*\len)
    (-0.5*\len,0.75*\len) to (-1*\len,0.75*\len)
    (-0.5*\len,0) to [in=-90,out=180] (-1*\len,0.75*\len);
\draw[line width=1]
    (-0.5*\len,1.25*\len) .. controls (-0.75*\len,1.3*\len) .. node[midway,rotate=180] {\scriptsize $\Arr$} (-1*\len,1.25*\len)
    node[midway,above] {\scriptsize $X$};
\draw[line width=1]
    (-0.5*\len,0.75*\len) .. controls (-0.25*\len,1.2*\len) and (0.25*\len,1.2*\len).. node[midway,rotate=180] {\scriptsize $\Arr$}(0.5*\len,0.75*\len)
    node[midway,above] {\scriptsize $Y$};
\draw[line width=1,dotted] 
(-0.5*\len,0.75*\len) to (-1*\len,1.25*\len)
    (-0.5*\len,0) to (-0.5*\len,0.75*\len);

\node[rotate=90] at (0.5*\len,0.375*\len) {\scriptsize $\Arr$};
\node[rotate=-90] at (-0.5*\len,1*\len) {\scriptsize $\Arr$};
\node[rotate=65] at (-0.15*\len,1.45*\len) {\scriptsize $\Arr$};
\node[rotate=-90] at (-1*\len,1*\len) {\scriptsize $\Arr$};
\node[rotate=-70] at (-0.94*\len,0.375*\len) {\scriptsize $\Arr$};
\node[rotate=180] at (0,0) {\scriptsize $\Arr$};
\node[rotate=180] at (-0.75*\len,0.75*\len) {\scriptsize $\Arr$};
\node[rotate=180] at (0,0.75*\len) {\scriptsize $\Arr$};

\filldraw [black] (-0.5*\len,0) circle (1pt);
\filldraw [black] (0.5*\len,0) circle (1pt);
\filldraw [black] (-0.5*\len,1.25*\len) circle (1pt);
\filldraw [black] (-1*\len,1.25*\len) circle (1pt);
\filldraw [black] (-0.5*\len,0.75*\len) circle (1pt);
\filldraw [black] (0.5*\len,0.75*\len) circle (1pt);
\filldraw [black] (-1*\len,0.75*\len) circle (1pt);

\filldraw [black] (0*\len,1.75*\len) circle (1pt);
\node at (0,0) [anchor=north] {$d$};
\node at (0,0.75*\len) [anchor=north] {\footnotesize$t$};
\node at (-0.75*\len,0.75*\len) [anchor=north] {\footnotesize$s$};
\node at (0.5*\len,0.375*\len) [anchor=east] {\footnotesize$s$};
\node at (-0.55*\len,1.15*\len) [anchor=west] {\footnotesize$s$};
\node at (-0.15*\len,1.45*\len) [anchor=east] {\footnotesize$z$};

\node at (-1*\len,1.15*\len) [anchor=east] {\footnotesize$s$};
\node at (-0.95*\len,0.35*\len) [anchor=east] {\footnotesize$z$};

\draw (0,0.9*\len) node {$n$};
\draw (0,0.325*\len) node {${\pi}$};
\node at (-0.85*\len,0.9*\len) {\footnotesize{$\Id_s$}};
\node at (-0.7*\len,1.14*\len) {\footnotesize{$m$}};
\node at (-0.7*\len,0.375*\len) {{$\overline{\varphi}_{\!k}$}};
\node at (0.7*\len,0.375*\len) {{$\varphi^k$}};
\node at (0.6*\len,1.15*\len) {{$\sigma^Y_z$}};
\node at (-1.4*\len,0.85*\len) {{$\sigma^X_{sz}$}};
\end{tikzpicture}
\end{equation*}
represents its image under the composition $\Psi_{Y,X}\circ\mathrm{B}_{\EQ(X), \EQ(Y)}$.
Now, by applying naturality of $\sigma^X$, we can transport $m$ to bound $\overline{\varphi}_k$. Next, by means of~\eqref{eq:base_change}, we further transport $m$ to the right side of $n$ obtaining bases $\overline{\psi}$ and $\psi$ of $\Hsp{X\,\overline{z}}$ in the process. By naturality of $\sigma^X$ and $\sigma^Y$ the bases $\overline{\psi}$ and $\psi$ are glued together and using~\eqref{dominance}, we finally recover the expression~\eqref{eq:braided_interstep}, thereby proving that $\EQ$ is braided.
\end{proof}

Let us summarize what we have obtained so far.

\begin{thm}\label{thm:main}
Let $\cx$ be a spherical semisimple multitensor category. There is a braided monoidal equivalence
\begin{equation*}
    \EQ\Colon\mathcal{Z}(\Ind\cx)\xrightarrow{\quad\simeq\quad}\Rep \Tube
\end{equation*}
given by the assignment~\eqref{eq:equivalence} and monoidal structure~\eqref{eq:monoidal_str_T}. Moreover, restricting $\EQ$ to dualizable objects, we obtain an equivalence
\begin{equation*}
    \mathcal{Z}(\cx)\xrightarrow{\quad\simeq\quad}\Rep^{\rm l.f.} \Tube
\end{equation*}
of ribbon semisimple tensor categories.
\end{thm}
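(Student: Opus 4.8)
The plan is to obtain the first assertion as a formal consequence of the preceding propositions and then to upgrade it on the rigid parts. By \cite[Proposition 4.2]{NY15tubealgebra} the functor $\EQ$ of~\eqref{eq:equivalence} is already an equivalence of $\bk$-linear categories; the natural isomorphisms $\Psi_{X,Y}$ of~\eqref{eq:monoidal_str_T} have been shown to be isomorphisms of tube algebra representations and to satisfy the associativity constraint~\eqref{eq:associativity_psi}, and $\EQ$ has been checked to intertwine the braidings. What is still needed is the unit constraint, so first I would identify $\EQ$ of the monoidal unit $\un=\bigoplus_i\un_i$ with the trivial representation $\mathbb{I}$ through the evident isomorphism $\Hsp{\overline{d}\,\un}\cong\mathbb{I}_d$, and verify — by the same manipulations with~\eqref{eq:Rel1} and~\eqref{eq:Rel2} used for the unitors $\mathbf{r}_M$, $\mathbf{l}_M$ — that this isomorphism is compatible with $\Psi$ and with the unitors of $\Rep\Tube$ and of $\cz(\Ind\cx)$. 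Together with the coherence already established this makes $(\EQ,\Psi)$ a braided monoidal equivalence.

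For the restricted statement I would first identify the dualizable objects on both sides. The forgetful functor $\cz(\Ind\cx)\to\Ind\cx$ is monoidal, hence preserves duals; since the dualizable objects of the semisimple category $\Ind\cx$ are exactly those of $\cx$, while $\cz(\cx)$ is rigid (being the Drinfeld center of the rigid category $\cx$), it follows that $(Z,\sigma)\in\cz(\Ind\cx)$ is dualizable iff $Z\in\cx$, i.e.\ iff $(Z,\sigma)\in\cz(\cx)$. Thus $\cz(\cx)$ is the full subcategory of dualizable objects of $\cz(\Ind\cx)$. A monoidal equivalence preserves and reflects dualizability, so $\EQ$ restricts to an equivalence of $\cz(\cx)$ onto the full subcategory of dualizable objects of $\Rep\Tube$. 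That this subcategory is precisely $\Rep^{\mathrm{l.f.}}\Tube$ I would deduce from Proposition~\ref{prop:rigidity} (every locally finite representation is dualizable) together with the observation that for $Z\in\cz(\cx)$ each component $\EQ(Z)_d=\Hsp{\overline{d}\,Z}$ is finite-dimensional, by local finiteness of $\cx$ and the fact that $Z$ is a genuine object, and is nonzero for only finitely many $d\in\Irr\cx$; hence $\EQ(Z)$ is locally finite, and any dualizable representation, being isomorphic to some $\EQ(Z)$, is locally finite.

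It remains to match the ribbon structures. Here $\cz(\cx)$ carries its canonical ribbon twist $\theta$ built from the pivotal structure of $\cx$ and the braiding, which is a genuine ribbon twist because $\cx$ is spherical (cf.\ \cites{MR1966524,MR1822847}), whereas $\Rep^{\mathrm{l.f.}}\Tube$ is ribbon with the twist $\Theta$ of Propositions~\ref{prop:twist} and~\ref{prop:rigidity}. Since both categories are rigid and $\EQ$ is already a braided monoidal equivalence, the duality data are transported uniquely, so it suffices to prove $\EQ(\theta_Z)=\Theta_{\EQ(Z)}$. I would check this componentwise on $\EQ(Z)_a=\Hsp{\overline{a}\,Z}$: the right-hand side is, by definition, the action of the twist element $\twist_a$, and I would expand this action through the tube-action formula for $\EQ(Z)$ and the defining picture of $\twist_a$, then simplify using the snake relations and the naturality of $\sigma^Z$, to obtain exactly the morphism representing $\theta_Z$, with the factor $\rd_a^{-1}$ in $\twist_a$ accounting for the spherical normalization. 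Compatibility of $\EQ$ with $\ev$ and $\coev$ is then automatic, and $\Theta_{\overline{M}}=\overline{\Theta_M}$ is already known, which completes the equivalence of ribbon semisimple tensor categories.

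The hard part will be this last ribbon-compatibility computation: unwinding the graphical definition of the twist element $\twist_a$ inside the $\EQ$-action and recognising the result, with the correct scalar, as the intrinsic ribbon twist of the Drinfeld center — this is where sphericality of $\cx$ genuinely enters. By contrast, the identification of the dualizable objects and the verification of the unit constraint are routine bookkeeping.
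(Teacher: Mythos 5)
Your proposal is correct and follows essentially the same route as the paper, which simply assembles the preceding propositions (the linear equivalence of \cite{NY15tubealgebra}, the well-definedness and coherence of $\Psi_{X,Y}$, and the ribbon structure on $\Rep^{\mathrm{l.f.}}\Tube$) without further argument. You in fact supply three verifications the paper leaves implicit — the unit constraint $\EQ(\un)\cong\mathbb{I}$, the identification of the dualizable objects on both sides with $\cz(\cx)$ and $\Rep^{\mathrm{l.f.}}\Tube$ respectively, and the matching $\EQ(\theta_Z)=\Theta_{\EQ(Z)}$ of ribbon twists via the action of $\twist_a$ on $\Hsp{\overline{a}\,Z}$ — and each is carried out (or sketched) correctly.
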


\newpage
\begin{bibdiv}
\begin{biblist}

\bib{MR695890}{article}{
      author={Abrams, Gene~D.},
       title={Morita equivalence for rings with local units},
        date={1983},
     journal={Comm. Algebra},
      volume={11},
      number={8},
       pages={801\ndash 837},
       doi={10.1080/00927878308822881}
}

\bib{MR899719}{article}{
      author={\'Anh, P.~N.},
      author={M\'arki, L.},
       title={Morita equivalence for rings without identity},
        date={1987},
     journal={Tsukuba J. Math.},
      volume={11},
      number={1},
       pages={1\ndash 16},
       doi={10.21099/tkbjm/1496160500}              
}

\bib{MR3342166}{article}{
      author={Bartels, Arthur},
      author={Douglas, Christopher~L.},
      author={Henriques, Andr\'{e}},
       title={Dualizability and index of subfactors},
        date={2014},
     journal={Quantum Topol.},
      volume={5},
      number={3},
       pages={289\ndash 345},
       doi={10.4171/QT/53}
}

\bib{BK1}{article}{
      author={Balsam, Benjamin},
      author={Kirillov, Alexander, Jr.},
       title={{Turaev-Viro invariants as an extended TQFT}},
        date={2010},
     journal={preprint},
      eprint={\href{http://arxiv.org/abs/1004.1533}{\texttt{arXiv:1004.1533
  [math.GT]}}},
}

\bib{MR4642306}{article}{
      author={Christian, Jessica},
      author={Green, David},
      author={Huston, Peter},
      author={Penneys, David},
       title={A lattice model for condensation in {L}evin-{W}en systems},
        date={2023},
     journal={J. High Energy Phys.},
      volume={2023},
      number={9},
       pages={Paper No. 55, 55},
       doi={10.1007/JHEP09(2023)055}       
}

\bib{MR3254423}{article}{
      author={Das, Paramita},
      author={Ghosh, Shamindra~Kumar},
      author={Gupta, Ved~Prakash},
       title={Drinfeld center of planar algebra},
        date={2014},
     journal={Internat. J. Math.},
      volume={25},
      number={8},
       pages={1450076 (43 pages)},
       doi={10.1142/S0129167X14500761}
}

\bib{douglas2018fusion2categories}{misc}{
      author={Douglas, Christopher~L.},
      author={Reutter, David~J.},
       title={Fusion 2-categories and a state-sum invariant for 4-manifolds},
        date={2018},
      eprint={\href{http://arxiv.org/abs/1812.11933}{\texttt{arXiv:1812.11933
  [math.QA]}}},
}

\bib{EGNO}{book}{
      author={Etingof, Pavel},
      author={Gelaki, Shlomo},
      author={Nikshych, Dmitri},
      author={Ostrik, Victor},
       title={Tensor categories},
      series={Mathematical Surveys and Monographs},
   publisher={American Mathematical Society, Providence, RI},
        date={2015},
      volume={205},
       doi={10.1090/surv/205}
}

\bib{MR2503392}{incollection}{
      author={Freedman, Michael},
      author={Nayak, Chetan},
      author={Walker, Kevin},
      author={Wang, Zhenghan},
       title={On picture {$(2+1)$}-{TQFT}s},
        date={2008},
   booktitle={Topology and physics},
      series={Nankai Tracts Math.},
      volume={12},
   publisher={World Sci. Publ., Hackensack, NJ},
       pages={19\ndash 106},
       doi={10.1142/6907}
}

\bib{sphericalmorita}{article}{
   author={Fuchs, J\"urgen},
   author={Galindo, C\'esar},
   author={Jaklitsch, David},
   author={Schweigert, Christoph},
   title={Spherical Morita contexts and relative Serre functors},
   journal={Kyoto J. Math.},
   volume={65},
   date={2025},
   number={3},
   pages={537--594},
   issn={2156-2261},
   doi={10.1215/21562261-2024-0021},
}

\bib{manifestlymorita}{article}{
      author={Fuchs, J\"{u}rgen},
      author={Galindo, César},
      author={Jaklitsch, David},
      author={Schweigert, Christoph},
       title={A manifestly {M}orita-invariant construction of {T}uraev–{V}iro
  invariants},
        date={2025},
        ISSN={1664-073X},
     journal={Quantum Topology},
         doi={10.4171/qt/234},
}

\bib{MR3447719}{article}{
      author={Ghosh, Shamindra~Kumar},
      author={Jones, Corey},
       title={Annular representation theory for rigid {$C^*$}-tensor
  categories},
        date={2016},
     journal={J. Funct. Anal.},
      volume={270},
      number={4},
       pages={1537\ndash 1584},
        doi={10.1016/j.jfa.2015.08.017}
}

\bib{DOI:10.22331/q-2024-03-28-1301}{article}{
      author={Green, David},
      author={Huston, Peter},
      author={Kawagoe, Kyle},
      author={Penneys, David},
      author={Poudel, Anup},
      author={Sanford, Sean},
       title={Enriched string-net models and their excitations},
        date={2024},
     journal={Quantum},
      volume={8},
       pages={1301},
}

\bib{hoek-master-thesis}{thesis}{
      author={Hoek, Keeley},
       title={Drinfeld centers for bimodule categories},
        type={Master's Thesis},
      school={Australian National University},
        date={2019},
}

\bib{MR1782145}{article}{
      author={Izumi, Masaki},
       title={The structure of sectors associated with {L}ongo-{R}ehren
  inclusions. {I}. {G}eneral theory},
        date={2000},
     journal={Comm. Math. Phys.},
      volume={213},
      number={1},
       pages={127\ndash 179},
        doi={10.1007/s002200000234}
}

\bib{levinwen2024}{article}{
      author={Kawagoe, Kyle},
      author={Jones, Corey},
      author={Sanford, Sean},
      author={Green, David},
      author={Penneys, David},
       title={Levin-wen is a gauge theory: Entanglement from topology},
        date={2024-10},
        ISSN={1432-0916},
     journal={Communications in Mathematical Physics},
      volume={405},
      number={11},
         doi={10.1007/s00220-024-05144-x}
}

\bib{Kir}{article}{
      author={Kirillov, Alexander, Jr},
       title={{String-net model of Turaev-Viro invariants}},
        date={2011},
     journal={preprint},
      eprint={\href{http://arxiv.org/abs/1106.6033}{\texttt{arXiv:1106.6033
  [math.AT]}}}
}

\bib{MR3398725}{article}{
      author={Meir, Ehud},
      author={Szymik, Markus},
       title={Drinfeld centers for bicategories},
        date={2015},
     journal={Doc. Math.},
      volume={20},
       pages={707\ndash 735},
        doi={10.4171/dm/503}
}

\bib{MR1966524}{article}{
      author={M{\"u}ger, Michael},
       title={From subfactors to categories and topology. {I}. {F}robenius
  algebras in and {M}orita equivalence of tensor categories},
        date={2003},
     journal={J. Pure Appl. Algebra},
      volume={180},
      number={1-2},
       pages={81\ndash 157},
        doi={10.1016/S0022-4049(02)00247-5}
}

\bib{MR1966525}{article}{
      author={M{\"u}ger, Michael},
       title={From subfactors to categories and topology. {II}. {T}he quantum
  double of tensor categories and subfactors},
        date={2003},
     journal={J. Pure Appl. Algebra},
      volume={180},
      number={1-2},
       pages={159\ndash 219},
        doi={10.1016/S0022-4049(02)00248-7}
}

\bib{NY15tubealgebra}{article}{
      author={Neshveyev, Sergey},
      author={Yamashita, Makoto},
       title={A few remarks on the tube algebra of a monoidal category},
        date={2018},
     journal={Proc. Edinb. Math. Soc. (2)},
      volume={61},
      number={3},
       pages={735\ndash 758},
        doi={10.1017/S0013091517000426}
}

\bib{NS07pivotal}{incollection}{
      author={Ng, Siu-Hung},
      author={Schauenburg, Peter},
       title={Higher {F}robenius-{S}chur indicators for pivotal categories},
        date={2007},
   booktitle={Hopf algebras and generalizations},
      series={Contemp. Math.},
      volume={441},
   publisher={Amer. Math. Soc., Providence, RI},
       pages={63\ndash 90},
         doi={10.1090/conm/441/08500}
}

\bib{MR1317353}{incollection}{
      author={Ocneanu, Adrian},
       title={Chirality for operator algebras},
        date={1994},
   booktitle={Subfactors ({K}yuzeso, 1993)},
   publisher={World Sci. Publ., River Edge, NJ},
       pages={39\ndash 63},
        doi={10.1142/9789814534154}       
}

\bib{MR1822847}{article}{
      author={Schauenburg, Peter},
       title={The monoidal center construction and bimodules},
        date={2001},
     journal={J. Pure Appl. Algebra},
      volume={158},
      number={2-3},
       pages={325\ndash 346},
        doi={10.1016/S0022-4049(00)00040-2}              
}

\end{biblist}
\end{bibdiv}

\end{document}